\pgfplotsset{compat=1.18}
\theoremstyle{plain}
\newtheorem{thm}{Theorem}[section]
\newaliascnt{cor}{thm}
\newaliascnt{prop}{thm}
\newaliascnt{lem}{thm}
\newaliascnt{remark}{thm}
\newtheorem{cor}[cor]{Corollary}
\newtheorem{prop}[prop]{Proposition}
\newtheorem{lem}[lem]{Lemma}
\newtheorem{remark}[remark]{Remark}
\theoremstyle{definition}
\newaliascnt{defn}{thm}
\newaliascnt{asu}{thm}
\newaliascnt{con}{thm}
\newtheorem{defn}[defn]{Definition}
\newcounter{stp}
\newcounter{stpi}
\newcounter{stpci}
\newcounter{stpiii}
\numberwithin{equation}{section}
\setlist[enumerate]{font = \normalfont}
\newcommand{\eps}{\varepsilon}
\newcommand{\sol}{\mathrm{sol}}
\newcommand{\test}{\mathrm{test}}
\newcommand{\rf}{\mathrm{f}}
\newcommand{\rp}{\mathrm{p}}
\newcommand{\rb}{\mathrm{b}}
\newcommand{\rfb}{\rf_\rb}
\newcommand{\mre}{\mathrm{e}}
\newcommand{\rv}{\mathrm{v}}
\newcommand{\hx}{\widehat{x}}
\newcommand{\hy}{\widehat{y}}
\newcommand{\hz}{\widehat{z}}
\newcommand{\hOm}{\widehat{\Omega}}
\newcommand{\hGam}{\widehat{\Gamma}}
\newcommand{\hOmf}{\hOm_\rf}
\newcommand{\hOmb}{\hOm_\rb}
\newcommand{\Omf}{\Omega_\rf}
\newcommand{\Omb}{\Omega_\rb}
\newcommand{\tOm}{\Tilde{\Omega}}
\newcommand{\tOmb}{\tOm_\rb}
\newcommand{\Ombd}{\Omb^\delta}
\newcommand{\Gad}{\Gamma^\delta}
\newcommand{\bfeta}{\boldsymbol{\eta}}
\newcommand{\bfsigma}{\boldsymbol{\sigma}}
\newcommand{\hbfsigma}{\widehat{\bfsigma}}
\newcommand{\bfD}{\boldsymbol{D}}
\newcommand{\bfS}{\boldsymbol{S}}
\newcommand{\bfu}{\boldsymbol{u}}
\newcommand{\bfxi}{\boldsymbol{\xi}}
\newcommand{\bfn}{\boldsymbol{n}}
\newcommand{\hbfn}{\widehat{\bfn}}
\newcommand{\bfe}{\boldsymbol{e}}
\newcommand{\bftau}{\boldsymbol{\tau}}
\newcommand{\bfw}{\boldsymbol{w}}
\newcommand{\hbftau}{\widehat{\bftau}}
\newcommand{\rhob}{\rho_\rb}
\newcommand{\bfsigmaE}{\bfsigma^{\mathrm{E}}}
\newcommand{\bfI}{\boldsymbol{I}}
\newcommand{\bfq}{\boldsymbol{q}}
\newcommand{\hbfeta}{\widehat{\bfeta}}
\newcommand{\hp}{\widehat{p}}
\newcommand{\hr}{\widehat{r}}
\newcommand{\bfPhi}{\boldsymbol{\Phi}}
\newcommand{\hbSb}{\widehat{\bfS}_\rb}
\newcommand{\hS}{\widehat{S}}
\newcommand{\hbS}{\widehat{\bfS}}
\newcommand{\lambde}{\lambda_\mre}
\newcommand{\lambdv}{\lambda_\rv}
\newcommand{\mue}{\mu_\mre}
\newcommand{\muv}{\mu_\rv}
\newcommand{\hbfD}{\widehat{\bfD}}
\newcommand{\hnabla}{\widehat{\nabla}}
\newcommand{\hbfPhi}{\widehat{\bfPhi}}
\newcommand{\hbfxi}{\widehat{\bfxi}}
\newcommand{\hzeta}{\widehat{\zeta}}
\newcommand{\bfpsi}{\boldsymbol{\psi}}
\newcommand{\hbfpsi}{\widehat{\bfpsi}}
\newcommand{\hphi}{\widehat{\varphi}}
\newcommand{\etad}{\eta^\delta}
\newcommand{\heta}{\widehat{\eta}}
\newcommand{\bfphi}{\boldsymbol{\phi}}
\newcommand{\rhop}{\rho_\rp}
\newcommand{\homega}{\widehat{\omega}}
\newcommand{\hFp}{\widehat{F}_\rp}
\newcommand{\hDelta}{\widehat{\Delta}}
\newcommand{\bfsigmaf}{\bfsigma_\rf}
\newcommand{\hbfu}{\widehat{\bfu}}
\newcommand{\hg}{\widehat{g}}
\newcommand{\hbfw}{\widehat{\bfw}}
\newcommand{\hbfv}{\widehat{\bfv}}
\newcommand{\df}{\Dot{f}}
\newcommand{\dbfu}{\Dot{\bfu}}
\newcommand{\dbfeta}{\Dot{\bfeta}}
\newcommand{\bbfeta}{\overline{\bfeta}}
\newcommand{\bp}{\overline{p}}
\newcommand{\bomega}{\overline{\omega}}
\newcommand{\bbfu}{\overline{\bfu}}
\newcommand{\tbfv}{\Tilde{\bfv}}
\newcommand{\bfvphi}{\boldsymbol{\varphi}}
\newcommand{\Vf}{V_\rf}
\newcommand{\cV}{\mathcal{V}}
\newcommand{\cVf}{\cV_\rf}
\newcommand{\Vfom}{\Vf^\omega}
\newcommand{\cVfom}{\cVf^\omega}
\newcommand{\cVom}{\cV_\omega}
\newcommand{\Vd}{V_\rd}
\newcommand{\cVb}{\cV_\rb}
\newcommand{\Vp}{V_\rp}
\newcommand{\cQ}{\mathcal{Q}}
\newcommand{\cQb}{\cQ_\rb}
\newcommand{\cVsol}{\cV_\sol}
\newcommand{\cVsolom}{\cVsol^\omega}
\newcommand{\cVtest}{\cV_\test}
\newcommand{\cVtestom}{\cVtest^\omega}
\newcommand{\bfx}{\boldsymbol{x}}
\newcommand{\bfy}{\boldsymbol{y}}
\newcommand{\hbfx}{\widehat{\bfx}}
\newcommand{\hbfy}{\widehat{\bfy}}
\newcommand{\bfv}{\boldsymbol{v}}
\newcommand{\bfA}{\boldsymbol{A}}
\newcommand{\bfG}{\boldsymbol{G}}
\newcommand{\bff}{\boldsymbol{f}}
\newcommand{\bR}{\mathbb{R}}
\newcommand{\bN}{\mathbb{N}}
\newcommand{\rd}{\mathrm{d}}
\newcommand{\srd}{\, \mathrm{d}}
\newcommand{\rc}{\mathrm{c}}
\newcommand{\rC}{\mathrm{C}}
\newcommand{\rL}{\mathrm{L}}
\newcommand{\rW}{\mathrm{W}}
\newcommand{\rH}{\mathrm{H}}
\newcommand{\rD}{\mathrm{D}}
\newcommand{\cJ}{\mathcal{J}}
\newcommand{\cX}{\mathcal{X}}
\newcommand{\hcJ}{\widehat{\cJ}}
\DeclareMathOperator{\Id}{Id}
\DeclareMathOperator{\tr}{tr}
\newcommand{\sigmad}{\sigma_\delta}
\newcommand{\dOmega}{\del \Omega}
\newcommand{\oOmega}{\overline{\Omega}}
\newcommand{\del}{\partial}
\newcommand{\dk}[1]{\partial_{#1}}
\newcommand{\dt}{\dk{t}} 
\newcommand{\dz}{\dk{z}} 
\newcommand{\tin}{\enspace \text{in} \enspace}
\newcommand{\ton}{\enspace \text{on} \enspace}
\newcommand{\tfor}{\enspace \text{for} \enspace}
\newcommand{\tforall}{\enspace \text{for all} \enspace}
\newcommand{\tand}{\enspace \text{and} \enspace}
\newcommand{\twith}{\enspace \text{with} \enspace}
\newcommand{\tso}{\enspace \text{so} \enspace}
\newcommand{\tas}{\enspace \text{as} \enspace}
\newcommand{\taswellas}{\enspace \text{as well as} \enspace}
\newcommand{\twhere}{\enspace \text{where} \enspace}
\begin{document}

\title[3D FPSI]{Three-Dimensional Navier--Stokes--Biot Coupling via a Moving Reticular Plate Interface: Existence of Weak Solutions}

\author{Felix Brandt}
\address{Department of Mathematics, University of California at Berkeley, Berkeley, 94720, CA, USA.}
\email{fbrandt@berkeley.edu}
\author{Sun\v{c}ica \v{C}ani\'c}
\address{Department of Mathematics, University of California at Berkeley, Berkeley, 94720, CA, USA.}
\email{canics@berkeley.edu}
\author{Boris Muha}
\address{Department of Mathematics, Faculty of Science, University of Zagreb, Zagreb, Croatia}
\email{borism@math.hr}
\subjclass[2020]{74F10, 35Q30, 35R35, 74H20}
\keywords{Fluid-structure interaction, poroelastic media, nonlinear coupling, finite-energy weak solutions, Aubin--Lions compactness criterion}

\begin{abstract}
We prove the existence of finite-energy weak solutions to a regularized three-dimensional fluid-structure interaction (FSI) problem involving an incompressible, viscous, Newtonian fluid and a multilayered poro(visco)elastic structure. The structure consists of a thick layer modeled by the Biot equations and a thin reticular plate with inertia and elastic energy, transparent to fluid flow. The coupling is nonlinear in the sense that it takes place on a moving interface that is not known a priori but is defined by the solution itself, making the problem a moving-boundary problem. This nonlinear free-boundary coupling, combined with the limited regularity of the Biot displacement, renders the classical weak formulation ill-defined at finite energy. To address this, we introduce a minimally invasive regularization based on a suitable extension and convolution of the Biot displacement, chosen so that the regularized problem remains consistent with the original model. We then construct approximate solutions to the regularized problem via a Lie operator-splitting scheme and derive uniform energy bounds. While these bounds ensure weak and weak* convergence, passing to the limit in the nonlinear terms requires refined compactness arguments, including variants of the Aubin--Lions lemma and tools adapted to moving non-Lipschitz interfaces. The result applies in particular to the purely elastic case (without structural damping) as well as the poroviscoelastic case. This work extends the two-dimensional analysis of \cite{KCM:24} to the fully three-dimensional setting and, to our knowledge, provides the first existence result for a nonlinearly coupled, multilayer 3D Navier--Stokes--Biot FSI system with a permeable interface.
\end{abstract}

\maketitle

\section{Introduction}\label{sec:intro}

Fluid-structure interaction (FSI) problems involving multilayered poroelastic structures naturally arise from a wide range of applications.
We refer here e.g.\ to encapsulation of bioartificial organs \cite{WCBBR:22} or blood flow in arteries \cite{BQQ:09, BZY:15, CWB:21} modeled as poro(visco)elastic media to investigate drug transport through vascular walls.

In this article, we prove the existence of a finite-energy weak solution to a regularized, nonlinearly coupled fluid--poro(visco)elastic structure interaction (FPSI) problem.
More precisely, we study the interaction problem of an incompressible, viscous, Newtonian fluid, represented by the Navier--Stokes equations, as well as bulk poro(visco)elasticity, given by the Biot equations for porous media.
Here, bulk poro(visco)elasticity refers to the fact that the fluid and poro(visco)elastic domains have the same dimension--both are three-dimensional in this work.
We further assume that the free fluid flow and the Biot poro(visco)elastic medium are coupled across a moving interface.
This interface is modeled by a reticular plate with inertia and elastic energy that is transparent to fluid flow.
The geometric setup is sketched in \autoref{fig:3d_fsi_flat}.

A similar problem, though with two-dimensional fluid and Biot domains and a one-dimensional interface, has been analyzed recently in \cite{KCM:24}.
In the present manuscript, we extend the existence result of a finite-energy weak solution of Leray--Hopf type to the 3D case.
Note that the three-dimensional character of the present problem has a clear meaning in biomedical applications \cite{MC:13b}, and the extension of the result from \cite{KCM:24} to the 3D case requires some non-trivial modifications, as outlined below after the literature review.

The field of FSI involving purely elastic structures located at a part of the fluid boundary has been a very active one in the last 20 years.
The linearly coupled case has been investigated in the situation of linear \cite{DGHL:03} and nonlinear models \cite{BGLT:07, KTZ:10}.
Strong solutions to FSI problems with the domain of the structure displacement being of a lower dimension than the fluid domain were first studied in \cite{BdV:04}, and the analysis was later extended e.g.\ in \cite{Leq:11, GH:16, MRR:20, DS:20, MT:21}.
For investigations of the situation of the fluid and the structure domain being of the same dimension, we refer for example to \cite{CS:05, CCS:07, KT:12, IKLT:14, RV:14}.
The existence of a strong time-periodic solution to an FSI problem with a multilayered structure has been analyzed recently in \cite{BMR:25}.

The study of weak solutions to nonlinearly coupled FSI problems was pioneered in \cite{CDEG:05} and further developed e.g.\ in \cite{Gra:08, MC:13a, LR:14}.
In particular, in \cite{MC:13a}, a constructive approach to FSI problems based on an operator splitting scheme first introduced in \cite{GGCC:09} was used to prove the existence of a finite-energy weak solution to an FSI problem between a 1D elastic Koiter shell and a 2D incompressible, viscous, Newtonian fluid. 
The underlying idea is to semidiscretize the full nonlinearly coupled problem in time, and to reduce the analysis to fluid and structure subproblems.
This scheme has been successfully applied to different settings including the situation of 3D-2D-FSI problems \cite{MC:13b}, multilayered structures \cite{MC:14} or elastic shells with nonlinear Koiter energy \cite{MC:15}.
Our approach likewise employs time discretization combined with Lie operator splitting to construct approximate solutions to the 3D FPSI problem.

Biot \cite{Biot:41, Biot:55} introduced the equations modeling poroelastic media in the context of soil consolidation, and they are referred to as Biot equations nowadays.
Concerning their mathematical analysis, we mention for instance \cite{Sho:00, BGSW:16, BMW:23}.
Linearly coupled FSI problems involving poroelastic structures were e.g.\ considered in \cite{Sho:05, Ces:17, AGW:25, BHR:25,AW:25}.
In \cite{BCMW:21}, a linearly coupled multilayered FPSI problem of a Stokes flow with a poroelastic plate as well as a poroelastic Biot medium was studied.
The aforementioned work \cite{KCM:24} addresses the nonlinearly coupled FPSI problem of a Navier--Stokes fluid with a multilayered structure consisting of a thin reticular plate and a thick Biot layer in the 2D-1D-setting.

The main mathematical difficulties in the present work stem from the nonlinear coupling of the Navier--Stokes equations with the bulk poro(visco)elasticity.
Due to the same dimension of the thick structural layer and the fluid domain, the regularity of the finite-energy weak solutions is insufficient for defining the moving domain and associated traces. In addition, geometric nonlinearities from the moving domain mean that certain integrals in the weak formulation are not well-defined. To address this, we introduce a minimally invasive regularization: a suitable extension and convolution of the Biot displacement that renders all integrals well-defined.
We then show the existence of a finite-energy weak solution to the emerging regularized FPSI problem.

The next step in this work will be to prove that the regularized problem studied here is consistent with the original problem in the sense that its solution converges to the classical solution of the non-regularized problem when the regularization parameter tends to zero. 
Note that the approach from \cite{KCM:24} to prove this type of a weak-classical consistency result cannot be used in the present three-dimensional setting, since the convergence rate of the convolution kernel is not compatible with the loss of regularity following from the odd extension procedure.
In a forthcoming work, a conditional weak-classical consistency result with minimal requirements on the underlying extension and regularization procedure will be provided.

Compared to \cite{KCM:24}, there are additional difficulties arising from the three-dimensional character of the FPSI problem.
In fact, by Sobolev embeddings, the plate displacements in the finite-energy space are no longer Lipschitz continuous, see \eqref{eq:reg of omega in fin energy space}.
The consequences of this observation are twofold:
First, the fluid domain can no longer be represented as the subgraph of a Lipschitz function, leading to complications in the definition of the solution and test spaces.
As a remedy, we adapt the strategy from \cite{MC:13b, MC:15} in the case of 3D FSI problems and invoke the so-called Lagrangian trace, see also \cite{CDEG:05, Gra:08,LR:14, Muh:14}.
Second, we face a loss of regularity when switching between the moving and fixed fluid domain configurations.
This manifests itself in the uniform boundedness estimates of the approximate fluid velocities, providing estimates in Sobolev spaces with lower integrability, which in turn complicates the subsequent compactness arguments.

Here is a summary of the methodology. We prove existence constructively. After specifying the regularization, we time-discretize $[0,T]$ into $N$ subintervals of length $\Delta t = \frac{T}{N}$ and apply Lie splitting to decouple, at each step, a reticular-plate subproblem from the Biot--fluid subproblem. Solving these subproblems sequentially yields the approximate solutions.

We then derive energy estimates uniform in the time step, giving uniform bounds for the approximate solutions.
At this stage, the loss of regularity due to the lower regularity of the plate displacements enters, and it is only possible to show the uniform boundedness of the approximate fluid velocities in $\rL^2(0,T;\rW^{1,p}(\Omf)^3)$ for $p < 2$, where $\Omf$ denotes the fluid domain that will be made precise in the following section.
Because of the strong coupling, the approaches in \cite[Lemma~6]{CDEG:05} and \cite[Proposition~6.5]{MC:14} to obtain a Korn equality are not applicable, and the non-Lipschitz geometry rules out the Korn inequality of \cite[Lemma~1]{Vel:12}. We therefore use a Korn-type inequality for non-Lipschitz domains \cite[Proposition~2.9]{Len:14}, at the cost of the aforementioned loss of integrability.

The highly nonlinear nature of the present 3D FPSI problem implies that the weak and weak* convergences resulting from the uniform boundedness of the approximate solutions are insufficient for the limit passage.
We invoke suitable compactness criteria to upgrade to strong convergences.
More precisely, we use the classical Aubin--Lions compactness result for the Biot displacement, the Arzela--Ascoli theorem for the plate displacement, the compactness result of Dreher and J\"ungel \cite{DJ:12} for piecewise constant functions in the context of the Biot and plate velocity as well as a generalized Aubin--Lions--Simon compactness result due to Muha and \v{C}ani\'c \cite{MC:19} for the fluid velocity.
The loss of regularity of the plate displacements compared to the 2D case significantly complicates the proof of the compactness of the fluid velocities.

Finally, we pass to the limit in the semidiscretized problem.
Here we first need to show the convergence of the gradients of the fluid velocities, which is due to the loss of regularity compared to the 2D case.
Moreover, we need to construct suitable test functions and discuss their convergence.
The limit passage is based on the above weak, weak* and strong convergences.
We observe that the lack of Lipschitz regularity of the plate displacements requires considerable adjustments in the limit passage.

Note that the local-in-time character of the solution results from the possibility of degeneration of the fluid domain.
To address the maximal time of existence, the procedure from \cite[Section~5]{CDEG:05} can be adopted to the present setting.
It yields that the maximal time of existence can be extended up to the occurrence of one of the following events:
the moving fluid or Biot domain degenerates, the Lagrangian mapping is no longer injective, or the solution exists globally, i.e., $T = \infty$.

    This work presents the first existence result for a nonlinearly coupled, multilayer 3D Navier-Stokes--Biot fluid--structure interaction (FSI) system with a permeable interface.

    The remainder of the article is organized as follows. In \autoref{sec:FPSI problem}, we introduce the multilayered FPSI problem consisting of the Biot subproblem, the reticular plate subproblem, the fluid subproblem as well as suitable coupling and boundary conditions.
\autoref{sec:def of a weak sol} is dedicated to the introduction of the maps between the reference and physical domains as well as the formal derivation of a weak formulation.
In \autoref{sec:reg probl, weak sol & ex result}, we formulate the regularized problem and state the main existence result for a finite-energy weak solution.
In \autoref{sec:splitting scheme}, we present the splitting scheme comprising the plate subproblem as well as the fluid and Biot subproblem.
We also make precise the coupled semidiscrete problem and establish discrete energy equalities.
The construction of the approximate solutions, the proof of their uniform boundedness and the derivation of weak and weak* convergences from there is done in \autoref{sec:approx sols on the complete time int}.
In \autoref{sec:comp args}, we perform the compactness arguments for the Biot displacement, the plate displacement, the Biot and plate velocity, the Biot pore pressure and the fluid velocity.
\autoref{sec:limit passage} is concerned with the limit passage, for which the strong convergence of the fluid velocity traces, the convergence of the test functions on approximate fluid domains and the convergence of the gradients of the fluid velocity are required.
In the final \autoref{sec:conclusions}, we provide a summary and give a brief outlook on possible future work.

\section{The fluid-poro(visco)elastic structure interaction problem}\label{sec:FPSI problem}

We consider a 3D FPSI problem on the time-dependent domain $\Omega(t) = \Omega_\rb(t) \cup \Gamma(t) \cup \Omega_\rf(t)$ (\autoref{fig:3d_fsi_flat}).
The 3D reference configuration is $\hOm= \widehat{\Omega}_\rb \cup \widehat{\Gamma} \cup \widehat{\Omega}_\rf$, where
\begin{equation*}
    \begin{aligned}
      \hOmb = (0,L)^2 \times (0,R), \enspace \hGam = (0,L)^2 \times \{0\} \tand \hOmf = (0,L)^2 \times (-R,0).
    \end{aligned}
\end{equation*}
The mathematical models defined on each of the moving sub-domains are described as follows.

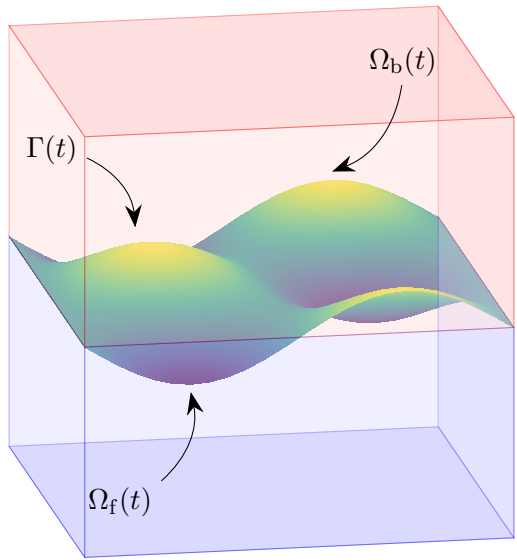
\begin{figure}[h!]
\centering
\begin{tikzpicture}
\begin{axis}[
axis lines=none, 
xlabel=$x$, ylabel=$z$, zlabel=$y$,
xmin=-1.5, xmax=1.5,
ymin=-1.5, ymax=1.5,
zmin=-1.2, zmax=1.2,
view={170}{15}, 
ticks=none,
enlargelimits=false,
axis equal,
width=0.8\textwidth,
height=0.7\textwidth,
]

\def\Zmax{1}
\def\Zmin{-1}
\def\Xmax{1}
\def\Xmin{-1}
\def\Ymax{1}
\def\Ymin{-1}

\addplot3[
surf,
shader=interp,
colormap/viridis,
opacity=0.8, 
domain=\Xmin:\Xmax,
y domain=\Ymin:\Ymax,
samples=25,
variable=\u,
variable y=\v,
] (u, v, {0.2 * sin(deg(u*pi)) * cos(deg(v*pi))});

\draw[blue, fill=blue!30, opacity=0.4] (axis cs:\Xmin, \Ymin, \Zmin) -- (axis cs:\Xmax, \Ymin, \Zmin) -- (axis cs:\Xmax, \Ymax, \Zmin) -- (axis cs:\Xmin, \Ymax, \Zmin) -- cycle; 
\draw[blue, fill=blue!20, opacity=0.3] (axis cs:\Xmin, \Ymin, \Zmin) -- (axis cs:\Xmin, \Ymax, \Zmin) -- (axis cs:\Xmin, \Ymax, 0) -- (axis cs:\Xmin, \Ymin, 0) -- cycle; 
\draw[blue, fill=blue!20, opacity=0.3] (axis cs:\Xmax, \Ymin, \Zmin) -- (axis cs:\Xmax, \Ymax, \Zmin) -- (axis cs:\Xmax, \Ymax, 0) -- (axis cs:\Xmax, \Ymin, 0) -- cycle; 
\draw[blue, fill=blue!20, opacity=0.3] (axis cs:\Xmin, \Ymax, \Zmin) -- (axis cs:\Xmax, \Ymax, \Zmin) -- (axis cs:\Xmax, \Ymax, 0) -- (axis cs:\Xmin, \Ymax, 0) -- cycle; 

\draw[red, fill=red!30, opacity=0.4] (axis cs:\Xmin, \Ymin, \Zmax) -- (axis cs:\Xmax, \Ymin, \Zmax) -- (axis cs:\Xmax, \Ymax, \Zmax) -- (axis cs:\Xmin, \Ymax, \Zmax) -- cycle; 
\draw[red, fill=red!20, opacity=0.3] (axis cs:\Xmin, \Ymin, \Zmax) -- (axis cs:\Xmin, \Ymax, \Zmax) -- (axis cs:\Xmin, \Ymax, 0) -- (axis cs:\Xmin, \Ymin, 0) -- cycle; 
\draw[red, fill=red!20, opacity=0.3] (axis cs:\Xmax, \Ymin, \Zmax) -- (axis cs:\Xmax, \Ymax, \Zmax) -- (axis cs:\Xmax, \Ymax, 0) -- (axis cs:\Xmax, \Ymin, 0) -- cycle; 
\draw[red, fill=red!20, opacity=0.3] (axis cs:\Xmin, \Ymax, \Zmax) -- (axis cs:\Xmax, \Ymax, \Zmax) -- (axis cs:\Xmax, \Ymax, 0) -- (axis cs:\Xmin, \Ymax, 0) -- cycle;

\node (gamma_label) at (axis cs: 0.8, -1, 0.4) {\large $\Gamma(t)$};
\draw[-{Stealth[length=3mm]}] (gamma_label) to[bend left] (axis cs:0.5, -0.5, 0.15);

\node (fluid_label) at (axis cs: 0.8, 0.8, -0.8) {\large $\Omega_\rf(t)$};
\draw[-{Stealth[length=3mm]}] (fluid_label) to[bend right] (axis cs:0.4, 0.4, -0.4);

\node (biot_label) at (axis cs: -0.8, -0.8, 0.8) {\large $\Omega_\rb(t)$};
\draw[-{Stealth[length=3mm]}] (biot_label) to[bend left] (axis cs:-0.4, -0.4, 0.4);

\end{axis}
\end{tikzpicture}
\caption{
A sketch of the $3D$ fluid domain $\Omega_\rf(t)$, $3D$ Biot domain $\Omega_\rb(t)$, and the interface $\Gamma(t)$ modeled by the reticular plate equation.}
\label{fig:3d_fsi_flat}
\end{figure}

\subsection{The Biot subproblem and the Lagrangian mapping}\label{ssec:Biot subproblem}
We begin by introducing the governing equations defined on a fixed domain $\Omb$, originally formulated by M.A.~Biot in \cite{Biot:41, Biot:55}, and then extend this formulation to the case of time-dependent, moving domains.

For a positive time $T > 0$, the model variables are the displacement $\bfeta \colon [0,T] \times \Omb \to \bR^3$ of a poroelastic material from its reference configuration $\Omb$ and the fluid pore pressure $p \colon [0,T] \times \Omb \to \bR$.
The system of coupled PDEs then reads as
\begin{equation}\label{eq:Biot eqs on fixed dom}
    \left\{
    \begin{aligned}
        \rhob \del_{tt} \bfeta - \nabla \cdot \bfsigma_\rb(\nabla \bfeta,p)
        &= 0, &&\tin [0,T] \times \Omb,\\
        c_0 \dt p + \alpha (\nabla \cdot \bfeta)_t -\nabla \cdot (\kappa \nabla p)
        &= 0, &&\tin [0,T] \times \Omb.
    \end{aligned}
    \right.
\end{equation}
Here, $\rhob$ is the density of the poro(visco)elastic matrix, $c_0$ is the storage coefficient, $\alpha$ is the Biot-Willis coefficient, and $\kappa$ is the permeability of the poro(visco)elastic material, all strictly greater than zero.

In \eqref{eq:Biot eqs on fixed dom}$_1$, $\bfsigma_\rb(\nabla \bfeta,p)$ represents the stress tensor, which is given by
\begin{equation}\label{sigma_fixed}
    \bfsigma_\rb(\nabla \bfeta,p) = \bfsigmaE(\nabla \bfeta,p) - \alpha p \bfI,
\end{equation}
where $\bfsigmaE(\nabla \bfeta,p)$ denotes the elastic part. Details about $\bfsigmaE(\nabla \bfeta,p)$ will be discussed below. 

The first equation \eqref{eq:Biot eqs on fixed dom}$_1$ models the elastodynamics of the poroelastic material.
Note that the effect of the pore pressure enters via the term $\alpha p \bfI$ in the stress tensor.
On the other hand, \eqref{eq:Biot eqs on fixed dom}$_2$ models the conservation of fluid mass.
The term $\kappa \nabla p$ in \eqref{eq:Biot eqs on fixed dom}$_2$ is related to the filtration velocity $\bfq$ via Darcy's law
\begin{equation*}
    \bfq = -\kappa \nabla p, \tfor \text{$\kappa > 0$ constant}. 
\end{equation*}

To study the Biot equations defined on a {\em moving domain}, we will consider 
the elastodynamics equation~\eqref{eq:Biot eqs on fixed dom}$_1$ defined in Lagrangian  formulation on the reference domain $\hOmb$, while \eqref{eq:Biot eqs on fixed dom}$_2$ will be considered on the Eulerian domain $\Omb(t)$.
We introduce the following {\em Lagrangian mapping} between the two domains 
\begin{equation}\label{eq:Lagrangian map}
    \bfPhi_\rb^\eta(t,\cdot) = \Id + \hbfeta(t,\cdot) \colon \hOmb \to \Omb(t), 
\end{equation}
where $\hbfeta \colon [0,T] \times \hOmb \to \bR^3$ denotes the displacement of the Biot poro(visco)elastic matrix, defined on the reference domain. The inverse of the Lagrangian map will be denoted by
$(\bfPhi_\rb^\eta)^{-1}(t,\cdot) \colon \Omb(t) \to \hOmb$.

Using the Lagrangian mapping we can define the stress tensor \eqref{sigma_fixed}, now denoted by $\widehat\bfsigma_\rb(\widehat\nabla \widehat\bfeta,\widehat{p})$, as
\begin{equation*}
\widehat\bfsigma_\rb(\widehat\nabla \widehat\bfeta,\widehat{p}) :=\hbSb(\hnabla \hbfeta,\hp)- \alpha \det\bigl(\hnabla \hbfPhi_\rb^\eta\bigr) \hp (\hnabla \hbfPhi_\rb^\eta)^{-\top}, 
\end{equation*}
where 
\begin{equation*}
    \hbSb(\hnabla \hbfeta,\hp) = 2 \mue \hbfD(\hbfeta) + \lambde(\hnabla \cdot \hbfeta) \bfI + 2 \muv \hbfD(\hbfeta_t) + \lambdv(\hnabla \cdot \hbfeta_t) \bfI
\end{equation*}
is the Piola--Kirchhoff stress tensor.
Here $\bfD$ represents the symmetric part of the gradient, the notation~$\hbfD$ and $\hnabla$ indicates that the respective differential operators act on the Lagrangian domain $\hOmb$, $\lambde$, $\mue$ and $\lambdv$, $\muv$ are Lam\'e coefficients associated to the elastic and viscoelastic stress, respectively, i.e., $\lambdv = \muv = 0$ corresponds to the purely elastic case. 

For later reference, we note that on the moving domain, the Piola--Kirchhoff stress tensor can be obtained via the Piola transform as follows:
\begin{equation*}
    \begin{aligned}
        \bfS_\rb(\nabla \bfeta,p)
        &= \bigl[\det(\hnabla \hbfPhi_\rb^\eta)^{-1} \hbS_\rb(\hnabla \hbfeta,\hp) (\hnabla \hbfPhi_\rb^\eta)^\top\bigr] \circ (\bfPhi_\rb^\eta)^{-1}\\
        &= \left(\frac{1}{\det(\hnabla \hbfPhi_\rb^\eta)}\bigl[ 2 \mue \hbfD(\hbfeta) + \lambde(\hnabla \cdot \hbfeta) + 2 \muv \hbfD(\hbfeta_t) + \lambdv (\hnabla \cdot \hbfeta_t) \bigr] (\hnabla \hbfPhi_\rb^\eta)^\top\right) \circ (\bfPhi_\rb^\eta)^{-1}.
    \end{aligned}
\end{equation*}

Recalling the material derivative
\begin{equation*}
    \frac{\rD}{\rD t} = \frac{\rd}{\rd t} + \bigl((\dt \bfeta(t,\cdot) \circ (\bfPhi_\rb^\eta)^{-1}(t,\cdot)) \cdot \nabla\bigr),
\end{equation*}
we can now express the Biot equations on the moving domain as follows:
\begin{equation}\label{eq:Biot eqs on moving dom}
    \left\{
    \begin{aligned}
        \rhob \del_{tt} \hbfeta
        &= \hnabla \cdot \widehat{\boldsymbol{\sigma}}_\rb(\hnabla \hbfeta,\hp), &&\tin [0,T] \times \hOmb,\\
        \frac{c_0}{[\det(\hnabla \hbfPhi_\rb^\eta)] \circ (\hbfPhi_\rb^\eta)^{-1}} \frac{\rD}{\rD t} p + \alpha \nabla \cdot \frac{\rD}{\rD t} \bfeta
        &= \nabla \cdot (\kappa \nabla p), &&\tin [0,T] \times \Omb(t).
    \end{aligned}
    \right.
\end{equation}
Here, the Biot material displacement $\bfeta$ and the fluid pore pressure $p$ on the physical domain $\Omb(t)$ in~\eqref{eq:Biot eqs on moving dom}$_2$
are given by
\begin{equation*}
    \bfeta(t,\cdot) = \hbfeta\bigl(t,(\hbfPhi_\rb^\eta)^{-1}(t,\cdot)\bigr), \enspace p(t,\cdot) = \hp\bigl(t,(\hbfPhi_\rb^\eta)^{-1}(t,\cdot)\bigr), \twhere \Omb(t) = \hbfPhi_\rb^\eta(t,\hOmb).
\end{equation*}

At this stage, we observe that there is a subtlety in the definition of the moving domain $\Omb(t)$, namely it is a priori not clear that it is well-defined unless $\hbfeta$ possesses enough regularity, and the map $\hbfPhi_\rb^\eta = \Id + \hbfeta$ is injective as a map from $\hOmb$ to $\Omb(t)$.
We shall address this aspect at a later stage, namely in the context of the construction of the approximate solutions in \autoref{sec:approx sols on the complete time int}.

\subsection{The reticular plate subproblem}\label{ssec:plate subproblem}
Reticular plates are thin elastic plates with holes/periodic cells distributed in all directions \cite{CSJP:99}. The coefficients in the reticular plate equation depend on the periodic structure. 
As in \cite{BCMW:21}, we restrict ourselves to the transverse plate displacement
$\homega \colon [0,T] \times \hGam \to \bR$
under the assumption that the stress in the transverse direction dominates over the longitudinal and lateral components. Using $\rhop > 0$ to denote reticular plate density, and $\hDelta^2$ for the in-plane bi-Laplacian, the reticular plate equation can be written as
\begin{equation}\label{eq:plate eq}
    \rhop \del_{tt} \homega + \hDelta^2 \homega = \hFp, \ton [0,T] \times \hGam,
\end{equation}
where $\hFp$ denotes the source term, and $\hGam$ denotes the reference configuration of the middle surface of the plate. Without loss of generality, we took the coefficient in front of the bi-Laplacian to be equal to 1. 

We denote the time-dependent configuration of the plate by
\begin{equation*}
    \Gamma(t) = \{(x,y,z) \in \bR^3 : 0 < x,\, y < L \tand z = \homega(t,x,y) \}.
\end{equation*}
It represents the bottom boundary of the moving Biot domain $\Omb(t)$.
We assume that the lateral boundaries and the top boundary of the Biot domain are independent of time,
namely,  $\bfeta = 0$ on these boundaries.
This also allows us to write the moving Biot domain $\Omb(t)$ as
\begin{equation*}
    \Omb(t) = \{(x,y,z) \in \bR^3 : 0 < x,\, y < L \tand \homega(t,x,y) < z < R \}.
\end{equation*}

\subsection{The fluid subproblem and ALE mapping}\label{ssec:fluid subproblem}
We will be assuming that the fluid occupying $\Omega_\rf(t)$ is viscous and incompressible, and that the flow is governed  by the Navier--Stokes equations
\begin{equation}\label{eq:fluid eqs on moving dom}
    \left\{
    \begin{aligned}
        \dt \bfu + (\bfu \cdot \nabla) \bfu
        &= \nabla \cdot \bfsigmaf(\nabla \bfu,\pi), &&\tin [0,T] \times \Omf(t),\\
        \nabla \cdot \bfu
        &= 0, &&\tin [0,T] \times \Omf(t),
    \end{aligned}
    \right.
\end{equation}
where $\bfu \colon [0,T] \times \Omf(t) \to \bR^3$ denotes the fluid velocity,
and $\pi \colon [0,T] \times \Omf(t) \to \bR$ the pressure.
We further assume the fluid is Newtonian, so that the Cauchy stress tensor takes the form
\begin{equation*}
    \bfsigmaf(\nabla \bfu,\pi) = 2 \nu \bfD(\bfu) - \pi \bfI,
\end{equation*}
where $\nu > 0$ is the kinematic viscosity coefficient.

As in the case of the moving Biot domain, the fluid domain $\Omf(t)$ is not known a priori --  it can be expressed in terms of the plate displacement $\homega$ as follows:
\begin{equation*}
    \Omf(t) = \{(x,y,z) \in \bR^3 : 0 < x,\, y < L \tand -R < z < \homega(t,x,y) \}.
\end{equation*}

As we will see below, it is convenient to express the weak formulation of the coupled FPSI problem on a fixed reference domain. To that end, we introduce the following Arbitrary Lagrangian-Eulerian (ALE) mapping
$\hbfPhi_\rf^\omega \colon \hOmf \to \Omf(t)$, which maps the reference domain $\hOmf$ onto the time-dependent physical domain~$\Omf(t)$:
\begin{equation}\label{eq:ALE mapping}
    \hbfPhi_\rf^\omega(\hx,\hy,\hz) = \Bigl(\hx,\hy,\hz + \Bigl(1 + \frac{\hz}{R}\Bigr) \homega\Bigr), \tfor (\hx,\hy,\hz) \in \hOmf.
\end{equation}

\subsection{Coupling conditions}\label{ssec:coupling conds}

The coupling between the Biot subproblem  \eqref{eq:Biot eqs on moving dom}, the reticular plate subproblem \eqref{eq:plate eq} and the fluid subproblem \eqref{eq:fluid eqs on moving dom} occurs across the moving reticular plate interface $\Gamma(t)$ and is governed by two sets of coupling conditions: kinematic and dynamic.

To state the coupling conditions we introduce the following two sets of notations:
\begin{itemize}
\item  The Eulerian structure velocity of the Biot poro(visco)elastic matrix defined at each point of the physical domain $\Omb(t)$:
\begin{equation*}
    \bfxi(t,\cdot) = \dt \hbfeta\bigl(t,(\bfPhi_\rb^\eta)^{-1}(t,\cdot)\bigr);
    \end{equation*}
\item Normal unit vectors:  $\bfn(t)$ denotes the outward unit vector to the boundary of the fluid domain~$\Omega_\rf(t)$ at $\Gamma(t)$, and $\hbfn$ denotes the normal unit vector to the reference fluid domain $\widehat\Omega_\rf$ at~$\hGam$.
\end{itemize}

\noindent{\bf{The kinematic coupling conditions.}}
We assume the following three kinematic coupling conditions at the reticular plate interface:
\begin{enumerate}[label=\textbf{K\arabic*}:]
\item Continuity of normal components of the velocity, or penetration condition, describing conservation of mass of the fluid:
\begin{equation}\label{eq:cont of normal comps of vel}
    \bfu \cdot \bfn(t) = (\bfq + \bfxi) \cdot \bfn(t), \ton (0,T) \times \Gamma(t).
\end{equation}
\item The Beavers--Joseph--Saffman condition capturing 
slip in the tangential component of the fluid velocity:
\begin{equation}\label{eq:Beavers--Joseph--Saffman}
    \beta (\bfxi - \bfu) \cdot \bftau_i(t) = \bfsigmaf \bfn(t) \cdot \bftau_i(t), \ton (0,T) \times \Gamma(t), \tfor i=1,2,
\end{equation}
where $\beta \ge 0$ is a constant, and $\bftau_i(t)$, $i=1,2$, represent the unit tangent vectors to $\Gamma(t)$.
\item The continuity of displacements:
\begin{equation}\label{eq:cont of displacement}
    \hbfeta = \homega \bfe_3, \ton (0,T) \times \hGam.
\end{equation}
\end{enumerate}

{\bf{The dynamic coupling conditions.}}
The following two dynamic coupling conditions are assumed at the reticular plate interface:
\begin{enumerate}[label=\textbf{D\arabic*}:]
\item The interface elastodynamics is governed by Newton's second law of motion, where the net force~$\hFp$ acting on the interface is given by the jump in the normal component of the stress across the interface -- with contributions from the fluid on one side and the Biot medium on the other:
\begin{equation}\label{eq:bal of forces on the plate}
    \hFp = -\det\bigl(\nabla \hbfPhi_\rf^\omega\bigr)\bigl[\bfsigmaf(\nabla \bfu,\pi) \circ \hbfPhi_\rf^\omega\bigr]\bigl(\nabla \hbfPhi_\rf^\omega\bigr)^{-\top} \bfe_3 \cdot \bfe_3 + \left.\hbfsigma_\rb(\hnabla \hbfeta,\hp) \bfe_3 \cdot \bfe_3 \right|_{\hGam}, \ton (0,T) \times \hGam.
\end{equation}
\item Continuity of pressure at the interface, i.e., we assume that
\begin{equation}\label{eq:bal of pressure}
    -\bfsigmaf(\nabla \bfu,\pi) \bfn(t) \cdot \bfn(t) + \frac{1}{2} |\bfu|^2 = p, \ton (0,T) \times \Gamma(t).
\end{equation}
This condition implies that the reticular plate behaves as a permeable interface, allowing fluid to pass without resistance.
\end{enumerate}

\subsection{The boundary and initial conditions}\label{ssec:bdry and init conds}

The FPSI problem studied in this work is supplemented by appropriate boundary and initial conditions.

For the fluid, we assume that the boundary $\partial \Omf(t) \setminus \Gamma(t)$ consists of rigid walls, and thus impose a no-slip condition on the velocity field $\bfu$:
\begin{equation}\label{eq:no-slip fluid}
\bfu = 0, \quad \text{on } (0,T) \times \partial\Omf(t) \setminus \Gamma(t).
\end{equation}

Similarly, we assume that the boundary of the Biot poro(visco)elastic domain, excluding the interface~$\Gamma(t)$, is rigid. Consequently, we enforce no-slip boundary conditions for both the solid displacement~$\hbfeta$ and the pore pressure $\hp$:
\begin{equation}\label{eq:no-slip dispalcement & pressure}
\hbfeta = 0 \quad \text{and} \quad \hp = 0, \quad \text{on } (0,T) \times \partial \hOmb \setminus \hGam.
\end{equation}

The initial conditions for the coupled system are given by:
\begin{equation}\label{eq:init conds}
\bfu(0) = \bfu_0 \text{ in } \Omf(0),\
\hbfeta(0) = \hbfeta_0, \, \partial_t \hbfeta(0) = \hbfxi_0 \text{ in } \hOmb,\
\homega(0) = \homega_0, \, \partial_t \homega(0) = \hzeta_0 \text{ in } \hGam,\
\hp(0) = \hp_0 \text{ in } \hOmb.
\end{equation}

In summary, the 3D FPSI problem under consideration consists of the Biot problem \eqref{eq:Biot eqs on moving dom}, the reticular plate problem \eqref{eq:plate eq}, the fluid problem \eqref{eq:fluid eqs on moving dom}, the kinematic coupling conditions \eqref{eq:cont of normal comps of vel}--\eqref{eq:cont of displacement}, the dynamic coupling conditions \eqref{eq:bal of forces on the plate}--\eqref{eq:bal of pressure}, the boundary conditions \eqref{eq:no-slip fluid}--\eqref{eq:no-slip dispalcement & pressure} and the initial conditions~\eqref{eq:init conds}.
In the table below, we collect the constants and coefficients appearing in the model described above:
\begin{equation*}
    \begin{aligned}
        &\textbf{Coefficients} &&\textbf{Name}\\
        &\rhob &&\text{Density of poro(visco)elastic matrix;}\\
        &c_0 &&\text{Storage coefficient;}\\
        &\alpha &&\text{Biot-Willis coefficient;}\\
        &\kappa &&\text{Permeability of the poro(visco)elastic matrix;}\\
        &\mue, \lambde, \muv, \lambdv &&\text{Lam\'e constants accounting for elasticity and viscosity;}\\
        &\rhop &&\text{Plate density coefficient;}\\
        &\nu &&\text{Kinematic viscosity coefficient;}\\
        &\beta &&\text{Constant in the context of the Beavers--Joseph--Saffman coupling condition.}
    \end{aligned}
\end{equation*}

\section{Definition of a weak solution}\label{sec:def of a weak sol}

After formulating the nonlinearly coupled 3D FPSI problem in \autoref{sec:FPSI problem}, we turn to the notion of a weak solution.
Due to the nonlinear coupling, both the fluid domain $\Omf(t)$ and the Biot domain $\Omb(t)$ depend on time and are therefore not known a priori in the physical setting. To address this, we introduce mappings from the respective reference domains $\hOmf$, $\hGam$, and $\hOmb$ to the corresponding time-dependent physical domains.

At this point, it is important to highlight that the 3D nature of the problem introduces additional analytical difficulties compared to the 2D case treated in \cite{KCM:24}, primarily due to the weaker Sobolev embeddings in three dimensions. As shown in \eqref{eq:reg of omega in fin energy space}, the plate displacement $\homega$ lies in a finite-energy space whose spatial regularity is limited to $\rC^{0,r}$ for any $r < 1$. Consequently, the fluid domain $\Omf(t)$ is, in general, not a Lipschitz domain, and the associated ALE mapping is not necessarily Lipschitz continuous either.

Nevertheless, the $r$-H\"{o}lder continuity of $\homega$ ensures that $\Omf(t)$ can locally be represented as the subgraph of a $\rC^{0,r}$ function. This regularity is sufficient to define traces on $\Omf(t)$, as established in \cite{CDEG:05, Gra:08, Muh:14}.

For a treatment of 3D nonlinearly coupled FSI problems without poroelasticity, we refer to the works~\cite{MC:13b, MC:15}.
We will elaborate more on the difficulties described above in this section as well as in the subsequent section.

\subsection{Maps from the reference to the physical domains}\label{ssec:maps from ref to phys dom}

We start by recalling the mappings defined in  \eqref{eq:Lagrangian map} and \eqref{eq:ALE mapping}, involving the fluid and Biot domains:
\begin{equation*}
    \hbfPhi_\rb^\eta(t,\cdot) \colon \hOmb \to \Omb(t) \tand \hbfPhi_\rf^\omega(t,\cdot) \colon \hOmf \to \Omf(t).
\end{equation*}
Additionally, we introduce the map $\hbfPhi_\Gamma^\omega(t,\cdot) \colon \hGam \to \Gamma(t)$ accounting for the fluid-structure interface, and summarize the three mappings that will be important in this work:
\begin{equation}\label{eq:trafos to phys doms}
    \begin{aligned}
        \hbfPhi_\rb^\eta(\hx,\hy,\hz)
        &= \Id + \hbfeta(\hx,\hy,\hz), &&\tfor (\hx,\hy,\hz) \in \hOmb,\\
        \hbfPhi_\Gamma^\omega(\hx,\hy,0)
        &= (\hx,\hy,\homega(\hx,\hy)), &&\tfor (\hx,\hy) \in \hGam,\\
        \hbfPhi_\rf^\omega(\hx,\hy,\hz)
        &= \Bigl(\hx,\hy,\hz + \Bigl(1 + \frac{\hz}{R}\Bigr) \homega\Bigr), &&\tfor (\hx,\hy,\hz) \in \hOmf.
    \end{aligned}
\end{equation}
The inverse of the ALE mapping in \eqref{eq:trafos to phys doms}$_3$ is given by
\begin{equation*}
    (\hbfPhi_\rf^\omega)^{-1}(x,y,z) = \Bigl(x,y,-R + \frac{R}{R + \homega}(R + z)\Bigr).
\end{equation*}
Concerning the regularity of the ALE mapping \eqref{eq:trafos to phys doms}$_3$, we make the following observation.
As also revealed later on, the natural space for the plate displacement $\homega$ in the class of finite energy is 
\begin{equation*}
    \cVom \coloneqq \rW^{1,\infty}(0,T;\rL^2(\hGam)) \cap \rL^\infty(0,T;\rH_0^2(\hGam)). 
\end{equation*}
From a mixed derivative-type embedding, see \cite[Lemma~4]{CKM:22} or also \cite[Section~1.2]{Gra:08}, it then follows that 
\begin{equation*}
    \homega \in \cVom \hookrightarrow \rW^{\theta,\infty}(0,T;\rH^{2(1-\theta)}(\hGam)), \tforall \theta \in (0,1).
\end{equation*}
In conjunction with Sobolev embeddings and the fact that $\hGam \subset \bR^2$ in the present 3D FPSI problem, the latter embedding leads to
\begin{equation}\label{eq:reg of omega in fin energy space}
    \homega \in \cVom \hookrightarrow \rC(0,T;\rC^{0,r}(\hGam)) \tfor r < 1.
\end{equation}
Because of \eqref{eq:reg of omega in fin energy space}, the ALE map $\hbfPhi_\rf^\omega$ from \eqref{eq:trafos to phys doms}$_3$ is not necessarily a Lipschitz function.
As we will see below, this also has consequences on the regularity of the fluid velocity $\hbfu$ defined on the reference domain~$\hOmf$.

Next, with regard to the transformation of the associated integrals, we provide the determinants of the Jacobians of the aforementioned maps.
Indeed, straightforward calculations reveal that 
\begin{equation}\label{eq:dets of Jacobians}
    \hcJ_\rb^\eta = \det(\bfI + \hnabla \hbfeta), \enspace \hcJ_\Gamma^\omega = \sqrt{1 + |\del_{\hx} \homega|^2 + |\del_{\hy} \homega|^2} \tand \hcJ_\rf^\omega = 1 + \frac{\homega}{R}.
\end{equation}
A few comments on the latter quantities are in order now.
Note that $\hcJ_\Gamma^\omega$ gives a measure of the arc length difference between the reference and deformed configuration of the plate.
In addition, let us observe that we do not include an absolute value sign in $\hcJ_\rf^\omega$, since we are only concerned with the situation up to degeneracy of the domain, i.e., up to $|\homega| \ge R$.

{\bf{Regularity of the transformed fluid velocity.}}
The mappings that we have just made precise allow us to describe the precise way the functions under consideration are transformed.
In particular, the fluid velocity $\bfu$ defined on the time-dependent domain $\Omf(t)$ is transformed to the reference domain by
\begin{equation*}
    \hbfu(t,\hx,\hy,\hz) = \bfu \circ \hbfPhi_\rf^\omega, \tfor (\hx,\hy,\hz) \in \hOmf.
\end{equation*}
With regard to \eqref{eq:reg of omega in fin energy space}, it does {\em not} necessarily follow from $\bfu \in \rH^1(\Omf(t))^3$ that $\hbfu \in \rH^1(\hOmf)^3$.
We shall take this into consideration when making precise the solution and test function spaces in \autoref{ssec:weak sols to the reg probl}.

{\bf{The transformed divergence-free condition.}} Recall that the fluid velocity $\bfu$ is assumed to be divergence free on the physical domain $\Omf(t)$, i.e., $\nabla \cdot \bfu = 0$.
The pull-back to the reference configuration generally does {\em not} preserve this property, so~$\hbfu$ defined on $\hOmf$ is in general not divergence free.
This calls for a reformulation of the divergence free condition on the fixed domain, which we discuss now.
Let $g$ be a function defined on $\Omf(t)$.
The definition of the pull-back together with the chain rule then leads to
\begin{equation}\label{eq:repr of gradient on moving fluid dom}
    \nabla g = \nabla \bigl(\hg \circ (\hbfPhi_\rf^\omega)^{-1}\bigr) = (\hnabla_\rf^\omega \hg) \circ (\hbfPhi_\rf^\omega)^{-1}.
\end{equation}
Here, $\hnabla_\rf^\omega$ represents the transformed gradient operator, which is of the form
\begin{equation}\label{eq:transformed fluid grad}
    \hnabla_\rf^\omega = \begin{pmatrix}
        \del_{\hx} - (R + z) \del_{\hx} \homega \frac{R}{(R+\homega)^2}\del_{\hz}\\
        \del_{\hy} - (R + z) \del_{\hy} \homega \frac{R}{(R+\homega)^2}\del_{\hz}\\
        \frac{R}{R+\homega}\del_{\hz}
    \end{pmatrix}, \twhere z = \hz + \Bigl(1 + \frac{\hz}{R}\Bigr) \homega.
\end{equation}
As a result, the divergence free condition and the symmetric part of the gradient can be rephrased as
\begin{equation*}
    \hnabla_\rf^\omega \cdot \hbfu = 0 \tand \hbfD_\rf^\omega(\hbfu) = \frac{1}{2}\Bigl(\hnabla_\rf^\omega \hbfu + (\hnabla_\rf^\omega \hbfu)^\top\Bigr).
\end{equation*}

{\bf{The transformed time derivative.}} For the transformed time derivative under the map $\hbfPhi_\rf^\omega$, we obtain
\begin{equation}\label{eq:transformed time der}
    \dt \bfu = \dt \hbfu - (\hbfw \cdot \hnabla_\rf^\omega) \hbfu, \twhere \hbfw = \frac{R + \hz}{R} \dt \homega \bfe_3.
\end{equation}

{\bf{Transformed functions and differential operators on the Biot domain.}}
We now examine the effect of the transformation $\hbfPhi_\rb^\eta$ on functions and differential operators defined on the Biot domain.
Let~$g$ be a scalar function defined on the time-dependent physical domain $\Omb(t)$.
The pull-back of $g$ to the fixed reference domain $\hOmb$ is given by
\begin{equation*}
    \hg = g \circ \hbfPhi_\rb^\eta.
\end{equation*}
As in the case of the fluid domain, we aim to express differential operators on the physical domain in terms of corresponding operators acting on the reference domain.
In particular, 
\begin{equation*}
    \nabla g = \nabla(\hg \circ (\hbfPhi_\rb^\eta)^{-1}) = (\hnabla_\rb^\eta \hg) \circ (\hbfPhi_\rb^\eta)^{-1},
\end{equation*}
for $\nabla$ representing the gradient on the (time-dependent) physical domain, and $\hnabla_\rb^\eta$ denoting a differential operator on the (fixed) reference domain.

Next, we want to calculate the expression for the gradient operator on the reference domain, which we denote by  $\hnabla$.
Using the chain rule together with the definition of the map $\hbfPhi_\rb^\eta$, we obtain
\begin{equation*}
    \hnabla (g \circ \hbfPhi_\rb^\eta) = [(\nabla g) \circ \hbfPhi_\rb^\eta] \cdot (\bfI + \hnabla \hbfeta).
\end{equation*}
Therefore the transformed gradient operator $\hnabla_\rb^\eta$ on $\hOmb$ is of the form
\begin{equation}\label{eq:transformed Biot grad}
    \hnabla_\rb^\eta \hg = (\del_{\hx} \hg,\del_{\hy} \hg,\del_{\hz} \hg) \cdot (\bfI + \hnabla \hbfeta)^{-1}.
\end{equation}
Let us observe that the invertibility of the matrix $\bfI + \hnabla \hbfeta$ is directly linked to the question of the map 
\begin{equation*}
    (\hx,\hy,\hz) \mapsto (\hx,\hy,\hz) + \hbfeta(\hx,\hy,\hz)
\end{equation*}
being a bijection between the domains $\hOmb$ and $\Omb(t)$.

\subsection{Definition of a weak solution}\label{ssec:def of a weak sol}

After precisely defining the mappings between the various domains, we now turn to the weak formulation of the 3D nonlinearly coupled FPSI problem under consideration. This formulation will be derived through formal calculations, beginning with the fluid equations.

Before proceeding, we comment on an important issue related to the regularity of the fluid domain and its implications for the trace. In the current setting, the fluid domain can be (locally) represented as the subgraph of a $\rC^{0,r}$-function. This allows for the definition of the so-called Lagrangian trace:
\begin{equation}\label{eq:Lagrangian trace}
\gamma_{\Gamma(t)} \colon \rC^1\bigl(\overline{\Omf(t)}\bigr) \to \rC(\Gamma(t)), \enspace \gamma_{\Gamma(t)} u \coloneqq u(t,x,y,\homega(t,x,y)).
\end{equation}
As established in \cite{CDEG:05, Gra:08, Muh:14}, this trace operator $\gamma_{\Gamma(t)}$ admits a continuous linear extension from $\rH^1(\Omf(t))$ to $\rH^s(\Gamma(t))$ for $s \in [0,\nicefrac{1}{2})$. The precise result can be found in \cite[Theorem~3.1]{Muh:14}.
We note that in the 3D FPSI setting the fluid domain generally fails to be Lipschitz due to the limited regularity of the reticular-plate interface. This distinguishes the 3D, nonlinearly coupled problem from its 2D counterpart. As a result, the weak-solution spaces introduced below for the 3D problem differ  from those used in the 2D analysis in \cite{KCM:24}. 

Since the formal derivation of the weak formulation is analogous in 2D and 3D, we do not reproduce the standard steps here and instead refer the reader to~\cite[Section 4.2]{KCM:24}; we also do not delve into the detailed function-space framework or the analytical subtleties caused by the limited regularity of the plate displacement, as these technical aspects are addressed in \autoref{ssec:weak sols to the reg probl}. Furthermore, for simplicity, we formulate the weak problem with the fluid velocity defined on the moving domain, thereby avoiding complications related to the potential lack of regularity of the ALE mapping $\hbfPhi_\rf^\omega$ introduced in \eqref{eq:trafos to phys doms}$_3$. We therefore recall the transverse plate velocity, defined by:
\begin{equation}\label{eq:hzeta}
    \dt \homega = \hzeta \tand \zeta = \hzeta \circ (\hbfPhi_\Gamma^\omega)^{-1}.
\end{equation}

\begin{defn}\label{def:weak form FPSI}
We say that the tuple $(\bfu, \homega, \hbfeta, p)$ satisfies the weak formulation of the 3D nonlinearly coupled FPSI problem if, for every test function tuple $(\bfv, \hphi, \hbfpsi, r)$ that is $\rC_\rc^1$ in time on $[0,T)$ and satisfy the following conditions:
\begin{itemize}
    \item $\nabla\cdot\bfv=0$, $\bfv = 0$ on $\del \Omf(t) \setminus \Gamma(t)$,
    \item $\hbfpsi = 0$ on $\del \hOmb \setminus \hGam$,
    \item $r = 0$ on $\del \Omb(t) \setminus \Gamma(t)$,
    \item  $\hbfpsi = \hphi \bfe_3$ on $\hGam$,
\end{itemize}
the following holds:
\begin{equation*}
    \begin{aligned}
        &\quad -\int_0^T \int_{\Omf(t)} \bfu \cdot \dt \bfv \srd \bfx \srd t + \frac{1}{2}\int_0^T\int_{\Omf(t)} \bigl[((\bfu \cdot \nabla)\bfu) \cdot \bfv - ((\bfu \cdot\nabla)\bfv) \cdot \bfu\bigr] \srd \bfx \srd t\\
        &\qquad + \frac{1}{2}\int_0^T\int_{\Gamma(t)} (\bfu \cdot \bfn - 2 \zeta \bfe_3 \cdot \bfn) \bfu \cdot \bfv \srd S \srd t + 2 \nu \int_0^T \int_{\Omf(t)} \bfD(\bfu) : \bfD(\bfv) \srd \bfx \srd t\\
        &\qquad + \int_0^T \int_{\Gamma(t)} \Bigl(\frac{1}{2}|\bfu|^2 - p\Bigr)(\psi_{\bfn} - v_{\bfn}) \srd S \srd t + \beta\sum_{i=1}^2 \int_0^T \int_{\Gamma(t)} (\zeta \bfe_3 - \bfu) \cdot \bftau_i (\psi_{\bftau_i} - v_{\bftau_i}) \srd S \srd t\\
        &\qquad - \rhop \int_0^T \int_{\hGam} \dt \homega \cdot \dt \hphi \srd \hS \srd t + \int_0^T \int_{\hGam} \hDelta \homega \cdot \hDelta \hphi \srd \hS \srd t - \rhob \int_0^T \int_{\hOmb} \dt \hbfeta \cdot \dt \hbfpsi \srd \hbfx \srd t\\
        &\qquad + 2 \mue \int_0^T \int_{\hOmb} \hbfD(\hbfeta) : \hbfD(\hbfpsi) \srd \hbfx \srd t + \lambde \int_0^T \int_{\hOmb} (\hnabla \cdot \hbfeta)(\hnabla \cdot \hbfpsi) \srd \hbfx \srd t\\
        &\qquad + 2 \muv \int_0^T \int_{\hOmb} \hbfD(\dt \hbfeta) : \hbfD(\hbfpsi) \srd \hbfx \srd t + \lambdv \int_0^T \int_{\hOmb} (\hnabla \cdot \dt \hbfeta)(\hnabla \cdot \hbfpsi) \srd \hbfx \srd t - \alpha \int_0^T \int_{\Omb(t)} p (\nabla \cdot \bfpsi) \srd \bfx \srd t\\
        &\qquad - c_0 \int_0^T \int_{\hOmb} \hp \cdot \dt \hr \srd \hbfx \srd t - \alpha \int_0^T \int_{\Omb(t)} \frac{\rD}{\rD t} \bfeta \cdot \nabla r \srd \bfx \srd t - \alpha \int_0^T \int_{\Gamma(t)} (\zeta \bfe_3 \cdot \bfn) r \srd S \srd t\\
        &\qquad + \kappa \int_0^T \int_{\Omb(t)} \nabla p \cdot \nabla r \srd \bfx \srd t - \int_0^T \int_{\Gamma(t)} ((\bfu - \zeta \bfe_3) \cdot \bfn) r \srd S \srd t\\
        &= \int_{\Omf(0)} \bfu(0) \cdot \bfv(0) \srd \bfx + \rhop \int_{\hGam} \dt \homega(0) \cdot \hphi(0) \srd \hS + \rhob \int_{\hOmb} \dt \hbfeta(0) \cdot \hbfpsi(0) \srd \hbfx + c_0 \int_{\hOmb} \hp(0) \cdot \hr(0) \srd \hbfx. 
    \end{aligned}
\end{equation*}
\end{defn}

\begin{remark}\label{Regularity}
We emphasize a crucial point: while any classical solution -- that is, a solution that is sufficiently smooth in both time and space -- of the 3D nonlinear FPSI problem automatically satisfies the weak formulation stated in \autoref{def:weak form FPSI}, this is not the case for finite-energy weak solutions. 

Indeed, if one formally derives an energy estimate following \autoref{def:weak form FPSI}, one obtains that the following energy estimate holds (at least for classical solutions):
\begin{equation}\label{Energy1}
    E^K(T) + E^E(T) + \int_0^T \bigl(D_\rf^V(t) + D_\rb^V(t) + D_{\rfb}^V(t) + D_\beta^V(t)\bigr) \srd t = E^K(0) + E^E(0),
\end{equation}
where
   $E^K(t) = \frac{1}{2} \int_{\Omf(t)} |\bfu(t)|^2 \srd \bfx + \frac{\rhob}{2} \int_{\hOmb} |\dt \hbfeta(t)|^2 \srd \hbfx + \frac{c_0}{2} \int_{\hOmb} |\hp(t)|^2 \srd \hbfx + \frac{\rhop}{2} \int_{\hGam} |\dt \homega(t)|^2 \srd \hS$,
is the total kinetic energy of the problem,
$
    E^E(t) = \mue \int_{\hOmb} |\hbfD(\hbfeta)(t)|^2 \srd \hbfx + \frac{\lambde}{2} \int_{\hOmb} |\hnabla \cdot \hbfeta(t)|^2 \srd \hbfx + \frac{1}{2}\int_{\hGam} |\hDelta \homega(t)|^2 \srd \hS
$
is the elastic energy of the Biot poroelastic matrix and the reticular plate, and\begin{equation*}
    \begin{aligned}
        D_\rf^V(t)
        &= 2 \nu \int_{\Omf(t)} |\bfD(\bfu)|^2 \srd \bfx, \enspace D_\rb^V(t) = 2 \muv \int_{\hOmb} |\hbfD(\dt \hbfeta)|^2 \srd \hbfx + \lambdv \int_{\hOmb} |\hnabla \cdot \dt \hbfeta|^2 \srd \hbfx,\\
        D_{\rfb}^V(t)
        &= \kappa \int_{\Omb(t)} |\nabla p|^2 \srd \bfx \tand D_\beta^V(t) = \beta \sum_{i=1}^2 \int_{\Gamma(t)} |(\bfxi - \bfu) \cdot \bftau_i|^2 \srd S,
    \end{aligned}
\end{equation*}
are dissipation terms due to fluid viscosity ($D_\rf^V$), the viscosity of the Biot poro(visco)elastic matrix~($D_\rb^V$), permeability effects ($D_{\rfb}^V$) and friction in the Beavers--Joseph--Saffman slip condition ($D_\beta^V$).
Details behind the derivation of this energy equality are presented later in \autoref{sec:EnergyEquality1}.

Indeed, this energy equality shows that the structure displacement $\hbfeta$ only belongs to $\rL^\infty(0,T;\rH^1(\hOmb)^3)$, which is insufficient to ensure that the following integrals in the weak formulation are well-defined:
\begin{equation}\label{eq:crit ints weak form}
\alpha \int_0^T \int_{\Omb(t)} p (\nabla \cdot \bfpsi) \srd \bfx \srd t, \,
\alpha \int_0^T \int_{\Omb(t)} \frac{\rD}{\rD t} \bfeta \cdot \nabla r \srd \bfx \srd t, \, \alpha \int_0^T \int_{\Gamma(t)} (\zeta \bfe_3 \cdot \bfn) r \srd S \srd t, \,
\kappa \int_0^T \int_{\Omb(t)} \nabla p \cdot \nabla r \srd \bfx \srd t.
\end{equation}
The problem arises because the test functions $\hbfpsi$ and $\hr$ live in $\rH^1(\hOmb)^3$ and $\rH^1(\hOmb)$, respectively -- on the reference (fixed) domain. Upon transforming to the physical (moving) domain, an additional factor -- the Jacobian determinant $\hcJ_\rb^\eta = \det(\bfI + \hnabla \hbfeta)$ from \eqref{eq:dets of Jacobians} -- must be taken into account. However, this Jacobian is only known to be in $\rL^\infty(0,T;\rL^1(\hOmb))$ under finite-energy regularity.

As a result, the regularity of $\hcJ_\rb^\eta$ is not sufficient to guarantee that the integrals in \eqref{eq:crit ints weak form} are finite. This observation is essential: it shows that the weak formulation from \autoref{def:weak form FPSI} cannot be interpreted in the setting of finite-energy weak solutions. A different, more suitable formulation is therefore required to handle weak solutions in this lower regularity regime.
\end{remark}

\section{The regularized problem, notion of a weak solution and existence result}\label{sec:reg probl, weak sol & ex result}

To address the regularity difficulties associated with the Biot problem discussed in \autoref{Regularity}, we introduce a {\emph{regularized problem}} that serves as an approximation of the original formulation. The regularization differs from the original problem in the smallest number of terms, and is ``consistent'' with the original problem in the sense that, whenever a classical solution to the original problem exists, the solution to the regularized problem converges to it as the regularization parameter tends to zero. A similar regularization strategy was successfully employed in \cite{KCM:24} to overcome weak regularity challenges in the 2D setting.

Since the mathematical difficulties associated with weak regularity stem primarily from the lack of smoothness of the Biot displacement $\hbfeta$ as the Biot domain evolves, we start by 
regularizing the Biot displacement $\hbfeta$ on $\hOmb$ via a convolution with a smooth and compactly supported kernel.
The convolution is chosen carefully, since the domain $\hOmb$ is bounded -- it is defined in a way that preserves the Dirichlet boundary conditions on the lateral boundaries as well as the top boundary.

For this purpose, we introduce an {\em extended domain} $\tOmb$ given by
\begin{equation*}
    \tOmb = [-L,2L] \times [-L,2L] \times [-R,2R].
\end{equation*}
For $\delta < \min\{L,R\}$, the convolution of a function defined on the extended domain $\tOmb$ with a smooth and compactly supported function in the closed ball of radius $\delta$ yields a function that is well-defined on $\hOmb$.

We introduce an odd extension of $\hbfeta$ along the lines $\hx = 0$, $\hx = L$, $\hy = 0$, $\hy = L$, $\hz = 0$ as well as $\hz = R$.
To this end, let us recall that $\hbfeta$ satisfies $\hbfeta = 0$ on $\hx = 0$, $\hx = L$, $\hy = 0$, $\hy = L$ and $\hz = R$, while it fulfills $\hbfeta = \homega \bfe_3$ on $\hz = 0$.
The odd extension of $\hbfeta$ to $\tOmb$ is then given as follows:
On the closure of the original domain -- still denoted by $\hOmb = [0,L] \times [0,L] \times [0,R]$ by a slight abuse of notation -- we keep the extension -- still denoted by $\hbfeta$ -- the same.
In contrast, outside of the closure of $\hOmb$, we successively define~$\hbfeta$ by
\begin{equation}\label{eq:odd ext}
    \left\{
    \begin{aligned}
        \hbfeta(\hx,\hy,\hz)
        &= \omega(\hx,\hy) \bfe_3 + \bigl(\homega(\hx,\hy) \bfe_3 - \hbfeta(\hx,\hy,-\hz)\bigr), &&\ton [0,L] \times [0,L] \times [-R,0],\\
        \hbfeta(\hx,\hy,\hz)
        &= -\hbfeta(\hx,\hy,2R-\hz), &&\ton [0,L] \times [0,L] \times [R,2R],\\
        \hbfeta(\hx,\hy,\hz)
        &= -\hbfeta(-\hx,\hy,\hz), &&\ton [-L,0] \times [0,L] \times [-R,2R],\\
        \hbfeta(\hx,\hy,\hz)
        &= -\hbfeta(2L-\hx,\hy,\hz), &&\ton [L,2L] \times [0,L] \times [-R,2R],\\
        \hbfeta(\hx,\hy,\hz)
        &= -\hbfeta(\hx,-\hy,\hz), &&\ton [-L,2L] \times [-L,0] \times [-R,2R],\\
        \hbfeta(\hx,\hy,\hz)
        &= -\hbfeta(\hx,2L-\hy,\hz), &&\ton [-L,2L] \times [L,2L] \times [-R,2R].
    \end{aligned}
    \right.
\end{equation}

Next, let $\sigma$ be a radially symmetric function with compact support in the closed unit ball such that
\begin{equation*}
    \int_{\bR^3} \sigma \srd \bfx = 1, \tand 
\end{equation*}
\begin{equation*}
    \sigmad(\bfx) \coloneqq \delta^{-3} \sigma(\delta^{-1}\bfx) \ton \bR^3.
\end{equation*}
Note that $\sigmad$ then has compact support in the closed ball of radius $\delta$, and we will use $\sigmad$ to convolve with the Biot displacement for the regularization.

More precisely, we introduce the following spatially smooth regularized functions on $\hOmb$.
First, we define the regularized Biot domain by invoking the odd extension to $\tOmb$ from \eqref{eq:odd ext} and setting
\begin{equation*}
    \hbfeta^\delta(t,\hbfx) = (\hbfeta \ast \sigma_\delta)(t,\hbfx) = \int_{\bR^3} \hbfeta(t,\hbfx - \hbfy) \sigmad(\hbfy) \srd \hbfy, \ton \hOmb. 
\end{equation*}
Accordingly, we define the regularized Lagrangian mapping
\begin{equation*}
    \hbfPhi_\rb^{\eta^\delta}(t,\cdot) = \Id + \hbfeta^\delta(t,\cdot),
\end{equation*}
as well as the regularized moving Biot domain
\begin{equation*}
    \Ombd(t) = \hbfPhi_\rb^{\etad}(t,\hOmb).
\end{equation*}
Let us observe that in spite of the coupling condition $\left. \hbfeta \right|_{\hGam} = \homega \bfe_3$, we can generally {\em not} guarantee that~$\left. \hbfeta^\delta \right|_{\hGam} = \homega \bfe_3$.
Therefore, we also define the regularized moving interface
\begin{equation*}
    \Gad(t) = \hbfPhi_\rb^{\etad}(t,\hGam).
\end{equation*}
An alternative interpretation is that $\hGam^\delta$ is the plate interface when displaced from the reference configuration $\hGam$ in the direction $\left.\hbfeta^\delta \right|_{\hGam}$.
Note that this is a purely transverse displacement.

Another important observation is that by the odd extension described in \eqref{eq:odd ext}, we have
\begin{equation*}
    \hbfeta^\delta = 0 \ton \del \hOmb \setminus \hGam.
\end{equation*}

We can now introduce a {\emph{regularized weak formulation}} of the $3D$ nonlinearly coupled FPSI with regularization parameter $\delta$.
First, we introduce the solution and test space.
These are motivated by the subsequent energy estimates.
After that, we state the regularized weak formulation in the moving domain framework.

\subsection{Weak solutions to the regularized problem}\label{ssec:weak sols to the reg probl}
In this subsection, we define a weak solution to the {\emph{regularized 3D nonlinearly coupled FPSI problem}}.
We emphasize that, as already touched upon in \autoref{sec:def of a weak sol}, the transition from the 2D setting studied in \cite{KCM:24} to the present 3D framework introduces several additional mathematical challenges -- most notably due to the lower spatial regularity of the plate displacement $\homega$.

Specifically, the regularity result in \eqref{eq:reg of omega in fin energy space} necessitates a more refined construction of the solution and test spaces, including an extension of the Lagrangian trace operator from \eqref{eq:Lagrangian trace} to accommodate reduced spatial regularity.
Furthermore, the lack of Lipschitz continuity of the ALE map in 3D complicates the transformation between the reference and physical domains, affecting the formulation of the fluid equations.
Our approach builds upon and generalizes techniques developed in \cite{MC:13b, MC:15} to handle the unique analytical obstacles posed by the 3D setting.

In the definition below, we present the solution and test spaces, taking into account the reduced regularity of the plate displacement and the associated difficulties with the trace as well as the ALE mapping.

\begin{defn}[Solution and test spaces]\label{def:sol & test space}
\ 

\begin{enumerate}[(a)]
    \item For the fluid part on the moving domain, i.e., in the Eulerian formulation, we first define
    \begin{equation*}
        \prescript{}{0}{\Vf(t)} \coloneqq \bigl\{\bfu \in \rC^1\bigl(\overline{\Omf(t)}\bigr)^3 : \nabla \cdot \bfu = 0 \tand \bfu = 0 \tfor x, y \in \{0,L\} \tand z = -R\bigr\}.
    \end{equation*}
    Considering the discussion at the beginning of \autoref{ssec:def of a weak sol}, we set 
    \begin{equation*}
        \Vf(t) \coloneqq \overline{\prescript{}{0}{\Vf(t)}}^{\rH^1(\Omf(t))^3} \tand \cVf \coloneqq \rL^\infty(0,T;\rL^2(\Omf(t))^3) \cap \rL^2(0,T;\Vf(t)).
    \end{equation*}
    Similarly as in \cite{CDEG:05, Gra:08}, one can verify that $\Vf(t)$ can be characterized as
    \begin{equation*}
        \Vf(t) \coloneqq \bigl\{\bfu \in \rH^1(\Omf(t))^3 : \nabla \cdot \bfu = 0 \tand \bfu = 0 \tfor x, y \in \{0,L\} \tand z = -R\bigr\}. 
    \end{equation*}
    \item With regard to the fact that the ALE mapping $\hbfPhi_\rf^\omega$ from \eqref{eq:trafos to phys doms}$_3$ is in general not Lipschitz continuous, implying that $\hbfu$ is not necessarily in $\rH^1(\hOmf)^3$, we set
    \begin{equation}\label{eq:spat fluid sol space}
        \Vfom \coloneqq \bigl\{\hbfu = \bfu \circ \hbfPhi_\rf^\omega : \bfu \in \Vf(t)\bigr\}.
    \end{equation}
    The associated fluid part of the finite-energy solution space is then defined by
    \begin{equation*}
        \cVfom \coloneqq \rL^\infty(0,T;\rL^2(\hOmf)^3) \cap \rL^2(0,T;\Vfom).
    \end{equation*}
    \item Concerning the plate subproblem, we introduce the function space
    \begin{equation*}
        \cVom \coloneqq \rW^{1,\infty}(0,T;\rL^2(\hGam)) \cap \rL^\infty(0,T;\rH_0^2(\hGam)).
    \end{equation*}
    \item In the context of the Biot displacement, we invoke
    \begin{equation}\label{eq:Biot displacement sol space}
        \begin{aligned}
            \Vd
            &\coloneqq \bigl\{\hbfeta \in \rH^1(\hOmb)^3 : \hbfeta = 0 \tfor \hx, \hy \in \{0,L\} \tand \hz = R \taswellas \heta_x = \heta_y = 0 \ton \hGam\bigr\} \tand\\
            \cVb
            &\coloneqq \rW^{1,\infty}(0,T;\rL^2(\hOmb)^3) \cap \rL^\infty(0,T;\Vd) \cap \rH^1(0,T;\Vd).
        \end{aligned}
    \end{equation}
    \item For the Biot pore pressure, we define
    \begin{equation}\label{eq:Biot pore pressure sol space}
        \begin{aligned}
            \Vp
            &\coloneqq \bigl\{\hp \in \rH^1(\hOmb) : \hp = 0 \tfor \hx, \hy \in \{0,L\} \tand \hz = R\bigr\} \tand\\
            \cQb 
            &\coloneqq \rL^\infty(0,T;\rL^2(\hOmb)) \cap \rL^2(0,T;\Vp).
        \end{aligned}
    \end{equation}
    \item The weak solution space with the fluid velocity defined on the moving domain is given by
    \begin{equation*}
        \cVsol \coloneqq \bigl\{(\bfu,\homega,\hbfeta,\hp) \in \cVf \times \cVom \times \cVb \times \cQb : \hbfeta = \homega \bfe_3 \ton \hGam\bigr\}.
    \end{equation*}
    \item The weak solution space with the fluid velocity defined on the fixed domain takes the form
    \begin{equation*}
        \cVsolom \coloneqq \bigl\{(\hbfu,\homega,\hbfeta,\hp) \in \cVfom \times \cVom \times \cVb \times \cQb : \hbfeta = \homega \bfe_3 \ton \hGam\bigr\}.
    \end{equation*}
    \item We define the test space for the fluid on the moving domain by 
    \begin{equation*}
        \cVtest \coloneqq \bigl\{(\bfv,\hphi,\hbfpsi,\hr) \in \rC_\rc^1([0,T); \Vf(t) \times \rH_0^2(\hGam) \times \Vd \times \Vp) : \hbfpsi = \hphi \bfe_3 \ton \hGam\bigr\}.
    \end{equation*}
    \item The test space for the fluid defined on the fixed domain is given by
    \begin{equation*}
        \cVtestom \coloneqq \bigl\{(\hbfv,\hphi,\hbfpsi,\hr) \in \rC_\rc^1([0,T); \Vfom \times \rH_0^2(\hGam) \times \Vd \times \Vp) : \hbfpsi = \hphi \bfe_3 \ton \hGam\bigr\}.
    \end{equation*}
\end{enumerate}
\end{defn}

Below, we comment on the Hilbert space structure of the space $\Vfom$ introduced in \autoref{def:sol & test space}(b).

\begin{remark}
Under the non-degeneracy assumption $|\homega| < R$ on the domain, we can define a scalar product on $\Vfom$ as follows: 
\begin{equation*}
    (\hbfu,\hbfv)_{\Vfom} \coloneqq \int_{\hOmf} \Bigl(1 + \frac{\homega}{R}\Bigr) \bigl(\hbfu \cdot \hbfv + \hnabla_\rf^\omega \hbfu : \hnabla_\rf^\omega \hbfv\bigr) \srd \hbfx,
\end{equation*}
where $\hnabla_\rf^\omega$ is the transformed gradient defined in \eqref{eq:transformed fluid grad}.
Recalling the Jacobian $\hcJ_\rf^\omega = 1 + \frac{\homega}{R}$ from \eqref{eq:dets of Jacobians} and exploiting the definition of $\hnabla_\rf^\omega$, we find that
\begin{equation*}
    (\hbfu,\hbfv)_{\Vfom} = \int_{\hOmf} \Bigl(1 + \frac{\homega}{R}\Bigr) \bigl(\hbfu \cdot \hbfv + \hnabla_\rf^\omega \hbfu : \hnabla_\rf^\omega \hbfv\bigr) \srd \hbfx = \int_{\Omf(t)}  \bfu \cdot \bfv + \nabla \bfu : \nabla \bfv \srd \bfx = (\bfu,\bfv)_{\rH^1(\Omf(t))^3}.
\end{equation*}
As a result, $\bfu \mapsto \hbfu$ is an isometric isomorphism between $\Vf(t)$ and $\Vfom$, rendering $\Vfom$ a Hilbert space.
\end{remark}

After introducing the appropriate solution and test spaces in \autoref{def:sol & test space}, we now discuss the weak formulation of the regularized problem involving the {\em moving} fluid domain.

\begin{defn}[Weak formulation of the regularized problem with moving fluid domain]\label{def:reg weak form FPSI moving fluid dom}
\ 

\noindent
A tuple $(\bfu,\homega,\hbfeta,\hp) \in \cVsol$ is a weak solution to the regularized 3D nonlinearly coupled FPSI problem with regularization parameter $\delta$ if for any test function $(\bfv,\hphi,\hbfpsi,\hr) \in \cVtest$, the following  weak formulation is satisfied:
\begin{equation*}
    \begin{aligned}
        &\quad -\int_0^T \int_{\Omf(t)} \bfu \cdot \dt \bfv \srd \bfx \srd t + \frac{1}{2}\int_0^T\int_{\Omf(t)} \bigl[((\bfu \cdot \nabla)\bfu) \cdot \bfv - ((\bfu \cdot\nabla)\bfv) \cdot \bfu\bigr] \srd \bfx \srd t\\
        &\qquad + \frac{1}{2}\int_0^T\int_{\Gamma(t)} (\bfu \cdot \bfn - 2 \zeta \bfe_3 \cdot \bfn) \bfu \cdot \bfv \srd S \srd t + 2 \nu \int_0^T \int_{\Omf(t)} \bfD(\bfu) : \bfD(\bfv) \srd \bfx \srd t\\
        &\qquad + \int_0^T \int_{\Gamma(t)} \Bigl(\frac{1}{2}|\bfu|^2 - p\Bigr)(\psi_{\bfn} - v_{\bfn}) \srd S \srd t + \beta\sum_{i=1}^2 \int_0^T \int_{\Gamma(t)} (\zeta \bfe_3 - \bfu) \cdot \bftau_i (\psi_{\bftau_i} - v_{\bftau_i}) \srd S \srd t\\
        &\qquad - \rhop \int_0^T \int_{\hGam} \dt \homega \cdot \dt \hphi \srd \hS \srd t + \int_0^T \int_{\hGam} \hDelta \homega \cdot \hDelta \hphi \srd \hS \srd t - \rhob \int_0^T \int_{\hOmb} \dt \hbfeta \cdot \dt \hbfpsi \srd \hbfx \srd t\\
        &\qquad + 2 \mue \int_0^T \int_{\hOmb} \hbfD(\hbfeta) : \hbfD(\hbfpsi) \srd \hbfx \srd t + \lambde \int_0^T \int_{\hOmb} (\hnabla \cdot \hbfeta)(\hnabla \cdot \hbfpsi) \srd \hbfx \srd t\\
        &\qquad + 2 \muv \int_0^T \int_{\hOmb} \hbfD(\dt \hbfeta) : \hbfD(\hbfpsi) \srd \hbfx \srd t + \lambdv \int_0^T \int_{\hOmb} (\hnabla \cdot \dt \hbfeta)(\hnabla \cdot \hbfpsi) \srd \hbfx \srd t - \alpha \int_0^T \int_{\Ombd(t)} p (\nabla \cdot \bfpsi) \srd \bfx \srd t\\
        &\qquad - c_0 \int_0^T \int_{\hOmb} \hp \cdot \dt \hr \srd \hbfx \srd t - \alpha \int_0^T \int_{\Ombd(t)} \frac{\rD^\delta}{\rD t} \bfeta \cdot \nabla r \srd \bfx \srd t - \alpha \int_0^T \int_{\Gad(t)} (\zeta \bfe_3 \cdot \bfn^\delta) r \srd S \srd t\\
        &\qquad + \kappa \int_0^T \int_{\Ombd(t)} \nabla p \cdot \nabla r \srd \bfx \srd t - \int_0^T \int_{\Gamma(t)} ((\bfu - \zeta \bfe_3) \cdot \bfn) r \srd S \srd t\\
        &= \int_{\Omf(0)} \bfu(0) \cdot \bfv(0) \srd \bfx + \rhop \int_{\hGam} \dt \homega(0) \cdot \hphi(0) \srd \hS + \rhob \int_{\hOmb} \dt \hbfeta(0) \cdot \hbfpsi(0) \srd \hbfx + c_0 \int_{\hOmb} \hp(0) \cdot \hr(0) \srd \hbfx, 
    \end{aligned}
\end{equation*}
where the material derivative with respect to the regularized displacement is given by
\begin{equation*}
    \frac{\rD^\delta}{\rD t} = \frac{\rd}{\rd t}  + (\bfxi^\delta \cdot \nabla), \twith \bfxi^\delta(t,\cdot) = \dt \hbfeta^\delta\bigl(t,\hbfPhi_\rb^{\etad})^{-1}(t,\cdot)\bigr),
\end{equation*}
the vector $\bfn$ represents the upward pointing normal vector to $\Gamma(t)$, and $\bfn^\delta$ is the upward pointing normal vector to $\Gamma^\delta(t)$.
\end{defn}

As mentioned earlier, we introduced a minimal regularization, modifying only the four terms in the weak formulation that require it, while keeping all others unchanged, as in \cite{KCM:24}.

\begin{remark}\label{rem:rem on not reg weak sol}{\bf{Notation.}}
To ease notation, we will use the following conventions throughout the rest of this work:
\begin{enumerate}[(a)]
    \item Even though the solution to the above regularized problem depends on the regularization parameter~$\delta$ implicitly, we will omit writing $\delta$ explicitly if it is clear from the context that we are dealing with the solution to the regularized problem.
    \item We also omit writing the compositions with the maps $\hbfPhi_\rf^\omega$, $\hbfPhi_\Gamma^\omega$, $\hbfPhi_\rb^\eta$ as well as $\hbfPhi_\rb^{\etad}$ and their inverses explicitly.
    As a result, we simply write
    \begin{equation*}
        -\alpha \int_0^T \int_{\Ombd(t)} p \nabla \cdot \bfpsi \srd \bfx \srd t \enspace \text{instead of} \enspace -\alpha \int_0^T  \int_{\Ombd(t)} \Bigl(\hp \circ (\hbfPhi_\rb^{\etad})^{-1}\Bigr) \nabla \cdot \Bigl(\hbfpsi \circ (\hbfPhi_\rb^{\etad})^{-1}\Bigr) \srd \bfx \srd t
    \end{equation*}
    or
    \begin{equation*}
        - \int_0^T \int_{\Gamma(t)} ((\bfu - \zeta \bfe_3) \cdot \bfn) r \srd S \srd t \enspace \text{instead of} \enspace - \int_0^T \int_{\Gamma(t)} ((\bfu - (\zeta \circ (\hbfPhi_\Gamma^\omega)^{-1}) \bfe_3) \cdot \bfn) (\hr \circ (\hbfPhi_\Gamma^\omega)^{-1}) \srd S \srd t.
    \end{equation*}
\end{enumerate}
\end{remark}

We note that the above considerations implicitly assume the invertibility of the map $\hbfPhi_\rb^{\etad}$. This assumption will be addressed in more detail later, as it leads to a constraint on the time interval of existence. Specifically, we will show that invertibility can be ensured on a time interval $[0, T_\delta]$, where the length of the interval $T_\delta$ may depend on the regularization parameter $\delta$.

Next, we present the weak formulation of the regularized problem with the {\em fluid velocity defined on the fixed domain}. To this end, recall that the Jacobians $\hcJ_\rb^\eta$, $\hcJ_\Gamma^\omega$, and $\hcJ_\rf^\omega$, as defined in \eqref{eq:dets of Jacobians}, naturally arise in the transformation from the moving domains to the corresponding fixed reference domains.

We start with the first term in \autoref{def:reg weak form FPSI moving fluid dom}.
Using of the non-degeneracy assumption $|\homega| < R$, invoking the transformed time derivative from \eqref{eq:transformed time der}, also recalling $\hbfw = \frac{R + \hz}{R} \dt \homega \bfe_3$, employing the transformed fluid gradient from \eqref{eq:transformed fluid grad}, and integrating by parts in the $z$-direction, we find that
\begin{equation}\label{eq:der weak form fixed fluid dom first term}
    \begin{aligned}
        \int_{\Omf(t)} \bfu \cdot \dt \bfv \srd \bfx
        &= \int_{\hOmf} \Bigl(1 + \frac{\homega}{R}\Bigr) \hbfu \cdot \dt \hbfv \srd \hbfx - \int_{\hOmf} \Bigl(1 + \frac{\homega}{R}\Bigr) \hbfu \cdot \bigl[(\hbfw \cdot \hnabla_\rf^\omega) \hbfv\bigr] \srd \hbfx\\
        &= \int_{\hOmf} \Bigl(1 + \frac{\homega}{R}\Bigr) \hbfu \cdot \dt \hbfv \srd \hbfx - \frac{1}{R} \int_{\hOmf} \hbfu \cdot \bigl[(R + \hz) \dt \homega \del_{\hz} \hbfv\bigr] \srd \hbfx\\
        &= \int_{\hOmf} \Bigl(1 + \frac{\homega}{R}\Bigr) \hbfu \cdot \dt \hbfv \srd \hbfx - \frac{1}{2R} \int_{\hOmf} \hbfu \cdot \bigl[(R + \hz) \dt \homega \del_{\hz} \hbfv\bigr] \srd \hbfx\\
        &\quad + \frac{1}{2R} \int_{\hOmf} (\dt \homega) \hbfu \cdot \hbfv \srd \hbfx + \frac{1}{2R} \int_{\hOmf} \bigl[(R + \hz) \dt \homega \del_{\hz} \hbfu\bigr] \cdot \hbfv \srd \hbfx - \frac{1}{2} \int_{\hGam} (\hbfu \cdot \hbfv) \dt \homega \srd \hS\\
        &= \int_{\hOmf} \Bigl(1 + \frac{\homega}{R}\Bigr) \hbfu \cdot \dt \hbfv \srd \hbfx - \frac{1}{2} \int_{\hOmf} \Bigl(1 + \frac{\homega}{R}\Bigr) \bigl[((\hbfw \cdot \hnabla_\rf^\omega) \hbfv) \cdot \hbfu - ((\hbfw \cdot \hnabla_\rf^\omega) \hbfu) \cdot \hbfv \bigr] \srd \hbfx\\
        &\quad + \frac{1}{2R} \int_{\hOmf} (\dt \homega) \hbfu \cdot \hbfv \srd \hbfx - \frac{1}{2} \int_{\hGam} (\hbfu \cdot \hbfv) \dt \homega \srd \hS.
    \end{aligned}
\end{equation}
Using the expression for the normal vector to the interface $\bfn = (-\del_{\hx} \homega,-\del_{\hy} \homega,1) / \hcJ_\Gamma^\omega$, as well as the interface condition $\left.\zeta \bfe_3\right|_{\Gamma(t)} = \dt \homega \bfe_3$, which is a result of \eqref{eq:cont of displacement}, we find that
\begin{equation*}
    \int_0^T \int_{\Gamma(t)} (\zeta \bfe_3 \cdot \bfn) \bfu \cdot \bfv \srd S \srd t = \int_0^T \int_{\hGam} (\hbfu \cdot \hbfv) \dt \homega \srd \hS \srd t.
\end{equation*}
Therefore, the last term in \eqref{eq:der weak form fixed fluid dom first term} can be combined with one of the terms from \autoref{def:reg weak form FPSI moving fluid dom}.

\begin{remark}{\bf{Notation for scaled normal and tangent vectors.}}
Since the factor $\hcJ_\Gamma^\omega$ in the unit normal vector cancels out with the Jacobian of the transformation from~$\Gamma(t)$ to $\hGam$, we introduce the following normal and tangent vectors: 
\begin{equation}\label{eq:normal & tangential vectors}
    \hbfn^\omega = (-\del_{\hx} \homega,-\del_{\hy} \homega,1), \enspace \hbftau_1^\omega = (1,0,\del_{\hx} \homega) \cdot \frac{1}{\sqrt{1 + |\del_{\hx}\homega|^2}} \tand \hbftau_2^\omega = (0,1,\del_{\hy} \homega) \cdot \frac{1}{\sqrt{1 + |\del_{\hy}\homega|^2}}.
\end{equation}
Note that we include the normalization factors in the definition of the tangential vectors, since in contrast to the normal vector, this factor does not cancel out with the Jacobian $\hcJ_\Gamma^\omega$ when transforming to the fixed domain.
Moreover, we set
\begin{equation*}
    \hbfn^{\etad} = \Bigl(-\del_{\hx} \Bigl( \hbfeta^\delta \Big|_{\hGam}\Bigr),-\del_{\hy} \Bigl( \hbfeta^\delta \Big|_{\hGam}\Bigr),1\Bigr).
\end{equation*}
\end{remark}
The weak formulation of the regularized FPSI problem, involving a {\emph{fixed fluid domain}}, now reads:

\begin{defn}[Weak formulation of the regularized problem with fixed fluid domain]\label{def:reg weak form FPSI fixed fluid dom}
\ 

\noindent
A tuple $(\hbfu,\homega,\hbfeta,\hp) \in \cVsolom$ is a weak solution to the regularized 3D nonlinearly coupled FPSI problem with regularization parameter $\delta$ if for any test function $(\hbfv,\hphi,\hbfpsi,\hr) \in \cVtestom$, the following weak formulation holds:
\begin{equation*}
    \begin{aligned}
        &\enspace -\int_0^T \int_{\hOmf} \Bigl(1 + \frac{\homega}{R}\Bigr) \hbfu \cdot \dt \hbfv \srd \hbfx \srd t + \frac{1}{2}\int_0^T\int_{\hOmf} \Bigl(1 + \frac{\homega}{R}\Bigr) \bigl[\bigl(((\hbfu - \hbfw) \cdot \hnabla_\rf^\omega)\hbfu\bigr) \cdot \hbfv\\
        &\qquad-\bigl(((\hbfu - \hbfw) \cdot \hnabla_\rf^\omega)\hbfv\bigr) \cdot \hbfu\bigr] \srd \hbfx \srd t -\frac{1}{2R} \int_0^T \int_{\hOmf} (\dt \homega) \hbfu \cdot \hbfv \srd \hbfx \srd t\\
        &\quad + \frac{1}{2}\int_0^T\int_{\hGam} (\hbfu \cdot \hbfn^\omega - \hzeta \bfe_3 \cdot \hbfn^\omega) \hbfu \cdot \hbfv \srd \hS \srd t + 2 \nu \int_0^T \int_{\hOmf} \Bigl(1 + \frac{\homega}{R}\Bigr) \hbfD(\hbfu) : \hbfD(\hbfv) \srd \hbfx \srd t\\
        &\quad +\int_0^T \int_{\hGam} \Bigl(\frac{1}{2}|\hbfu|^2 - \hp\Bigr)(\hbfpsi - \hbfv) \cdot \hbfn^\omega \srd \hS \srd t + \beta\sum_{i=1}^2 \int_0^T \int_{\hGam} \hcJ_\Gamma^\omega (\hzeta \bfe_3 - \hbfu) \cdot \hbftau_i^\omega(\bfpsi - \hbfv) \cdot \hbftau_i^\omega \srd \hS \srd t\\
        &\quad - \rhop \int_0^T \int_{\hGam} \dt \homega \cdot \dt \hphi \srd \hS \srd t + \int_0^T \int_{\hGam} \hDelta \homega \cdot \hDelta \hphi \srd \hS \srd t - \rhob \int_0^T \int_{\hOmb} \dt \hbfeta \cdot \dt \hbfpsi \srd \hbfx \srd t\\
        &\quad + 2 \mue \int_0^T \int_{\hOmb} \hbfD(\hbfeta) : \hbfD(\hbfpsi) \srd \hbfx \srd t + \lambde \int_0^T \int_{\hOmb} (\hnabla \cdot \hbfeta)(\hnabla \cdot \hbfpsi) \srd \hbfx \srd t\\
        &\quad + 2 \muv \int_0^T \int_{\hOmb} \hbfD(\dt \hbfeta) : \hbfD(\hbfpsi) \srd \hbfx \srd t + \lambdv \int_0^T \int_{\hOmb} (\hnabla \cdot \dt \hbfeta)(\hnabla \cdot \hbfpsi) \srd \hbfx \srd t - \alpha \int_0^T \int_{\hOmb} \hcJ_\rb^{\etad} \hp \hnabla_\rb^{\etad} \cdot \hbfpsi \srd \hbfx \srd t\\
        &\quad - c_0 \int_0^T \int_{\hOmb} \hp \cdot \dt \hr \srd \hbfx \srd t - \alpha \int_0^T \int_{\hOmb} \hcJ_\rb^{\etad} \dt \hbfeta \cdot \hnabla_\rb^{\etad} \hr \srd \hbfx \srd t - \alpha \int_0^T \int_{\hGam} (\hzeta \bfe_3 \cdot \hbfn^{\etad}) \hr \srd \hS \srd t\\
        &\quad + \kappa \int_0^T \int_{\hOmb} \hcJ_\rb^{\etad} \hnabla_\rb^{\etad} \hp \cdot \hnabla_\rb^{\etad} \hr \srd \hbfx \srd t - \int_0^T \int_{\hGam} ((\hbfu - \hzeta \bfe_3) \cdot \hbfn^\omega) \hr \srd \hS \srd t\\
        &= \int_{\Omf(0)} \bfu(0) \cdot \bfv(0) \srd \bfx + \rhop \int_{\hGam} \dt \homega(0) \cdot \hphi(0) \srd \hS + \rhob \int_{\hOmb} \dt \hbfeta(0) \cdot \hbfpsi(0) \srd \hbfx + c_0 \int_{\hOmb} \hp(0) \cdot \hr(0) \srd \hbfx, 
    \end{aligned}
\end{equation*}
where we recall the definitions of $\hcJ_\rb^{\etad}$ and $\hcJ_\Gamma^\omega$ from \eqref{eq:dets of Jacobians}, $\hbfw$ from \eqref{eq:transformed time der}, $\hnabla_\rf^\omega$ from \eqref{eq:transformed fluid grad}, $\hnabla_\rb^{\etad}$ from \eqref{eq:transformed Biot grad} as well as $\hzeta$ from \eqref{eq:hzeta}.
\end{defn}

\subsection{A formal energy inequality}\label{sec:EnergyEquality1}
Here we show that the weak formulation of the regularized FPSI problem is ``energy consistent'' with the original, non-regularized problem in the sense that the two problems satisfy the same type of energy estimate. 
The only difference is that the integrals over the moving Biot domain $\Omb(t)$ in the energy equality of the non-regularized problem \eqref{Energy1} are now given over the regularized moving Biot domain $\Ombd(t)$. 

\begin{lem}\label{lem:formal energy eq}
Suppose that a weak solution to the regularized FPSI problem given in \autoref{def:reg weak form FPSI moving fluid dom} exists and is sufficiently smooth. Then the following energy equality holds:
\begin{equation*}
    E^K(T) + E^E(T) + \int_0^T \bigl(D_\rf^V(t) + D_\rb^V(t) + D_{\rfb}^V(t) + D_\beta^V(t)\bigr) \srd t = E^K(0) + E^E(0),
\end{equation*}
where
\begin{equation*}
    E^K(t) = \frac{1}{2} \int_{\Omf(t)} |\bfu(t)|^2 \srd \bfx + \frac{\rhob}{2} \int_{\hOmb} |\dt \hbfeta(t)|^2 \srd \hbfx + \frac{c_0}{2} \int_{\hOmb} |\hp(t)|^2 \srd \hbfx + \frac{\rhop}{2} \int_{\hGam} |\dt \homega(t)|^2 \srd \hS
\end{equation*}
is the total kinetic energy of the problem,
\begin{equation*}
    E^E(t) = \mue \int_{\hOmb} |\hbfD(\hbfeta)(t)|^2 \srd \hbfx + \frac{\lambde}{2} \int_{\hOmb} |\hnabla \cdot \hbfeta(t)|^2 \srd \hbfx + \frac{1}{2}\int_{\hGam} |\hDelta \homega(t)|^2 \srd \hS
\end{equation*}
is the elastic energy of the Biot poroelastic matrix and the reticular plate, and
\begin{equation*}
    \begin{aligned}
        D_\rf^V(t)
        &= 2 \nu \int_{\Omf(t)} |\bfD(\bfu)(t)|^2 \srd \bfx, \enspace D_\rb^V(t) = 2 \muv \int_{\hOmb} |\hbfD(\dt \hbfeta)(t)|^2 \srd \hbfx + \lambdv \int_{\hOmb} |\hnabla \cdot \dt \hbfeta(t)|^2 \srd \hbfx,\\
        D_{\rfb}^V(t)
        &= \kappa \int_{\Ombd(t)} |\nabla p(t)|^2 \srd \bfx \tand D_\beta^V(t) = \beta \sum_{i=1}^2 \int_{\Gamma(t)} |(\bfxi(t) - \bfu(t)) \cdot \bftau_i|^2 \srd S,
    \end{aligned}
\end{equation*}
are the dissipation terms due to the fluid viscosity $D_\rf^V$, the viscosity of the Biot poro(visco)elastic matrix~$D_\rb^V$, permeability effects $D_{\rfb}^V$, and friction in the Beavers--Joseph--Saffman slip condition $D_\beta^V$.
\end{lem}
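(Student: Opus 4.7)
The plan is to test the weak formulation in \autoref{def:reg weak form FPSI moving fluid dom} against the solution itself, namely
$(\bfv,\hphi,\hbfpsi,\hr) = (\bfu, \dt\homega, \dt\hbfeta, \hp)$,
after multiplication by a temporal cut-off $\chi_\eps \in \rC_\rc^1([0,T))$ equal to one on $[0,T-\eps]$, and then letting $\eps \downarrow 0$. This is an admissible test tuple: $\bfu$ is divergence-free and vanishes on the rigid walls; $\dt\hbfeta\in\Vd$ and $\hp\in\Vp$ under the smoothness assumption; and differentiating the kinematic constraint $\hbfeta = \homega\bfe_3$ on $\hGam$ produces $\dt\hbfeta = \dt\homega\bfe_3$ on $\hGam$, matching the structural constraint in $\cVtest$.

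Next, I would recast each time-derivative in the weak formulation as a total time derivative. On the fixed reference domains this is immediate: the terms $-\rhob\int\dt\hbfeta\cdot\dt\hbfpsi$, $-c_0\int \hp \dt\hr$ and $-\rhop\int \dt\homega\dt\hphi$ become $-\frac{\rhob}{2}\frac{d}{dt}\int|\dt\hbfeta|^2$, $-\frac{c_0}{2}\frac{d}{dt}\int|\hp|^2$ and $-\frac{\rhop}{2}\frac{d}{dt}\int|\dt\homega|^2$ for the smooth solution, yielding the corresponding pieces of $E^K(T) - E^K(0)$. The elastic Biot and plate bilinear forms assemble $E^E(T) - E^E(0)$. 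For the fluid term, Reynolds' transport theorem on $\Omf(t)$ yields
$$-\!\int_0^T\!\!\int_{\Omf(t)}\bfu\cdot\dt\bfu\srd\bfx\srd t = \tfrac{1}{2}\!\int_{\Omf(0)}|\bfu_0|^2\srd\bfx - \tfrac{1}{2}\!\int_{\Omf(T)}|\bfu(T)|^2\srd\bfx + \tfrac{1}{2}\!\int_0^T\!\!\int_{\Gamma(t)}(\zeta\bfe_3\cdot\bfn)|\bfu|^2\srd S\srd t,$$
supplying the fluid kinetic contribution plus an interface flux to be absorbed below. The convective bracket $((\bfu\cdot\nabla)\bfu)\cdot\bfv - ((\bfu\cdot\nabla)\bfv)\cdot\bfu$ is antisymmetric in $\bfv$ and so vanishes identically for $\bfv = \bfu$; the fluid and Biot symmetric-gradient terms directly produce $D_\rf^V$ and $D_\rb^V$; and the Beavers-Joseph-Saffman integral yields $D_\beta^V$ since $(\hbfpsi - \hbfv)\cdot\bftau_i$ reduces to $(\zeta\bfe_3 - \bfu)\cdot\bftau_i$ on $\Gamma(t)$.

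The main obstacle is the bookkeeping of the interface contributions and of the four regularized Biot-pressure terms. On the fluid/plate interface $\Gamma(t)$, the term $\tfrac{1}{2}\int(\bfu\cdot\bfn - 2\zeta\bfe_3\cdot\bfn)|\bfu|^2$ combined with $\int(\tfrac{1}{2}|\bfu|^2 - p)(\zeta\bfe_3 - \bfu)\cdot\bfn$ (where $\bfpsi|_{\Gamma(t)} = \zeta\bfe_3$ via $\hbfpsi|_{\hGam} = \dt\homega\bfe_3$) and with the Reynolds flux above will cancel up to a residual $\int((\bfu - \zeta\bfe_3)\cdot\bfn) p$, which then pairs with the coupling term $-\int_{\Gamma(t)}((\bfu - \zeta\bfe_3)\cdot\bfn) \hp$ obtained at $\hr = \hp$. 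On the regularized Biot side, plugging $\bfpsi = \dt\hbfeta$ and $r = \hp$ into
$$-\alpha\!\int_{\Ombd(t)}p\,\nabla\cdot\bfpsi\,-\,\alpha\!\int_{\Ombd(t)}\tfrac{\rD^\delta}{\rD t}\bfeta\cdot\nabla p\,-\,\alpha\!\int_{\Gad(t)}(\zeta\bfe_3\cdot\bfn^\delta)p\,+\,\kappa\!\int_{\Ombd(t)}|\nabla p|^2$$
must collapse to $\kappa\int|\nabla p|^2 = D_{\rfb}^V$. I would accomplish this by applying Reynolds' transport on $\Ombd(t)$ with velocity $\bfxi^\delta$ to the material-derivative term and integrating by parts the $\alpha p\,\nabla\cdot\bfpsi$ term; the resulting boundary flux on $\Gad(t)$ is precisely $\alpha\int_{\Gad(t)}(\bfxi^\delta\cdot\bfn^\delta) p$, which cancels against $-\alpha\int_{\Gad(t)}(\zeta\bfe_3\cdot\bfn^\delta) p$ once one identifies the vertical component of $\bfxi^\delta|_{\Gad}$ with $\dt\homega$ via the interface kinematic condition. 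Collecting all surviving contributions then yields the stated energy identity.
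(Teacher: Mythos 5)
Your overall plan is exactly the strategy the paper indicates for this lemma: test the regularized weak formulation with the solution's velocities, pass between the fixed- and moving-domain formulations, and cancel terms (the paper defers the computations to Lemma~5.1 of \cite{KCM:24}). The plate, elastic Biot, fluid viscous, Beavers--Joseph--Saffman and convective contributions, and the cancellation of the residual fluid interface flux against $-\int_{\Gamma(t)}((\bfu-\zeta\bfe_3)\cdot\bfn)\,r\,\srd S$ at $r=p$, are all in that spirit.

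There is, however, a genuine gap in your treatment of the regularized Biot $\alpha$-terms. You assert the boundary flux on $\Gad(t)$ is $\alpha\int_{\Gad}(\bfxi^\delta\cdot\bfn^\delta)p$ and that the cancellation follows ``once one identifies the vertical component of $\bfxi^\delta|_{\Gad}$ with $\dt\homega$.'' That identification is unavailable: the paper explicitly observes, right where $\Gad(t)$ is introduced, that one can in general \emph{not} guarantee $\hbfeta^\delta|_{\hGam}=\homega\bfe_3$, so $\bfxi^\delta|_{\Gad}$ -- which pulls back to $\dt\hbfeta^\delta|_{\hGam}$ -- need not equal $\zeta\bfe_3$. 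The field that actually appears is $\tfrac{\rD^\delta}{\rD t}\bfeta = \dt\hbfeta\circ(\hbfPhi_\rb^{\etad})^{-1}$, which, using $\bfpsi=\hbfpsi\circ(\hbfPhi_\rb^{\etad})^{-1}$ (cf.\ \autoref{rem:rem on not reg weak sol}) and your choice $\hbfpsi=\dt\hbfeta$, is exactly $\bfpsi$. The first two $\alpha$-terms therefore combine into the purely spatial identity $-\alpha\int_{\Ombd(t)}\nabla\cdot(p\,\bfpsi)\,\srd\bfx$ -- Reynolds transport is not involved -- and the divergence theorem (with $p=0$ on $\del\Ombd(t)\setminus\Gad(t)$ and outward unit normal $-\bfn^\delta$ on $\Gad(t)$) yields the flux $\alpha\int_{\Gad}p\,\bigl(\tfrac{\rD^\delta}{\rD t}\bfeta\cdot\bfn^\delta\bigr)\srd S$. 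This equals $\alpha\int_{\Gad}p\,(\zeta\bfe_3\cdot\bfn^\delta)\srd S$ because $\dt\hbfeta|_{\hGam}=\dt\homega\,\bfe_3$ is a trace of the \emph{unregularized} displacement, which the convolution does not alter, and so it cancels the last $\alpha$-term exactly. In short, the cancellation is correct, but the justification via $\bfxi^\delta$, the interface kinematic condition, and Reynolds transport does not go through; the argument must run through $\tfrac{\rD^\delta}{\rD t}\bfeta$ and the regularization-invariant trace $\dt\hbfeta|_{\hGam}$.
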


The proof of this (formal) energy equality is based on using the test functions $(\hbfv,\hphi,\hbfpsi,\hr) = (\hbfu,\homega,\hbfeta,\hp)$, switching between the formulations on the fixed and moving domains and canceling appropriate terms. For more details one can also see the proof of Lemma 5.1 in  \cite{KCM:24}.

\subsection{Main result on the existence of a weak solution to the regularized problem}
The main result of this work is the proof of the existence of a weak solution to the regularized nonlinearly coupled FPSI problem in 3D, made precise in \autoref{ssec:weak sols to the reg probl}.

\begin{thm}\label{thm:ex of a weak sol}
Suppose that the problem parameters satisfy $\rhob$, $\mue$, $\lambde$, $\alpha$, $\rhop$, $\nu > 0$, and assume that the viscosity parameters of the Biot layer fulfill $\muv$, $\lambdv \ge 0$.
Moreover, suppose that the initial data satisfy~$\homega_0 \in \rH_0^2(\hGam)$ for the plate displacement, $\hzeta_0 \in \rL^2(\hGam)$ for the plate velocity, $\hp_0 \in \rL^2(\hOmb)$ for the Biot pore pressure, $\bfu_0 \in \rH^1(\Omf(0))^3$ for the fluid velocity with the additional property that $\bfu_0$ is divergence free, $\hbfeta_0 \in \rH^1(\hOmb)^3$ for the Biot displacement and
\begin{enumerate}[(a)]
    \item $\hbfxi_0 \in \rH^1(\hOmb)^3$ for the Biot velocity in the viscoelastic case $\muv$, $\lambdv > 0$, and
    \item $\hbfxi_0 \in \rL^2(\hOmb)^3$ for the Biot velocity in the purely elastic case $\muv = \lambdv = 0$.
\end{enumerate}
Furthermore, assume that there is $R_0 \in (0,R)$ with $|\homega_0| \le R_0$, the compatibility conditions $\left. \hbfeta_0 \right|_{\hGam} = \homega_0 \bfe_3$ as well as $\left. \hbfxi_0 \right|_{\hGam} = \hzeta_0 \bfe_3$ are satisfied, and that for some arbitrary but fixed regularization parameter $\delta > 0$, we have that $\det(\bfI + \nabla \hbfeta_0^\delta) > 0$.

Then {\bf{there exists $T > 0$}} such that the regularized 3D nonlinearly coupled FPSI problem, made precise in \autoref{ssec:weak sols to the reg probl}, {\bf{admits a weak solution}} $(\bfu,\homega,\hbfeta,\hp)$ on $[0,T]$ in the sense of \autoref{def:reg weak form FPSI moving fluid dom}.

Moreover, the weak solution satisfies the following {\bf{energy estimate}}:
\begin{equation*}
    \begin{aligned}
        &\quad \frac{1}{2} \int_{\Omf(t)} \bigl|\bfu(t)\bigr|^2 \srd \bfx + \frac{\rhob}{2} \int_{\hOmb} \bigl|\hbfxi(t) \bigr|^2 \srd \hbfx + \frac{c_0}{2} \int_{\hOmb} \bigl|\hp(t)\bigr|^2 \srd \hbfx + \mue \int_{\hOmb} |\hbfD(\hbfeta)(t)|^2 \srd \hbfx + \frac{\lambde}{2} \int_{\hOmb} |\hnabla \cdot \hbfeta(t)|^2 \srd \hbfx\\
        &\quad +\frac{\rhop}{2} \int_{\hGam}\bigl|\hzeta(t)\bigr|^2 \srd \hS + \frac{1}{2}\int_{\hGam} |\hDelta \homega(t)|^2 \srd \hS + 2 \nu \int_0^t \int_{\Omega_{\rf,N}(s)} \bigl|\bfD(\bfu)(s)\bigr|^2 \srd \bfx \srd s + 2\muv \int_0^t \int_{\hOmb} |\hbfD(\hbfxi)(s)|^2 \srd \hbfx \srd s\\
        &\quad + \lambdv \int_0^t \int_{\hOmb} |\hnabla \cdot \hbfxi(s)|^2 \srd \hbfx \srd s + \kappa \int_0^t \int_{\Omega_{\rb}^\delta(s)} \bigl|\nabla p(s)\bigr|^2 \srd \bfx \srd s + \beta \sum_{i=1}^2 \int_0^t \int_{\Gamma(s)} \bigl|(\zeta \bfe_3 - \bfu) \cdot \bftau_i\bigr|^2 \srd S \srd t \le E_0.
    \end{aligned}
\end{equation*}
\end{thm}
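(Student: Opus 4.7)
The plan is to prove existence constructively via a Lie operator-splitting time-discretization, following the roadmap outlined in the introduction and adapting \cite{MC:13a, MC:13b, KCM:24} to the present 3D regularized setting. Fix $N \in \bN$, set $\Delta t = T/N$, and on each subinterval $[n\Delta t,(n+1)\Delta t]$ decouple the system into (i) a linear reticular-plate subproblem that advances $(\homega^{n+1/2}, \dt\homega^{n+1/2})$ with an external load frozen from step $n$, and (ii) a coupled fluid--Biot subproblem posed on the frozen domain $\Omega_\rf^n$, $\Omega_\rb^{\delta,n}$ in which the kinematic conditions \eqref{eq:cont of normal comps of vel}--\eqref{eq:cont of displacement} and the Beavers--Joseph--Saffman slip are built into the test space, while the pressure-balance condition \eqref{eq:bal of pressure} enters the weak form via boundary integrals. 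Subproblem (i) is solved by Lax--Milgram; subproblem (ii) is a nonlinear variational problem whose convective terms are skew-symmetrized so as to vanish on the diagonal, admitting a Galerkin or fixed-point solution. This produces piecewise-constant and piecewise-affine interpolants $\bfu_N$, $\homega_N$, $\hbfeta_N$, $\hp_N$ approximating the unknowns.

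\textbf{Energy estimates and the local time of existence.} Testing each subproblem with its own solution and summing telescopically yields a discrete analog of \autoref{lem:formal energy eq}, providing $N$-independent bounds for $\hbfeta_N \in \cVb$, $\homega_N \in \cVom$, $\hp_N \in \cQb$, and for $\bfu_N$ in the finite-energy fluid norm. Because \eqref{eq:reg of omega in fin energy space} only gives $\homega \in \rC^{0,r}$ with $r<1$, the moving fluid domain is not Lipschitz, and the Korn-type inequality of \cite[Proposition~2.9]{Len:14} must be used, yielding uniform boundedness of $\bfu_N$ in $\rL^2(0,T;\rW^{1,p}(\Omega_{\rf,N}(t))^3)$ for some $p<2$ rather than $p=2$. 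A continuity-in-time argument based on the regularization $\hbfeta \mapsto \hbfeta^\delta$, the initial non-degeneracy $\det(\bfI + \hnabla \hbfeta_0^\delta)>0$, and $|\homega_0|\le R_0 < R$ guarantees a $\delta$-dependent time $T=T_\delta>0$ on which the Lagrangian map $\Id + \hbfeta_N^\delta$ remains a diffeomorphism uniformly in $N$ and $|\homega_N|<R$, ensuring that all transformed objects remain well-defined.

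\textbf{Compactness and limit passage.} The uniform bounds give weak and weak$^*$ convergence along a subsequence. To handle the nonlinear terms, strong convergence is obtained by combining the classical Aubin--Lions lemma for $\hbfeta_N$, Arzel\`a--Ascoli for $\homega_N$ (available thanks to the uniform H\"older bound from \eqref{eq:reg of omega in fin energy space}), the Dreher--J\"ungel compactness result \cite{DJ:12} for the piecewise-constant Biot and plate velocities, and the generalized Aubin--Lions--Simon criterion of \cite{MC:19} for the fluid velocity on the moving domain. Admissible test functions on the approximate domains $\Omega_{\rf,N}(t)$ converging to test functions in $\cVtest$ on $\Omega_\rf(t)$ are constructed by ALE pullback following \cite{MC:13b, MC:15}, with the Lagrangian trace of \cite{CDEG:05, Gra:08, Muh:14} providing well-defined boundary values on the non-Lipschitz interface. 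The main obstacle is passing to the limit in the viscous term and in the quadratic trace term $\int_{\Gamma_N(t)}|\bfu_N|^2(\psi_{\bfn}-v_{\bfn})$: since Korn only supplies $\rW^{1,p}$ integrability with $p<2$, one must first upgrade the convergence $\hnabla \bfu_N \rightharpoonup \hnabla \bfu$ on the time-dependent domains by identifying the weak limit of the gradient with the gradient of the weak limit, then combine it with the strong trace convergence of $\bfu_N$ on $\Gamma_N(t)$ to pass to the limit in the dissipation and quadratic trace integrals. Once this is achieved, weak lower semicontinuity applied to the discrete energy equality yields the stated energy inequality, and the limit tuple satisfies the weak formulation of \autoref{def:reg weak form FPSI moving fluid dom}.
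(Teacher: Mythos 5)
Your proposal follows the same broad roadmap as the paper: Lie splitting into a plate step and a fluid--Biot step, discrete energy estimates, Lengeler's Korn inequality for non-Lipschitz domains (costing a drop to $\rW^{1,p}$, $p<2$), a $\delta$-dependent local time of existence, compactness via classical Aubin--Lions, Arzel\`a--Ascoli, Dreher--J\"ungel and the generalized Muha--\v{C}ani\'c criterion, ALE-constructed test functions, identification of the gradient's weak limit, and weak lower semicontinuity for the energy estimate. That is the paper's architecture.

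Two pieces of the splitting are misdescribed, though, and they matter for how the subproblems are actually solved. First, the plate subproblem in the paper (see \eqref{eq:weak form plate subprobl}) is \emph{homogeneous}: it advances $(\omega_N^{n+1/2},\zeta_N^{n+1/2})$ under $\rhop\dot\zeta+\Delta^2\omega=0$ with no external load. The interface forcing from fluid and Biot stresses is not frozen and fed into the plate step; it is absorbed \emph{implicitly} into the fluid--Biot step, which updates the plate velocity $\zeta_N^{n+1}$ a second time. Your ``external load frozen from step $n$'' version would alter the telescoping of the discrete energies (the cross terms do not cancel the same way), so it is not a harmless paraphrase. Second, you characterize the fluid--Biot subproblem as a nonlinear problem requiring Galerkin or a fixed point. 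In the paper's scheme (\autoref{def:weak sol to fluid and Biot subproblem}) the convecting velocity in the skew-symmetrized trilinear form is lagged to $\bfu_N^n$, and the moving-domain coefficients $\cJ_\rb^{(\bfeta_N^n)^\delta}$, $\nabla_\rb^{(\bfeta_N^n)^\delta}$, $\bfn^{\omega_N^n}$ are all frozen from the previous step, so the subproblem is genuinely \emph{linear} in $(\bfu_N^{n+1},\zeta_N^{n+1},\bfeta_N^{n+1},p_N^{n+1})$. The real obstructions the paper then addresses are (i) a hyperbolic--parabolic scaling mismatch, cured by rescaling the fluid and pressure test functions by $\Delta t$ so that Lax--Milgram applies, and (ii) coercivity of the Biot elastic form, for which a Korn equality on $\Omb$ (\autoref{lem:Korn's ineq Biot}) is proved. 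Neither of these appears in your proposal, and they are the concrete devices that make the per-step existence go through; a generic Galerkin/fixed-point argument for a nonlinear subproblem would require a separate well-posedness analysis and is not what the paper does.

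Beyond that, your description of the limit passage is consistent with the paper, but you should be aware that verifying the hypotheses of the Muha--\v{C}ani\'c lemma in 3D is itself a substantial part of the proof: one must extend $\bfu_N$ by zero to a H\"older maximal domain using \autoref{lem:ext by zero for Holder}, bound it in $\rL^2(0,T;\rH^s(\Omf^M)^3)$ only for $s<\tfrac12$, and check Properties~(B) and (C1)--(C3) with uniform constants, which is where the $p<2$ loss forces non-trivial trace and interpolation estimates. Your phrase ``combine it with the strong trace convergence of $\bfu_N$'' is the right idea, but the paper has to earn that trace convergence (\autoref{prop:conv of fluid vel traces}) from the $\rW^{1,p}$ bounds via \autoref{lem:conv in L2Hs}, precisely because the natural $\rH^1$ bound is unavailable.
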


The remainder of this paper is devoted to proving \autoref{thm:ex of a weak sol}. The proof will be based on a splitting approach, which was first introduced to study weak solutions to moving boundary problems in \cite{MC:13a}.
For notational simplicity, we omit below the $\widehat{}$ notation distinguishing quantities defined on the reference domain from those on the physical domain, including the domains themselves, as the distinction between the two will be clear from the context.

\section{The splitting scheme}\label{sec:splitting scheme}
Similar to the strategies employed in \cite{MC:13a,KCM:24}, the general approach is to semidiscretize the problem in time by introducing time steps $\Delta t = \nicefrac{T}{N}$ and partitioning the interval $[0, T]$ into $N$ subintervals of length~$\Delta t$.
On each subinterval, we consider two subproblems:
\begin{enumerate}
\item a plate subproblem, which governs the elastodynamics of the reticular plate and is used to update the plate displacement and velocity; and
\item a fluid/Biot subproblem, which updates the fluid velocity, Biot displacement, Biot pore pressure, and plate velocity.
\end{enumerate}

The solution from the first subproblem is used as input data for the second.

The approximations of the fluid velocity, plate displacement and velocity, and Biot poro(visco)elastic material displacement and pressure, will be denoted by
\begin{equation*}
    (\bfu_N^{n + \frac{i}{2}},\omega_N^{n + \frac{i}{2}},\zeta_N^{n + \frac{i}{2}},\bfeta_N^{n + \frac{i}{2}},p_N^{n + \frac{i}{2}}), \tfor n=0,1,\dots,N \tand i=0,1.
\end{equation*}
They are defined on the given time subinterval $[n \Delta t,(n+1) \Delta t]$.
Moreover, the quantities with superscript $n + \nicefrac{1}{2}$ indicate the approximate solutions after solving the plate subproblem, while the quantities with superscript $n+1$ represent the approximate solutions after solving the fluid/Biot subproblem.

We remark that we will work on the weak formulation given in terms of the fixed reference domain, defined in \autoref{def:reg weak form FPSI fixed fluid dom}.
Consequently, we will semi-discretize the regularized weak formulation on the fixed reference domain.
Concerning the approximation of time derivatives, we will use backward Euler definition and employ the notation
\begin{equation}\label{eq:time discr}
    \df_N^{n + \frac{i}{2}} = \frac{f_N^{n+\frac{i}{2}}-f_N^{n+\frac{i}{2}-1}}{\Delta t}.
\end{equation}
Let us remark at this stage that due to the backward Euler discretization as described in \eqref{eq:time discr}, there can be negative subscripts.
We will explicitly define the quantities $f_{-1}$ and $f_{-\nicefrac{1}{2}}$ when necessary.

\subsection{The plate subproblem}\label{ssec:plate subprobl}
In this step, only the plate quantities $\omega_N^{n+\frac{1}{2}}$ and $\zeta_N^{n+\frac{1}{2}}$ are updated, while the fluid and Biot quantities remain unchanged:
\begin{equation*}
    \bfu_N^{n+\frac{1}{2}} = \bfu_N^n, \enspace \bfeta_N^{n+\frac{1}{2}} = \bfeta_N^n \tand p_N^{n+\frac{1}{2}} = p_N^n.
\end{equation*}
The plate displacement $\omega_N^{n+\frac{1}{2}}$ and velocity $\zeta_N^{n+\frac{1}{2}}$ are determined by using the weak formulation of the plate subproblems, i.e., find $\omega_N^{n+\frac{1}{2}} \in \rH_0^2(\Gamma)$ and $\zeta_N^{n+\frac{1}{2}} \in \rH_0^2(\Gamma)$
such that
\begin{equation}\label{eq:weak form plate subprobl}
    \begin{aligned}
        \int_\Gamma \biggl(\frac{\omega_N^{n+\frac{1}{2}}-\omega_N^{n-\frac{1}{2}}}{\Delta t}\biggr) \cdot \phi \srd S
        &= \int_\Gamma \zeta_N^{n+\frac{1}{2}} \cdot \phi \srd S, &&\tforall \phi \in \rL^2(\Gamma),\\
        \rhop \int_\Gamma \biggl(\frac{\zeta_N^{n+\frac{1}{2}}-\zeta_N^{n}}{\Delta t}\biggr) \cdot \varphi \srd S + \int_\Gamma \Delta \omega_N^{n+\frac{1}{2}} \cdot \Delta \varphi \srd S
        &= 0, &&\tforall \varphi \in \rH_0^2(\Gamma).
    \end{aligned}
\end{equation}
For the first subinterval, i.e., for $n=0$, we set $\omega_N^{-\frac{1}{2}} = \omega(0)$ and $\zeta_N^0 = \zeta(0)$, and these quantities satisfy $\omega(0) \bfe_3 = \left. \bfeta(0) \right|_{\Gamma}$ as well as $\zeta(0) \bfe_3 = \bfxi(0)$.

\begin{lem}\label{lem:ex of weak sol to plate subprobl}
There exists a unique solution $\bigl(\omega_N^{n+\frac{1}{2}},\zeta_N^{n+\frac{1}{2}}\bigr)$ to the semidiscretized plate subproblem \eqref{eq:weak form plate subprobl}, and $\bigl(\omega_N^{n+\frac{1}{2}},\zeta_N^{n+\frac{1}{2}}\bigr)$ satisfies the energy equality
\begin{equation*}
    \begin{aligned}
        &\quad \frac{\rhop}{2} \int_\Gamma |\zeta_N^{n+\frac{1}{2}}|^2 \srd S + \frac{\rhop}{2} \int_\Gamma |\zeta_N^{n+\frac{1}{2}} - \zeta_N^{n}|^2 \srd S + \frac{1}{2} \int_\Gamma |\Delta \omega_N^{n+\frac{1}{2}}|^2 \srd S + \frac{1}{2} \int_\Gamma |\Delta(\omega_N^{n+\frac{1}{2}} - \omega_N^{n-\frac{1}{2}})|^2 \srd S\\
        &= \frac{\rhop}{2} \int_\Gamma |\zeta_N^{n}|^2 \srd S + \frac{1}{2} \int_\Gamma |\Delta \omega_N^{n-\frac{1}{2}}|^2 \srd S.
    \end{aligned}
\end{equation*}
\end{lem}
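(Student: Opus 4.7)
The plan is to decouple the two equations in \eqref{eq:weak form plate subprobl}, reducing the system to a single coercive elliptic variational problem for the updated velocity, then recovering the displacement algebraically, and finally obtaining the energy equality by testing against the natural multipliers.

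First I would use the first equation in \eqref{eq:weak form plate subprobl}. Since it must hold for every $\phi \in \rL^2(\Gamma)$, it is equivalent to the pointwise identity
\begin{equation*}
  \omega_N^{n+\frac{1}{2}} = \omega_N^{n-\frac{1}{2}} + \Delta t \, \zeta_N^{n+\frac{1}{2}} \tin \rL^2(\Gamma).
\end{equation*}
Inserting this into the second equation in \eqref{eq:weak form plate subprobl} eliminates $\omega_N^{n+\frac{1}{2}}$ and yields the variational problem: find $\zeta_N^{n+\frac{1}{2}} \in \rH_0^2(\Gamma)$ such that for every $\varphi \in \rH_0^2(\Gamma)$,
\begin{equation*}
  \frac{\rhop}{\Delta t}\int_\Gamma \zeta_N^{n+\frac{1}{2}} \varphi \srd S + \Delta t \int_\Gamma \Delta \zeta_N^{n+\frac{1}{2}}\, \Delta \varphi \srd S = \frac{\rhop}{\Delta t}\int_\Gamma \zeta_N^n \varphi \srd S - \int_\Gamma \Delta \omega_N^{n-\frac{1}{2}}\, \Delta \varphi \srd S.
\end{equation*}
The bilinear form on the left is continuous and coercive on $\rH_0^2(\Gamma)$ (the $\Delta^2$ term is coercive by the Poincaré inequality for $\rH_0^2$, and the mass term is nonnegative), while the right-hand side defines a continuous linear functional on $\rH_0^2(\Gamma)$ thanks to $\zeta_N^n \in \rL^2(\Gamma)$ and $\omega_N^{n-\frac{1}{2}} \in \rH_0^2(\Gamma)$ (which holds at each step by induction, with base case $\omega_N^{-\frac{1}{2}} = \omega(0) \in \rH_0^2(\Gamma)$ from the assumed regularity of the initial data). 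The Lax--Milgram lemma then provides a unique $\zeta_N^{n+\frac{1}{2}} \in \rH_0^2(\Gamma)$. Setting $\omega_N^{n+\frac{1}{2}} := \omega_N^{n-\frac{1}{2}} + \Delta t\, \zeta_N^{n+\frac{1}{2}}$ gives a unique element of $\rH_0^2(\Gamma)$, and the pair $(\omega_N^{n+\frac{1}{2}}, \zeta_N^{n+\frac{1}{2}})$ solves \eqref{eq:weak form plate subprobl}.

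For the energy equality, I would test the second equation with the natural choice $\varphi = \zeta_N^{n+\frac{1}{2}} \in \rH_0^2(\Gamma)$. The inertial term is handled by the standard discrete identity
\begin{equation*}
  2(a-b)\cdot a = |a|^2 - |b|^2 + |a-b|^2,
\end{equation*}
giving
\begin{equation*}
  \rhop \int_\Gamma \frac{\zeta_N^{n+\frac{1}{2}}-\zeta_N^n}{\Delta t}\cdot \zeta_N^{n+\frac{1}{2}} \srd S = \frac{\rhop}{2\Delta t}\Bigl(\|\zeta_N^{n+\frac{1}{2}}\|_{\rL^2(\Gamma)}^2 - \|\zeta_N^n\|_{\rL^2(\Gamma)}^2 + \|\zeta_N^{n+\frac{1}{2}} - \zeta_N^n\|_{\rL^2(\Gamma)}^2\Bigr).
\end{equation*}
For the elastic term, I would use the first equation in the form $\Delta t\, \Delta \zeta_N^{n+\frac{1}{2}} = \Delta \omega_N^{n+\frac{1}{2}} - \Delta \omega_N^{n-\frac{1}{2}}$ (legitimate since both displacements lie in $\rH_0^2(\Gamma)$), and apply the same identity to $(\Delta \omega_N^{n+\frac{1}{2}}, \Delta \omega_N^{n-\frac{1}{2}})$, obtaining
\begin{equation*}
  \int_\Gamma \Delta \omega_N^{n+\frac{1}{2}} \cdot \Delta \zeta_N^{n+\frac{1}{2}} \srd S = \frac{1}{2\Delta t}\Bigl(\|\Delta \omega_N^{n+\frac{1}{2}}\|_{\rL^2(\Gamma)}^2 - \|\Delta \omega_N^{n-\frac{1}{2}}\|_{\rL^2(\Gamma)}^2 + \|\Delta(\omega_N^{n+\frac{1}{2}} - \omega_N^{n-\frac{1}{2}})\|_{\rL^2(\Gamma)}^2\Bigr).
\end{equation*}
Multiplying the resulting equation by $\Delta t$ yields exactly the asserted energy equality.

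I do not expect a genuine obstacle here: both parts are routine consequences of Lax--Milgram and the discrete integration-by-parts identity. The only mild care is in justifying that the first equation in \eqref{eq:weak form plate subprobl} may be used to replace $\Delta \zeta_N^{n+\frac{1}{2}}$ by a difference of $\Delta \omega$'s, which is immediate since the relation between $\omega_N^{n+\frac{1}{2}}$, $\omega_N^{n-\frac{1}{2}}$, and $\zeta_N^{n+\frac{1}{2}}$ holds in $\rH_0^2(\Gamma)$ and the bi-Laplacian acts continuously on this space into $\rH^{-2}(\Gamma)$.
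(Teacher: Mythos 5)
Your proof is correct and follows the same route the paper indicates (reduction to a coercive variational problem solved by Lax--Milgram, then testing with $\zeta_N^{n+\frac{1}{2}}$ and using the discrete product identity), consistent with the paper's reference to Lemma~6.1 in \cite{KCM:24}. The only cosmetic remark is that once all three of $\omega_N^{n\pm\frac{1}{2}}$ and $\zeta_N^{n+\frac{1}{2}}$ lie in $\rH_0^2(\Gamma)$, the identity $\Delta\omega_N^{n+\frac{1}{2}}-\Delta\omega_N^{n-\frac{1}{2}}=\Delta t\,\Delta\zeta_N^{n+\frac{1}{2}}$ already holds in $\rL^2(\Gamma)$, so the detour through $\rH^{-2}(\Gamma)$ in your closing remark is unnecessary.
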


The proof is a consequence of Lax--Milgram lemma, similar to the proof of Lemma 6.1 in \cite{KCM:24}.

\subsection{The fluid and Biot subproblem}\label{ssec:fluid & Biot subprobl}

In this subproblem the fluid and Biot quantities are updated.
However, by the kinematic coupling condition~\eqref{eq:cont of displacement} involving the Biot medium and plate displacement, it is necessary to update the plate velocity as well.
Thus, only the plate displacement stays intact, i.e., $\omega_N^{n+1} = \omega_N^{n+\frac{1}{2}}$, while the other quantities are recovered from the corresponding weak formulation.

For the weak formulation, we define the solution and test space by
\begin{equation}\label{eq:sol & test space fluid & Biot subprobl}
    \begin{aligned}
        \cV_N^{n+1}
        &\coloneqq \bigl\{(\bfu,\zeta,\bfeta,p) \in \Vf^{\omega_N^n} \times \rH_0^2(\Gamma) \times \Vd \times \Vp\bigr\} \tand\\
        \cQ_N^{n+1}
        &\coloneqq \{(\bfv,\varphi,\bfpsi,r) \in \Vf^{\omega_N^n} \times \rH_0^2(\Gamma) \times \Vd \times \Vp : \bfpsi = \varphi \bfe_3 \ton \Gamma\},
    \end{aligned}
\end{equation}
respectively, where $\Vf^{\omega_N^n}$, $\Vd$ and $\Vp$ are given in \eqref{eq:spat fluid sol space}, \eqref{eq:Biot displacement sol space}$_1$ and \eqref{eq:Biot pore pressure sol space}$_1$.

The weak formulation of the present fluid and Biot subproblem is now given as follows.

\begin{defn}\label{def:weak sol to fluid and Biot subproblem}
We say that the fluid and Biot subproblem in the splitting scheme admits a weak solution if there are $(\bfu_N^{n+1},\zeta_N^{n+1},\bfeta_N^{n+1},p_N^{n+1}) \in \cV_N^{n+1}$ such that for all test functions $(\bfv,\varphi,\bfpsi,r) \in \cQ_N^{n+1}$ defined on the (fixed) reference domain, the following holds:
\begin{equation*}
    \begin{aligned}
        &\int_{\Omf} \Bigl(1 + \frac{\omega_N^n}{R}\Bigr) \dbfu_N^{n+1} \cdot \bfv \srd \bfx + 2 \nu \int_{\Omf} \Bigl(1 + \frac{\omega_N^n}{R}\Bigr)\bfD_\rf^{\omega_N^n}(\bfu_N^{n+1}) : \bfD_\rf^{\omega_N^n}(\bfv) \srd \bfx\\
        &+\int_{\Gamma} \Bigl(\frac{1}{2}\bfu_N^{n+1} \cdot \bfu_N^n - p_N^{n+1}\Bigr)(\bfpsi - \bfv) \cdot \bfn^{\omega_N^n} \srd S\\
        &+ \frac{1}{2}\int_{\Omf} \Bigl(1 + \frac{\omega_N^n}{R}\Bigr) \Bigl[\Bigl(\Bigl(\Bigl(\bfu_N^n - \zeta_N^{n+\frac{1}{2}}\frac{R+z}{R}\bfe_3\Bigr) \cdot \nabla_\rf^{\omega_N^n}\Bigr)\bfu_N^{n+1}\Bigr) \cdot \bfv\\
        &\qquad -\Bigl(\Bigl(\Bigl(\bfu_N^n - \zeta_N^{n+\frac{1}{2}}\frac{R+z}{R}\bfe_3\Bigr) \cdot \nabla_\rf^{\omega_N^n}\Bigr)\bfv\Bigr) \cdot \bfu_N^{n+1}\Bigr] \srd \bfx + \frac{1}{2R} \int_{\Omf} \zeta_N^{n+\frac{1}{2}} \bfu_N^{n+1} \cdot \bfv \srd \bfx\\
        &+ \frac{1}{2}\int_{\Gamma} (\bfu_N^{n+1} - \dbfeta_N^{n+1}) \cdot \bfn^{\omega_N^n} (\bfu_N^n \cdot \bfv) \srd S +\beta\sum_{i=1}^2 \int_{\Gamma}\cJ_\Gamma^{\omega_N^n} (\dbfeta_N^{n+1} - \bfu_N^{n+1}) \cdot \bftau_i^{\omega_N^n}(\bfpsi - \bfv) \cdot \bftau_i^{\omega_N^n} \srd S\\
        &+\rhob\int_{\Omb} \Bigl(\frac{\dbfeta_N^{n+1}-\dbfeta_N^n}{\Delta t}\Bigr) \cdot \bfpsi \srd \bfx + \rhop\int_{\Gamma} \Bigl(\frac{\zeta_N^{n+1}-\zeta_N^{n+\frac{1}{2}}}{\Delta t}\Bigr) \varphi \srd S + 2 \mue \int_{\Omb} \bfD(\bfeta_N^{n+1}) : \bfD(\bfpsi) \srd \bfx\\
        &+ \lambde \int_{\Omb} (\nabla \cdot \bfeta_N^{n+1})(\nabla \cdot \bfpsi) \srd \bfx + 2 \muv \int_{\Omb} \bfD(\dbfeta_N^{n+1}) : \bfD(\bfpsi) \srd \bfx + \lambdv \int_{\Omb} (\nabla \cdot \dbfeta_N^{n+1})(\nabla \cdot \bfpsi) \srd \bfx\\
        &- \alpha \int_{\Omb} \cJ_\rb^{(\bfeta_N^n)^\delta} p_N^{n+1} \nabla_\rb^{(\bfeta_N^n)^\delta} \cdot \bfpsi \srd \bfx + c_0 \int_{\Omb} \frac{p_N^{n+1}-p_N^n}{\Delta t} r \srd \bfx - \alpha \int_{\Omb} \cJ_\rb^{(\bfeta_N^n)^\delta} \dbfeta_N^{n+1} \cdot \nabla_\rb^{(\bfeta_N^n)^\delta} r \srd \bfx\\
        &- \alpha \int_{\Gamma} (\dbfeta_N^{n+1} \cdot \bfn^{(\bfeta_N^n)^\delta}) r \srd S + \kappa \int_{\Omb} \cJ_\rb^{(\bfeta_N^n)^\delta} \nabla_\rb^{(\bfeta_N^n)^\delta} p_N^{n+1} \cdot \nabla_\rb^{(\bfeta_N^n)^\delta} r \srd \bfx - \int_{\Gamma} ((\bfu_N^{n+1} - \dbfeta_N^{n+1}) \cdot \bfn^{\omega_N^n}) r \srd S = 0
    \end{aligned}
\end{equation*}
as well as $\int_\Gamma \Bigl(\frac{\bfeta_N^{n+1}-\bfeta_N^n}{\Delta t}\Bigr) \cdot \bfphi \srd S = \int_\Gamma \zeta_N^{n+1} \bfe_3 \cdot \bfphi \srd S$ for all $\bfphi \in \rL^2(\Gamma)^3$.
\end{defn}

Note that in the case $n=0$, a negative subscript appears in the definition of $\dbfeta_N^0$ in the Euler discretization.
Thus, we set $\dbfeta_N^0$ to be the initial plate velocity $\zeta_0 \bfe_3$.

Below, we state the existence result of a unique weak solution to the fluid and Biot subproblem as discussed in \autoref{def:weak sol to fluid and Biot subproblem}.

\begin{lem}\label{lem:ex of weak sol to fluid & Biot subprobl}
The semidiscretized fluid and Biot subproblem in \autoref{def:weak sol to fluid and Biot subproblem} admits a unique weak solution in the sense of \autoref{def:weak sol to fluid and Biot subproblem} under the following assumptions:
\begin{enumerate}
    \item {\bf{ Assumption~1.A (Boundedness of plate displacement away from $R$):}}
    There is a positive constant~$R_0$ so that $|\omega_N^{k+\frac{i}{2}}| \le R_0 < R$ for all $k=0,1,\dots,n$ and $i=0,1$.
    \item {\bf{Assumption~2.A (Invertibility of $\Id + (\bfeta_N^n)^{\delta}$):}}
    The map $\Id + (\bfeta_N^n)^{\delta} \colon \Omb \to (\Omb)_{N}^{n,\delta}$,
    which maps the fixed Biot domain $\Omb$ onto $(\Omb)_{N}^{n,\delta}$, is invertible.
    \item{\bf{Assumption~3.A (Positivity of $\cJ_\rb^{(\bfeta_N^n)^\delta} = \det(\bfI + \nabla (\bfeta_N^n)^\delta)$):}} There exists a constant $c_0 > 0$ such that for all $N$, we have $\cJ_\rb^{(\bfeta_N^n)^\delta} = \det\bigl(\bfI + \nabla (\bfeta_N^n)^\delta\bigr) \ge c_0 > 0$ for all $n=0,1,\dots,N$.
\end{enumerate}
Moreover, the resulting weak solution to the fluid and Biot subproblem satisfies the following energy equality:
\begin{equation*}
    \begin{aligned}
        &\quad \frac{1}{2} \int_{\Omf} \Bigl(1+\frac{\omega_N^{n+1}}{R}\Bigr) \bigl|\bfu_N^{n+1}\bigr|^2 \srd \bfx + \frac{\rhob}{2} \int_{\Omb} \bigl|\dbfeta_N^{n+1} \bigr|^2 \srd \bfx + \frac{\rhop}{2} \int_\Gamma \bigl|\zeta_N^{n+1}\bigr|^2 \srd S\\
        &\quad + \frac{c_0}{2} \int_{\Omb} \bigl|p_N^{n+1}\bigr|^2 \srd \bfx + \mue \int_{\Omb} |\bfD(\bfeta_N^{n+1})|^2 \srd \bfx + \frac{\lambde}{2} \int_{\Omb} |\nabla \cdot \bfeta_N^{n+1}|^2 \srd \bfx\\
        &\quad + 2 \nu (\Delta t)\int_{\Omf} \Bigl(1 + \frac{\omega_N^n}{R}\Bigr)\bigl|\bfD_\rf^{\omega_N^n}(\bfu_N^{n+1})\bigr|^2 \srd \bfx + 2\muv (\Delta t) \int_{\Omb} |\bfD(\dbfeta_N^{n+1})|^2 \srd \bfx + \lambdv (\Delta t) \int_{\Omb} |\nabla \cdot \dbfeta_N^{n+1}|^2 \srd \bfx\\
        &\quad + \kappa (\Delta t) \int_{\Omb} \cJ_\rb^{(\bfeta_N^n)^\delta} \bigl|\nabla_\rb^{(\bfeta_N^n)^\delta} p_N^{n+1}\bigr|^2 \srd \bfx + \beta(\Delta t)\sum_{i=1}^2 \int_{\Gamma} \cJ_\Gamma^{\omega_N^n} \bigl|(\dbfeta_N^{n+1} - \bfu_N^{n+1}) \cdot \bftau_i^{\omega_N^n}\bigr|^2 \srd S\\
        &\quad + \frac{1}{2} \int_{\Omf} \Bigl(1 + \frac{\omega_N^n}{R}\Bigr) \bigl|\bfu_N^{n+1} - \bfu_N^n\bigr|^2 \srd \bfx + \frac{\rhob}{2} \int_{\Omb} \bigl|\dbfeta_N^{n+1} - \dbfeta_N^{n} \bigr|^2 \srd \bfx + \frac{\rhop}{2} \int_\Gamma \bigl|\zeta_N^{n+1} - \zeta_N^{n}\bigr|^2 \srd S\\
        &\quad + \frac{c_0}{2} \int_{\Omb} \bigl|p_N^{n+1}-p_N^n\bigr|^2 \srd \bfx + \mue \int_{\Omb} |\bfD(\bfeta_N^{n+1} - \bfeta_N^n)|^2 \srd \bfx + \frac{\lambde}{2} \int_{\Omb} |\nabla \cdot (\bfeta_N^{n+1} - \bfeta_N^{n})|^2 \srd \bfx\\
        &= \frac{1}{2} \int_{\Omf} \Bigl(1 + \frac{\omega_N^n}{R}\Bigr) \bigl|\bfu_N^n\bigr|^2 \srd \bfx + \frac{\rhob}{2} \int_{\Omb} \bigl|\dbfeta_N^{n} \bigr|^2 \srd \bfx + \frac{\rhop}{2} \int_\Gamma \bigl|\zeta_N^{n}\bigr|^2 \srd S + \frac{c_0}{2} \int_{\Omb} \bigl|p_N^n\bigr|^2 \srd \bfx\\
        &\quad + \mue \int_{\Omb} |\bfD(\bfeta_N^n)|^2 \srd \bfx + \frac{\lambde}{2} \int_{\Omb} |\nabla \cdot \bfeta_N^{n}|^2 \srd \bfx.
    \end{aligned}
\end{equation*}
\end{lem}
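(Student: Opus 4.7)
The plan is to interpret the fluid and Biot subproblem as a linear elliptic system on the finite-energy space $\cV_N^{n+1}$ defined in \eqref{eq:sol & test space fluid & Biot subprobl} and obtain existence and uniqueness by a Lax--Milgram argument, followed by a direct computation to establish the energy equality. First, I would recast the formulation in \autoref{def:weak sol to fluid and Biot subproblem} as the problem of finding $(\bfu_N^{n+1},\zeta_N^{n+1},\bfeta_N^{n+1},p_N^{n+1})$ in the affine subspace of $\cV_N^{n+1}$ built from the kinematic constraint $\bfeta_N^{n+1} = \bfeta_N^n + (\Delta t)\,\zeta_N^{n+1}\bfe_3$ on $\Gamma$, so that after eliminating $\bfeta_N^{n+1}$ in favor of $\dbfeta_N^{n+1}$ and $\bfeta_N^n$ (and using $\omega_N^{n+1}=\omega_N^{n+\nicefrac{1}{2}}$), the unknowns are effectively $(\bfu_N^{n+1},\zeta_N^{n+1},\dbfeta_N^{n+1},p_N^{n+1})$. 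Writing all previously-computed quantities as data, the problem takes the form $a_N^{n+1}(U,V) = L_N^{n+1}(V)$ for all $V\in\cQ_N^{n+1}$, where $a_N^{n+1}$ is bilinear in $U,V$ and $L_N^{n+1}$ is linear in $V$.

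The core work is to verify that $a_N^{n+1}$ is continuous and coercive on the constrained test space. Continuity follows from Assumption~1.A (which guarantees $1+\nicefrac{\omega_N^n}{R}\ge 1-\nicefrac{R_0}{R}>0$, so that the transformed gradient $\nabla_\rf^{\omega_N^n}$ from \eqref{eq:transformed fluid grad} is bounded and $\cJ_\rf^{\omega_N^n}$ is uniformly bounded), Assumption~2.A (so $\nabla_\rb^{(\bfeta_N^n)^\delta}$ in \eqref{eq:transformed Biot grad} is well-defined via $(\bfI+\nabla(\bfeta_N^n)^\delta)^{-1}$), Assumption~3.A (so $\cJ_\rb^{(\bfeta_N^n)^\delta}$ is bounded above and below), and the observation that $(\bfeta_N^n)^\delta$ and its gradient are smooth by construction via convolution with $\sigma_\delta$. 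The skew-symmetric convection-type terms (those with $\bfu_N^n - \zeta_N^{n+\nicefrac{1}{2}}\tfrac{R+z}{R}\bfe_3$ and the surface term containing $(\bfu_N^{n+1}-\dbfeta_N^{n+1})\cdot\bfn^{\omega_N^n}$) vanish when tested with $U$ itself, and so contribute nothing to coercivity. Coercivity therefore comes from the quadratic data-independent terms: the mass terms $\tfrac{1}{\Delta t}\int_{\hOmb}|\dbfeta_N^{n+1}|^2$, $\tfrac{1}{\Delta t}\int_{\Gamma}|\zeta_N^{n+1}|^2$ and $\tfrac{c_0}{\Delta t}\int_{\Omb}|p_N^{n+1}|^2$; the elastic term $2\mue\|\bfD(\bfeta_N^{n+1})\|^2 + \lambde\|\nabla\cdot\bfeta_N^{n+1}\|^2$ (controlled via Korn on the Lipschitz domain $\Omb$); the viscous fluid term $2\nu\int_{\Omf}(1+\nicefrac{\omega_N^n}{R})|\bfD_\rf^{\omega_N^n}(\bfu_N^{n+1})|^2$; the Darcy dissipation $\kappa\int_{\Omb}\cJ_\rb^{(\bfeta_N^n)^\delta}|\nabla_\rb^{(\bfeta_N^n)^\delta} p_N^{n+1}|^2$; and the Beavers--Joseph--Saffman boundary friction.

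The main obstacle is the coercivity of the fluid viscous term, since $\Omf^{\omega_N^n}$ (or equivalently the image of $\hOmf$ under $\hbfPhi_\rf^{\omega_N^n}$) is only $\rC^{0,r}$ and not Lipschitz, as discussed in the introduction around \eqref{eq:reg of omega in fin energy space}. I would therefore invoke the Korn-type inequality of \cite[Proposition~2.9]{Len:14} for non-Lipschitz domains (at the price of a lower integrability exponent) to control $\|\nabla_\rf^{\omega_N^n}\bfu_N^{n+1}\|$ by $\|\bfD_\rf^{\omega_N^n}(\bfu_N^{n+1})\|$ in an appropriate $\rL^p$-sense, then transfer the estimate back to the fixed reference configuration using the bounds on $\cJ_\rf^{\omega_N^n}$. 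Together with the Dirichlet boundary condition on $\del\Omf\setminus\Gamma$ and Poincar\'{e}'s inequality, this produces a uniform lower bound by $\|\bfu_N^{n+1}\|_{\Vf^{\omega_N^n}}^2$ in a suitable sense. Similarly, Poincar\'{e} on $\Omb$ (with the Dirichlet condition on $\del\Omb\setminus\Gamma$ in \eqref{eq:Biot pore pressure sol space}) turns the Darcy term into a full $\rH^1$-coercive contribution for $p_N^{n+1}$. With coercivity and continuity established, Lax--Milgram yields the unique solution $U\in\cV_N^{n+1}$.

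To obtain the energy equality, I would test the weak formulation with $(\bfv,\varphi,\bfpsi,r)=(\bfu_N^{n+1},\zeta_N^{n+1},\dbfeta_N^{n+1},p_N^{n+1})$, which is admissible because the kinematic constraint $\dbfeta_N^{n+1}|_\Gamma = \zeta_N^{n+1}\bfe_3$ holds by construction (this is the second equation of \autoref{def:weak sol to fluid and Biot subproblem}). The skew-symmetric convective bulk and interface terms cancel identically. For the remaining discrete time-derivative terms I would use the identity
\begin{equation*}
    (a-b)\cdot a = \tfrac{1}{2}\bigl(|a|^2 - |b|^2 + |a-b|^2\bigr)
\end{equation*}
applied to $\dbfeta_N^{n+1}-\dbfeta_N^n$, $\zeta_N^{n+1}-\zeta_N^{n+\nicefrac{1}{2}}$, $p_N^{n+1}-p_N^n$, as well as $\bfD(\bfeta_N^{n+1})-\bfD(\bfeta_N^n)$ (the last via the relation $\bfeta_N^{n+1}-\bfeta_N^n = (\Delta t)\dbfeta_N^{n+1}$), giving the left-hand side the desired sum-of-squares form with the ``error'' terms $|\bfu_N^{n+1}-\bfu_N^n|^2$, $|\dbfeta_N^{n+1}-\dbfeta_N^n|^2$ etc. The Biot coupling terms involving $\alpha$ cancel between the momentum and pressure equations (the $p_N^{n+1}(\nabla_\rb^{(\bfeta_N^n)^\delta}\cdot\dbfeta_N^{n+1})$-type integrals are identical with opposite signs), and the interface $\alpha$-boundary term on $\Gamma$ cancels against the contribution from the divergence theorem applied to the Biot displacement term; this mirrors the computation in \autoref{sec:EnergyEquality1}. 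The remaining interface terms combine via the kinematic identity to produce $\frac{1}{2}\int_\Gamma(\bfu_N^{n+1}\cdot\bfn^{\omega_N^n})|\bfu_N^{n+1}|^2\,\srd S$-type cancellations compatible with the transport structure discussed around \eqref{eq:der weak form fixed fluid dom first term}. Collecting these identities yields the stated energy equality.
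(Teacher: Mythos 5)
Your proposal correctly identifies the overall Lax--Milgram strategy, the elimination of $\bfeta_N^{n+1}$ in favor of $\dbfeta_N^{n+1}$ via the interface constraint, and the test-with-the-solution / polarization-identity structure of the energy equality. Those match the paper in spirit. Two points, however, deserve closer scrutiny, and one of them is a genuine error.

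\textbf{Missing rescaling.} The paper flags as the first main difficulty that the bilinear form, as written, is \emph{not} coercive on $\Vf^{\omega_N^n}\times\Vd\times\Vp$; it attributes this to a hyperbolic--parabolic scaling mismatch (the plate and Biot displacements obey second-order-in-time equations, the fluid and Darcy pressure obey first-order-in-time equations) and fixes it by rescaling the test functions $\bfv\mapsto(\Delta t)\bfv$ and $r\mapsto(\Delta t)r$. You do not mention this and instead assert directly that coercivity ``comes from the quadratic data-independent terms.'' That is precisely the step the paper says requires the rescaling, and you would need to justify why the cross-coupling interface integrals (in particular the pressure--velocity terms on $\Gamma$) do not spoil the lower bound when the bilinear form acts on test functions at disparate natural scales.

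\textbf{Wrong tool for the fluid coercivity.} You invoke the Korn-type inequality of \cite[Proposition~2.9]{Len:14} for non-Lipschitz domains to close the coercivity of the fluid viscous term. That inequality only controls $\nabla\bfu$ in $\rL^p$ for $p<2$; it cannot deliver the $\rL^2$ gradient bound that the Hilbert norm on $\Vf^{\omega_N^n}$ (which by construction is the $\rH^1$ norm of the moving domain) requires for a Lax--Milgram argument. ``In a suitable sense,'' as you write, is not enough: a $\rW^{1,p}$-lower bound with $p<2$ does not give coercivity in $\rH^1$, and $\rW^{1,p}$ is not a Hilbert space. In the paper, Lengeler's inequality is reserved for the \emph{uniform} estimates in \autoref{lem:unif bddness of approx sols}, not for existence at fixed $n,N$; the existence proof is deferred to the corresponding 2D computation in \cite{KCM:24}, Lemma~6.2, together with the rescaling and the explicit Korn inequality for $\Omb$.

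\textbf{Minor point on the Biot Korn inequality.} You write that the elastic term is ``controlled via Korn on the Lipschitz domain $\Omb$'' as if a textbook result applied, but the space $\Vd$ of \eqref{eq:Biot displacement sol space} imposes Dirichlet conditions only on part of $\del\Omb$ and only on the tangential components along $\hGam$. The standard Korn inequalities (full Dirichlet or trace-free average) do not directly apply; the paper proves the needed inequality by an explicit computation in \autoref{lem:Korn's ineq Biot}. Citing that lemma, or noting that it needs a separate argument, would close this small gap.
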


The proof of \autoref{lem:ex of weak sol to fluid & Biot subprobl} follows from the Lax--Milgram lemma after addressing two main challenges. The first is the challenge associated with the fact that
the bilinear form corresponding to the weak formulation is not coercive on $V_\rf^{\omega_N^n} \times \Vd \times \Vp$ due to a mismatch in hyperbolic-parabolic scaling.
This can be remedied by rescaling the test functions as follows:
\begin{equation*}
    \bfv \mapsto (\Delta t) \bfv \tand r \mapsto (\Delta t) r.
\end{equation*}
We note that the rescaled test functions are admissible as they satisfy $((\Delta t)^{-1} \bfv,\varphi,\bfpsi,(\Delta t)^{-1} r) \in \cQ_N^{n+1}$.

The second challenge is associated with the fact that the validity of Korn's inequality for the Biot domain {\em cannot} be guaranteed a priori.
However, an explicit calculation reveals the following Korn inequality.

\begin{lem}[Korn's inequality for $\Omb$]\label{lem:Korn's ineq Biot}
For $\bfeta \in \Vd$, it holds that $\int_{\Omb} |\bfD(\bfeta)|^2 \srd \bfx \ge \frac{1}{2} \int_{\Omb}|\nabla \bfeta|^2 \srd \bfx$.
\end{lem}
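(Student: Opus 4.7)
The plan is to start from the pointwise algebraic identity
\begin{equation*}
2|\bfD(\bfeta)|^2 = |\nabla\bfeta|^2 + \sum_{i,j=1}^3 \del_i\eta_j\,\del_j\eta_i,
\end{equation*}
integrate over $\Omb$, and then show that the cross term is non-negative; rearranging will immediately yield $\int_{\Omb}|\bfD(\bfeta)|^2 \ge \tfrac{1}{2}\int_{\Omb}|\nabla\bfeta|^2$.

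To handle the cross term I would argue by density, since smooth fields satisfying the partial Dirichlet conditions encoded in $\Vd$ are dense in $\Vd$, and hence one can assume $\bfeta \in \rC^\infty(\overline{\Omb})^3$. Two successive integrations by parts, combined with the swap of mixed partials that collapses $\del_i\del_j\eta_i$ into $\del_j(\nabla\cdot\bfeta)$, produce the identity
\begin{equation*}
\int_{\Omb}\sum_{i,j}\del_i\eta_j\,\del_j\eta_i\,d\bfx = \int_{\Omb}(\nabla\cdot\bfeta)^2\,d\bfx + \sum_{i,j}\int_{\del\Omb}\eta_j\,\del_j\eta_i\,n_i\,dS - \int_{\del\Omb}(\bfeta\cdot\bfn)(\nabla\cdot\bfeta)\,dS.
\end{equation*}
The interior contribution is manifestly non-negative, so the argument reduces to verifying that the two boundary integrals sum to a non-negative quantity (in fact, to zero).

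The boundary $\del\Omb$ decomposes into the lateral faces, the top $\{\hz=R\}$, and the bottom interface $\hGam$. On the lateral and top pieces the full Dirichlet condition $\bfeta=0$ kills both boundary integrands outright, so the only delicate face is $\hGam$, where $\bfn=(0,0,-1)$ and only $\eta_1=\eta_2=0$ is imposed. There the tangential derivatives $\del_1\eta_1,\,\del_2\eta_2,\,\del_1\eta_2,\,\del_2\eta_1$ all vanish, so $\nabla\cdot\bfeta=\del_3\eta_3$ on $\hGam$, and a short direct computation shows that both $\hGam$-integrands collapse to the single expression $-\eta_3\,\del_3\eta_3$ and hence cancel exactly. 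This cancellation is the only real obstacle: it is precisely the interplay between the flatness of $\hGam$, its perpendicularity to $\bfe_3$, and the partial Dirichlet condition $\eta_1=\eta_2=0$ that forces the two boundary integrands to agree, which is also what yields the clean constant $\tfrac{1}{2}$ rather than a smaller, geometry-dependent one. Once the cancellation is in place the cross term equals $\int_{\Omb}(\nabla\cdot\bfeta)^2\ge 0$, and Korn's inequality follows.
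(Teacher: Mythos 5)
Your argument is correct and is the standard Korn-identity computation: write $2|\bfD(\bfeta)|^2 = |\nabla\bfeta|^2 + \sum_{i,j}\del_i\eta_j\,\del_j\eta_i$, integrate by parts twice to turn the cross term into $\int_{\Omb}(\nabla\cdot\bfeta)^2$ plus boundary contributions, and check that the boundary terms vanish under the mixed boundary conditions encoded in $\Vd$. The verification on $\hGam$ is the only non-trivial face, and your computation there is right: with $\bfn = -\bfe_3$, $\eta_1 = \eta_2 = 0$ on $\hGam$, the tangential derivatives $\del_1\eta_1,\del_2\eta_2,\del_1\eta_2,\del_2\eta_1$ vanish, so $\nabla\cdot\bfeta|_{\hGam} = \del_3\eta_3$, and both boundary integrands reduce to $-\eta_3\,\del_3\eta_3$ and cancel in the identity. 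The paper itself gives no proof, deferring to the two-dimensional Prop.\ 6.1 of \cite{KCM:24}; your argument is the natural three-dimensional analogue of that computation, so it matches the intended approach. The one point worth flagging explicitly is the density step: you invoke density of smooth fields satisfying the mixed boundary conditions of $\Vd$ in $\Vd$. For the box $\Omb = (0,L)^2\times(0,R)$ with Dirichlet data on five faces and partial Dirichlet data ($\eta_1=\eta_2=0$) on $\hGam$ this is true and standard, but it is not entirely trivial and deserves at least a reference if this were written out in full.
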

The proof is analogous to the one of \cite[Prop.~6.1]{KCM:24}, so we omit the details.

The rest of the proof of \autoref{lem:ex of weak sol to fluid & Biot subprobl} follows the same calculations as the proof of Lemma 6.2 in \cite{KCM:24}.

\bigskip
Our goal is to prove that the sequences of approximate solutions defined by this splitting scheme, converge to a solution of the regularized, nonlinearly coupled FPSI problem, as the time step $\Delta t$ tends to zero. The first step in this direction is to obtain uniform estimate on approximate solution. For this purpose it is useful to present the semidiscretized scheme in monolithic form, and derive the appropriate energy estimates. We present those estimates in the next section. 

\subsection{Weak formulation and energy equality for the coupled semidiscrete problem}\label{ssec:weak form & energy eq for coupled semi-discr probl}
We derive the monolithic (coupled) formulation of the semidiscrete problem by combining the weak formulation of the semidiscretized plate subproblem \eqref{eq:weak form plate subprobl} with the weak formulation of the semidiscretized fluid and Biot subproblem in \autoref{def:weak sol to fluid and Biot subproblem}, and using the fact that 
\begin{equation*}
    \rhop \int_\Gamma \Bigl(\frac{\zeta_N^{n+\frac{1}{2}}-\zeta_N^n}{\Delta t}\Bigr) \cdot \varphi \srd S + \rhop \int_\Gamma \Bigl(\frac{\zeta_N^{n+1}-\zeta_N^{n+\frac{1}{2}}}{\Delta t}\Bigr) \cdot \varphi \srd S = \rhop \int_\Gamma \Bigl(\frac{\zeta_N^{n+1}-\zeta_N^{n}}{\Delta t}\Bigr) \cdot \varphi \srd S.
\end{equation*}
The following monolithic weak formulation holds:
\begin{defn}\label{def:weak sol to coupled problem}
For each fixed $n \in \{1,...,N\}$, the weak formulation of the semidiscretized problem above, written in monolithic form, reads:  the functions $(\bfu_N^{n+1},\zeta_N^{n+1},\bfeta_N^{n+1},p_N^{n+1}) \in \cV_N^{n+1}$ and $\omega_N^{n+\frac{1}{2}}$, $\zeta_N^{n+\frac{1}{2}} \in \rH_0^2(\Gamma)$ are a weak solution if for all test functions~$(\bfv,\varphi,\bfpsi,r) \in \cQ_N^{n+1}$, $\phi \in \rL^2(\Gamma)$ and $\bfphi \in \rL^2(\Gamma)^3$, they satisfy the following weak formulation:
\begin{equation}\label{eq:weak form coupled semidiscr probl}
    \begin{aligned}
        &\int_{\Omf} \Bigl(1 + \frac{\omega_N^n}{R}\Bigr) \dbfu_N^{n+1} \cdot \bfv \srd \bfx + 2 \nu \int_{\Omf} \Bigl(1 + \frac{\omega_N^n}{R}\Bigr)\bfD_\rf^{\omega_N^n}(\bfu_N^{n+1}) : \bfD_\rf^{\omega_N^n}(\bfv) \srd \bfx\\
        &+\int_{\Gamma} \Bigl(\frac{1}{2}\bfu_N^{n+1} \cdot \bfu_N^n - p_N^{n+1}\Bigr)(\bfpsi - \bfv) \cdot \bfn^{\omega_N^n} \srd S\\
        &+ \frac{1}{2}\int_{\Omf} \Bigl(1 + \frac{\omega_N^n}{R}\Bigr) \Bigl[\Bigl(\Bigl(\Bigl(\bfu_N^n - \zeta_N^{n+\frac{1}{2}}\frac{R+z}{R}\bfe_3\Bigr) \cdot \nabla_\rf^{\omega_N^n}\Bigr)\bfu_N^{n+1}\Bigr) \cdot \bfv\\
        &\qquad -\Bigl(\Bigl(\Bigl(\bfu_N^n - \zeta_N^{n+\frac{1}{2}}\frac{R+z}{R}\bfe_3\Bigr) \cdot \nabla_\rf^{\omega_N^n}\Bigr)\bfv\Bigr) \cdot \bfu_N^{n+1}\Bigr] \srd \bfx + \frac{1}{2R} \int_{\Omf} \zeta_N^{n+\frac{1}{2}} \bfu_N^{n+1} \cdot \bfv \srd \bfx\\
        &+ \frac{1}{2}\int_{\Gamma} (\bfu_N^{n+1} - \dbfeta_N^{n+1}) \cdot \bfn^{\omega_N^n} (\bfu_N^n \cdot \bfv) \srd S +\beta\sum_{i=1}^2 \int_{\Gamma} \cJ_\Gamma^{\omega_N^n}(\dbfeta_N^{n+1} - \bfu_N^{n+1}) \cdot \bftau_i^{\omega_N^n}(\bfpsi - \bfv) \cdot \bftau_i^{\omega_N^n} \srd S\\
        &+\rhob\int_{\Omb} \Bigl(\frac{\dbfeta_N^{n+1}-\dbfeta_N^n}{\Delta t}\Bigr) \cdot \bfpsi \srd \bfx + \rhop\int_{\Gamma} \Bigl(\frac{\zeta_N^{n+1}-\zeta_N^{n}}{\Delta t}\Bigr) \varphi \srd S + 2 \mue \int_{\Omb} \bfD(\bfeta_N^{n+1}) : \bfD(\bfpsi) \srd \bfx\\
        &+ \lambde \int_{\Omb} (\nabla \cdot \bfeta_N^{n+1})(\nabla \cdot \bfpsi) \srd \bfx + 2 \muv \int_{\Omb} \bfD(\dbfeta_N^{n+1}) : \bfD(\bfpsi) \srd \bfx + \lambdv \int_{\Omb} (\nabla \cdot \dbfeta_N^{n+1})(\nabla \cdot \bfpsi) \srd \bfx\\
        &- \alpha \int_{\Omb} \cJ_\rb^{(\bfeta_N^n)^\delta} p_N^{n+1} \nabla_\rb^{(\bfeta_N^n)^\delta} \cdot \bfpsi \srd \bfx + c_0 \int_{\Omb} \frac{p_N^{n+1}-p_N^n}{\Delta t} r \srd \bfx - \alpha \int_{\Omb} \cJ_\rb^{(\bfeta_N^n)^\delta} \dbfeta_N^{n+1} \cdot \nabla_\rb^{(\bfeta_N^n)^\delta} r \srd \bfx\\
        &- \alpha \int_{\Gamma} (\dbfeta_N^{n+1} \cdot \bfn^{(\bfeta_N^n)^\delta}) r \srd S + \kappa \int_{\Omb} \cJ_\rb^{(\bfeta_N^n)^\delta} \nabla_\rb^{(\bfeta_N^n)^\delta} p_N^{n+1} \cdot \nabla_\rb^{(\bfeta_N^n)^\delta} r \srd \bfx\\
        &- \int_{\Gamma} \bigl((\bfu_N^{n+1} - \dbfeta_N^{n+1}) \cdot \bfn^{\omega_N^n}\bigr) r \srd S+\int_\Gamma \Delta \omega_N^{n+\frac{1}{2}} \cdot \Delta \varphi \srd S = 0 \taswellas
    \end{aligned}
\end{equation}
\begin{equation}\label{eq:ids coupled probl}
    \int_\Gamma \Bigl(\frac{\omega_N^{n+\frac{1}{2}}-\omega_N^{n-\frac{1}{2}}}{\Delta t}\Bigr) \phi \srd S = \int_\Gamma \zeta_N^{n+\frac{1}{2}} \phi \srd S \tand \int_\Gamma \Bigl(\frac{\bfeta_N^{n+1}-\bfeta_N^n}{\Delta t}\Bigr) \cdot \bfphi \srd S = \int_\Gamma \zeta_N^{n+1} \bfe_3 \cdot \bfphi \srd S.
\end{equation}
\end{defn}

By recalling that $\bfu_N^{n+\frac{1}{2}} = \bfu_N^n$, $\bfeta_N^{n+\frac{1}{2}} = \bfeta_N^n$, $p_N^{n+\frac{1}{2}} = p_N^n$ and $\omega_N^{n+1} = \omega_N^{n+\frac{1}{2}}$, and introducing the following notation for discrete total energy $E_N^{n+\frac{i}{2}}$ and  discrete dissipation $D_N^{n+1}$ for $i=1,2$:
\begin{equation*}
    \begin{aligned}
        E_N^{n+\frac{i}{2}}
        &\coloneqq \frac{1}{2} \int_{\Omf} \Bigl(1+\frac{\omega_N^{n}}{R}\Bigr) \bigl|\bfu_N^{n+\frac{i}{2}}\bigr|^2 \srd \bfx + \frac{\rhob}{2} \int_{\Omb} \bigl|\dbfeta_N^{n+\frac{i}{2}} \bigr|^2 \srd \bfx + \frac{\rhop}{2} \int_\Gamma \bigl|\zeta_N^{n+\frac{i}{2}}\bigr|^2 \srd S\\
        &\quad + \frac{c_0}{2} \int_{\Omb} \bigl|p_N^{n+\frac{i}{2}}\bigr|^2 \srd \bfx + \mue \int_{\Omb} |\bfD(\bfeta_N^{n+\frac{i}{2}})|^2 \srd \bfx + \frac{\lambde}{2} \int_{\Omb} |\nabla \cdot \bfeta_N^{n+\frac{i}{2}}|^2 \srd \bfx + \frac{1}{2}\int_\Gamma |\Delta \omega_N^{n+\frac{i}{2}}|^2 \srd S,
    \end{aligned}
\end{equation*}
\begin{equation*}
    \begin{aligned}
        D_N^{n+1}
        &\coloneqq 2 \nu (\Delta t)\int_{\Omf} \Bigl(1 + \frac{\omega_N^n}{R}\Bigr)\bigl|\bfD_\rf^{\omega_N^n}(\bfu_N^{n+1})\bigr|^2 \srd \bfx + 2\muv (\Delta t) \int_{\Omb} |\bfD(\dbfeta_N^{n+1})|^2 \srd \bfx + \lambdv (\Delta t) \int_{\Omb} |\nabla \cdot \dbfeta_N^{n+1}|^2 \srd \bfx\\
        &\quad + \kappa (\Delta t) \int_{\Omb} \cJ_\rb^{(\bfeta_N^n)^\delta} \bigl|\nabla_\rb^{(\bfeta_N^n)^\delta} p_N^{n+1}\bigr|^2 \srd \bfx + \beta(\Delta t) \sum_{i=1}^2 \int_{\Gamma} \cJ_\Gamma^{\omega_N^n} \bigl|(\dbfeta_N^{n+1} - \bfu_N^{n+1}) \cdot \bftau_i^{\omega_N^n}\bigr|^2 \srd S,
    \end{aligned}
\end{equation*}
we can derive the following energy estimate:

\begin{lem}\label{lem:discr energy eqs coupled probl}
A weak solution to the semidiscretized coupled problem in \autoref{def:weak sol to coupled problem} satisfies
\begin{equation*}
    E_N^{n+\frac{1}{2}} + \frac{\rhop}{2} \int_\Gamma \bigl|\zeta_N^{n+\frac{1}{2}} - \zeta_N^n\bigr|^2 \srd S + \frac{1}{2} \int_\Gamma \bigl|\Delta (\omega_N^{n+\frac{1}{2}} - \omega_N^{n-\frac{1}{2}})\bigr|^2 \srd S = E_N^n \tand
\end{equation*}
% and
\begin{equation*}
    \begin{aligned}
        &E_N^{n+1} + D_N^{n+1} + \frac{1}{2} \int_{\Omf} \Bigl(1 + \frac{\omega_N^n}{R}\Bigr) \bigl|\bfu_N^{n+1} - \bfu_N^n\bigr|^2 \srd \bfx + \frac{\rhob}{2} \int_{\Omb} \bigl|\dbfeta_N^{n+1} - \dbfeta_N^{n} \bigr|^2 \srd \bfx + \frac{\rhop}{2} \int_\Gamma \bigl|\zeta_N^{n+1} - \zeta_N^{n}\bigr|^2 \srd S\\
        &\quad + \frac{c_0}{2} \int_{\Omb} \bigl|p_N^{n+1}-p_N^n\bigr|^2 \srd \bfx + \mue \int_{\Omb} |\bfD(\bfeta_N^{n+1} - \bfeta_N^n)|^2 \srd \bfx + \frac{\lambde}{2} \int_{\Omb} |\nabla \cdot (\bfeta_N^{n+1} - \bfeta_N^{n})|^2 \srd \bfx = E_N^{n+\frac{1}{2}}.
    \end{aligned}
\end{equation*}
\end{lem}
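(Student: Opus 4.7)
The plan is to derive each of the two equalities directly from the energy identities already established for the subproblems in \autoref{lem:ex of weak sol to plate subprobl} and \autoref{lem:ex of weak sol to fluid & Biot subprobl}, and then to account for the quantities that remain unchanged across each substep. The Lie-splitting structure is precisely tailored so that the discrete total energy $E_N^{n+\frac{i}{2}}$ splits additively into a plate-only part (updated in the first substep) and a fluid/Biot part (updated in the second substep), with a single shared geometric factor $1 + \omega_N^n/R$ in the fluid kinetic energy kept frozen across each full time step.

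For the first identity, I observe that during the plate substep only $\omega_N$ and $\zeta_N$ are updated, while $\bfu_N^{n+\frac{1}{2}} = \bfu_N^n$, $\bfeta_N^{n+\frac{1}{2}} = \bfeta_N^n$ and $p_N^{n+\frac{1}{2}} = p_N^n$. Hence, the fluid kinetic, Biot kinetic, pore-pressure, Biot elastic and Biot volumetric contributions to $E_N^{n+\frac{1}{2}}$ coincide term-by-term with the corresponding entries in $E_N^n$. Invoking the plate energy equality of \autoref{lem:ex of weak sol to plate subprobl} supplies the remaining balance
\begin{equation*}
\tfrac{\rhop}{2}\!\int_\Gamma |\zeta_N^{n+\frac{1}{2}}|^2 \srd S + \tfrac{1}{2}\!\int_\Gamma |\Delta \omega_N^{n+\frac{1}{2}}|^2 \srd S + \tfrac{\rhop}{2}\!\int_\Gamma |\zeta_N^{n+\frac{1}{2}} - \zeta_N^n|^2 \srd S + \tfrac{1}{2}\!\int_\Gamma |\Delta(\omega_N^{n+\frac{1}{2}} - \omega_N^{n-\frac{1}{2}})|^2 \srd S = \tfrac{\rhop}{2}\!\int_\Gamma |\zeta_N^n|^2 \srd S + \tfrac{1}{2}\!\int_\Gamma |\Delta \omega_N^{n-\frac{1}{2}}|^2 \srd S,
\end{equation*}
and adding the unchanged terms to both sides yields the first identity. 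For the second identity, the plate displacement is frozen, $\omega_N^{n+1} = \omega_N^{n+\frac{1}{2}}$, so the elastic plate contribution $\tfrac{1}{2}\int_\Gamma |\Delta \omega|^2 \srd S$ is identical at levels $n+\frac{1}{2}$ and $n+1$. \autoref{lem:ex of weak sol to fluid & Biot subprobl} then provides the full balance of all fluid, Biot-displacement, Biot-velocity, pore-pressure, interface and dissipation terms that build $E_N^{n+1} + D_N^{n+1}$ together with the squared-jump numerical dissipation differences, and combining with the frozen plate term yields the second identity.

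The main substantive work is therefore concentrated in \autoref{lem:ex of weak sol to fluid & Biot subprobl}, not in the present statement; the present proof amounts to the bookkeeping described above. If one wished to rederive the second identity directly from the monolithic formulation \eqref{eq:weak form coupled semidiscr probl}, the key step would be to test with the admissible tuple $(\bfv, \varphi, \bfpsi, r) = (\bfu_N^{n+1}, \zeta_N^{n+1}, \dbfeta_N^{n+1}, p_N^{n+1})$, which belongs to $\cQ_N^{n+1}$ because $\dbfeta_N^{n+1}|_\Gamma = \zeta_N^{n+1}\bfe_3$ by \eqref{eq:ids coupled probl}$_2$; use the polarization identity $a \cdot (a-b) = \tfrac{1}{2}(|a|^2 - |b|^2 + |a-b|^2)$ on every backward-Euler time-difference term to simultaneously produce the level-to-level energy jumps and the squared-increment numerical dissipation; exploit the explicit antisymmetrization of the fluid convective term to annihilate its contribution; and verify that the coupling terms on $\Gamma$ (the pressure jump $p_N^{n+1}(\bfpsi - \bfv)\cdot\bfn^{\omega_N^n}$, the penetration coupling $((\bfu_N^{n+1} - \dbfeta_N^{n+1})\cdot\bfn^{\omega_N^n}) r$, the Beavers--Joseph--Saffman friction and the $\tfrac{1}{2}(\bfu_N^{n+1}-\dbfeta_N^{n+1})\cdot\bfn^{\omega_N^n}(\bfu_N^n\cdot\bfv)$ term) cancel in the appropriate pairs, while the $\alpha$-coupling among $p\,\nabla_\rb^{(\bfeta_N^n)^\delta}\cdot\bfpsi$, $\dbfeta\cdot\nabla_\rb^{(\bfeta_N^n)^\delta} r$ and the interface integral with $\bfn^{(\bfeta_N^n)^\delta}$ cancels by the Piola identity for the regularized Lagrangian map $\Id + (\bfeta_N^n)^\delta$. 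This last cancellation is the only genuinely subtle point, since the regularized normal $\bfn^{(\bfeta_N^n)^\delta}$ differs from $\bfn^{\omega_N^n}$; it is precisely the reason the interface term $-\alpha\int_\Gamma(\dbfeta\cdot\bfn^{(\bfeta_N^n)^\delta}) r \srd S$ was included in \autoref{def:weak sol to fluid and Biot subproblem}, and with this term present the standard Piola-transform integration by parts closes the computation.
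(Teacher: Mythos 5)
Your proposal is correct and takes essentially the paper's approach: the paper gives no separate proof of this lemma, and the intended argument is exactly the bookkeeping you describe, namely combining \autoref{lem:ex of weak sol to plate subprobl} and \autoref{lem:ex of weak sol to fluid & Biot subprobl} with the frozen-quantity relations $\bfu_N^{n+\frac12}=\bfu_N^n$, $\bfeta_N^{n+\frac12}=\bfeta_N^n$, $p_N^{n+\frac12}=p_N^n$, $\omega_N^{n+1}=\omega_N^{n+\frac12}$, which the paper records immediately before defining $E_N^{n+\frac{i}{2}}$ and $D_N^{n+1}$. Your alternative direct derivation from the monolithic form — testing with $(\bfu_N^{n+1},\zeta_N^{n+1},\dbfeta_N^{n+1},p_N^{n+1})$, invoking the polarization identity on each backward-Euler increment, exploiting the convective antisymmetrization, and observing that the Piola transform for $\Id+(\bfeta_N^n)^\delta$ closes the regularized $\alpha$-coupling — is precisely the content of the (omitted) proof of \autoref{lem:ex of weak sol to fluid & Biot subprobl}, so you have correctly located where the substantive work lies.
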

\begin{remark}
The terms in the energy equality above that are not included in the definitions of the total energy $E_N^{n+\frac{i}{2}}$ and dissipation $D_N^{n+1}$, are due to numerical dissipation. 
\end{remark}
This energy estimate serves as the cornerstone for the uniform bounds established in the next section, as it implies that both $E_N^{n+\frac{i}{2}}$ and $\sum_{n=1}^N D_N^n$ are uniformly bounded by a constant $C$ independent of $n$ and $N$. Recall that $E_N^{n+\frac{i}{2}}$ and $D_N^n$ represent the {\emph{time-discrete}} total energy and dissipation, respectively. However, our ultimate goal is to obtain uniform estimates for the approximate {\emph{solutions}} themselves. Approximate solutions are functions of both space and time, not only space, given at discrete time points. 
To that end, we introduce the corresponding approximate solutions in the next section, and derive uniform boundedness estimates and weak convergence results that will be useful when passing to the limit, as the time-discretization parameter tends to zero.

\section{Approximate solutions: uniform boundedness and weak convergence results}\label{sec:approx sols on the complete time int}

In this section, we extend the approximations defined at a discrete number of time point via the splitting scheme presented in \autoref{sec:splitting scheme},
 to approximate solutions defined on the entire time interval $(0,T)$. 

First, for the fluid velocity $\bfu_N$, the Biot displacement $\bfeta_N$, the pore pressure $p_N$, the plate displacement~$\omega_N$ as well as the plate velocity $\zeta_N$, we introduce piecewise constant approximations: for $t \in \bigl((n-1)\Delta t,n \Delta t\bigr]$, we set
\begin{equation}\label{eq:pw const approx}
    \bfu_N(t) = \bfu_N^n, \enspace \bfeta_N(t) = \bfeta_N^n, \enspace p_N(t) = p_N^n, \enspace \omega_N(t) = \omega_N^{n-\frac{1}{2}}, \enspace \zeta_N(t) = \zeta_N^{n-\frac{1}{2}} \tand \zeta_N^*(t) = \zeta_N^n.
\end{equation}
These piecewise constant approximations have a disadvantage that they possibly exhibit jump discontinuities.
Thus, in order to pave the way for higher regularity in time, and for the limit passage in \autoref{sec:limit passage}, we also employ linear interpolations for the fluid velocity $\bfu_N$, for the Biot displacement $\bfeta_N$, the pore pressure $p_N$
and the plate displacement~$\omega_N$, i.e., we introduce
\begin{equation*}
    \bbfu_N(n \Delta t) = \bfu_N^n, \enspace \bbfeta_N(n \Delta t) = \bfeta_N^n, \enspace \bp_N(n \Delta t) = p_N^n \tand \bomega_N(n \Delta t) = \omega_N^{n-\frac{1}{2}}, \tfor n=0,1,\dots,N.
\end{equation*}
In the above, we formally set $\omega_N^{-\frac{1}{2}} = \omega_0$.
By construction of the splitting scheme, see also \eqref{eq:ids coupled probl}, for $t \in (n\Delta t,(n+1) \Delta t]$, we observe that
\begin{equation}\label{eq:repr of bomega & bbfeta}
    \begin{aligned}
        \dt \bomega_N(t) 
        &= \frac{\omega_N^{n+\frac{1}{2}}-\omega_N^{n-\frac{1}{2}}}{\Delta t} = \zeta_N^{n+\frac{1}{2}} = \zeta_N(t), \enspace \dt \bbfeta_N(t) = \frac{\bfeta_N^{n+1}-\bfeta_N^n}{\Delta t} = \dbfeta_N^{n+1} \tand\\
        \left.\dt \bbfeta_N(t) \right|_\Gamma 
        &= \left.\frac{\bfeta_N^{n+1}-\bfeta_N^n}{\Delta t}\right|_\Gamma = \zeta_N^n \bfe_3 = \zeta_N^*(t) \bfe_3.
    \end{aligned}
\end{equation}
Similar relations are also valid for the fluid velocity $\bbfu_N$ as well as the fluid pore pressure $\bp_N$.

In order to derive uniform boundedness of approximate solutions, we make the following a priori assumptions, which will be later justified by showing that they indeed hold on a sufficiently small time interval $[0,T]$ for $T>0$, as shown in \autoref{lem:verification of assumptions} below:

\bigskip
\noindent{\bf{A priori assumptions on uniform boundedness of geometric quantities:}}
\begin{enumerate}
    \item{\bf{ Assumption~1.B (Uniform boundedness of displacements):}}
    There is a constant $R_{\max} > 0$ so that for all $N$, we have $|\omega_N^{n-\frac{1}{2}}| \le R_{\max} < R$, for all $n=0,1,\dots,N$, and $\left|\left.(\bfeta_N^n)^\delta \right|_\Gamma\right| \le R_{\max} < R$ for all $n=0,1,\dots,N$.
    \item{\bf{Assumption~2.B (Uniform invertibility of the Lagrangian map):}}
    There exists a constant $c_0 > 0$ such that for all $N$, we have $\det\bigl(\bfI + \nabla (\bfeta_N^n)^\delta\bigr) \ge c_0 > 0$ for all $n=0,1,\dots,N$.
    \item{\bf{Assumption~2.C (Uniform boundedness of the Lagrangian map):}}
    There are constants $c_1$, $c_2 > 0$ such that for all $N$, we obtain $|(\bfI + \nabla (\bfeta_N^n)^\delta)^{-1}| \le c_1$ and $|\bfI + \nabla (\bfeta_N^n)^\delta| \le c_2$, for all $n=0,1,\dots,N$.
\end{enumerate}

\begin{lem}\label{lem:unif bddness of approx sols}{\bf{(Uniform boundedness)}}
Suppose that Assumptions 1.B, 2.B, and 2.C hold. 
Then, for all $N>0$, the approximate functions satisfy the following uniform boundedness properties:
    \begin{itemize}
        \item the fluid velocity $\bfu_N$ is uniformly bounded in $\rL^\infty(0,T;\rL^2(\Omf)^3)$ and $\rL^2(0,T;\rW^{1,p}(\Omf)^3)$,  for all~$p \in (1,2)$,
        \item the Biot displacement $\bfeta_N$ is uniformly bounded in $\rL^\infty(0,T;\rH^1(\Omb)^3)$,
        \item the pore pressure $p_N$ is uniformly bounded in $\rL^\infty(0,T;\rL^2(\Omb)) \cap \rL^2(0,T;\rH^1(\Omb))$, and
        \item the plate displacement $\omega_N$ is uniformly bounded in $\rL^\infty(0,T;\rH_0^2(\Gamma))$.
    \end{itemize}
    Moreover, linear interpolations satisfy the following uniform boundedness properties:
    \begin{itemize}
        \item the Biot displacement $\bbfeta_N$ is uniformly bounded in $\rW^{1,\infty}(0,T;\rL^2(\Omb)^3)$, and 
        \item the plate displacement $\bomega_N$ is uniformly bounded in $\rW^{1,\infty}(0,T;\rL^2(\Gamma))$.
    \end{itemize}
\end{lem}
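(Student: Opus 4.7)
The backbone of the proof is the two-part discrete energy identity of \autoref{lem:discr energy eqs coupled probl}. Summing telescopically over $n=0,1,\dots,N-1$ and dropping the (non-negative) numerical-dissipation terms, I obtain
\begin{equation*}
    \max_{0 \le n \le N} E_N^{n+\frac{i}{2}} + \sum_{n=0}^{N-1} D_N^{n+1} \le E_N^0 = E_0,
\end{equation*}
where $E_0$ is controlled by the initial data (using the compatibility and regularity hypotheses of \autoref{thm:ex of a weak sol}). From the uniform bound on $E_N^{n+\frac{i}{2}}$ I read off, using Assumption~1.B to bound $1+\omega_N^n/R$ away from $0$ and from above, the pointwise-in-$n$ bounds on $\bfu_N^n$ in $\rL^2(\Omf)$, on $\dbfeta_N^n$ in $\rL^2(\Omb)$, on $\zeta_N^{n+\frac{i}{2}}$ in $\rL^2(\Gamma)$, on $p_N^n$ in $\rL^2(\Omb)$, on $\omega_N^{n-\frac{1}{2}}$ in $\rH_0^2(\Gamma)$ via the $\Delta \omega$ term, and on $\bfeta_N^n$ in $\rH^1(\Omb)$ by combining the bound on $\bfD(\bfeta_N^n)$ with Korn's inequality (\autoref{lem:Korn's ineq Biot}). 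These translate verbatim into $\rL^\infty(0,T;\,\cdot\,)$ bounds for the piecewise-constant extensions \eqref{eq:pw const approx}.

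Next I extract bounds from $\sum_n D_N^{n+1} \le E_0$. The pore pressure contributes $\kappa(\Delta t)\sum_n \int_{\Omb} \cJ_\rb^{(\bfeta_N^n)^\delta}|\nabla_\rb^{(\bfeta_N^n)^\delta} p_N^{n+1}|^2$, and using Assumptions~2.B and 2.C to control $\cJ_\rb^{(\bfeta_N^n)^\delta}$ from below and $(\bfI+\nabla(\bfeta_N^n)^\delta)^{-1}$ uniformly, the transformed gradient is equivalent (with constants independent of $N$) to the standard gradient. Hence $p_N$ is uniformly bounded in $\rL^2(0,T;\rH^1(\Omb))$. The Biot viscoelastic terms (when $\muv,\lambdv>0$) give an analogous $\rL^2(0,T;\rH^1(\Omb))$ bound on $\dbfeta_N^{n+1}$ via \autoref{lem:Korn's ineq Biot}; in the purely elastic case these terms vanish and are not needed.

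The main obstacle, and the reason for the restriction $p<2$, is the fluid gradient. The dissipation term only controls $\bfD_\rf^{\omega_N^n}(\bfu_N^{n+1})$, i.e.\ the transformed symmetric gradient on the fixed reference fluid domain, not the full gradient. Because of \eqref{eq:reg of omega in fin energy space}, $\omega_N^n$ is only $\rC^{0,r}$ for $r<1$, so the ALE map $\hbfPhi_\rf^{\omega_N^n}$ fails to be Lipschitz and a naive pull-back/push-forward argument loses control of $\nabla \bfu$. The plan is to push forward to the moving fluid domain $\Omega_\rf^{\omega_N^n}$, where the bound on $\bfD_\rf^{\omega_N^n}(\bfu_N^{n+1})$ transforms (using the bound on $1+\omega_N^n/R$ from Assumption~1.B) into a uniform $\rL^2$ bound on $\bfD(\bfu_N^{n+1})$. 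Since the moving domain is only the subgraph of a $\rC^{0,r}$ function, the standard Korn inequality fails; I invoke instead the Korn-type inequality for non-Lipschitz domains \cite[Proposition~2.9]{Len:14} (as pointed out in the introduction), which yields
\begin{equation*}
    \|\bfu_N^{n+1}\|_{\rW^{1,p}(\Omega_\rf^{\omega_N^n})} \le C_p\bigl(\|\bfD(\bfu_N^{n+1})\|_{\rL^2} + \|\bfu_N^{n+1}\|_{\rL^2}\bigr), \qquad p \in (1,2),
\end{equation*}
with $C_p$ depending only on $R$, $L$ and $\|\omega_N^n\|_{\rC^{0,r}}$, and the latter is uniformly controlled by the embedding \eqref{eq:reg of omega in fin energy space} together with the already established $\rL^\infty(0,T;\rH_0^2(\Gamma))$ bound on $\omega_N$. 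Squaring, summing in $n$, and multiplying by $\Delta t$ gives the uniform $\rL^2(0,T;\rW^{1,p}(\Omf)^3)$ bound for $p<2$.

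Finally, for the linear interpolations I use the identities \eqref{eq:repr of bomega & bbfeta}: since $\dt \bbfeta_N = \dbfeta_N^{n+1}$ and $\dt \bomega_N = \zeta_N^{n+\frac{1}{2}}$ are constants on each subinterval whose $\rL^2$ norms are already uniformly bounded (by the pointwise-in-$n$ energy bounds), I obtain the $\rW^{1,\infty}(0,T;\rL^2)$ bounds on $\bbfeta_N$ and $\bomega_N$. The $\rL^\infty(0,T;\rH^1)$ and $\rL^\infty(0,T;\rH_0^2)$ components of these linear interpolations follow from the pointwise bounds together with the convexity of the norm on each subinterval. This completes the plan; the decisive non-trivial step is the application of the non-Lipschitz Korn inequality in the fluid estimate, which is precisely where the $3$D character forces the loss of integrability from $p=2$ down to any $p<2$.
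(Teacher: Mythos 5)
Your plan reproduces the paper's argument for the $\rL^\infty$-in-time bounds, the pore pressure bound, and the linear interpolations, and it correctly identifies the non-Lipschitz Korn inequality of Lengeler as the key tool for the fluid gradient. However, there is a genuine gap in that last step: the Korn inequality on the moving domain gives you a bound on $\nabla \Tilde{\bfu}_N^{n+1}$ in $\rL^r(\Omega_\rf^{\omega_N^n})$ for $r<2$, which corresponds via change of variables to a bound on the \emph{transformed} gradient $\nabla_\rf^{\omega_N^n} \bfu_N^{n+1}$ on the fixed domain — \emph{not} to a bound on the ordinary gradient $\nabla \bfu_N^{n+1}$, which is what the statement requires. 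These two gradients are related by
\begin{equation*}
  \nabla \bfu_N^{n+1} = \bigl(\nabla_\rf^{\omega_N^n}\bfu_N^{n+1}\bigr)\bigl[\nabla\bfPhi_\rf^{\omega_N^n}\bigr],
\end{equation*}
and the factor $\nabla\bfPhi_\rf^{\omega_N^n}$ is only in $\rL^s(\Omf)$ for $s<\infty$ (uniformly in $n$ and $N$, via $\rH^2_0(\Gamma)\hookrightarrow\rW^{1,s}(\Gamma)$), never in $\rL^\infty$. Your write-up says that ``a naive pull-back/push-forward argument loses control of $\nabla\bfu$'' and then does the push forward, but never does the pull back; it jumps from $\|\bfu_N^{n+1}\|_{\rW^{1,p}(\Omega_\rf^{\omega_N^n})}$ directly to the claimed bound in $\rL^2(0,T;\rW^{1,p}(\Omf)^3)$, i.e.\ on the fixed reference domain, which is exactly the unjustified step.

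The missing ingredient is a H\"older decomposition: pick $r\in(p,2)$ and $r'\in(2,\infty)$ with $\tfrac{1}{r}+\tfrac{1}{r'}=\tfrac{1}{p}$, estimate $\|\nabla\bfu_N^{n+1}\|_{\rL^p(\Omf)} \le \|\nabla_\rf^{\omega_N^n}\bfu_N^{n+1}\|_{\rL^r(\Omf)}\,\|\nabla\bfPhi_\rf^{\omega_N^n}\|_{\rL^{r'}(\Omf)}$, bound the second factor uniformly using Assumption~1.B and the $\rH^2_0$-bound on $\omega_N^n$, insert the Jacobian $\cJ_\rf^{\omega_N^n}$ (bounded below by Assumption~1.B) to change variables to the moving domain, and \emph{then} apply Lengeler's Korn inequality. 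Since this chain requires $r<2$ and in turn forces $p<r$, it is exactly this double loss — the H\"older split to cross the non-Lipschitz ALE map together with the Korn inequality on the H\"older domain — that produces the restriction $p<2$. Without the H\"older step your plan does not actually produce a bound on the reference domain, and the summability in $n$ of the right-hand side would not have been set up correctly either (your Korn constant must be argued uniform in $n,N$ via the $\rH^2_0\cap\rL^\infty$ bounds on $\omega_N^n$, as you correctly note, but this needs to be plugged into the fixed-domain chain).
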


Before proving \autoref{lem:unif bddness of approx sols}, we first remark that because we are working in 3D, the fluid velocity is uniformly bounded only in $\rL^2(0,T,\rW^{1,p}(\Omf))$, for $p\in (1,2)$, and not in $\rL^2(0,T,\rH^1(\Omf))$, which was the case for the 2D problem.
This has important implications for the compactness arguments and the limit passage below.

Secondly, we remark that the assumptions in \autoref{lem:unif bddness of approx sols} imply the two assumptions in \autoref{lem:ex of weak sol to fluid & Biot subprobl}. This is addressed more precisely in the following. 

\begin{remark}\label{rem:inv of Lagrangian map}
 The assumptions in \autoref{lem:unif bddness of approx sols} imply the two assumptions in \autoref{lem:ex of weak sol to fluid & Biot subprobl}:
\begin{enumerate}
 \item Assumption~1.B in \autoref{lem:unif bddness of approx sols} implies Assumption~1.A in \autoref{lem:ex of weak sol to fluid & Biot subprobl}. 
\item Assumptions~1.B and 2.B in \autoref{lem:unif bddness of approx sols} imply Assumption~2.A in \autoref{lem:ex of weak sol to fluid & Biot subprobl}. 
\\
Indeed, the following result from \cite[Theorem~5.5-2]{Cia:88} will help us prove this statement by providing a criterion for injectivity of a given map in terms of the existence of another injective map that coincides with the given map on the boundary:
\begin{lem}\label{lem:inj crit cont map}
Consider a bounded Lipschitz domain $\Omega$, and assume that $\bfPhi \colon \oOmega \to \bR^n$ is continuous as a map from $\oOmega$ to $\bR^n$, $\rC^1$ as a map from $\Omega$ to $\bR^n$ and fulfills the pointwise condition $\det(\nabla \bfPhi) > 0$.
If there is an injective continuous map $\bfPhi_0 \colon \oOmega \to \bR^n$ such that $\left. \bfPhi \right|_{\dOmega} = \left. \bfPhi_0 \right|_{\dOmega}$, then $\bfPhi \colon \oOmega \to \bR^n$ is an injective homeomorphism with $\bfPhi(\oOmega) = \bfPhi_0(\oOmega)$, and $\bfPhi \colon \Omega \to \bR^n$ is an injective diffeomorphism with~$\bfPhi(\Omega) = \bfPhi_0(\Omega)$.
\end{lem}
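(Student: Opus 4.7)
The plan is to apply Brouwer's topological degree theory. Two facts drive the argument: the degree of a continuous map is determined by its boundary values, so that $\deg(\bfPhi,\Omega,y) = \deg(\bfPhi_0,\Omega,y)$ for every $y \notin \bfPhi_0(\dOmega)$; and the pointwise condition $\det(\nabla \bfPhi) > 0$ forces every regular preimage to contribute $+1$ in the degree-sum formula, so the degree at a regular value equals the cardinality of the preimage. Together these two observations collapse the preimage of each regular value to a single point, from which injectivity follows by a density/openness argument.

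The steps I would carry out are the following. First, since $\bfPhi_0$ is an injective continuous map on the compact set $\oOmega$, it is a homeomorphism onto its image; Brouwer's invariance of domain then yields that $\bfPhi_0(\Omega)$ is open in $\bR^n$ with boundary $\bfPhi_0(\dOmega)$, so $\deg(\bfPhi_0,\Omega,y) \in \{+1,-1\}$ for $y \in \bfPhi_0(\Omega)$ and $\deg(\bfPhi_0,\Omega,y) = 0$ for $y \notin \bfPhi_0(\oOmega)$. Second, the inverse function theorem makes $\bfPhi$ a local diffeomorphism on $\Omega$; in particular $\bfPhi(\Omega)$ is open, and by Sard's theorem it contains regular values. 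Third, at any regular value $y \in \bfPhi_0(\Omega)$ the differential formula yields
\begin{equation*}
    \deg(\bfPhi,\Omega,y) = \sum_{x \in \bfPhi^{-1}(y)} \operatorname{sgn}\bigl(\det \nabla \bfPhi(x)\bigr) = \#\bfPhi^{-1}(y) \ge 1,
\end{equation*}
so combining with the boundary-value identity fixes both the sign ($\deg = +1$) and the preimage count ($\#\bfPhi^{-1}(y) = 1$). Fourth, this point-preimage result upgrades to injectivity on $\Omega$ by a standard contradiction: an alleged double point would, via local invertibility together with Sard, yield a regular value with at least two preimages. Finally, $\bfPhi(\Omega) = \bfPhi_0(\Omega)$ and $\bfPhi(\dOmega) = \bfPhi_0(\dOmega) = \partial \bfPhi_0(\Omega)$ together with injectivity of $\bfPhi_0$ on $\dOmega$ rule out collisions between interior and boundary points as well as between two distinct boundary points, yielding injectivity on all of $\oOmega$. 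The homeomorphism and diffeomorphism conclusions then follow automatically: from compactness of $\oOmega$ plus continuity in the first case, and from the inverse function theorem plus global injectivity in the second.

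The main obstacle I expect is the correct determination of the sign of $\deg(\bfPhi_0,\Omega,y)$, since a priori only $|\deg(\bfPhi_0,\Omega,y)| = 1$ follows from $\bfPhi_0$ being a homeomorphism; the pointwise hypothesis $\det(\nabla \bfPhi) > 0$ is exactly what is needed, as it makes the summands in the degree-sum formula for $\bfPhi$ strictly positive and hence pins the shared degree to $+1$. A secondary technicality is that $\bfPhi$ is only continuous (not $\rC^1$) up to $\dOmega$, so one must rely on the topological/axiomatic definition of the Brouwer degree when invoking boundary-only homotopy invariance, or approximate $\bfPhi$ in a neighborhood of $\dOmega$ by smooth maps agreeing with it on $\dOmega$.
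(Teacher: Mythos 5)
Your degree-theoretic argument is essentially the proof given in the reference the paper cites for this lemma, namely \cite[Theorem~5.5-2]{Cia:88}; the paper itself does not reproduce the proof. One small simplification: since $\det(\nabla\bfPhi) > 0$ holds at \emph{every} point of $\Omega$, every $y \notin \bfPhi(\dOmega)$ is automatically a regular value, so Sard's theorem is not needed either for establishing the degree-sum formula or for the final contradiction argument; the preimage $\bfPhi^{-1}(y)$ is finite by discreteness (local invertibility) plus compactness, since it cannot accumulate on $\dOmega$ when $y \notin \bfPhi(\dOmega)$.
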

Indeed, Assumption~2.B implies that $\det(\bfI + \nabla (\bfeta_N^n)^\delta) > 0$, which is one of the requirements in \autoref{lem:inj crit cont map}. Furthermore, $\bfI+ \nabla (\bfeta_N^n)^\delta$ possesses all the necessary regularity properties from \autoref{lem:inj crit cont map}. Therefore, by \autoref{lem:inj crit cont map}, it suffices to construct an injective mapping $\bfvphi_0 \colon \oOmega_\rb \to \bR^3$ such that $\Id + (\bfeta_N^n)^\delta = \bfvphi_0$ on $\del \Omb$.

For this, we can use a standard ALE mapping, e.g., $\bfvphi_0(x,y,z) = (x, y, z + (1 - \frac{z}{R}) (\bfeta_N^n)^\delta(x, y, 0))$, which is injective by Assumption~1.B. Therefore, Assumption~2.A in \autoref{lem:ex of weak sol to fluid & Biot subprobl} is satisfied.
As a result of \autoref{lem:inj crit cont map}, we also get that $(\Id + (\bfeta_N^n)^\delta)(\oOmega_\rb) = \bfvphi_0(\oOmega_\rb)$.
In other words, the deformed configuration is fully determined by the behavior on the boundary.

\item Assumption~3.A in \autoref{lem:ex of weak sol to fluid & Biot subprobl} corresponds precisely to Assumption~2.B in \autoref{lem:unif bddness of approx sols}.
\end{enumerate}
\end{remark}

Next, we prove \autoref{lem:unif bddness of approx sols}.

\begin{proof}[Proof of~\autoref{lem:unif bddness of approx sols}]
The main ingredients here are the uniform energy estimates obtained in \autoref{lem:discr energy eqs coupled probl}.
As a preparation, upon recalling the conventions from \autoref{sec:splitting scheme}, we define the initial energy $E_0$ by
\begin{equation*}
    \begin{aligned}
        E_0
        &\coloneqq \frac{1}{2} \int_{\Omf} \Bigl(1+\frac{\omega_0}{R}\Bigr) |\bfu_0|^2 \srd \bfx + \frac{\rhob}{2} \int_{\Omb} |\bfxi_0|^2 \srd \bfx + \frac{\rhop}{2} \int_\Gamma |\zeta_0|^2 \srd S + \frac{c_0}{2} \int_{\Omb} |p_0|^2 \srd \bfx\\
        &\quad + \mue \int_{\Omb} |\bfD(\bfeta_0)|^2 \srd \bfx + \frac{\lambde}{2} \int_{\Omb} |\nabla \cdot \bfeta_0|^2 \srd \bfx + \frac{1}{2}\int_\Gamma |\Delta \omega_0|^2 \srd S,
    \end{aligned}
\end{equation*}
indicating that $E_0$ is independent of the discretization parameter $N$.
Next, concatenating the first part of the assertion of \autoref{lem:discr energy eqs coupled probl} with the second one, for $E_N^n$, $E_N^{n+1}$ as well as $D_N^{n+1}$, we find that
\begin{equation*}
    \begin{aligned}
        &E_N^{n+1} + D_N^{n+1} + \frac{1}{2} \int_{\Omf} \Bigl(1 + \frac{\omega_N^n}{R}\Bigr) \bigl|\bfu_N^{n+1} - \bfu_N^n\bigr|^2 \srd \bfx + \frac{\rhob}{2} \int_{\Omb} \bigl|\dbfeta_N^{n+1} - \dbfeta_N^{n} \bigr|^2 \srd \bfx + \frac{\rhop}{2} \int_\Gamma \bigl|\zeta_N^{n+1} - \zeta_N^{n}\bigr|^2 \srd S\\
        &\quad + \frac{\rhop}{2} \int_\Gamma \bigl|\zeta_N^{n+\frac{1}{2}} - \zeta_N^{n}\bigr|^2 \srd S + \frac{c_0}{2} \int_{\Omb} \bigl|p_N^{n+1}-p_N^n\bigr|^2 \srd \bfx + \frac{1}{2}\int_\Gamma |\Delta (\omega_N^{n+\frac{1}{2}} - \omega_N^{n-\frac{1}{2}})|^2 \srd S\\
        &\quad + \mue \int_{\Omb} |\bfD(\bfeta_N^{n+1} - \bfeta_N^n)|^2 \srd \bfx + \frac{\lambde}{2} \int_{\Omb} |\nabla \cdot (\bfeta_N^{n+1} - \bfeta_N^{n})|^2 \srd \bfx = E_N^{n}.
    \end{aligned}
\end{equation*}
Summing from $0$ to $n$, and using the cancellations, we first find that
\begin{equation}\label{eq:unif energy ineq}
    E_N^{n+1} + \sum_{j=0}^{n} D_N^{j+1} \le E_0
\end{equation}
for all $n=0,1,\dots,N-1$.
Similarly, we obtain that
\begin{equation}\label{eq:num dissip ests}
    \begin{aligned}
        &\sum_{n=0}^{N-1} \biggl(\frac{1}{2} \int_{\Omf} \Bigl(1 + \frac{\omega_N^n}{R}\Bigr) \bigl|\bfu_N^{n+1} - \bfu_N^n\bigr|^2 \srd \bfx + \frac{\rhob}{2} \int_{\Omb} \bigl|\dbfeta_N^{n+1} - \dbfeta_N^{n} \bigr|^2 \srd \bfx + \frac{\rhop}{2} \int_\Gamma \bigl|\zeta_N^{n+1} - \zeta_N^{n}\bigr|^2 \srd S\\
        &\quad + \frac{\rhop}{2} \int_\Gamma \bigl|\zeta_N^{n+\frac{1}{2}} - \zeta_N^{n}\bigr|^2 \srd S + \frac{c_0}{2} \int_{\Omb} \bigl|p_N^{n+1}-p_N^n\bigr|^2 \srd \bfx + \frac{1}{2}\int_\Gamma |\Delta (\omega_N^{n+\frac{1}{2}} - \omega_N^{n-\frac{1}{2}})|^2 \srd S\\
        &\quad + \mue \int_{\Omb} |\bfD(\bfeta_N^{n+1} - \bfeta_N^n)|^2 \srd \bfx + \frac{\lambde}{2} \int_{\Omb} |\nabla \cdot (\bfeta_N^{n+1} - \bfeta_N^{n})|^2 \srd \bfx\biggr) \le E_0.
    \end{aligned}
\end{equation}
To prove uniform boundedness of the fluid velocity we recall Assumption~1.B, which implies the existence of a constant $\Tilde{C} > 0$ such that for all $n= 0,1,\dots,N-1$ and all~$N$, we have $1 + \frac{\omega_N^n}{R} \ge 1 - \frac{R_{\max}}{R} \ge \Tilde{C}$.
Let now $t \in (0,T)$ be arbitrary.
Then there is $n \in \{0,1,\dots,N-1\}$ with $t \in (n \Delta t,(n+1) \Delta t]$, so it follows that $\bfu_N(t) = \bfu_N^{n+1}$.
Making use of the preceding uniform lower bound of $1 + \frac{\omega_N^n}{R}$, recalling the definition of~$E_N^{n+1}$ and employing \eqref{eq:unif energy ineq}, we find that
\begin{equation}\label{eq:unif bddness u_N in Linfty L2}
    \| \bfu_N(t) \|_{\rL^2(\Omf)}^2 = \| \bfu_N^{n+1} \|_{\rL^2(\Omf)}^2 = \int_{\Omf} |\bfu_N^{n+1}|^2 \srd \bfx \le C \cdot \int_{\Omf} \Bigl(1+\frac{\omega_N^n}{R}\Bigr) \bigl|\bfu_N^{n+1}\bigr|^2 \srd \bfx \le C E_0,
\end{equation}
where $C > 0$ is independent of $N$.
This shows the uniform boundedness of $\bfu_N$ in $\rL^\infty(0,T;\rL^2(\Omf)^3)$.

The uniform boundedness with higher spatial regularity is not so straightforward:
The uniform energy estimates from \autoref{lem:discr energy eqs coupled probl} only provide information about the {\em transformed symmetric part of the gradient}.
To deduce properties of the gradient, we would like to employ Korn's inequality, which now needs to be applied to the moving domain case.
This is a problem since the Korn constant generally depends on the underlying domain.
As pointed out in \autoref{sec:def of a weak sol}, the family of domains in only in $\rC^{0,\alpha}$ for all $\alpha \in (0,1)$, so standard approaches fall short here. 
We note that this is another place where the 2D and 3D cases differ.
However, Lengeler showed in \cite[Proposition~2.9]{Len:14} a Korn-type inequality for non-Lipschitz domains, where the Korn constant only depends on the bound of the structure displacement.
The lower regularity of the domain merely yields the boundedness for lower integrability parameters though.
We shall make these considerations precise in the sequel.
There are some further preparations required for this.
First, similarly as in \eqref{eq:repr of gradient on moving fluid dom}, for $f$ denoting a function defined on the fixed domain, and recalling the ALE mapping from \autoref{ssec:maps from ref to phys dom}, we find that
\begin{equation}\label{eq:grad and transformed grad for same fct}
    \nabla f = (\nabla_\rf^{\omega_N^n} f) [\nabla \bfPhi_\rf^{\omega_N^n}].
\end{equation}
Moreover, note that
\begin{equation*}
    \nabla \bfPhi_\rf^{\omega_N^n} = \begin{pmatrix}
        1 & 0 & (1+\frac{z}{R}) \del_x \omega_N^n\\
        0 & 1 & (1+\frac{z}{R}) \del_y \omega_N^n\\
        0 & 0 & 1+\frac{\omega_N^n}{R}
    \end{pmatrix}.
\end{equation*}
From \eqref{eq:unif energy ineq}, it follows that for given $T > 0$, the family $\omega_N^n$ is uniformly bounded in $\rH_0^2(\Gamma)$.
Thus, thanks to the embedding $\rH^2(\Gamma) \hookrightarrow \rW^{1,s}(\Gamma)$ for all $s < \infty$, we find that this family is also uniformly bounded in~$\rW^{1,s}(\Gamma)$.
Together with Assumption~1.B, this results in
\begin{equation}\label{eq:unif bddness of grad of ALE}
    \| \nabla \bfPhi_\rf^{\omega_N^n} \|_{\rL^s(\Omega_\rf)} \le C \bigl(\| \omega_N^n \|_{\rW^{1,s}(\Gamma)} + \| \omega_N^n \|_{\rL^\infty(\Gamma)} + 1\bigr) \le C,
\end{equation}
where the (generic) constant $C > 0$ is independent of $n$ and $N$.
Let now $p \in (1,2)$ be arbitrary.
We then find $r \in (p,2)$ and $r' \in (2,\infty)$ such that $\nicefrac{1}{r} + \nicefrac{1}{r'} = \nicefrac{1}{p}$.
Making use of \eqref{eq:grad and transformed grad for same fct}, H\"older's inequality together with \eqref{eq:unif bddness of grad of ALE}, Assumption~1.B, which also yields that $\cJ_\rf^{\omega_N^n} = 1 + \frac{\omega_N^n}{R}$ is bounded from below, and \cite[Proposition~2.9]{Len:14}, and employing the notation $\Tilde{\bfu}_N^{n+1} = \bfu_N^{n+1} \circ (\bfPhi_\rf^{\omega_N^n})^{-1}$, we find that
\begin{equation}\label{eq:est of grad of u}
    \begin{aligned}
        \left(\int_{\Omf}|\nabla \bfu_N^{n+1}|^p \srd\bfx\right)^{\frac{2}{p}}
        &= \left(\int_{\Omf}\Bigl|\nabla_\rf^{\omega_N^n} u_N^{n+1} \bigl[\nabla \Phi_\rf^{\omega_N^n}\bigr]\Bigr|^p \srd\bfx\right)^{\frac{2}{p}}
        \le C \left(\int_{\Omf} \bigl|\nabla_\rf^{\omega_N^n} \bfu_N^{n+1}\bigr|^r \srd \bfx \right)^{\frac{2}{r}}\\
        &\le C \left(\int_{\Omf} \cJ_\rf^{\omega_N^n} \bigl|\nabla_\rf^{\omega_N^n} \bfu_N^{n+1}\bigr|^r \srd \bfx \right)^{\frac{2}{r}}
        = C \left(\int_{\Omf(t)} \bigl|\nabla\Tilde{\bfu}_N^{n+1}\bigr|^r \srd \bfx \right)^{\frac{2}{r}}\\
        &\le C\bigl(\| \bfD(\Tilde{\bfu}_N^{n+1}) \|_{\rL^2(\Omf(t))}^2 + \| \Tilde{\bfu}_N^{n+1} \|_{\rL^2(\Omf(t))}^2\bigr)\\
        &= C\left(\int_{\Omf} \cJ_\rf^{\omega_N^n} \bigl|\bfD_\rf^{\omega_N^n}(\bfu_N^{n+1})\bigr|^2 \srd \bfx + \int_{\Omf} \cJ_\rf^{\omega_N^n} |\bfu_N^{n+1}|^2 \srd \bfx\right).
    \end{aligned}
\end{equation}
From \eqref{eq:est of grad of u} and the observation that \cite[Proposition~2.9]{Len:14} yields that the Korn inequality only depends on bounds of $\omega_N^n$ in $\rH_0^2(\Gamma)$ and $\rL^\infty(\Gamma)$, which are in turn uniform in $n$ and $N$, it now follows that
\begin{equation*}
    \begin{aligned}
        \int_0^T \| \nabla \bfu_N(t) \|_{\rL^p(\Omf)}^2 \srd t
        &= (\Delta t) \sum_{n=0}^{N-1} \| \nabla \bfu_N^{n+1} \|_{\rL^p(\Omf)}^2
        = (\Delta t) \sum_{n=0}^{N-1} \left(\int_{\Omf}|\nabla \bfu_N^{n+1}|^p \srd\bfx\right)^{\frac{2}{p}}\\
        &\le C (\Delta t) \sum_{n=0}^{N-1} \left(\int_{\Omf} \cJ_\rf^{\omega_N^n} \bigl|\bfD_\rf^{\omega_N^n}(\bfu_N^{n+1})\bigr|^2 \srd \bfx + \int_{\Omf} \cJ_\rf^{\omega_N^n} |\bfu_N^{n+1}|^2 \srd \bfx\right)
    \end{aligned}
\end{equation*}
for $C > 0$ independent of $n$ and $N$.
As a result of \eqref{eq:unif energy ineq}, we obtain
\begin{equation*}
    \begin{aligned}
        (\Delta t) \sum_{n=0}^{N-1} \int_{\Omf} \cJ_\rf^{\omega_N^n} \bigl|\bfD_\rf^{\omega_N^n}(\bfu_N^{n+1})\bigr|^2 \srd \bfx \le E_0 \tand (\Delta t) \sum_{n=0}^{N-1} \int_{\Omf} \cJ_\rf^{\omega_N^n} |\bfu_N^{n+1}|^2 \srd \bfx \le (\Delta t) \cdot N \cdot E_0 = T E_0.
    \end{aligned}
\end{equation*}
In total, we find that 
\begin{equation*}
    \int_0^T \| \nabla \bfu_N(t) \|_{\rL^p(\Omf)}^2 \srd t \le C (1+T) E_0,
\end{equation*}
where $C > 0$ is independent of $n$ and $N$.
As we have already verified that the sequence $\bfu_N$ is uniformly bounded in $\rL^\infty(0,T;\rL^2(\Omf)^3) \hookrightarrow \rL^2(0,T;\rL^p(\Omf)^3)$ for all $p < 2$, we infer that $\bfu_N$ is indeed also uniformly bounded in $\rL^2(0,T;\rW^{1,p}(\Omf)^3)$ for all $p < 2$.

Next, we address the uniform boundedness of the Biot displacement $\bfeta_N$ in $\rL^\infty(0,T;\rH^1(\Omb)^3)$, for arbitrary~$t \in (0,T)$.
As above, we argue that there is $n \in \{0,1,\dots,N-1\}$ with $\bfeta_N(t) = \bfeta_N^{n+1}$.
Thus, using Poincar\'e's inequality, Korn's inequality for the Biot domain as asserted in \autoref{lem:Korn's ineq Biot} and~\eqref{eq:unif energy ineq} together with the form of $E_N^{n+1}$ as made precise in \autoref{ssec:weak form & energy eq for coupled semi-discr probl}, we infer that
\begin{equation*}
    \| \bfeta_N(t) \|_{\rH^1(\Omb)}^2 = \| \bfeta_N^{n+1} \|_{\rH^1(\Omb)}^2 \le C \cdot \| \nabla \bfeta_N^{n+1} \|_{\rL^2(\Omb)}^2 \le C \cdot \| \bfD(\bfeta_N^{n+1}) \|_{\rL^2(\Omb)}^2 \le C E_0,
\end{equation*}
where the constant $C > 0$ is again uniform in $N$.

Analogously as in \eqref{eq:unif bddness u_N in Linfty L2}, upon invoking the form of $E_N^{n+1}$, we deduce from \eqref{eq:unif energy ineq} the uniform boundedness of $p_N$ in $\rL^\infty(0,T;\rL^2(\Omb))$.
The same is valid for the uniform boundedness of $\omega_N$ in $\rL^\infty(0,T;\rH_0^2(\Gamma))$.
For the uniform boundedness of the pore pressure $p_N$ in $\rL^2(0,T;\rH^1(\Omb))$, inequality \eqref{eq:unif energy ineq} yields
\begin{equation*}
    \begin{aligned}
        \kappa (\Delta t) \sum_{n=0}^{N-1} \int_{\Omb} \cJ_\rb^{(\bfeta_N^n)^\delta} \bigl|\nabla_\rb^{(\bfeta_N^n)^\delta} p_N^{n+1}\bigr|^2 \srd \bfx \le E_0.
    \end{aligned}
\end{equation*}
To control the gradient of $p_N$ rather than the transformed gradient, we start by invoking the identity $\nabla p_N^{n+1} = \nabla_\rb^{(\bfeta_N^n)^\delta} p_N^{n+1} \cdot (\bfI + \nabla (\bfeta_N^n)^\delta)$.
In conjunction with Assumption~2.B, it then follows that
\begin{equation*}
    (\Delta t) \sum_{n=0}^{N-1} \int_{\Omb} \bigl|\nabla_\rb^{(\bfeta_N^n)^\delta} p_N^{n+1}\bigr|^2 \srd \bfx \le C
\end{equation*}
for some $C > 0$ independent of $N$.
With regard to the identity $\nabla p_N^{n+1} = \nabla_\rb^{(\bfeta_N^n)^\delta} p_N^{n+1} \cdot (\bfI + \nabla (\bfeta_N^n)^\delta)$ as well as Assumption~2.C, which in turn implies that $|\bfI + \nabla (\bfeta_N^n)^\delta| \le c_2$, we then find that
\begin{equation*}
    (\Delta t) \sum_{n=0}^{N-1} \int_{\Omb} \bigl|\nabla p_N^{n+1}\bigr|^2 \srd \bfx \le c_2^2 \cdot (\Delta t) \sum_{n=0}^{N-1} \int_{\Omb} \bigl|\nabla_\rb^{(\bfeta_N^n)^\delta} p_N^{n+1}\bigr|^2 \srd \bfx \le C
\end{equation*}
for $C > 0$ independent of $N$.
Similarly as above for the fluid velocity, for $C$ as just described, this eventually yields the uniform boundedness of the pore pressure $p_N$ in $\rL^2(0,T;\rH^1(\Omb))$ thanks to
\begin{equation*}
    \| p_N \|_{\rL^2(0,T;\rH^1(\Omb))}^2 \le C (\Delta t) \sum_{n=0}^{N-1} \int_{\Omb} \bigl|\nabla p_N^{n+1}\bigr|^2 \srd \bfx \le C.
\end{equation*}

Concerning the remaining uniform boundedness assertions on the uniform boundedness of $\bbfeta_N$ as well as $\bomega_N$, we start by recalling $\dt \bbfeta_N(t) = \dbfeta_N^{n+1}$ and $\dt \bomega_N(t) = \zeta_N^{n+\frac{1}{2}} = \zeta_N(t)$ for some $n \in \{0,1,\dots,N-1\}$.
Moreover, we observe that the uniform boundedness of $\bbfeta_N$ in $\rL^\infty(0,T;\rL^2(\Omb)^3)$ follows in a similar manner as the above uniform boundedness of $\bfeta_N$ in $\rL^\infty(0,T;\rH^1(\Omb)^3)$.
On the other hand, for $t \in (0,T)$, invoking the above relation, and exploiting \eqref{eq:unif energy ineq} along with the form of $E_N^{n+1}$, we find that
\begin{equation*}
    \| \dt \bbfeta_N(t) \|_{\rL^2(\Omb)}^2 = \| \dbfeta_N^{n+1} \|_{\rL^2(\Omb)}^2 \le C E_0,
\end{equation*}
where $C > 0$ is independent of $N$.
The uniform boundedness of $\bomega_N$ in $\rW^{1,\infty}(0,T;\rL^2(\Gamma))$ can be deduced similarly by virtue of \eqref{eq:repr of bomega & bbfeta} and the form of $E_N^{n+1}$.
\end{proof}

The corollary below is an immediate consequence of the uniform boundedness established in \autoref{lem:unif bddness of approx sols}.

\begin{cor}\label{cor:weak & weak* conv}
Suppose that the assumptions stated in \autoref{lem:unif bddness of approx sols} are satisfied.
Then there is a subsequence such that the following weak convergence results hold:
\begin{itemize}
    \item {\bf{Fluid velocity:}} 
    \begin{itemize}
    \item $\bfu_N \rightharpoonup \bfu$ weakly$^*$ in $\rL^\infty(0,T;\rL^2(\Omf)^3)$ and 
    \item $\nabla_\rf^{\tau_{\Delta t}\omega_N} \bfu_N \rightharpoonup \bfG$ weakly in $\rL^2(0,T;\rL^p(\Omf)^{3 \times 3})$ for all $p \in (1,2)$, for some $\bfG$, where $\tau_{\Delta t}$ denotes the time shift by $\Delta t$, i.e., $\tau_{\Delta t} f(t,\cdot) = f(t-\Delta t,\cdot)$ for a function $f$ defined on $(0,T)$.
    \end{itemize}
    \item {\bf{Biot displacement:}} 
    \begin{itemize}
    \item $\bfeta_N \rightharpoonup \bfeta$ weakly$^*$ in $\rL^\infty(0,T;\rH^1(\Omb)^3)$ and 
    \item $\overline{\bfeta}_N \rightharpoonup \overline{\bfeta}$ weakly$^*$ in $\rW^{1,\infty}(0,T;\rL^2(\Omb)^3)$.
    \end{itemize}
    \item {\bf{Biot pressure:}} 
    \begin{itemize}
    \item $p_N \rightharpoonup p$ weakly$^*$ in $\rL^\infty(0,T;\rL^2(\Omb))$ and 
    \item $p_N \rightharpoonup p$ weakly in $\rL^2(0,T;\rH^1(\Omb))$.
    \end{itemize}
    \item {\bf{Plate displacement:}} 
    \begin{itemize}
    \item $\omega_N$ satisfies $\omega_N \rightharpoonup \omega$ weakly$^*$ in $\rL^\infty(0,T;\rH_0^2(\Gamma))$ and
    \item $\overline{\omega}_N \rightharpoonup \overline{\omega}$ weakly$^*$ in~$\rW^{1,\infty}(0,T;\rL^2(\Gamma))$.
    \end{itemize}
\end{itemize}
Additionally, we have $\bfeta = \overline{\bfeta}$ and $\omega = \overline{\omega}$.
\end{cor}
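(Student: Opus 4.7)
The plan is to derive every stated convergence as a direct consequence of the uniform bounds in \autoref{lem:unif bddness of approx sols} by means of the Banach--Alaoglu theorem, merged via a diagonal argument into a single common subsequence. Most assertions are routine: the sequences $\bfu_N$, $\bfeta_N$, $p_N$, $\omega_N$ live in $\rL^\infty$-type spaces whose weak-$^*$ topology is defined by separable preduals of the form $\rL^1(0,T;\,\cdot\,)$; the pore pressure $p_N$ is additionally bounded in the reflexive space $\rL^2(0,T;\rH^1(\Omb))$; and the linear interpolants $\bbfeta_N$, $\bomega_N$ lie in the corresponding $\rW^{1,\infty}$-spaces. In each case Banach--Alaoglu provides a weakly-$^*$ (respectively weakly) convergent subsequence, and a standard diagonal argument yields a common indexing.

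Two items require additional attention. The first is the weak convergence of the transformed gradient $\nabla_\rf^{\tau_{\Delta t}\omega_N}\bfu_N$. To establish it, I would repeat the Lengeler Korn-type argument already used in the proof of \autoref{lem:unif bddness of approx sols}: the discrete energy identity of \autoref{lem:discr energy eqs coupled probl} uniformly bounds $(\Delta t)\sum_n \int_{\Omf} \cJ_\rf^{\omega_N^n} |\bfD_\rf^{\omega_N^n}(\bfu_N^{n+1})|^2 \srd \bfx$, and passing to the moving domain through $\tbfu_N^{n+1} = \bfu_N^{n+1}\circ (\bfPhi_\rf^{\omega_N^n})^{-1}$ combined with \cite[Proposition~2.9]{Len:14} controls $\nabla \tbfu_N^{n+1}$ in $\rL^r(\Omf(t))$ for every $r \in (1,2)$. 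Pulling back to the fixed fluid domain and using Assumption~1.B to absorb the Jacobian yields a uniform bound on $\nabla_\rf^{\omega_N^n}\bfu_N^{n+1}$ in $\rL^2(0,T;\rL^p(\Omf)^{3\times 3})$ for each $p \in (1,2)$. Since, by \eqref{eq:pw const approx} and the convention $\omega_N^{n+1}=\omega_N^{n+\frac{1}{2}}$, the transformed gradient at $t\in((n-1)\Delta t, n\Delta t]$ equals $\nabla_\rf^{\omega_N^{n-1}}\bfu_N^n = \nabla_\rf^{\tau_{\Delta t}\omega_N(t)}\bfu_N(t)$, Banach--Alaoglu applied for $p$ in a countable dense subset of $(1,2)$, together with a further diagonal extraction and uniqueness of distributional limits, produces a common weak limit $\bfG$.

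The second item is the identification $\bfeta = \bbfeta$ and $\omega = \bomega$. The key is an elementary comparison of the piecewise constant and piecewise linear interpolants. For $t\in((n-1)\Delta t, n\Delta t]$ a direct computation gives
\begin{equation*}
    \bfeta_N(t) - \bbfeta_N(t) = \frac{n\Delta t - t}{\Delta t}(\bfeta_N^n - \bfeta_N^{n-1}) = (n\Delta t - t)\, \dbfeta_N^n,
\end{equation*}
which, combined with the uniform bound $\|\dbfeta_N^n\|_{\rL^2(\Omb)}^2 \le 2 E_0/\rhob$ coming from \eqref{eq:unif energy ineq}, implies
\begin{equation*}
    \int_0^T \|\bfeta_N - \bbfeta_N\|_{\rL^2(\Omb)}^2 \srd t \le T(\Delta t)^2 \cdot \frac{2 E_0}{\rhob} \longrightarrow 0.
\end{equation*}
The analogous estimate for $\omega_N - \bomega_N$ follows from the uniform $\rL^2(\Gamma)$ bound on $\zeta_N^{n+\frac{1}{2}}$. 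Since both $\bfeta_N$ and $\bbfeta_N$ converge weakly in $\rL^2(0,T;\rL^2(\Omb)^3)$ and their difference converges strongly to zero, uniqueness of weak limits forces $\bfeta = \bbfeta$; similarly $\omega = \bomega$.

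The genuinely subtle point is the transformed-gradient bound above: the loss of integrability to $p<2$ is an unavoidable consequence of the non-Lipschitz 3D geometry and forces the use of Lengeler's Korn inequality for non-Lipschitz domains in place of the classical Korn inequality. Everything else reduces to Banach--Alaoglu and the standard interpolation comparison just outlined.
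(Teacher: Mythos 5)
Your proposal is correct and matches the intended (but not written out) argument: the paper declares the corollary an immediate consequence of Lemma~\ref{lem:unif bddness of approx sols} via Banach--Alaoglu, and your Banach--Alaoglu plus diagonal-extraction reasoning is exactly that. You correctly flag the one genuinely non-immediate point, namely that the bound on $\nabla_\rf^{\tau_{\Delta t}\omega_N}\bfu_N$ in $\rL^2(0,T;\rL^p(\Omf)^{3\times 3})$ is not contained in the statement of Lemma~\ref{lem:unif bddness of approx sols} (which only gives $\bfu_N\in\rL^2(0,T;\rW^{1,p}(\Omf)^3)$) but is established as an intermediate step in the chain \eqref{eq:est of grad of u} of its proof, and your index bookkeeping $\tau_{\Delta t}\omega_N(t)=\omega_N^{n-1}$ for $t\in((n-1)\Delta t,n\Delta t]$ is right; likewise, the identification $\bfeta=\bbfeta$, $\omega=\bomega$ via $\|\bfeta_N-\bbfeta_N\|_{\rL^2(0,T;\rL^2(\Omb))}\le C\,\Delta t\to 0$ and uniqueness of weak limits is the standard and correct closing step.
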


In \autoref{lem:lim of the gradient of the pressure}, we will prove that $\bfG = \nabla_\rf^\omega \bfu$.

\bigskip
Next, we show that the assumptions from \autoref{lem:unif bddness of approx sols} are indeed fulfilled for sufficiently small $T > 0$, and under suitable conditions on $\bfeta_0$.

\begin{lem}\label{lem:verification of assumptions}
Assume that the initial data satisfy $|\omega_0| \le R_0 < R$, and that $\bfeta_0$ is such that $\Id + (\bfeta_0)^\delta$ is invertible with $\det(\bfI + \nabla (\bfeta_0)^\delta) \ge c_0 > 0$ on $\Omb$.
Then there exists $T > 0$ sufficiently small such that for all $N$, Assumptions~1.B, 2.B and~2.C from \autoref{lem:unif bddness of approx sols} are satisfied.
\end{lem}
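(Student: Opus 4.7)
\textbf{Proof plan for \autoref{lem:verification of assumptions}.} The plan is to proceed by induction on the time-step index $k$, combined with a continuity-in-time argument and the smoothing effect of the mollifier $\sigma_\delta$. The main issue is a circular dependency: the energy estimates of \autoref{lem:discr energy eqs coupled probl} which we need in order to control the size of the displacements presuppose Assumptions~1.A, 2.A and~3.A (equivalent, via \autoref{rem:inv of Lagrangian map}, to Assumptions~1.B, 2.B and~2.C) at all previous time steps. I will resolve this by bootstrapping: at step $k$, the inductive hypothesis gives the assumptions for indices $0,\dots,k-1$, hence permits applying \autoref{lem:ex of weak sol to fluid & Biot subprobl} up to step $k$; the resulting energy bounds then furnish sufficient continuity-in-time estimates to extend the assumptions to step $k$, provided the final time $T$ is sufficiently small (independently of $N$).

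\textbf{Base case and uniform time-continuity.} At $k=0$, Assumptions~1.B, 2.B, 2.C hold immediately from the hypotheses on the initial data together with continuity of $\nabla(\bfeta_0)^\delta$ on $\overline{\Omega}_\rb$ and the compatibility condition $\left.\bfeta_0\right|_\Gamma = \omega_0 \bfe_3$ (using the smoothing estimate $\| (\bfeta_0)^\delta|_\Gamma - \omega_0 \bfe_3\|_{\rL^\infty}$ controlled by $\delta$ and a constant depending on $\bfeta_0$, which we absorb into the choice of $R_{\max}$). Assume next that Assumptions~1.B, 2.B, 2.C hold for $n = 0,1,\dots,k-1$. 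Then \autoref{lem:discr energy eqs coupled probl} yields $E_N^k \le E_0$ together with the numerical dissipation estimate \eqref{eq:num dissip ests}, both with constants independent of $N$. In particular, with the conventions \eqref{eq:repr of bomega & bbfeta},
\begin{equation*}
   \| \dt \bbfeta_N \|_{\rL^\infty(0,T_k;\rL^2(\Omb)^3)} \le C_1 \tand \| \dt \bomega_N \|_{\rL^\infty(0,T_k;\rL^2(\Gamma))} \le C_2,
\end{equation*}
where $T_k = k \Delta t$ and $C_1$, $C_2$ depend only on $E_0$. Consequently $\| \bfeta_N^k - \bfeta_0 \|_{\rL^2(\Omb)} \le C_1 T_k$ and $\| \omega_N^{k-\frac{1}{2}} - \omega_0 \|_{\rL^2(\Gamma)} \le C_2 T_k$.

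\textbf{Pointwise control by mollification and interpolation.} Young's convolution inequality applied to the odd extension \eqref{eq:odd ext} gives, for a fixed $\delta > 0$,
\begin{equation*}
   \| (\bfeta_N^k)^\delta - (\bfeta_0)^\delta \|_{\rL^\infty(\Omb)} \le \| \sigma_\delta \|_{\rL^2(\bR^3)} \| \bfeta_N^k - \bfeta_0 \|_{\rL^2(\tOm_\rb)} \le C_\delta T_k,
\end{equation*}
and the analogous bound $\| \nabla (\bfeta_N^k)^\delta - \nabla (\bfeta_0)^\delta \|_{\rL^\infty(\Omb)} \le \| \nabla \sigma_\delta \|_{\rL^2(\bR^3)} \cdot C_1 T_k \le C_\delta' T_k$ (the bound on $\| \bfeta_N^k - \bfeta_0 \|_{\rL^2(\tOm_\rb)}$ follows from the uniform bound on the finite-energy norm of the extended displacement, controlled by the finite-energy bound on $\bfeta_N^k$ on $\hOmb$ together with the trace of the plate displacement). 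For the plate, since $\omega_N^{k-\frac{1}{2}}$ is uniformly bounded in $\rH_0^2(\Gamma)$ by $E_0$, interpolation between $\rL^2(\Gamma)$ and $\rH^2(\Gamma)$ combined with the embedding $\rH^s(\Gamma) \hookrightarrow \rL^\infty(\Gamma)$ for some $s \in (1,2)$ gives, with a suitable $\theta \in (0,1)$,
\begin{equation*}
   \| \omega_N^{k-\frac{1}{2}} - \omega_0 \|_{\rL^\infty(\Gamma)} \le C \| \omega_N^{k-\frac{1}{2}} - \omega_0 \|_{\rL^2}^{1-\theta} \| \omega_N^{k-\frac{1}{2}} - \omega_0 \|_{\rH^2}^{\theta} \le C' T_k^{1-\theta}.
\end{equation*}

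\textbf{Conclusion by choice of $T$.} Gathering the above, I choose $T > 0$ small enough, depending only on $R$, $R_0$, $c_0$, the initial data and the fixed regularization parameter $\delta$ (but not on $N$), so that for every $k$ with $k \Delta t \le T$ and every $N$:
\begin{equation*}
   |\omega_N^{k-\frac{1}{2}}| \le R_0 + C' T^{1-\theta} \le R_{\max} < R, \quad \bigl| (\bfeta_N^k)^\delta|_\Gamma \bigr| \le |(\bfeta_0)^\delta|_\Gamma| + C_\delta T \le R_{\max} < R,
\end{equation*}
\begin{equation*}
   \det\bigl(\bfI + \nabla (\bfeta_N^k)^\delta \bigr) \ge c_0 - C_\delta' T \ge \tfrac{c_0}{2} > 0,
\end{equation*}
and similarly the $\rL^\infty$-norms of $\bfI + \nabla(\bfeta_N^k)^\delta$ and its inverse remain within twice their values at $k=0$. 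This verifies Assumptions~1.B, 2.B and~2.C at step $k$, closing the induction. The main obstacle is the circularity noted above; its resolution rests crucially on the facts that the mollifier $\sigma_\delta$ trades weak spatial regularity ($\rL^2$) for strong pointwise control at the cost of a $\delta$-dependent constant, and that the energy estimate of \autoref{lem:discr energy eqs coupled probl} yields $N$-independent bounds, so that a single threshold $T$ suffices uniformly in the discretization parameter.
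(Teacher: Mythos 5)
Your proof follows essentially the same approach as the paper: use the $N$-independent energy bounds to obtain linear-in-$T$ estimates for the displacement differences in $\rL^2$, upgrade to pointwise control via the mollifier, interpolate for the plate displacement between $\rL^2$ and $\rH^2$ to get a fractional-power-of-$T$ $\rL^\infty$-bound, and choose $T$ small using continuity of $\det(\bfI+\cdot)$ and matrix norms. The only cosmetic difference is that you obtain the $\rL^\infty$-control of $(\bfeta_N^k)^\delta - (\bfeta_0)^\delta$ and its gradient directly via Young's convolution inequality with $\|\sigma_\delta\|_{\rL^2}$ and $\|\nabla\sigma_\delta\|_{\rL^2}$, whereas the paper first bounds the difference in $\rH^3(\Omb)$ and then applies trace continuity and Sobolev embeddings — both routes are valid and cost the same $\delta$-dependence.
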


\begin{proof} The proof of this result is different from the proof of the corresponding result in 2D due to different embedding properties.  

First observe that it is clear that $\omega_0$ is bounded, while the boundedness of $(\bfeta_0)^\delta$ on $\Gamma$ is a consequence of the invertibility of $\Id + (\bfeta_0)^\delta$ and $\det(\bfI + \nabla (\bfeta_0)^\delta) \ge c_0 > 0$.
In particular, thanks the continuity of $|\bfI + \nabla (\bfeta_0)^\delta|$ and $|(\bfI + \nabla (\bfeta_0)^\delta)^{-1}|$ on the compact set $\oOmega_\rb$, we find constants $\alpha_1$, $\alpha_2 > 0$ such that $|\bfI + \nabla (\bfeta_0)^\delta| \le \alpha_1$ and $|(\bfI + \nabla (\bfeta_0)^\delta)^{-1}| \le \alpha_2$.

In the remainder of the proof, we show that there is a sufficiently small time $T > 0$ so that Assumptions~1.B, 2.B and~2.C from \autoref{lem:unif bddness of approx sols} hold uniformly for all $N$ and $n \Delta t$ up to time $T$.
For this purpose, we first invoke the energy equality from \autoref{lem:discr energy eqs coupled probl}.
Similarly as in \eqref{eq:unif energy ineq}, we find that 
\begin{equation*}
    E_N^{k+\frac{1}{2}} \le E_0 \tand E_N^{k+1} \le E_0 \tforall k=0,1,\dots,N-1.
\end{equation*}
Thus, after solving the two subproblems as exposed in \autoref{ssec:plate subprobl} and \autoref{ssec:fluid & Biot subprobl} on the time step $[k \Delta t,(k+1)\Delta t]$, for a constant $C > 0$ merely depending on the initial energy $E_0$, we get
\begin{equation}\label{eq:bddness eta, omega & zeta}
    \begin{aligned}
        \| \dbfeta_N^n \|_{\rL^2(\Omb)}
        &\le C, &&\tfor n=0,1,\dots,k+1,\\
        \| \omega_N^{n+\frac{1}{2}} \|_{\rH_0^2(\Gamma)}
        &\le C, &&\tfor n=0,1,\dots,k, \tand\\
        \| \zeta_N^{n+\frac{1}{2}} \|_{\rL^2(\Gamma)}
        &\le C, &&\tfor 0 \le n + \frac{i}{2} \le k+1 \tand i=0,1.
    \end{aligned}
\end{equation}

The first step is to establish the uniform bound of the plate displacements $\omega_N^{n-\frac{1}{2}}$.
To this end, we recall that $\dt \overline{\omega}_N = \zeta_N$, so the piecewise linear and piecewise constant character of $\overline{\omega}_N$ and $\zeta_N$, respectively, joint with \eqref{eq:bddness eta, omega & zeta}$_2$ and \eqref{eq:bddness eta, omega & zeta}$_3$ yields that there is $C = C(E_0) > 0$ independent of $N$ such that
\begin{equation}\label{eq:bounds for bar omega}
    \begin{aligned}
        \| \overline{\omega}_N \|_{\rW^{1,\infty}(0,(k+1)\Delta t;\rL^2(\Gamma))}
        &\le C \tand
        \| \overline{\omega}_N \|_{\rL^\infty(0,(k+1)\Delta t;\rH_0^2(\Gamma))} \le C.
    \end{aligned}
\end{equation}
Note that \eqref{eq:bounds for bar omega} means that $\overline{\omega}_N$ is Lipschitz-continuous in time with values in $\rL^2(\Gamma)$.
Next, we invoke that for $s \in (1,2)$, by complex interpolation, see e.g., \cite[Section~2.1]{Lun:18}, we have $\rH^s(\Gamma) = [\rL^2(\Gamma),\rH^2(\Gamma)]_{\frac{s}{2}}$.
Consequently, for all~$t$ and $t+\tau \in [0,(k+1) \Delta t]$ with $\tau > 0$, we obtain
\begin{equation*}
    \| \overline{\omega}_N(t+\tau) - \overline{\omega}_N(t) \|_{\rH^s(\Gamma)} \le C \cdot \| \overline{\omega}_N(t+\tau) - \overline{\omega}_N(t) \|_{\rL^2(\Gamma)}^{1-\frac{s}{2}} \cdot \| \overline{\omega}_N(t+\tau) - \overline{\omega}_N(t) \|_{\rH^2(\Gamma)}^{\frac{s}{2}}.
\end{equation*}
Combining the latter inequality with \eqref{eq:bounds for bar omega} and the above remark on the Lipschitz continuity in time, we infer that
\begin{equation}\label{eq:Holder est omega_N}
    \| \overline{\omega}_N(t+\tau) - \overline{\omega}_N(t) \|_{\rH^s(\Gamma)} \le C \cdot \tau^{1-\frac{s}{2}},
\end{equation}
where $C = C(E_0) > 0$ is again independent of $k$ and $N$.
The choice $t = 0$ and $\tau = (k+1) \Delta t$ joint with the embedding $\rH^s(\Gamma) \hookrightarrow \rC(\Gamma)$ thanks to $s > 1$ then leads to
\begin{equation*}
    \| \omega_N^{k+1} - \omega_0 \|_{\rC(\Gamma)} \le C \cdot [(k+1) \Delta t]^{1-\frac{s}{2}} \le C \cdot T^{1-\frac{s}{2}}.
\end{equation*}
As the above constant $C > 0$ only depends on $E_0$, with regard to $|\omega_0| < R$, we may choose $T > 0$ sufficiently small such that $C \cdot T^{1-\frac{s}{2}} < R - \| \omega_0 \|_{\rC(\Gamma)}$, showing that the first part of Assumption~1.B can be guaranteed for $T > 0$ sufficiently small.

What is left to show is the boundedness of the trace of $\bfeta_N^n$.
For this purpose, we employ the triangle inequality, the definition of $\dbfeta_N^n = \frac{\bfeta_N^n - \bfeta_N^{n-1}}{\Delta t}$ as well as \eqref{eq:bddness eta, omega & zeta}$_1$ to get
\begin{equation*}
    \| \bfeta_N^{k+1} - \bfeta_0 \|_{\rL^2(\Omb)} \le (\Delta t) \sum_{n=1}^{k+1} \| \dbfeta_N^n \|_{\rL^2(\Omb)} \le C(k+1) (\Delta t) \le C T,
\end{equation*}
with $C = C(E_0) > 0$ independent of $N$.
On the other hand, the triangle inequality, the construction of the plate subproblem in \autoref{ssec:plate subprobl} and \eqref{eq:bddness eta, omega & zeta}$_3$ imply that
\begin{equation*}
    \| \omega_N^{k+1} - \omega_0 \|_{\rL^2(\Gamma)} \le (\Delta t) \sum_{n=1}^{k+1} \| \zeta_N^{n-\frac{1}{2}} \|_{\rL^2(\Gamma)} \le C (\Delta t) (k+1) \le C T.
\end{equation*}
Concatenating the previous two estimates, and invoking the odd extension from \eqref{eq:odd ext}, we conclude that
\begin{equation*}
    \| \bfeta_N^{k+1} - \bfeta_0 \|_{\rL^2(\tOmb)} \le C\bigl(\| \bfeta_N^{k+1} - \bfeta_0 \|_{\rL^2(\Omb)} + \| \omega_N^{k+1} - \omega_0 \|_{\rL^2(\Gamma)}\bigr) \le C T.
\end{equation*}
Again, let us emphasize that $C = C(E_0) > 0$ is independent of $N$.
Thanks to the regularization, for~$C = C(\delta,E_0) > 0$, we find that
\begin{equation*}
    \| (\bfeta_N^{k+1})^\delta - (\bfeta_0)^\delta \|_{\rH^3(\Omb)} \le C \cdot T.
\end{equation*}
Since the trace is continuous from $\rH^3(\Omb)$ to $\rH^2(\Gamma)$, and thanks to $\rH^2(\Gamma) \hookrightarrow \rC(\Gamma)$ and $\rH^2(\Omb) \hookrightarrow \rC(\Omb)$, for $C = C(E_0,\delta) > 0$ independent of $N$, we deduce from the last estimate that
\begin{equation}\label{eq:bounds for eta}
    \left\| \left.(\bfeta_N^{k+1})^\delta \right|_\Gamma - \left.(\bfeta_0)^\delta \right|_\Gamma \right\|_{\rC(\Gamma)} \le C \cdot T \tand \left\| \nabla(\bfeta_N^{k+1})^\delta - (\bfeta_0)^\delta \right\|_{\rC(\Gamma)} \le C \cdot T.
\end{equation}
At this stage, let us observe that $\det(\bfI + \bfA)$ depends continuously on $\bfA$, and the matrix norms $|\bfI + \bfA|$ and~$|(\bfI + \bfA)^{-1}|$ are continuous functions of $\bfA$.
Moreover, the constant $C$ in \eqref{eq:bounds for eta} merely depends on~$E_0$ and $\delta$ and, in particular, it is independent of $k$ and $N$ (although it may depend on $\delta$).
By virtue of \eqref{eq:bounds for eta}, we may choose $T > 0$ sufficiently small such that the second part of Assumption~1.B and Assumptions~2.B and~2.C are satisfied, completing the proof.
\end{proof}

\section{Approximate solutions: Strong convergence via compactness arguments}\label{sec:comp args}

The weak and weak$^*$ convergences established in \autoref{cor:weak & weak* conv} are insufficient to pass to the limit in the nonlinear terms in the semidiscrete formulation \eqref{eq:weak form coupled semidiscr probl} and \eqref{eq:ids coupled probl}.
Instead, we need to upgrade to strong convergence results.
To that end, we apply the classical Aubin--Lions compactness theorem on fixed domains, together with the generalized Aubin--Lions framework of \cite{MC:19}. We first establish compactness for the plate displacement and for the displacement in the Biot domain, and then address the compactness of the fluid velocity on the moving fluid domain.

Throughout this section, we consider $T > 0$ sufficiently small, determined in \autoref{lem:verification of assumptions}.

\subsection{Compactness for the Biot displacement}
\ 

In this subsection, we verify the strong convergence of the Biot structure displacement $\overline{\bfeta}_N$ by employing the classical Aubin--Lions compactness result.

\begin{lem}\label{lem:strong conv Biot displacement}
We have that $\rW^{1,\infty}(0,T;\rL^2(\Omb)^3) \cap \rL^\infty(0,T;\rH^1(\Omb)^3)\Subset \rC(0,T;\rL^2(\Omb)^3)$.
In particular, we deduce the existence of a subsequence, still denoted by $(\overline{\bfeta}_N)$, such that $\overline{\bfeta}_N \to \bfeta$ strongly in~$\rC(0,T;\rL^2(\Omb)^3)$.
\end{lem}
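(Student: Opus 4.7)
The plan is to combine Simon's variant of the Aubin--Lions compactness theorem with the uniform bounds on the linear interpolants $\overline{\bfeta}_N$ from \autoref{lem:unif bddness of approx sols}, and then identify the strong limit using the weak$^\ast$ convergence recorded in \autoref{cor:weak & weak* conv}. More precisely, I would invoke the following criterion (see, e.g., Simon, \emph{Compact sets in $\rL^p(0,T;B)$}): given Banach spaces $X \Subset Y \hookrightarrow Z$ with the first embedding compact and the second continuous, any subset of $\rL^\infty(0,T;X)$ whose distributional time derivatives are bounded in $\rL^r(0,T;Z)$ for some $r > 1$ is relatively compact in $\rC(0,T;Y)$. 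I would apply this with $X = \rH^1(\Omb)^3$ and $Y = Z = \rL^2(\Omb)^3$; since $\Omb = (0,L)^2 \times (0,R)$ is a bounded Lipschitz domain, Rellich--Kondrachov supplies $\rH^1(\Omb)^3 \Subset \rL^2(\Omb)^3$, and the abstract compact embedding claimed in the lemma follows (with $r = \infty$, which is admissible since $\rW^{1,\infty}$-regularity in time yields derivatives in $\rL^r$ for every $r > 1$).

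Second, I would verify that the family $(\overline{\bfeta}_N)$ is uniformly bounded in $\rW^{1,\infty}(0,T;\rL^2(\Omb)^3) \cap \rL^\infty(0,T;\rH^1(\Omb)^3)$. The bound in $\rW^{1,\infty}(0,T;\rL^2(\Omb)^3)$ is already stated in \autoref{lem:unif bddness of approx sols}. For the $\rL^\infty(0,T;\rH^1(\Omb)^3)$ bound, I would use that $\overline{\bfeta}_N$ is the piecewise linear interpolant through the nodes $\bfeta_N^n$: on each subinterval $[(n-1)\Delta t, n\Delta t]$ it is a convex combination of $\bfeta_N^{n-1}$ and $\bfeta_N^n$, so convexity of the norm gives a pointwise estimate by $\|\bfeta_N\|_{\rL^\infty(0,T;\rH^1(\Omb))}$, which is uniformly bounded by the same lemma. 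This places the whole family into a bounded subset of the intersection space.

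Finally, the compact embedding established above extracts a subsequence, still denoted $(\overline{\bfeta}_N)$, converging strongly in $\rC(0,T;\rL^2(\Omb)^3)$ to some limit. By the uniqueness of weak$^\ast$ limits and the identification $\overline{\bfeta} = \bfeta$ from \autoref{cor:weak & weak* conv}, this strong limit must coincide with $\bfeta$, yielding the stated convergence. I do not foresee any genuine obstacle here: the proof is essentially a direct citation of a well-known compactness principle, and the only point that requires minor care is the transfer of the $\rL^\infty(0,T;\rH^1)$-bound from the piecewise constant approximation $\bfeta_N$ to the piecewise linear interpolant $\overline{\bfeta}_N$.
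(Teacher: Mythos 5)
Your proposal is correct and follows essentially the same route as the paper: both invoke Rellich--Kondrachov for $\rH^1(\Omb) \Subset \rL^2(\Omb)$, the Aubin--Lions(--Simon) compactness criterion for the abstract embedding into $\rC(0,T;\rL^2)$, and the uniform bounds from \autoref{lem:unif bddness of approx sols}. The only difference is that you spell out two details the paper leaves implicit --- citing Simon's variant explicitly (which is indeed the version needed for $\rC(0,T;\cdot)$ rather than $\rL^p(0,T;\cdot)$ compactness) and the convexity argument transferring the $\rL^\infty(0,T;\rH^1)$ bound from $\bfeta_N$ to the interpolant $\overline{\bfeta}_N$ --- but this is the same argument at a higher level of rigor, not a different one.
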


\begin{proof}
For the compactness of the embedding, we first note that $\rH^1(\Omb) \Subset \rL^2(\Omb)$ by the Rellich--Kondrachov theorem.
Thus, the compactness of the embedding is a result of the classical Aubin--Lions compactness theorem.
For the existence of a convergent subsequence $(\overline{\bfeta}_N)$, we still require the uniform boundedness of $\overline{\bfeta}_N$ in $\rW^{1,\infty}(0,T;\rL^2(\Omb)^3) \cap \rL^\infty(0,T;\rH^1(\Omb)^3)$.
The first part directly follows from \autoref{lem:unif bddness of approx sols}.
In the same lemma, we have also shown that the Biot displacement $\bfeta_N$ is uniformly bounded in $\rL^\infty(0,T;\rH^1(\Omb)^3)$.
The uniform boundedness of $\overline{\bfeta}_N$ can be concluded in the same way.
\end{proof}

\subsection{Compactness for the plate displacement}
\ 

Next, we address the compactness for the plate displacement.
Here we observe that we can only deduce the strong convergence in $\rC(0,T;\rH^s(\Gamma))$ for $s < 2$, owing to the use of the Arzela--Ascoli theorem and the required compactness of the Sobolev embedding, even though the plate displacements are uniformly boundedness in $\rL^\infty(0,T;\rH_0^2(\Gamma))$ by \autoref{lem:unif bddness of approx sols}.
The associated result is given below.

\begin{lem}\label{lem:comp for plate displacement}
Let $s \in (0,2)$.
Then there exist subsequences $\overline{\omega}_N$ and $\omega_N$ such that $\overline{\omega}_N \to \omega$ strongly in~$\rC(0,T;\rH^s(\Gamma))$ and $\omega_N \to \omega$ strongly in $\rL^\infty(0,T;\rH^s(\Gamma))$.
\end{lem}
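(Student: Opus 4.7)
My plan is to apply the Arzela--Ascoli theorem to the family $(\overline{\omega}_N) \subset \rC(0,T;\rH^s(\Gamma))$ and then transfer the strong convergence to the piecewise constant representatives $\omega_N$ via a small-jump comparison. Arzela--Ascoli requires two ingredients: uniform equicontinuity in time with values in $\rH^s(\Gamma)$, and, for each fixed $t$, relative compactness of $\{\overline{\omega}_N(t)\}_N$ in $\rH^s(\Gamma)$.

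For the first ingredient, I would reuse the interpolation computation already carried out in the proof of \autoref{lem:verification of assumptions}. Combining the bound in $\rW^{1,\infty}(0,T;\rL^2(\Gamma))$ with the bound in $\rL^\infty(0,T;\rH_0^2(\Gamma))$ -- both supplied by \autoref{lem:unif bddness of approx sols} -- and using the complex interpolation identity $\rH^s(\Gamma) = [\rL^2(\Gamma),\rH^2(\Gamma)]_{s/2}$ delivers exactly \eqref{eq:Holder est omega_N}:
\begin{equation*}
\| \overline{\omega}_N(t+\tau) - \overline{\omega}_N(t) \|_{\rH^s(\Gamma)} \le C \, \tau^{1-\frac{s}{2}},
\end{equation*}
with $C$ independent of $N$. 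Since $s < 2$, the exponent $1 - \frac{s}{2}$ is strictly positive, so the family is uniformly equicontinuous. For the second ingredient, the uniform bound of $\overline{\omega}_N$ in $\rL^\infty(0,T;\rH_0^2(\Gamma))$ makes $\{\overline{\omega}_N(t)\}_N$ bounded in $\rH^2(\Gamma)$ for every $t$, and Rellich--Kondrachov yields the compact embedding $\rH^2(\Gamma) \Subset \rH^s(\Gamma)$ for every $s < 2$. Arzela--Ascoli then produces a subsequence converging in $\rC(0,T;\rH^s(\Gamma))$ to some limit, which must coincide with $\overline{\omega} = \omega$ by uniqueness of the weak$^*$ limit from \autoref{cor:weak & weak* conv}.

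To transfer the strong convergence to the piecewise constant interpolant $\omega_N$, I would estimate $\omega_N - \overline{\omega}_N$. On each subinterval $((n-1)\Delta t, n\Delta t]$, the definitions \eqref{eq:pw const approx} and \eqref{eq:repr of bomega & bbfeta} bound the difference pointwise by a single increment of the plate velocity, and the uniform control of $\zeta_N$ in $\rL^\infty(0,T;\rL^2(\Gamma))$ therefore gives $\|\omega_N - \overline{\omega}_N\|_{\rL^\infty(0,T;\rL^2(\Gamma))} \le C \, \Delta t$, while both functions remain uniformly bounded in $\rL^\infty(0,T;\rH^2(\Gamma))$. A second application of the interpolation between $\rL^2(\Gamma)$ and $\rH^2(\Gamma)$ then yields
\begin{equation*}
\| \omega_N - \overline{\omega}_N \|_{\rL^\infty(0,T;\rH^s(\Gamma))} \le C \, (\Delta t)^{1-\frac{s}{2}} \longrightarrow 0,
\end{equation*}
so $\omega_N \to \omega$ in $\rL^\infty(0,T;\rH^s(\Gamma))$ along the same subsequence. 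I do not anticipate a serious obstacle here, since the Hölder-in-time bound -- the one nontrivial input -- is already in hand. The only genuine limitation is the restriction $s < 2$: the embedding $\rH^2(\Gamma) \hookrightarrow \rH^2(\Gamma)$ fails to be compact, which is precisely why the argument cannot be pushed to the full energy-space regularity.
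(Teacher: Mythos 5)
Your proof is correct and takes essentially the same route as the paper: Arzela--Ascoli on $(\overline{\omega}_N)$ using the H\"older-in-time bound \eqref{eq:Holder est omega_N} for equicontinuity and a compact embedding into $\rH^s(\Gamma)$ for pointwise relative compactness, followed by a small-step estimate to transfer the convergence to the piecewise constant $\omega_N$. The only cosmetic difference is that you establish the bound $\|\omega_N - \overline{\omega}_N\|_{\rL^\infty(0,T;\rH^s(\Gamma))} \le C(\Delta t)^{1-s/2}$ by interpolating between the $\rL^2$ jump bound from the plate velocity and the $\rH^2$ bound, whereas the paper applies \eqref{eq:Holder est omega_N} directly with a time shift $\tau \le \Delta t$; these are equivalent.
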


\begin{proof}
Let us observe that for $s \in (0,2)$, there exists $s' \in (0,2)$ such that $s' = s + \eps$, where $\eps > 0$ is sufficiently small.
From \eqref{eq:Holder est omega_N}, for $\tau > 0$ as well as $t$, $t + \tau \in [0,T]$, we recall that
\begin{equation*}
    \| \overline{\omega}_N(t+\tau) - \overline{\omega}_N(t) \|_{\rH^{s'}(\Gamma)} \le C \tau^{1-\frac{s'}{2}},
\end{equation*}
where we emphasize that the constant $C$ is independent of $N$.
This estimate establishes the equicontinuity of $(\overline{\omega}_N)$.
At the same time, note that $\rH^{s'}(\Gamma) \Subset \rH^s(\Gamma)$ by the choice of $s' = s + \eps$.
On the other hand, the uniform boundedness of $\overline{\omega}_N$ in $\rL^\infty(0,T;\rH^s(\Gamma))$ for $s \in (0,2)$ is especially implied by \autoref{lem:unif bddness of approx sols} upon noting that the proof is analogous to the one of the uniform boundedness of $\omega_N$ in $\rL^\infty(0,T;\rH_0^2(\Gamma))$.
Therefore, the Arzela--Ascoli theorem yields the existence of a subsequence, still denoted by $\overline{\omega}_N$ such that~$\overline{\omega}_N \to \omega$ strongly in~$\rC(0,T;\rH^s(\Gamma))$.

For the second part of the assertion, the aim is to show that $\| \omega_N(t) - \overline{\omega}_N(t) \|_{\rL^\infty(0,T;\rH^s(\Gamma))} \to 0$.
To this end, observe that $\overline{\omega}_N(n \Delta t) = \omega_N(t)$ for $n \Delta t \le t < (n+1) \Delta t$.
Thus, additionally exploiting~\eqref{eq:Holder est omega_N}, we find the existence of $\tau \le \Delta t$ such that
\begin{equation*}
    \| \omega_N(t) - \overline{\omega}_N(t) \|_{\rL^\infty(0,T;\rH^s(\Gamma))} = \| \overline{\omega}_N(t+\tau) - \overline{\omega}_N(t) \|_{\rL^\infty(0,T;\rH^s(\Gamma))} \le C(\Delta t)^{1-\frac{s}{2}} \to 0 \tas N \to \infty,
\end{equation*}
where we invoke again the independence of the constant $C$ of $N$.
This shows that $\omega_N$ and $\overline{\omega}_N$ have the same limit in $\rL^\infty(0,T;\rH^s(\Gamma))$ for $s \in (0,2)$ and thus completes the proof.
\end{proof}

Before proceeding with the compactness for the Biot velocity, the plate velocity, the pore pressure and finally also the fluid velocity, we remark that in the test space $\cQ_N^{n+1}$ from \eqref{eq:sol & test space fluid & Biot subprobl}, the pore pressure and the fluid velocity are decoupled from the Biot and plate velocity, so we may address the compactness arguments for these quantities separately.
In fact, we will decouple the test space into smaller test spaces for the Biot and place displacement and velocity, the pore pressure and the fluid velocity.
Due to the kinematic coupling condition at the plate interface, we must handle the Biot and plate velocity together, which is presented next.

\subsection{Compactness for the Biot and plate velocities}\label{ssec:comp Biot vel & plate vel}
The aim of this subsection is to establish compactness results for the Biot and plate velocities $(\bfxi_N,\zeta_N)$.
More precisely, we intend to show the existence of convergent subsequences in $\rL^2(0,T;\rH^{s}(\Omb)^3 \times \rH^{s}(\Gamma))$ for $s \in (-\frac{1}{2},0)$.
The reasons for considering negative Sobolev spaces are twofold.
First, the main existence result of this work includes the case in which the Biot medium is purely elastic, namely, the viscoelasticity coefficients can be zero, which implies that the Biot velocities can only be $\rL^2(\Omb)^3$.
{{Secondly, in contrast with classical FSI problems with elastic structures and no-slip boundary conditions, where the interface velocity is the same as the trace of the fluid velocity, in the present construction of the splitting scheme for the regularized FPSI problem, the plate velocity $\zeta_N$ generally {\em does not coincide with the traces of the fluid velocity $\bfu_N \in \rH^1(\Omf)^3$}.
As a consequence, we {\em cannot expect higher regularity of the plate velocities} compared to the regularity resulting from the finite-energy space.}}
In other words, the plate velocities can only be expected to lie in $\rL^2(\Gamma)$.

Below, we assert the compactness result for the Biot and plate velocities.

\begin{prop}\label{prop:comp of Biot & plate vel}
Let $s \in (-\frac{1}{2},0)$.
Then there is a subsequence, still denoted by $(\bfxi_N,\zeta_N)$, such that~$(\bfxi_N,\zeta_N) \to (\bfxi,\zeta)$ strongly in $\rL^2(0,T;\rH^{s}(\Omb)^3 \times \rH^{s}(\Gamma))$.
\end{prop}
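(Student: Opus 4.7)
The plan is to apply the discrete Aubin--Lions compactness theorem of Dreher and J\"ungel \cite{DJ:12} to the coupled pair $(\bfxi_N,\zeta_N^*)$, treating the Biot and plate velocities jointly because the last identity in \eqref{eq:ids coupled probl} enforces the kinematic constraint $\bfxi_N|_\Gamma = \zeta_N^* \bfe_3$ at the interface. The target convergence of the pair $(\bfxi_N,\zeta_N)$ stated in the proposition is then recovered from the numerical-dissipation bound $\sum_n \|\zeta_N^{n+1}-\zeta_N^{n+\frac{1}{2}}\|_{\rL^2(\Gamma)}^2 \le C$ furnished by \autoref{lem:discr energy eqs coupled probl}, which yields $\|\zeta_N-\zeta_N^*\|_{\rL^2(0,T;\rL^2(\Gamma))}^2 \le C\Delta t \to 0$; hence $\zeta_N$ and $\zeta_N^*$ share the same strong limit in any weaker target space.

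I would introduce the coupled test space
\begin{equation*}
    \mathcal{Z} \coloneqq \bigl\{(\bfpsi,\varphi) \in \Vd \times \rH_0^2(\Gamma) : \bfpsi = \varphi \bfe_3 \ton \Gamma\bigr\},
\end{equation*}
and set $X \coloneqq \rL^2(\Omb)^3 \times \rL^2(\Gamma)$, $B \coloneqq \rH^s(\Omb)^3 \times \rH^s(\Gamma)$ for $s \in (-\tfrac{1}{2},0)$, and $Y \coloneqq \mathcal{Z}^*$. The Dreher--J\"ungel criterion requires a uniform bound of $(\bfxi_N,\zeta_N^*)$ in $\rL^\infty(0,T;X)$ and a uniform bound of its discrete time differences in $\rL^2(0,T;Y)$, together with the chain $X \Subset B \hookrightarrow Y$. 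The $\rL^\infty(0,T;X)$-bound is immediate from \autoref{lem:unif bddness of approx sols}; the compact embedding $X \Subset B$ is Rellich--Kondrachov; and the continuous embedding $B \hookrightarrow \mathcal{Z}^*$ follows from $\mathcal{Z} \hookrightarrow \rH^{-s}(\Omb)^3 \times \rH^{-s}(\Gamma)$, which comfortably covers the stated range.

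The key step is the discrete time-derivative bound. Plugging a pure Biot/plate test tuple $(\mathbf{0},\varphi,\bfpsi,0)$ with $(\bfpsi,\varphi) \in \mathcal{Z}$ into the monolithic weak formulation~\eqref{eq:weak form coupled semidiscr probl} isolates the time-derivative block
\begin{equation*}
    \rhob \int_{\Omb} \Bigl(\frac{\dbfeta_N^{n+1}-\dbfeta_N^n}{\Delta t}\Bigr) \cdot \bfpsi \srd \bfx + \rhop \int_\Gamma \Bigl(\frac{\zeta_N^{n+1}-\zeta_N^n}{\Delta t}\Bigr) \varphi \srd S = -F_N^{n+1}(\bfpsi,\varphi),
\end{equation*}
where $F_N^{n+1}$ collects the bending, elastic, viscoelastic, pressure, pressure--displacement coupling and Beavers--Joseph--Saffman slip contributions. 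Each summand of $F_N^{n+1}$ can be bounded by $\|(\bfpsi,\varphi)\|_{\mathcal{Z}}$ times a factor whose square is $\Delta t$-summable uniformly in $N$; this uses the energy inequality~\eqref{eq:unif energy ineq}, \autoref{lem:unif bddness of approx sols}, Assumptions~1.B--2.C, and, for the pressure--displacement coupling, the uniform controls on $\cJ_\rb^{(\bfeta_N^n)^\delta}$ and $\nabla_\rb^{(\bfeta_N^n)^\delta}$ from the regularization. Taking the supremum over the unit ball of $\mathcal{Z}$, squaring, and summing in $n$ supplies the required bound in $\rL^2(0,T;\mathcal{Z}^*)$.

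The main obstacle is the kinematic coupling, which forbids componentwise Aubin--Lions arguments and compels the use of the coupled duality $\mathcal{Z}^*$ rather than a product of separate duals. A subsidiary difficulty is that in the purely elastic regime $\muv = \lambdv = 0$ no parabolic smoothing of $\bfxi_N$ is available, so the Biot velocity sits only in $\rL^\infty(0,T;\rL^2(\Omb)^3)$; the interpolation between $X$ and $\mathcal{Z}^*$ afforded by Dreher--J\"ungel then yields precisely the stated range $s \in (-\tfrac{1}{2},0)$.
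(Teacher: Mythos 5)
Your overall plan follows the paper's: a Dreher--J\"ungel discrete Aubin--Lions argument on the coupled pair, with the kinematic constraint $\bfpsi=\varphi\bfe_3$ built into the test space, followed by an Arzel\`a-type transfer from $\zeta_N^*$ to $\zeta_N$ via the numerical-dissipation estimate. Your explicit treatment of the $\zeta_N$ versus $\zeta_N^*$ distinction is a welcome clarification that the paper partly elides.

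There is, however, a genuine gap in the time-integrability claim. You assert that each summand of $F_N^{n+1}$ can be bounded by $\|(\bfpsi,\varphi)\|_{\mathcal Z}$ times a factor whose square is $\Delta t$-summable, i.e.\ an $\rL^2(0,T;\mathcal Z^*)$ bound on the discrete time derivative. This is too strong and cannot be proved: the normal-stress boundary term in~\eqref{eq:result of coupled semidiscr form for Biot & plate vel} contains $\frac12\int_\Gamma \bfu_N^{n+1}\cdot\bfu_N^n\,(\bfpsi\cdot\bfn^{\omega_N^n})\srd S$, which after the trace/H\"older step contributes a factor of the form $\|\bfu_N^{n+1}\|_{\rL^q(\Gamma)}\|\bfu_N^n\|_{\rL^q(\Gamma)}$. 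Since $\bfu_N$ is controlled only in $\rL^2(0,T;\rW^{1,p}(\Omf)^3)$ (Lemma~\ref{lem:unif bddness of approx sols}), each trace factor is merely $\rL^2$-summable in $n$ with weight $\Delta t$, and hence the \emph{product} is $\rL^1$-summable but in general not $\rL^2$-summable (e.g.\ concentrating the trace norm on two adjacent time slices makes $\Delta t\sum_n\|\bfu_N^{n+1}\|^2_{\rL^q}\|\bfu_N^n\|^2_{\rL^q}$ blow up while the individual $\rL^2$-in-time bounds stay fixed). Fortunately the Dreher--J\"ungel criterion only requires an $\rL^1(\Delta t,T;\mathcal Z^*)$ bound on the difference quotient together with the $\rL^\infty(0,T;X)$ bound and the chain $X\Subset B\hookrightarrow\mathcal Z^*$, and that $\rL^1$ bound \emph{does} follow from the energy estimates. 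So the remedy is to downgrade the claimed $\rL^2$ to $\rL^1$, as the paper does in~\eqref{eq:est for DJ crit}; without this the proof does not close.

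One secondary remark on the test space. You take $\mathcal Z=\Vd\times\rH_0^2(\Gamma)$, whereas the paper works with $\cQ_v=(\Vd\cap\rH^2(\Omb)^3)\times\rH_0^2(\Gamma)$; the extra $\rH^2$ regularity gives $\bfpsi|_\Gamma\in\rL^\infty(\Gamma)^3$, which the paper uses to simplify the H\"older split in the boundary estimates. With only $\bfpsi\in\Vd$ (trace in $\rH^{1/2}(\Gamma)\hookrightarrow\rL^4(\Gamma)$) one can still close the estimates by taking $\bfu_N^n$ in a higher boundary $\rL^q$ space (e.g.\ $q=8/3$, from $\rW^{1,12/7}(\Omf)$), but you should verify this balancing explicitly rather than asserting that each summand "can be bounded by $\|(\bfpsi,\varphi)\|_{\mathcal Z}$." Either test space yields the same final inclusion $\rH^s(\Omb)^3\times\rH^s(\Gamma)\subset\mathcal Z^*$ or $\cQ_v'$ for $s\in(-\tfrac12,0)$, so the choice is largely a matter of how much work you want to do in the boundary estimates.
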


\begin{proof} The proof of this result is different from the 2D case because of the lower regularity of the plate displacement, and the weaker result on the uniform boundedness of the fluid velocity in $\rL^2(0,T,\rW^{1,p})$ instead of $\rL^2(0,T,\rH^1)$ for the 2D case.

The main idea  is to use a compactness criterion for piecewise constant functions due to Dreher and J\"ungel \cite{DJ:12}.
We introduce the Biot and plate velocity test space~$\cQ_v$, which encodes slightly higher regularity in the Biot component compared to \autoref{def:sol & test space}, and which is given by
\begin{equation*}
    \cQ_v \coloneqq \{(\bfpsi,\varphi) \in (\Vd \cap \rH^2(\Omb)^3) \times \rH_0^2(\Gamma) : \bfpsi = \varphi \bfe_3 \ton \Gamma\}.
\end{equation*}
For $s \in (-\frac{1}{2},0)$, it then follows that $\rL^2(\Omb)^3 \times \rL^2(\Gamma) \Subset \rH^s(\Omb) \times \rH^s(\Gamma) \subset \cQ_v'$.
Let us stress that the first embedding is compact, and this chain of embeddings will be required for the compactness criterion \cite{DJ:12}.

Moreover, we recall the time shift $\tau_{\Delta t}$ given by $\tau_{\Delta t} f(t,\cdot) = f(t-\Delta t,\cdot)$ for a function $f$ defined on~$(0,T)$.
The Dreher--J\"ungel compactness criterion still requires us to show that for $C > 0$ uniform in $N$, we have
\begin{equation}\label{eq:est for DJ crit}
    \left\| \frac{\tau_{\Delta t}(\bfxi_N,\zeta_N) - (\bfxi_N,\zeta_N)}{\Delta t}\right\|_{\rL^1(\Delta t,T;\cQ_v')} + \| (\bfxi_N,\zeta_N) \|_{\rL^\infty(0,T;\rL^2(\Omb) \times \rL^2(\Gamma))} \le C.
\end{equation}

From \eqref{eq:repr of bomega & bbfeta}, it follows that $\dt \bbfeta_N(t) = \frac{\bfeta_N^{n+1}-\bfeta_N^n}{\Delta t} = \dbfeta_N^{n+1} = \bfxi_N(t)$ and $\dt \bomega_N(t) = \zeta_N(t)$.
Therefore, the uniform boundedness of $\bfxi_N$ in $\rL^\infty(0,T;\rL^2(\Omb)^3)$ is a consequence of the uniform boundedness of $\bbfeta_N$ in~$\rW^{1,\infty}(0,T;\rL^2(\Omb)^3)$ as shown in \autoref{lem:unif bddness of approx sols}, while the uniform boundedness of $\zeta_N$ in~$\rL^\infty(0,T;\rL^2(\Gamma))$ follows in the same way from the uniform boundedness result of $\bomega_N$ in \autoref{lem:unif bddness of approx sols}.
This implies that the second term in \eqref{eq:est for DJ crit} is uniformly bounded.

For the first term, we invoke the coupled semidiscrete formulation from \eqref{eq:weak form coupled semidiscr probl} and \eqref{eq:ids coupled probl}, where we choose the test functions $\bfv$ for the fluid velocity and $r$ for the Biot pore pressure equal to zero.
Thus, upon replacing $\dbfeta_N$ by $\bfxi_N$, and using that $\bfxi_N = \zeta_N \bfe_3$ on $\Gamma$, for $(\bfpsi,\varphi) \in \cQ_v$, we obtain
\begin{equation}\label{eq:result of coupled semidiscr form for Biot & plate vel}
    \begin{aligned}
        &\quad \rhob\int_{\Omb} \Bigl(\frac{\bfxi_N^{n+1}-\bfxi_N^n}{\Delta t}\Bigr) \cdot \bfpsi \srd \bfx + \rhop\int_{\Gamma} \Bigl(\frac{\zeta_N^{n+1}-\zeta_N^{n}}{\Delta t}\Bigr) \varphi \srd S\\
        &= -\int_{\Gamma} \Bigl(\frac{1}{2}\bfu_N^{n+1} \cdot \bfu_N^n - p_N^{n+1}\Bigr)(\bfpsi \cdot \bfn^{\omega_N^n}) \srd S - \beta\sum_{i=1}^2 \int_{\Gamma} \cJ_\Gamma^{\omega_N^n} (\zeta_N^{n+1} \bfe_3 - \bfu_N^{n+1}) \cdot \bftau_i^{\omega_N^n}(\bfpsi \cdot \bftau_i^{\omega_N^n}) \srd S\\
        &\quad - 2 \mue \int_{\Omb} \bfD(\bfeta_N^{n+1}) : \bfD(\bfpsi) \srd \bfx - \lambde \int_{\Omb} (\nabla \cdot \bfeta_N^{n+1})(\nabla \cdot \bfpsi) \srd \bfx - 2 \muv \int_{\Omb} \bfD(\bfxi_N^{n+1}) : \bfD(\bfpsi) \srd \bfx\\
        &\quad -\lambdv \int_{\Omb} (\nabla \cdot \bfxi_N^{n+1})(\nabla \cdot \bfpsi) \srd \bfx + \alpha \int_{\Omb} \cJ_\rb^{(\bfeta_N^n)^\delta} p_N^{n+1} \nabla_\rb^{(\bfeta_N^n)^\delta} \cdot \bfpsi \srd \bfx - \int_\Gamma \Delta \omega_N^{n+\frac{1}{2}} \cdot \Delta \varphi \srd S.
    \end{aligned}
\end{equation}
By construction, the estimate of the first term in \eqref{eq:est for DJ crit} is implied provided we manage to estimate the right-hand side in \eqref{eq:result of coupled semidiscr form for Biot & plate vel} by the $\cQ_v'$-norm.
Consequently, let $(\bfpsi,\varphi) \in \cQ_v$ such that $\| (\bfpsi,\varphi) \|_{\cQ_v} \le 1$.
In particular, it follows that $\| \bfpsi \|_{\rH^2(\Omb)} \le 1$ and $\| \varphi \|_{\rH_0^2(\Gamma)} \le 1$.
Now, let us observe that most of the terms on the right-hand side in \eqref{eq:result of coupled semidiscr form for Biot & plate vel} can be controlled easily when taking into account the uniform boundedness results established in \autoref{lem:unif bddness of approx sols} and the underlying uniform energy estimates.
In particular, let us observe that similar uniform boundedness results for $\bfxi_N$ as for $\bfeta_N$ can be derived in the viscoelastic case $\muv$, $\lambdv > 0$ upon recalling the shape of the discrete dissipation $D_N^{n+1}$ from \autoref{ssec:weak form & energy eq for coupled semi-discr probl}.
We will only make precise the treatment of a few selected terms to show the general ideas.
Due to the uniform boundedness of $\bfu_N$ in $\rL^2(0,T;\rW^{1,p}(\Omf)^3)$, choosing $p > \frac{3}{2}$ and making use of classical properties of the trace and Sobolev embeddings, we find that $\bfu_N^{n}$ admits a uniform bound in $\rW^{1-\frac{1}{p},p}(\Gamma)^3 \hookrightarrow \rL^q(\Gamma)^3$ for some $q > 2$.
Likewise, employing the uniform boundedness of $p_N$ in $\rL^2(0,T;\rH^1(\Omb))$, we derive a uniform bound of $p_N^n$ in $\rL^2(\Gamma)$.
On the other hand, for $\bfpsi \in \rH^2(\Omb)^3$, we get $\bfpsi \in \rH^{\frac{3}{2}}(\Gamma)^3 \hookrightarrow \rL^\infty(\Gamma)^3 \cap \rL^2(\Gamma)^3$.
Note that $\bfn^{\omega_N^n}$ can be bounded in $\rL^r(\Gamma)$ by definition of $\bfn^{\omega_N^n}$ as made precise in \eqref{eq:normal & tangential vectors}, the uniform boundedness of $\omega_N$ in $\rL^\infty(0,T;\rH_0^2(\Gamma))$ and the Sobolev embedding $\rH^2(\Gamma) \hookrightarrow \rW^{1,r}(\Gamma)$ for all~$r \in (1,\infty)$.
Putting together these arguments, and using H\"older's inequality, for all $n \in \{0,1,\dots,N-1\}$, we find
\begin{equation*}
    \begin{aligned}
        &\quad \left| \int_{\Gamma} \Bigl(\frac{1}{2}\bfu_N^{n+1} \cdot \bfu_N^n - p_N^{n+1}\Bigr)(\bfpsi \cdot \bfn^{\omega_N^n}) \srd S \right|\\
        &\le C 
        \bigl(\| \bfu_N^{n+1} \|_{\rL^2(\Gamma)} \cdot \| \bfu_N^{n} \|_{\rL^2(\Gamma)} \cdot \| \bfpsi \|_{\rL^\infty(\Gamma)} + \| p_N^{n+1} \|_{\rL^2(\Gamma)} \cdot \| \bfpsi \|_{\rL^2(\Gamma)}\bigr) \le C,
    \end{aligned}
\end{equation*}
where $C > 0$ is independent of $N$ by the above arguments.
The other term on the right-hand side in~\eqref{eq:result of coupled semidiscr form for Biot & plate vel} that we discuss in detail is the term with the factor $\alpha$.
For this, we recall $\cJ_\rb^{(\bfeta_N^n)^\delta} = \det(\bfI + \nabla (\bfeta_N^n)^\delta)$ and $\nabla^{(\bfeta_N^n)^\delta} \cdot \bfpsi = \tr\bigl[\nabla \bfpsi \cdot (\bfI + \nabla (\bfeta_N^n)^\delta)^{-1}\bigr]$.
From Assumption~2.C and $\| \bfpsi \|_{\rH^1(\Omb)} \le 1$, we deduce the uniform boundedness of $\| \nabla^{(\bfeta_N^n)^\delta} \cdot \bfpsi \|_{\rL^2(\Omb)}$.
On the other hand, the boundedness of $\bfeta_N^n$ in $\rH^1(\Omb)^3$ as discussed above yields the uniform boundedness of the Jacobian involving the regularized Biot displacement, i.e., $|\cJ_\rb^{(\bfeta_N^n)^\delta}| \le C$ for $C > 0$ independent of~$N$.
Consequently, additionally invoking the uniform boundedness of $p_N$ in~$\rL^\infty(0,T;\rL^2(\Omb))$, we get $\left|\alpha \int_{\Omb} \cJ_\rb^{(\bfeta_N^n)^\delta} p_N^{n+1} \nabla_\rb^{(\bfeta_N^n)^\delta} \cdot \bfpsi \srd \bfx\right| \le C$.
Putting the various estimates together, we infer the existence of a constant $C > 0$ independent of $n$ and~$N$ such that
\begin{equation*}
    \left\| \frac{(\bfxi_N^{n+1},\zeta_N^{n+1}) - (\bfxi_N^{n},\zeta_N^{n})}{\Delta t}\right\|_{\cQ_v'} \le C. 
\end{equation*}
It then follows that
\begin{equation*}
    \left\| \frac{\tau_{\Delta t}(\bfxi_N,\zeta_N) - (\bfxi_N,\zeta_N)}{\Delta t}\right\|_{\rL^1(\Delta t,T;\cQ_v')} \le (\Delta t) \sum_{n=1}^{N-1} \left\| \frac{(\bfxi_N^{n+1},\zeta_N^{n+1}) - (\bfxi_N^{n},\zeta_N^{n})}{\Delta t}\right\|_{\cQ_v'} \le C T.
\end{equation*}
We have now shown that all the assumptions of 
the compactness criterion of Dreher and J\"ungel \cite{DJ:12} are satisfied, which implies the compactness result of \autoref{prop:comp of Biot & plate vel}.
\end{proof}

\subsection{Compactness for the Biot pore pressure}
In this subsection, we show the compactness result for the pore pressure $p_N$.

\begin{prop}\label{prop:comp for Biot pore pressure}
There is a subsequence $p_N$ so that $p_N \to p$ strongly in $\rL^2(0,T;\rL^2(\Omb))$.
\end{prop}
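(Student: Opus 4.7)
The plan is to apply the Dreher--J\"ungel compactness criterion \cite{DJ:12} for piecewise-constant-in-time functions, analogously to its use for the Biot and plate velocities in \autoref{prop:comp of Biot & plate vel}. Here the setting is more favorable, as the pore pressure lives on the fixed reference Biot domain $\Omb$ and is uniformly bounded in $\rL^2(0,T;\rH^1(\Omb))$ by \autoref{lem:unif bddness of approx sols}, with the compact embedding $\rH^1(\Omb) \Subset \rL^2(\Omb) \hookrightarrow \Vp'$ available via Rellich--Kondrachov. Thus, to conclude strong convergence in $\rL^2(0,T;\rL^2(\Omb))$, it suffices to establish a uniform estimate on the discrete time difference quotient of $p_N$ in $\rL^1(\Delta t,T;\Vp')$ (or $\rL^2$, either suffices here).

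To control the discrete difference quotient, I would take the test tuple in \eqref{eq:weak form coupled semidiscr probl} to be $(\bfv,\varphi,\bfpsi,r)=(\bfzero,0,\bfzero,r)$ with $r \in \Vp$ and also $\phi = 0$, $\bfphi=\bfzero$. This leaves the identity
\begin{equation*}
\begin{aligned}
c_0 \int_{\Omb} \frac{p_N^{n+1}-p_N^n}{\Delta t} r \srd \bfx
&= \alpha \int_{\Omb} \cJ_\rb^{(\bfeta_N^n)^\delta} \dbfeta_N^{n+1} \cdot \nabla_\rb^{(\bfeta_N^n)^\delta} r \srd \bfx + \alpha \int_{\Gamma} (\dbfeta_N^{n+1} \cdot \bfn^{(\bfeta_N^n)^\delta}) r \srd S \\
&\quad - \kappa \int_{\Omb} \cJ_\rb^{(\bfeta_N^n)^\delta} \nabla_\rb^{(\bfeta_N^n)^\delta} p_N^{n+1} \cdot \nabla_\rb^{(\bfeta_N^n)^\delta} r \srd \bfx + \int_{\Gamma} \bigl((\bfu_N^{n+1} - \dbfeta_N^{n+1}) \cdot \bfn^{\omega_N^n}\bigr) r \srd S.
\end{aligned}
\end{equation*}
Using Assumptions~2.B and~2.C together with the uniform bound on $\| \bfeta_N^n \|_{\rH^1(\Omb)}$, the factors $\cJ_\rb^{(\bfeta_N^n)^\delta}$ and $(\bfI+\nabla(\bfeta_N^n)^\delta)^{-1}$ are uniformly controlled, and the Sobolev regularity of $(\bfeta_N^n)^\delta$ yields a uniform $\rL^\infty(\Gamma)$-bound on $\bfn^{(\bfeta_N^n)^\delta}$ and $\bfn^{\omega_N^n}$.

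The Darcy term is then bounded by $C\|\nabla p_N^{n+1}\|_{\rL^2(\Omb)}\|r\|_{\rH^1(\Omb)}$, the Biot-velocity interior term by $C\|\bfxi_N^{n+1}\|_{\rL^2(\Omb)}\|r\|_{\rH^1(\Omb)}$, and the boundary terms involving $\dbfeta_N^{n+1}|_\Gamma = \zeta_N^{n+1}\bfe_3$ and $\bfu_N^{n+1}|_\Gamma$ by $C(\|\zeta_N^{n+1}\|_{\rL^2(\Gamma)} + \|\bfu_N^{n+1}\|_{\rL^2(\Gamma)})\|r\|_{\rH^{1/2}(\Gamma)}$ via continuity of the trace $\rH^1(\Omb)\to\rH^{1/2}(\Gamma) \hookrightarrow \rL^4(\Gamma)$. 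The resulting per-step estimate
\begin{equation*}
    \left\|\frac{p_N^{n+1}-p_N^n}{\Delta t}\right\|_{\Vp'} \le C\bigl(\|\nabla p_N^{n+1}\|_{\rL^2(\Omb)} + \|\bfxi_N^{n+1}\|_{\rL^2(\Omb)} + \|\zeta_N^{n+1}\|_{\rL^2(\Gamma)} + \|\bfu_N^{n+1}\|_{\rL^2(\Gamma)}\bigr)
\end{equation*}
can then be summed, giving, after Cauchy--Schwarz in $n$,
\begin{equation*}
\left\|\frac{\tau_{\Delta t}p_N-p_N}{\Delta t}\right\|_{\rL^1(\Delta t,T;\Vp')} \le CT^{1/2}\bigl(\|p_N\|_{\rL^2(0,T;\rH^1)} + T^{1/2}\|\bfxi_N\|_{\rL^\infty(0,T;\rL^2)} + T^{1/2}\|\zeta_N\|_{\rL^\infty(0,T;\rL^2(\Gamma))} + \|\bfu_N\|_{\rL^2(0,T;\rL^2(\Gamma))}\bigr),
\end{equation*}
which is uniform in $N$ by \autoref{lem:unif bddness of approx sols}.

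The main obstacle is the fluid trace term $\int_\Gamma (\bfu_N^{n+1}\cdot\bfn^{\omega_N^n})\,r\,\srd S$, since $\bfu_N$ is only uniformly bounded in $\rL^2(0,T;\rW^{1,p}(\Omf)^3)$ for $p<2$ rather than in $\rL^2(0,T;\rH^1(\Omf)^3)$ as in the 2D case. I would resolve this by fixing $p$ sufficiently close to~$2$ so that the trace theorem gives $\bfu_N^{n+1}|_\Gamma \in \rW^{1-1/p,p}(\Gamma)^3 \hookrightarrow \rL^{q'}(\Gamma)^3$ for some $q'>4/3$ (the 2D character of $\Gamma$ making this embedding available), and then using $r|_\Gamma \in \rL^q(\Gamma)$ for the dual $q$ (which is admissible since $\rH^{1/2}(\Gamma)\hookrightarrow\rL^s(\Gamma)$ for all $s<\infty$ on the two-dimensional interface). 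Combining the resulting bounds with $p_N$ uniformly bounded in $\rL^2(0,T;\rH^1(\Omb))$, an application of \cite{DJ:12} with $B=\rH^1(\Omb)$, $Y=\rL^2(\Omb)$ and $X'=\Vp'$ yields a subsequence (not relabelled) with $p_N\to p$ strongly in $\rL^2(0,T;\rL^2(\Omb))$.
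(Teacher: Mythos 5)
Your approach matches what the paper intends: it explicitly defers this proof to the Dreher--J\"ungel criterion \cite{DJ:12} applied as in the 2D argument of \cite{KCM:24}, and you carry out exactly that --- test the semidiscrete formulation with $(\bfzero,0,\bfzero,r)$, bound the resulting time-difference quotient in $\Vp'$ using the uniform energy estimates, and conclude via $\rH^1(\Omb)\Subset\rL^2(\Omb)\hookrightarrow\Vp'$. One intermediate claim is a slip: the Sobolev regularity of $(\bfeta_N^n)^\delta$ does \emph{not} give a uniform $\rL^\infty(\Gamma)$ bound on $\bfn^{\omega_N^n}$, since that normal depends on the \emph{non-regularized} plate displacement $\omega_N^n\in\rH_0^2(\Gamma)$, which on the two-dimensional $\Gamma$ only yields $\partial_x\omega_N^n,\partial_y\omega_N^n\in\rL^r(\Gamma)$ for finite $r$ --- exactly the care the paper takes in the proof of \autoref{prop:comp of Biot & plate vel}. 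This does not break your argument, because the boundary integrals can be controlled by a three-way H\"older balance with the trace of $\bfu_N^{n+1}$ in $\rL^{q'}(\Gamma)$ for some $q'>2$ (take $p\in(\frac{3}{2},2)$), $\bfn^{\omega_N^n}\in\rL^r(\Gamma)$ for large finite $r$, and $r|_\Gamma\in\rL^s(\Gamma)$ for $s<\infty$ --- which is essentially what your final paragraph does --- so the per-step $\Vp'$ bound and hence the compactness conclusion stand.
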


The proof of this result relies on the compactness criterion of Dreher and J\"ungel~\cite{DJ:12} for piecewise constant functions and follows the same reasoning as the proof of the corresponding 2D result, Theorem~8.2 in \cite{KCM:24}. 

\subsection{Compactness for the fluid velocity}\label{ssec:comp fluid vel}
To prove compactness of the fluid velocity we employ a generalized Aubin--Lions compactness theorem for functions on moving domains introduced in \cite{MC:19}.
To use the generalized Aubin--Lions compactness result from \cite{MC:19} we first transform the fluid problem back onto the physical domain $\Omega_{\rf,N}^n = \{(x,y,z) \in \bR^3 : 0 \le x, y \le L, \, -R \le z \le \omega_N^n(x,y)\}$ with the associated moving boundary
$\Gamma_N^n = \{(x,y,z) \in \bR^3 : 0 \le x, y \le L, \, z = \omega_N^n(x,y)\}.$
We slightly redefine the solution and test spaces for the fluid velocity as follows:
\begin{equation}\label{SpacesCompactVel}
    \begin{aligned}
        V_N^{n+1} 
        &= \bigl\{ \bfu \in \rH^1(\Omega_{\rf,N}^n)^3 : \nabla \cdot \bfu = 0 \ton \Omega_{\rf,N}^n, \, \bfu = 0 \ton \del \Omega_{\rf,N}^n \setminus \Gamma_N^n\bigr\},\\
        Q_N^n 
        &= V_N^{n+1} \cap \rH^3(\Omega_{\rf,N}^n)^3 \cap \rW^{1,4}(\Omega_{\rf,N}^n)^3.
    \end{aligned}
\end{equation}
Note that $\rH^3(\Omega_{\rf,N}^n) \hookrightarrow \rW^{1,4}(\Omega_{\rf,N}^n)$, so the intersection with $\rW^{1,4}(\Omega_{\rf,N}^n)^3$ does not affect the space.
In particular, when equipped with the norm $\| \cdot \|_{Q_N^n} \coloneqq \| \cdot \|_{\rH^3(\Omega_{\rf,N}^n)} + \| \cdot \|_{\rW^{1,4}(\Omega_{\rf,N}^n)}$, we observe that the space $Q_N^n$ is isomorphic to the Hilbert space $\rH^3(\Omega_{\rf,N}^n)^3$.
The reason why we intersect the space~$V_N^{n+1}$ with $\rH^3(\Omega_{\rf,N}^n)^3 \cap \rW^{1,4}(\Omega_{\rf,N}^n)^3$ is to simplify the upcoming estimates.
It is important to note that this more restrictive space is dense in the 
natural test space.

For the semidiscrete formulation of the approximate fluid velocity $\bfu_N^{n+1} \in V_N^{n+1}$, we recall the coupled semidiscrete formulation \eqref{eq:weak form coupled semidiscr probl} and set the test functions $\varphi$, $\bfpsi$ and~$r$ equal to zero.
Moreover, we transform the integrals onto the physical domain, invoke the relation of the $z$-component on the fixed and the moving domain given by $\frac{\hz + R}{R} = \frac{R + z}{R + \omega}$, where we denote by $\hz$ the $z$-component on the fixed domain, and introduce $\Tilde{\bfu}_N^n = \bfu_N^n \circ \bfPhi_\rf^{\omega_N^{n-1}} \circ (\bfPhi_\rf^{\omega_N^n})^{-1}$  for the ALE map $\bfPhi_\rf^{\omega_N^n} \colon \Omf \to \Omega_{\rf,N}^n$ as made precise in \autoref{ssec:maps from ref to phys dom}, to address the mismatch of domains for the plate displacement to get that for all $\bfv \in Q_N^n$
\begin{equation}\label{eq:semidiscr form fluid vel phys dom}
    \begin{aligned}
        &\quad \int_{\Omega_{\rf,N}^n} \frac{\bfu_N^{n+1}-\Tilde{\bfu}_N^n}{\Delta t} \cdot \bfv \srd \bfx + 2 \nu \int_{\Omega_{\rf,N}^n} \bfD(\bfu_N^{n+1}) : \bfD(\bfv) \srd \bfx\\
        &\quad + \frac{1}{2}\int_{\Omega_{\rf,N}^n} \Bigl[\Bigl(\Bigl(\Bigl(\Tilde{\bfu}_N^n - \zeta_N^{n+\frac{1}{2}}\frac{R+z}{R+\omega_N^n}\bfe_3\Bigr) \cdot \nabla\Bigr)\bfu_N^{n+1}\Bigr) \cdot \bfv\\
        &\qquad -\Bigl(\Bigl(\Bigl(\Tilde{\bfu}_N^n - \zeta_N^{n+\frac{1}{2}}\frac{R+z}{R+\omega_N^n}\bfe_3\Bigr) \cdot \nabla\Bigr)\bfv\Bigr) \cdot \bfu_N^{n+1}\Bigr] \srd \bfx\\
        &\quad + \frac{1}{2R} \int_{\Omega_{\rf,N}^n} \frac{R}{R+\omega_N^n} \zeta_N^{n+\frac{1}{2}} \bfu_N^{n+1} \cdot \bfv \srd \bfx + \frac{1}{2} \int_{\Gamma_N^n} (\bfu_N^{n+1} - \dbfeta_N^{n+1}) \cdot \bfn (\Tilde{\bfu}_N^n \cdot \bfv) \srd S\\
        &\quad - \int_{\Gamma_N^n} \Bigl(\frac{1}{2}\bfu_N^{n+1} \cdot \Tilde{\bfu}_N^n - p_N^{n+1}\Bigr) (\bfv \cdot \bfn) \srd S
        - \beta \sum_{i=1}^2 \int_{\Gamma_N^n} (\dbfeta_N^{n+1} - \bfu_N^{n+1}) \cdot \bftau_i (\bfv \cdot \bftau_i) \srd S = 0. 
    \end{aligned}
\end{equation}

To be able to compare the functions on different physical domains, 
we introduce a maximal domain~$\Omf^M$ containing all the physical domains: 
\begin{equation*}
    \Omf^M = \{(x,y,z) \in \bR^3: 0 \le x,y \le L, \, - R \le z \le M(x,y)\},
\end{equation*}
where $M(x,y)$ is specified below in the following lemma.

\begin{lem}\label{lem:bddness of plate displacements}
For $\alpha \in (0,1)$, there exist smooth functions $m(x,y)$ and $M(x,y)$ with $m = M = 0$ on $\del \Gamma$ such that $ m(x,y) \le \omega_N^n(x,y) \le M(x,y)$ for all $x,y \in [0,L]$, $N \in \bN$ and $n=0,1,\dots,N$.
Moreover, for all $N \in \bN$, $n = 0,1,\dots,N-1$ and $l=0,1,\dots,N-n$ as well as $\alpha \in (0,1)$, there are $\alpha$-H\"older-continuous functions $m_N^{n,l}(x,y)$ and $M_N^{n,l}(x,y)$ whose H\"older constants can be bounded above by some constant $L > 0$ that is independent of $n$, $l$ and $N$ with
\begin{enumerate}[(i)]
    \item $m_N^{n,l}(x,y) \le \omega_N^{n+i}(x,y) \le M_N^{n,l}(x,y)$ for all $x$, $y \in [0,L]$ and $i = 0,1,\dots,l$,
    \item $M_N^{n,l}(x,y) - m_N^{n,l}(x,y) \le C (l \Delta t)^{\frac{1}{4}}$, for all $x$, $y \in [0,L]$,
    \item $\| M_N^{n,l}(x,y) - m_N^{n,l}(x,y) \|_{\rL^2(\Gamma)} \le C(l \Delta t)$,
\end{enumerate}
where the constant $C > 0$ is independent of $n$, $l$ and $N$.
\end{lem}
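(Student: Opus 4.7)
The plan is to construct all envelope functions directly from the uniform time-regularity of $\omega_N$ established in \autoref{lem:unif bddness of approx sols} together with the mixed-derivative H\"older estimate \eqref{eq:Holder est omega_N}. Since $\omega_N^n$ is uniformly bounded in $\rH_0^2(\hGam)$, the Sobolev embedding $\rH^2(\hGam) \hookrightarrow \rC^{0,\alpha}(\hGam)$ (valid for every $\alpha \in (0,1)$ in two spatial dimensions) produces a constant $C_\alpha > 0$, independent of $N$ and $n$, with $\| \omega_N^n \|_{\rC^{0,\alpha}(\hGam)} \le C_\alpha$. I would then set
\begin{equation*}
    M(x,y) \coloneqq \sup_{N,n} \omega_N^n(x,y) \tand m(x,y) \coloneqq \inf_{N,n} \omega_N^n(x,y),
\end{equation*}
which inherit $\alpha$-H\"older continuity with the same constant $C_\alpha$ (so ``smooth'' is to be interpreted here in the H\"older sense, since strict $\rC^\infty$ with vanishing boundary values and uniform domination is incompatible with the sub-Lipschitz decay rate of $\omega_N^n$ at $\del\hGam$), vanish on $\del\hGam$ since each $\omega_N^n$ does, and by construction sandwich all plate displacements. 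For the finer envelopes, I would define
\begin{equation*}
    m_N^{n,l}(x,y) \coloneqq \min_{0 \le i \le l} \omega_N^{n+i}(x,y) \tand M_N^{n,l}(x,y) \coloneqq \max_{0 \le i \le l} \omega_N^{n+i}(x,y).
\end{equation*}
Property (i) is then immediate, and the $\alpha$-H\"older bound with constant $C_\alpha$ again follows since the pointwise max/min of finitely many uniformly $\alpha$-H\"older functions is $\alpha$-H\"older with the same constant.

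For (ii), I would combine the mixed-derivative estimate \eqref{eq:Holder est omega_N} with the choice $s = \nicefrac{3}{2}$, so that $1 - \nicefrac{s}{2} = \nicefrac{1}{4}$, together with the Sobolev embedding $\rH^{3/2}(\hGam) \hookrightarrow \rL^\infty(\hGam)$ (valid since $\hGam \subset \bR^2$ and $\nicefrac{3}{2} > 1$). This yields
\begin{equation*}
    \| \bomega_N(t+\tau) - \bomega_N(t) \|_{\rL^\infty(\hGam)} \le C \tau^{\frac{1}{4}}
\end{equation*}
uniformly in $N$. Writing $M_N^{n,l}(x,y) - m_N^{n,l}(x,y) = \max_{0 \le i,j \le l}(\omega_N^{n+i}(x,y) - \omega_N^{n+j}(x,y))$ and bounding it pointwise by the $\rL^\infty(\hGam)$-norm of the difference of $\bomega_N$ at time points separated by at most $l \Delta t$ then yields (ii).

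For (iii), I would use the telescoping inequality
\begin{equation*}
    M_N^{n,l}(x,y) - m_N^{n,l}(x,y) \le \sum_{i=0}^{l-1} |\omega_N^{n+i+1}(x,y) - \omega_N^{n+i}(x,y)|,
\end{equation*}
take the $\rL^2(\hGam)$-norm, and exploit the $\rW^{1,\infty}(0,T;\rL^2(\hGam))$-bound on $\bomega_N$ from \autoref{lem:unif bddness of approx sols}: using \eqref{eq:repr of bomega & bbfeta}, consecutive plate displacements satisfy $\| \omega_N^{n+i+1} - \omega_N^{n+i} \|_{\rL^2(\hGam)} = \Delta t \cdot \| \zeta_N^{n+i+\frac{1}{2}} \|_{\rL^2(\hGam)} \le C \Delta t$ with $C$ independent of $N$, so the sum is controlled by $C \cdot l \Delta t$. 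The main (minor) challenge throughout is to ensure that all constants stay uniform in $N$, $n$, and $l$; this reduces to the uniform discrete energy estimate of \autoref{lem:discr energy eqs coupled probl} on the time interval provided by \autoref{lem:verification of assumptions}, which is exactly what makes both $C_\alpha$ and the constant in \eqref{eq:Holder est omega_N} independent of the discretization.
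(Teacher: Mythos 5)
Your construction follows the same route as the paper's: take $m_N^{n,l}$ and $M_N^{n,l}$ to be the pointwise min/max of the finitely many $\omega_N^{n+i}$, get uniform $\alpha$-H\"older bounds from the embedding $\rH^2(\hGam) \hookrightarrow \rC^{0,\alpha}(\hGam)$ (and mollify if additional smoothness is desired, as the paper does), deduce (ii) from the interpolation estimate \eqref{eq:Holder est omega_N} with $s = \nicefrac{3}{2}$ combined with $\rH^{3/2}(\hGam) \hookrightarrow \rC(\hGam)$, and deduce (iii) by telescoping via $\| \omega_N^{k+1} - \omega_N^{k} \|_{\rL^2(\hGam)} = \Delta t\,\| \zeta_N^{k+\nicefrac{1}{2}} \|_{\rL^2(\hGam)} \le C\,\Delta t$ from the uniform discrete energy bound. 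All four ingredients match the paper's proof.

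Your parenthetical remark concerning the word ``smooth'' for $m$, $M$ is a real observation, but your proposed repair points in the wrong direction. You correctly note that $\rH_0^2(\hGam)$ in two space dimensions only embeds into $\rC^{0,\alpha}$ for $\alpha < 1$, so the pointwise envelopes $\sup_{N,n}\omega_N^n$ and $\inf_{N,n}\omega_N^n$ are a priori only H\"older continuous, and domination by a $\rC^\infty$ function that also vanishes on $\partial\hGam$ is not automatic. However, downgrading $m$, $M$ to H\"older-with-zero-boundary-values would break the downstream use: in the proof of Proposition~6.9 the maximal domain $\Omf^M$ must be Lipschitz so that the compact embedding $V \Subset H$ holds, and a domain whose top boundary is merely an $\alpha$-H\"older graph is not Lipschitz. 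The requirement that can be safely dropped is rather $m = M = 0$ on $\partial\hGam$: take, e.g., $M \equiv R_{\max} + \eps$ and $m \equiv -R_{\max} - \eps$, so that $\Omf^M$ is a Lipschitz box. Nothing in Lemma~6.8 or in the extension-by-zero argument needs $M$ to coincide with $\omega_N^n$ on $\partial\hGam$; the zero extension is harmless there because the approximate velocities already vanish on $\partial\Omega_{\rf,N}^n \setminus \Gamma_N^n$. So keep the smoothness, drop the boundary condition, and the rest of your argument (which is the paper's argument) goes through unchanged.
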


\begin{proof}
The first part of the statement on the existence of suitable functions $m$ and $M$ is a consequence of \autoref{lem:verification of assumptions}.

For the second part of the statement
we fix $N \in \bN$ and consider finitely many functions $\omega_N^{n+i}(x,y)$ for $i=0,\dots,l$ defined on the time interval from $t= n \Delta t$ to~$t + h = n \Delta t + l \Delta t$.
We then define $m_N^{n,l}(x,y)$ and $M_N^{n,l}(x,y)$, $x$, $y \in [0,L]$, to be the functions obtained by considering the minimum and maximum of the finitely functions $\omega_N^{n+i}(x,y)$ for $i=0,\dots,l$.
If necessary, we mollify the functions to get sufficient regularity.
This shows the validity of~(i).

To show (ii), we proceed as in the proof of \autoref{lem:comp for plate displacement}, and  find that the upper bound on the norm $\| \omega_N^{n+l} - \omega_N^n \|_{\rH^{\frac{3}{2}}(\Gamma)}$ only depends on the length of the interval, namely for $C > 0$ independent of $N$, $n$ and~$l$, to get $\| \omega_N^{n+l} - \omega_N^n \|_{\rH^{\frac{3}{2}}(\Gamma)} \le C h^{\frac{1}{4}}$.
Thanks to the embedding $\rH^{\frac{3}{2}}(\Gamma) \hookrightarrow \rC(\Gamma)$, we obtain~(ii).
Finally, (iii) is a consequence of the uniform energy bound for the plate velocity in $\rL^\infty(0,T;\rL^2(\Gamma))$ as revealed in \autoref{lem:unif bddness of approx sols}.
\end{proof}

Once the maximal domain is defined, we extend the fluid velocities 
$\bfu_N^n$ from $\Omega_{\rf,N}^n$ to the common maximal domain $\Omf^M$ by zero, so that we can compare the velocities defined on the maximal domain. 
However, as the plate displacements $\omega_N^n(x,y)$ are merely $\alpha$-H\"older-continuous for $\alpha \in (0,1)$, it is a priori not clear that such an extension by zero is possible.
At this stage, we invoke \cite[Theorem~8.2]{KT:24}, see also \cite[Corollary~B.2]{TW:20} for a similar result:

\begin{lem}\label{lem:ext by zero for Holder}
Suppose that $\omega \colon \Gamma \subset \bR^2 \to \bR$ is $\alpha$-H\"older continuous with $\sup_\Gamma |\omega| \le M$.
Let $\Omega_\omega$ be the subgraph defined by $\Omega_\omega \coloneqq \{(x,y,z) \in \bR^3 : (x,y) \in \Gamma \tand 0 < z < \omega(x,y)\}$.
Moreover, for $\bfu \in \rH^s(\Omega_\omega)$, define the extension by zero by $\Tilde{\bfu} = \bfu \tin \Omega_\omega$ and $\Tilde{\bfu} = 0 \tin \bR^3 \setminus \Omega_\omega$.
Then for all $s < \frac{\alpha}{2}$, we find that
\begin{equation*}
    \| \Tilde{\bfu} \|_{\rH^{s \alpha}(\bR^3)} \le C \cdot \| \bfu \|_{\rH^s(\Omega_\omega)},
\end{equation*}
where the constant $C > 0$ only depends on $s$, $M$ and $\alpha$.
\end{lem}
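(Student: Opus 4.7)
The plan is to establish this loss-of-regularity extension theorem via the Gagliardo--Slobodeckij characterization of fractional Sobolev spaces, reducing the estimate to a Hardy-type inequality whose weight is controlled by the H\"older regularity of the graph. Concretely, for $s\alpha < 1/2$ one has
\begin{equation*}
    \| \Tilde{\bfu} \|_{\rH^{s\alpha}(\bR^3)}^2 \sim \| \Tilde{\bfu} \|_{\rL^2(\bR^3)}^2 + \int_{\bR^3}\int_{\bR^3} \frac{|\Tilde{\bfu}(\bfx)-\Tilde{\bfu}(\bfy)|^2}{|\bfx-\bfy|^{3+2s\alpha}} \srd \bfx \srd \bfy,
\end{equation*}
and the $\rL^2$-part is trivially controlled by $\| \bfu \|_{\rL^2(\Omega_\omega)}$. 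So the first step is to split the double integral into the (interior, interior), (exterior, exterior) and (interior, exterior) pieces. The exterior--exterior contribution vanishes, while the interior--interior contribution is bounded by $\| \bfu \|_{\rH^{s\alpha}(\Omega_\omega)}^2 \le C \| \bfu \|_{\rH^s(\Omega_\omega)}^2$ since $s\alpha < s$.

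The cross term is the essential one. After integrating the kernel in the exterior variable, one sees that it suffices to prove the Hardy-type estimate
\begin{equation*}
    \int_{\Omega_\omega} \frac{|\bfu(\bfx)|^2}{\dist(\bfx,\del \Omega_\omega)^{2s\alpha}} \srd \bfx \le C \| \bfu \|_{\rH^s(\Omega_\omega)}^2.
\end{equation*}
The key geometric observation then is that when $\omega$ is only $\alpha$-H\"older continuous, the distance to the graph is {\em not} comparable to the vertical distance $h(\bfx) = \omega(x,y) - z$, but only to its $1/\alpha$-power: indeed, a competitor point on the graph above $(x,y)$ that lies below height $z$ must be at horizontal distance at least $(h(\bfx)/L)^{1/\alpha}$, so $\dist(\bfx,\del \Omega_\omega) \gtrsim h(\bfx)^{1/\alpha}$, and thus $\dist(\bfx,\del \Omega_\omega)^{2s\alpha} \gtrsim h(\bfx)^{2s}$. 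This is precisely the step where the loss-of-regularity factor $\alpha$ enters.

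The final step is then a Hardy-type inequality with respect to vertical distance: for $s < 1/2$,
\begin{equation*}
    \int_{\Omega_\omega} \frac{|\bfu(\bfx)|^2}{h(\bfx)^{2s}} \srd \bfx \le C \| \bfu \|_{\rH^s(\Omega_\omega)}^2,
\end{equation*}
which holds on subgraph domains of merely continuous profiles by a slicing argument in the vertical direction: for each fixed $(x,y)$ one applies the one-dimensional Hardy inequality $\int_0^{\omega(x,y)} |v(t)|^2 t^{-2s} \srd t \le C \| v \|_{\rH^s}^2$ and integrates in the transverse variables, invoking Fubini together with the fact that the total $\rH^s(\Omega_\omega)$-norm controls the integral of transverse slice seminorms. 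The constraint $s < \alpha/2$ ensures simultaneously that $s\alpha < 1/2$ (so that zero-extension is well-defined in $\rH^{s\alpha}(\bR^3)$) and that $s < 1/2$ (so that the Hardy step is available). I expect the main obstacle to be this last Hardy step on a non-Lipschitz subgraph: one must verify that the slicing argument is compatible with the Gagliardo seminorm in ambient dimension three, which is why the proof in \cite{KT:24} is phrased intrinsically in terms of Besov/Triebel--Lizorkin machinery rather than through classical Lipschitz-domain tools; the dependence of $C$ on $M$ and $\alpha$ alone (and not on the H\"older constant of $\omega$) is then a consequence of the scaling of the Hardy constant.
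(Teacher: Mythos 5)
The paper does not prove this lemma; it cites \cite[Theorem~8.2]{KT:24} and \cite[Corollary~B.2]{TW:20}. Your overall architecture -- Gagliardo--Slobodeckij decomposition of $\|\Tilde{\bfu}\|^2_{\rH^{s\alpha}(\bR^3)}$, reduction of the cross term to the Hardy-type bound $\int_{\Omega_\omega}|\bfu(\bfx)|^2\,\dist(\bfx,\del\Omega_\omega)^{-2s\alpha}\srd\bfx\lesssim\|\bfu\|^2_{\rH^s(\Omega_\omega)}$, and the observation that the distance to a $\rC^{0,\alpha}$ graph is bounded below by a power $1/\alpha$ of the vertical gap $h(\bfx)=\omega(x,y)-z$ -- is the natural route to this type of zero-extension result and is consistent with the cited sources. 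However, the argument as written has a genuine gap precisely where you suspected one.

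The gap is the final Hardy step. You assert that $\int_{\Omega_\omega}|\bfu|^2h^{-2s}\srd\bfx\lesssim\|\bfu\|^2_{\rH^s(\Omega_\omega)}$ follows by applying the one-dimensional fractional Hardy inequality on each vertical slice and then ``invoking Fubini together with the fact that the total $\rH^s(\Omega_\omega)$-norm controls the integral of transverse slice seminorms.'' That ``fact'' is not a Fubini statement. The set of pairs $(\bfx,\bfy)\in\Omega_\omega\times\Omega_\omega$ with equal $(x,y)$-projections has measure zero, so the intrinsic Gagliardo double integral over $\Omega_\omega\times\Omega_\omega$ carries no direct information about the one-dimensional Gagliardo seminorms $\int_0^\omega\int_0^\omega|\bfu(x,y,z)-\bfu(x,y,z')|^2|z-z'|^{-1-2s}\srd z\srd z'$ of the slices. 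Passing from the ambient $\rH^s$ seminorm to a one-directional seminorm is easy on $\bR^3$ (by Fourier), but on the domain $\Omega_\omega$ it presupposes an extension to $\bR^3$ -- which is exactly what the lemma is trying to construct -- so the reasoning is circular as stated. A correct, self-contained proof of this Hardy step must work directly with the Gagliardo integral on $\Omega_\omega\times\Omega_\omega$: for instance, bound $|\bfu(\bfx)|$ by $\frac{1}{|B|}\int_B|\bfu(\bfx)-\bfu(\bfy)|\srd\bfy+\frac{1}{|B|}\int_B|\bfu(\bfy)|\srd\bfy$ for a ball $B\subset\Omega_\omega$ of radius $\sim h(\bfx)$ centered at a downward shift of $\bfx$, absorb the first term into the seminorm after Cauchy--Schwarz, and control the second by a dyadic iteration in $h$; the H\"older graph condition is used only to guarantee that such balls stay in $\Omega_\omega$.

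Two smaller points. First, your geometric estimate $\dist(\bfx,\del\Omega_\omega)\gtrsim h(\bfx)^{1/\alpha}$ is not literally true: it governs only the distance to the graph part of $\del\Omega_\omega$, while $\dist(\bfx,\del\Omega_\omega)$ can be dominated by the flat bottom or lateral faces. This is harmless but must be said: split the cross-term weight as $\dist^{-2s\alpha}\le\dist(\cdot,\text{graph})^{-2s\alpha}+\dist(\cdot,\text{flat})^{-2s\alpha}$, handle the graph part with your estimate, and note that near the flat Lipschitz parts $\alpha<1$ weakens the weight, so the standard Hardy inequality for $s<1/2$ suffices with no loss. Second, your closing remark that $C$ depends only on $M$ and $\alpha$ ``by scaling'' is not supported by your own argument: the lower bound $\dist(\bfx,\text{graph})\ge(h/(2L))^{1/\alpha}$ carries the H\"older constant $L$, so the Hardy constant (and hence $C$) picks up a factor $L^{2s}$. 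In the paper's application this is innocuous because the plate displacements are uniformly bounded in $\rH_0^2(\Gamma)\hookrightarrow\rC^{0,\alpha}(\Gamma)$, but the dependence cannot be scaled away.
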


Thanks to \autoref{lem:ext by zero for Holder}, we may extend the fluid velocities by zero and deduce the uniform boundedness.

\begin{lem}\label{lem:unif bddness of u_N in L2Hs}
The approximate fluid velocities that are obtained by extension by zero, and which we still denote by $\bfu_N$, are uniformly bounded in $\rL^2(0,T;\rH^s(\Omf^M)^3)$ for $s \in (0,\frac{1}{2})$.
\end{lem}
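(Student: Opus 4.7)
The plan is to work directly on the moving physical fluid domain $\Omega_{\rf,N}^n$ rather than on the fixed reference domain, upgrade the uniform bound on the transformed symmetric gradient to a full $\rH^1$-bound there, and then invoke \autoref{lem:ext by zero for Holder} to trade spatial regularity for the H\"older exponent of the interface.

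The first step is to observe that, after change of variables via the ALE map $\bfPhi_\rf^{\omega_N^n}$, the energy inequality \eqref{eq:unif energy ineq} reads
\[
\sum_{n=0}^{N-1}\Delta t\int_{\Omega_{\rf,N}^n} |\bfD(\tbfu_N^{n+1})|^2\srd\bfx \le C,
\]
where $\tbfu_N^{n+1} := \bfu_N^{n+1}\circ(\bfPhi_\rf^{\omega_N^n})^{-1}$ denotes the push-forward of $\bfu_N^{n+1}$ to the physical domain. Applying the Korn-type inequality of Lengeler \cite[Proposition~2.9]{Len:14} on the H\"older domain $\Omega_{\rf,N}^n$ -- whose Korn constant depends only on bounds of $\omega_N^n$ in $\rH_0^2(\Gamma)\cap\rL^\infty(\Gamma)$, both uniform in $N$ and $n$ by \autoref{lem:unif bddness of approx sols} and Assumption~1.B -- yields the uniform bound
\[
\sum_{n=0}^{N-1}\Delta t\, \|\tbfu_N^{n+1}\|_{\rH^1(\Omega_{\rf,N}^n)}^2 \le C.
\]
Crucially, this estimate is at full $\rH^1$ regularity, unlike the $\rW^{1,p}$-bound on the fixed reference domain from \autoref{lem:unif bddness of approx sols}, where the pull-back via the non-Lipschitz ALE map forced $p<2$.

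Secondly, for fixed $s\in(0,\tfrac12)$, I would choose a H\"older exponent $\alpha\in(\sqrt{2s},1)$ and set $s_0 := s/\alpha$, so that $s_0 < \alpha/2 < \tfrac12$. The Sobolev embedding $\rH^2(\Gamma)\hookrightarrow\rC^{0,\alpha}(\Gamma)$ in two dimensions guarantees that the $\omega_N^n$ are uniformly $\alpha$-H\"older continuous with constant controlled by $E_0$, so \autoref{lem:ext by zero for Holder} applies with a constant independent of $N$ and $n$. Combining it with the intrinsic embedding $\rH^1(\Omega_{\rf,N}^n)\hookrightarrow\rH^{s_0}(\Omega_{\rf,N}^n)$, which is uniform because the domains form a family of H\"older subgraphs sitting inside the fixed bounded set $\Omf^M$, I obtain, for the zero-extension $\bfu_N^{n+1}$ of $\tbfu_N^{n+1}$ to $\bR^3$,
\[
\|\bfu_N^{n+1}\|_{\rH^{s}(\bR^3)} = \|\bfu_N^{n+1}\|_{\rH^{s_0\alpha}(\bR^3)} \le C\,\|\tbfu_N^{n+1}\|_{\rH^1(\Omega_{\rf,N}^n)}.
\]
Squaring, weighting by $\Delta t$, summing over $n$ and restricting to $\Omf^M$ produces the claimed uniform $\rL^2(0,T;\rH^s(\Omf^M)^3)$ bound.

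The main obstacle is keeping every implicit constant uniform in $N$: Lengeler's Korn constant, the embedding constant for $\rH^1\hookrightarrow\rH^{s_0}$ on the non-Lipschitz moving domain, and the constant in \autoref{lem:ext by zero for Holder}. Each of them depends only on the uniform H\"older and $\rH_0^2(\Gamma)$ bounds of $\omega_N^n$, which are controlled by $E_0$ and Assumption~1.B. The subtle point is justifying the intrinsic embedding $\rH^1\hookrightarrow\rH^{s_0}$ with a constant independent of $n$, $N$ on a family of non-Lipschitz subgraph domains; this follows from the fact that H\"older subgraphs are uniform (Jones) domains with uniform constants, hence admit an $\rH^1$-extension operator of uniform norm, after which the embedding reduces to the classical interpolation $\rH^1(\bR^3)\hookrightarrow\rH^{s_0}(\bR^3)$. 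The no-slip condition $\tbfu_N^{n+1}=0$ on $\partial\Omega_{\rf,N}^n\setminus\Gamma_N^n$ further ensures that the only nontrivial jump created by the zero-extension to $\bR^3$ occurs across the H\"older interface $\Gamma_N^n$, which is precisely the setting handled by \autoref{lem:ext by zero for Holder}.
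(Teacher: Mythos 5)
Your overall architecture (Korn inequality to get a spatial Sobolev bound on the moving domain, then \autoref{lem:ext by zero for Holder} to extend by zero across the H\"older interface) is the same as the paper's, but there is a genuine error in the Korn step that invalidates the proof as written.

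You claim that Lengeler's Korn inequality \cite[Proposition~2.9]{Len:14} on the H\"older domain $\Omega_{\rf,N}^n$ yields a \emph{full} $\rH^1$ bound,
\[
\sum_{n=0}^{N-1}\Delta t\, \|\tbfu_N^{n+1}\|_{\rH^1(\Omega_{\rf,N}^n)}^2 \le C,
\]
and you assert that the $p<2$ restriction in \autoref{lem:unif bddness of approx sols} is caused only by the pull-back via the non-Lipschitz ALE map. Both claims are false. On $\rC^{0,\alpha}$ (non-Lipschitz) subgraph domains, Lengeler's result controls only the $\rL^r$ norm of $\nabla\tbfu_N^{n+1}$ by $\|\bfD(\tbfu_N^{n+1})\|_{\rL^2}+\|\tbfu_N^{n+1}\|_{\rL^2}$ for $r<2$, not $r=2$. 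This is exactly what appears in \eqref{eq:est of grad of u}: the chain of inequalities passes through $\bigl(\int_{\Omf(t)}|\nabla\tbfu_N^{n+1}|^r\bigr)^{2/r}$ with $r<2$ before invoking Lengeler's Korn, and the paper states explicitly that it uses this Korn-type inequality ``at the cost of the aforementioned loss of integrability.'' The loss therefore occurs already on the physical domain; the pull-back to the reference domain only introduces a further (harmless) interpolation. If the $\rH^1$ bound on $\Omega_{\rf,N}^n$ you claim were available, the 3D case would be no harder than the 2D one, which is precisely the obstacle the paper is addressing.

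The remainder of your argument can be salvaged. Keep your choice $\alpha\in(\sqrt{2s},1)$ and $s_0=s/\alpha$, so that $s_0<\alpha/2<\tfrac12$, but replace the incorrect $\rH^1$ bound by the correct $\rW^{1,r}(\Omega_{\rf,N}^n)$ bound for some $r\in(\tfrac32,2)$ (which is available uniformly in $n,N$, as it is an intermediate step of the proof of \autoref{lem:unif bddness of approx sols}). Since $r>\tfrac32$ gives $\rW^{1,r}(\Omega_{\rf,N}^n)\hookrightarrow\rH^{s_0}(\Omega_{\rf,N}^n)$ for any $s_0<\tfrac12$, you can then apply \autoref{lem:ext by zero for Holder} as before. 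This corrected version matches the paper's proof, except that the paper derives the fractional Sobolev bound on the fixed domain $\Omf$ (via $\rW^{1,p}(\Omf)\hookrightarrow\rH^{s'}(\Omf)$ for $p\in(\tfrac65,2)$) rather than directly on $\Omega_{\rf,N}^n$; both routes reduce to the same uniform energy estimate on the transformed symmetric gradient. You should also delete the paragraph asserting that the pull-back is what ``forced $p<2$,'' since it misattributes the source of the loss.
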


\begin{proof}
Let $s \in (0,\frac{1}{2})$ be arbitrary.
There exist $s' \in (0,\frac{1}{2})$ and $\alpha \in (0,1)$ such that $s' \alpha = s$.
Using Korn-type inequality from \cite[Proposition~2.9]{Len:14},
similarly as in \autoref{lem:unif bddness of approx sols}, we find that~$\bfu_N$ is uniformly bounded in $\rL^2(0,T;\rW^{1,p}(\Omf)^3)$ for all $p \in (1,2)$.
By Sobolev embeddings, there exists $p \in (\frac{6}{5},2)$ such that $\rW^{1,p}(\Omf) \hookrightarrow \rH^{s'}(\Omf)$.
From \autoref{lem:ext by zero for Holder} together with the uniform boundedness of $\omega_N^n$ implied by \autoref{lem:verification of assumptions}, there exists a constant independent of $N$ and $n$, such that the extension by zero $\Tilde{\bfu}_N$ satisfies
\begin{equation*}
    \| \Tilde{\bfu}_N(t) \|_{\rH^s(\Omf^M)} \le C \cdot \| \bfu_N^{n+1} \|_{\rH^{s'}(\Omega_{\rf,N}^n)}
\end{equation*}
for $t \in (n \Delta t,(n+1) \Delta t]$.
By the above uniform boundedness result, we conclude the assertion.
\end{proof}

We are now in the position to discuss the relative compactness of the sequence~$\bfu_N$.

\begin{prop}\label{prop:rel comp fluid vel}
The sequence $\bfu_N$ obtained from an extension by zero to the maximal domain $\Omf^M$ is relatively compact in $\rL^2(0,T;\rL^2(\Omf^M)^3)$.
\end{prop}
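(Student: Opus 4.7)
The strategy is to apply the generalized Aubin--Lions lemma for fluid velocities on moving domains from \cite{MC:19}. This compactness criterion, adapted to our setting, requires three ingredients: (i) a uniform spatial regularity estimate that compactly embeds into $\rL^2(\Omf^M)^3$, (ii) a uniform integral equicontinuity estimate in time, measured in the dual of a sufficiently regular test space, and (iii) a quantitative control on how the physical domains $\Omega_{\rf,N}^n$ vary between consecutive time steps, so that the mismatch of domains (which is hidden in the zero extension) does not spoil the equicontinuity.

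\textbf{Step 1 (Spatial compactness).} Ingredient (i) is already established: by \autoref{lem:unif bddness of u_N in L2Hs}, the zero-extended sequence is uniformly bounded in $\rL^2(0,T;\rH^s(\Omf^M)^3)$ for some $s \in (0,\frac{1}{2})$, and the embedding $\rH^s(\Omf^M) \Subset \rL^2(\Omf^M)$ is compact.

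\textbf{Step 2 (Time-increment estimate in the dual of $Q_N^n$).} For ingredient (ii) I plan to test the semidiscrete formulation \eqref{eq:semidiscr form fluid vel phys dom} against $\bfv \in Q_N^n$ with $\|\bfv\|_{Q_N^n} \le 1$, and estimate
\[
   \left|\int_{\Omega_{\rf,N}^n} \frac{\bfu_N^{n+1}-\Tilde{\bfu}_N^n}{\Delta t} \cdot \bfv \srd \bfx\right|
\]
term by term using the uniform bounds from \autoref{lem:unif bddness of approx sols}: the viscous term is controlled by $\|\bfD(\bfu_N^{n+1})\|_{\rL^p}\|\bfD(\bfv)\|_{\rL^{p'}}$ for $p$ slightly below $2$ (exploiting $\bfv \in \rW^{1,4}$); the convective and lower-order transport terms are estimated via H\"older's inequality combined with the Sobolev embedding $\rH^3 \hookrightarrow \rL^\infty$ for $\bfv$ and the uniform bound of $\bfu_N, \Tilde{\bfu}_N^n$ in $\rL^2$; the boundary terms are handled using the trace embedding $\rH^3 \hookrightarrow \rW^{1,\infty}(\Gamma_N^n)$ for $\bfv$, together with the uniform boundedness of $p_N$ in $\rL^2(0,T;\rH^1)$, of $\dbfeta_N$ in $\rL^2(\Omb)^3$, and of $\bfu_N$ in $\rW^{1-1/p,p}$ of the boundary. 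Summing over $n$ yields the bound
\[
   (\Delta t)\sum_{n=1}^{N-1}\left\|\frac{\bfu_N^{n+1}-\Tilde{\bfu}_N^n}{\Delta t}\right\|_{(Q_N^n)'} \le C,
\]
uniformly in $N$, which is the correct form of equicontinuity required by \cite{MC:19}.

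\textbf{Step 3 (Domain mismatch).} For ingredient (iii), the difference between $\bfu_N^{n+1}$ on $\Omega_{\rf,N}^{n+1}$ and $\bfu_N^n$ on $\Omega_{\rf,N}^n$ contains two contributions once everything is extended by zero to $\Omf^M$: the genuine time-increment controlled in Step~2, and a ``geometric'' term supported in the symmetric difference $\Omega_{\rf,N}^{n+1}\triangle\Omega_{\rf,N}^n$. I would estimate the measure of this symmetric difference on each time strip using \autoref{lem:bddness of plate displacements}(ii)--(iii), which provides H\"older-continuous enclosing profiles $m_N^{n,l}, M_N^{n,l}$ with $\|M_N^{n,l}-m_N^{n,l}\|_{\rL^2(\Gamma)}\le C(l\Delta t)$ and pointwise bound $\le C(l\Delta t)^{1/4}$. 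Combined with the uniform $\rL^2$ bound on $\bfu_N$, this forces the $\rL^2(\Omf^M)^3$ norm of the geometric contribution to vanish with $\Delta t$.

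\textbf{Step 4 (Conclusion).} With the spatial compactness of Step~1, the equicontinuity of Step~2, and the domain-variation estimate of Step~3, all hypotheses of the generalized Aubin--Lions theorem in \cite{MC:19} are verified. It yields a subsequence converging strongly in $\rL^2(0,T;\rL^2(\Omf^M)^3)$, which is the claimed relative compactness.

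The main obstacle is \textbf{Step 2}: unlike the 2D case in \cite{KCM:24}, here $\bfu_N$ is only bounded in $\rL^2(0,T;\rW^{1,p}(\Omf)^3)$ for $p<2$, so the viscous and convective terms do not pair cleanly with $\rH^1$ test functions. This forces us to enlarge the test space to $Q_N^n = V_N^{n+1}\cap \rH^3\cap\rW^{1,4}$ as in \eqref{SpacesCompactVel}, and to carefully track which $\rL^q$-integrability is needed on the interface where traces of the pressure $p_N^{n+1}$, the Biot velocity $\dbfeta_N^{n+1}$, and the weakly-regular $\bfu_N^{n+1}$ must be paired with $\bfv|_{\Gamma_N^n}$. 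Verifying that the $(Q_N^n)'$ norm estimate is uniform in $n$ and $N$ (in particular that constants from trace/Korn/Sobolev embeddings on $\Omega_{\rf,N}^n$ are independent of $n$ thanks to the uniform $\rC^{0,\alpha}$-bound on $\omega_N^n$ via \autoref{lem:verification of assumptions}) is where most of the technical work will concentrate.
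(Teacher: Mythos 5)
Your overall strategy is correct: this is precisely an application of the generalized Aubin--Lions lemma of \cite{MC:19} (in the form recalled in \autoref{lem:gen Aubin--Lions compactness crit}), and your Step~1 matches the paper's verification of Property~(A) via \autoref{lem:unif bddness of u_N in L2Hs}. Your Step~2 is in the right spirit, though a few details deviate: the viscous term does not require working in $\rL^p$ with $p<2$ (it pairs cleanly as $\rH^1 \times \rH^1$ since $\bfv \in Q_N^n \subset \rH^1$); and the form of Property~(B) that is actually needed is not the $\ell^1(\Delta t)$-summed bound you state but the pointwise estimate
\[
\left\| P_N^n \frac{\bfu_N^{n+1}-\bfu_N^n}{\Delta t} \right\|_{(Q_N^n)'} \le C\bigl(a_N^n + \| \bfu_N^n \|_{V_N^n} + \| \bfu_N^{n+1} \|_{V_N^{n+1}}\bigr)^p
\]
with some fixed $p\in[1,2)$ and $(\Delta t)\sum|a_N^n|^2\le C$, which is what the variant of \cite[Theorem~8.3]{KCM:24} demands. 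Getting the exponent strictly below $2$ is the subtle part: the paper achieves $p = 2(1-\eps)$ only after delicate boundary trace estimates ($\rW^{\frac{5}{7},\frac{7}{2}}(\Gamma)\hookrightarrow\rL^\infty(\Gamma)$, interpolation to place $\bfu_N^{n+1}$ in $\rL^{5/2}(\Gamma)$ with a power $1-\eps$ of the $V_N^{n+1}$-norm, and \cite[Lemma~2.6]{LR:14} to keep the domain-changing constants uniform). A crude $\rH^3\hookrightarrow\rW^{1,\infty}$ bound as you propose would be quantitatively insufficient and its constant would not be obviously uniform across the H\"older domains. You also need to account for the second piece $\Tilde\bfu_N^n - \bfu_N^n$ arising from the transplant between $\Omega_{\rf,N}^{n-1}$ and $\Omega_{\rf,N}^n$, which the paper bounds separately (via \cite[Theorem~4.2]{MC:19}) and which also goes into Property~(B), not into what you call ``Step~3.''

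Your Step~3 is the genuine gap. Property~(C) in the \cite{MC:19} framework is not an estimate of the measure of the symmetric difference $\Omega_{\rf,N}^{n+1}\triangle\Omega_{\rf,N}^n$ multiplied by an $\rL^2$ bound. It requires the explicit construction of \emph{common} test and solution spaces $Q_N^{n,l}$, $V_N^{n,l}$ (defined via squeezing between the profiles $m_N^{n,l}$ and $M_N^{n,l}$), together with operators $J_{N,l,n}^i\colon Q_N^{n,l}\to Q_N^{n+i}$ and $I_{N,l,n}^i\colon V_N^{n+i}\to V_N^{n,l}$, and then the quantitative estimates (C1), (C2), and the Ehrling-type inequality (C3), all uniform in $n$, $l$, $N$. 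The quantities from \autoref{lem:bddness of plate displacements} enter at exactly this point -- in the paper they feed the quantitative bound $\| I_{N,l,n}^i\bfv - \bfv \|_{\rL^2} \le C\bigl((l\Delta t)^{1/4} + (l\Delta t)^{1/8}\bigr)\|\bfv\|_{V_N^{n+i}}$ of Property~(C2) -- but they are used inside an operator-construction argument, not to directly estimate a ``geometric contribution'' to $\|\bfu_N(t+h)-\bfu_N(t)\|_{\rL^2}$. A naive symmetric-difference estimate will not verify the hypotheses of \cite[Theorem~3.1]{MC:19}, so this step cannot be dispensed with in the way you sketch.
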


The proof is based on the generalized Aubin--Lions compactness lemma \cite{MC:19}.
For completeness, we state the theorem in the variant required for the present setting in \autoref{sec:appendix}.

\begin{proof}[Proof of \autoref{prop:rel comp fluid vel}]
We start by showing that all the assumptions of Theorem 3.1 in \cite{MC:19} as stated in \autoref{lem:gen Aubin--Lions compactness crit} hold. 
\medskip
\noindent
First, we introduce the overarching Hilbert spaces:
\begin{equation*}
    H = \rL^2(\Omf^M)^3 \tand V = \rH^s(\Omf^M)^3 \tfor 0 < s < \frac{1}{2}.
\end{equation*}
From \autoref{lem:bddness of plate displacements}, the maximal domain $\Omf^M$ can  be chosen to be Lipschitz continuous, so by the compactness of Sobolev embeddings, we obtain that $V \Subset H$.
Next, we define the spaces $V_N^n$ and $Q_N^n$ in the statement of Theorem 3.1 in \cite{MC:19} to be the spaces \eqref{SpacesCompactVel} above. 
As required by Theorem 3.1 in \cite{MC:19},  $(V_N^n,Q_N^n) \hookrightarrow V \times V$ where the continuous embedding can be achieved using the extension by zero to the maximal domain, uniformly in $n$ and $N$.
Furthermore, the chain of embeddings~$V_N^n \Subset \overline{Q_N^n}^H \hookrightarrow (Q_N^n)'$ readily follows by construction.
It now remains to check the validity of (A1), (A2), (B), (C1), (C2) and~(C3) from Theorem 3.1 in \cite{MC:19}.

\medskip
\noindent
\textbf{Verification of Property(A):} The uniform boundedness properties in (A) follow from \autoref{lem:unif bddness of u_N in L2Hs}, uniform energy estimates \eqref{eq:unif energy ineq}, and \autoref{lem:unif bddness of approx sols}.

\medskip
\noindent
\textbf{Verification of Property(B):} 
We start by considering the semidiscrete formulation for the fluid velocity on the physical domain from \eqref{eq:semidiscr form fluid vel phys dom}
with $\bfv \in Q_N^n$ such that~$\| \bfv \|_{Q_N^n} \le 1$, and derive that
\begin{equation}\label{eq:cons of semidiscr form moving dom fluid}
    \begin{aligned}
        &\quad \left|\int_{\Omega_{\rf,N}^n} \frac{\bfu_N^{n+1}-\Tilde{\bfu}_N^n}{\Delta t} \cdot \bfv \srd \bfx\right|\\
        &\le 2 \nu \left|\int_{\Omega_{\rf,N}^n} \bfD(\bfu_N^{n+1}) : \bfD(\bfv) \srd \bfx\right| + \frac{1}{2}\left|\int_{\Omega_{\rf,N}^n} \Bigl[\Bigl(\Bigl(\Bigl(\Tilde{\bfu}_N^n - \zeta_N^{n+\frac{1}{2}}\frac{R+z}{R+\omega_N^n}\bfe_3\Bigr) \cdot \nabla\Bigr)\bfu_N^{n+1}\Bigr) \cdot \bfv\right.\\
        &\qquad \left.-\Bigl(\Bigl(\Bigl(\Tilde{\bfu}_N^n - \zeta_N^{n+\frac{1}{2}}\frac{R+z}{R+\omega_N^n}\bfe_3\Bigr) \cdot \nabla\Bigr)\bfv\Bigr) \cdot \bfu_N^{n+1}\Bigr] \srd \bfx\right| + \frac{1}{2R} \left|\int_{\Omega_{\rf,N}^n} \frac{R}{R+\omega_N^n} \zeta_N^{n+\frac{1}{2}} \bfu_N^{n+1} \cdot \bfv \srd \bfx\right|\\
        &\quad + \frac{1}{2} \left|\int_{\Gamma_N^n} (\bfu_N^{n+1} - \dbfeta_N^{n+1}) \cdot \bfn (\Tilde{\bfu}_N^n \cdot \bfv) \srd S\right| + \left|\int_{\Gamma_N^n} \Bigl(\frac{1}{2}\bfu_N^{n+1} \cdot \Tilde{\bfu}_N^n - p_N^{n+1}\Bigr) (\bfv \cdot \bfn) \srd S\right|\\
        &\quad + \beta \sum_{i=1}^2 \left|\int_{\Gamma_N^n} (\dbfeta_N^{n+1} - \bfu_N^{n+1}) \cdot \bftau_i (\bfv \cdot \bftau_i) \srd S\right|.
    \end{aligned}
\end{equation}
Proving Property(B) entails showing that the right-hand side is bounded uniformly in $N$, $n$ and $\| \bfv \|_{Q_N^n}$.
First, it readily follows that
\begin{equation*}
    2 \nu \left|\int_{\Omega_{\rf,N}^n} \bfD(\bfu_N^{n+1}) : \bfD(\bfv) \srd \bfx\right| \le C \cdot \| \bfu_{N}^{n+1} \|_{\rH^1(\Omega_{\rf,N}^n)} \cdot \| \bfv \|_{\rH^1(\Omega_{\rf,N}^n)} \le C \cdot \| \bfu_{N}^{n+1} \|_{V_N^{n+1}}
\end{equation*}
by definition of the space $Q_N^n$ as well as $\| \bfv \|_{Q_N^n} \le 1$.

The estimates of the following terms are more involved, which is due to the lack of Lipschitz continuity of the plate displacement.
For the next term, we first use H\"older's inequality and make use of the boundedness of the factor $\frac{R + z}{R + \omega_N^n}$ to get
\begin{equation*}
    \begin{aligned}
        &\quad \left|\int_{\Omega_{\rf,N}^n} \Bigl(\Bigl(\Bigl(\Tilde{\bfu}_N^n - \zeta_N^{n+\frac{1}{2}}\frac{R+z}{R+\omega_N^n}\bfe_3\Bigr) \cdot \nabla\Bigr)\bfu_N^{n+1}\Bigr) \cdot \bfv \srd \bfx \right|\\
        &\le C \cdot \bigl(\| \Tilde{\bfu}_N^n \|_{\rL^2(\Omega_{\rf,N}^n)} + \| \zeta_N^{n+\frac{1}{2}} \|_{\rL^2(\Gamma)}\bigr) \| \bfu_N^{n+1} \|_{\rH^1(\Omega_{\rf,N}^n)}  \cdot \| \bfv \|_{\rL^\infty(\Omega_{\rf,N}^n)}.
    \end{aligned}
\end{equation*}
Now, we discuss the terms on the right-hand side of the above inequality.
To this end, we recall that the Jacobian of the ALE map $\bfPhi_\rf^{\omega_N^n}$ given by $1 + \frac{\omega_N^n}{R}$ is uniformly bounded thanks to the uniform boundedness of the family of plate displacements.
In conjunction with the uniform energy estimate of the fluid velocity~$\bfu_N^n$ on the fixed domain, this yields that $\| \Tilde{\bfu}_N^n \|_{\rL^2(\Omega_{\rf,N}^n)}$ can be bounded by a constant that is independent of $N$ and $n$.
On the other hand, for $\| \zeta_N^{n+\frac{1}{2}} \|_{\rL^2(\Gamma)}$, we also invoke the uniform energy estimates as presented in the proof of \autoref{lem:unif bddness of approx sols}.
Thus, it remains to estimate $\| \bfv \|_{\rL^\infty(\Omega_{\rf,N}^n)}$.
For this purpose, we make use of the uniform boundedness of the Jacobian of the ALE map, use the Sobolev embedding $\rW^{1,p}(\Omf) \hookrightarrow \rL^\infty(\Omf)$ for $p \in (3,4)$ and employ \cite[Lemma~2.6]{LR:14} to find an estimate of the~$\| \cdot \|_{\rW^{1,p}(\Omf)}$-norm by the $\| \cdot \|_{\rW^{1,4}(\Omega_{\rf,N}^n)}$-norm, where the constant merely depends on the uniform bound of the plate displacements instead of $N$ and $n$.
Note that we will not distinguish variables on moving and fixed domains in terms of notation, but the distinction will become clear from the context.
Thus, still denoting the transformed version of $\bfv$ on $\Omf$ by $\bfv$, we get
\begin{equation*}
    \| \bfv \|_{\rL^\infty(\Omega_{\rf,N}^n)} \le C \cdot \| \bfv \|_{\rL^\infty(\Omf)} \le C \cdot \| \bfv \|_{\rW^{1,p}(\Omf)} \le C \cdot \| \bfv \|_{\rW^{1,4}(\Omega_{\rf,N}^n)} \le C \cdot \| \bfv \|_{Q_N^n} \le C
\end{equation*}
by definition of the space $Q_N^n$.
Putting together these arguments, we find that
\begin{equation*}
    \left|\int_{\Omega_{\rf,N}^n} \Bigl(\Bigl(\Bigl(\Tilde{\bfu}_N^n - \zeta_N^{n+\frac{1}{2}}\frac{R+z}{R+\omega_N^n}\bfe_3\Bigr) \cdot \nabla\Bigr)\bfu_N^{n+1}\Bigr) \cdot \bfv \srd \bfx \right| \le C \cdot \| \bfu_N^{n+1} \|_{V_N^{n+1}},
\end{equation*}
where the constant $C > 0$ is uniform in $N$ and $n$.
For the other term that is associated with the convective term, we use H\"older's inequality, handle the terms related to $\Tilde{\bfu}_N^n$ and $\zeta_N^{n+\frac{1}{2}}$ as above, invoke the definition of the space $Q_N^n$, and employ the uniform boundedness of the Jacobian of the ALE map together with the embedding $\rW^{1,r}(\Omf) \hookrightarrow \rL^4(\Omf)$ for $r \in (\frac{12}{7},2)$ and \cite[Lemma~2.6]{LR:14} for the estimate of $\| \cdot \|_{\rW^{1,r}(\Omf)}$ by $\| \cdot \|_{\rH^1(\Omega_{\rf,N}^n)}$ with a constant that is uniform in $n$ and $N$.
This strategy leads to
\begin{equation*}
    \begin{aligned}
        &\quad\left|\int_{\Omega_{\rf,N}^n} \Bigl(\Bigl(\Bigl(\Tilde{\bfu}_N^n - \zeta_N^{n+\frac{1}{2}}\frac{R+z}{R+\omega_N^n}\bfe_3\Bigr) \cdot \nabla\Bigr)\bfv\Bigr) \cdot \bfu_N^{n+1} \srd \bfx\right|\\
        &\le C \cdot \bigl(\| \Tilde{\bfu}_N^n \|_{\rL^2(\Omega_{\rf,N}^n)} + \| \zeta_N^{n+\frac{1}{2}} \|_{\rL^2(\Gamma)}\bigr) \| \bfu_N^{n+1} \|_{\rL^4(\Omega_{\rf,N}^n)}  \cdot \| \nabla \bfv \|_{\rL^4(\Omega_{\rf,N}^n)}
        \le C \cdot \| \bfv \|_{\rW^{1,4}(\Omega_{\rf,N}^n)} \cdot \| \bfu_N^{n+1} \|_{\rL^4(\Omf)}\\
        &\le C \cdot \| \bfv \|_{Q_N^n} \cdot \| \bfu_N^{n+1} \|_{\rW^{1,r}(\Omf)}
        \le C \cdot \| \bfu_N^{n+1} \|_{\rH^1(\Omega_{\rf,N}^n)}
        \le C \cdot \| \bfu_N^{n+1} \|_{V_N^{n+1}}.
    \end{aligned}
\end{equation*}
For the next term, we may proceed similarly as above:
We estimate $\zeta_N^{n+\frac{1}{2}}$ in $\rL^2(\Gamma)$, estimate~$\| \bfv \|_{\rL^4(\Omega_{\rf,N}^n)}$ by $\| \bfv \|_{Q_N^n}$ and handle $\bfu_N^{n+1}$ in $\rL^4(\Omega_{\rf,N}^n)$ as we have seen in the preceding estimate to get
\begin{equation*}
    \left|\int_{\Omega_{\rf,N}^n} \frac{R}{R+\omega_N^n} \zeta_N^{n+\frac{1}{2}} \bfu_N^{n+1} \cdot \bfv \srd \bfx\right| \le C \cdot \| \zeta_N^{n+\frac{1}{2}} \|_{\rL^2(\Gamma)} \cdot \| \bfu_N^{n+1} \|_{\rL^4(\Omega_{\rf,N}^n)} \cdot \| \bfv \|_{\rL^4(\Omega_{\rf,N}^n)} \le C \cdot \| \bfu_N^{n+1} \|_{V_N^{n+1}}
\end{equation*}
for a constant $C > 0$ that is uniform in $N$ and $n$.

The estimates of the boundary integrals require some further preparation.
In particular, we stress that due to the lack of Lipschitz regularity of the plate displacements, the considerations are significantly different from the 2D case.
First, let us recall the Jacobian of the transformation from the interface $\Gamma_N^n$ to the fixed interface $\Gamma$.
It is given by
\begin{equation*}
    \cJ_\Gamma^{\omega_N^n} = \sqrt{1 + |\del_x \omega_N^n|^2 + |\del_y \omega_N^n|^2}.
\end{equation*}
At this stage, we invoke the uniform boundedness of the family $\omega_N^n$ in $\rH_0^2(\Gamma)$, and as in the proof of \autoref{lem:unif bddness of approx sols}, we note that $\rH^2(\Gamma) \hookrightarrow \rW^{1,s}(\Gamma)$ for all $s < \infty$.
Hence, for every $s < \infty$, we can estimate $\cJ_\Gamma^{\omega_N^n}$ in $\rL^s(\Gamma)$ by a constant that is uniform in $N$ and $n$.
Next, we discuss how to estimate $\| \bfv \|_{\rL^\infty(\Gamma)}$.
In this regard, we observe that $\rW^{\frac{5}{7},\frac{7}{2}}(\Gamma) \hookrightarrow \rL^\infty(\Gamma)$ by Sobolev embeddings.
In addition, using continuity of the trace from $\rW^{1,\frac{7}{2}}(\Omf)$ to $\rW^{\frac{5}{7},\frac{7}{2}}(\Gamma)$, \cite[Lemma~2.6]{LR:14} in order to estimate $\| \cdot \|_{\rW^{1,\frac{7}{2}}(\Omf)}$ by $\| \cdot \|_{\rW^{1,4}(\Omega_{\rf,N}^n)}$, with a constant that is independent of $N$ and $n$, as well as the definition of the space $Q_N^n$, we find that
\begin{equation*}
    \| \bfv \|_{\rL^\infty(\Gamma)} \le C \cdot \| \bfv \|_{\rW^{\frac{5}{7},\frac{7}{2}}(\Gamma)} \le C \cdot \| \bfv \|_{\rW^{1,\frac{7}{2}}(\Omf)} \le C \cdot \| \bfv \|_{\rW^{1,4}(\Omega_{\rf,N}^n)} \le C \cdot \| \bfv \|_{Q_N^n} \le C.
\end{equation*}
Let us emphasize that the (generic) constant $C > 0$ is independent of $N$ and $n$.
By the relations shown in~\eqref{eq:repr of bomega & bbfeta} as well as the uniform energy estimates established in the context of \autoref{lem:unif bddness of approx sols}, we infer that~$\dbfeta_N^{n+1}$ can be uniformly bounded in $\rL^2(\Gamma)$.
It remains to handle the norms involving $\bfu_N^{n+1}$ and $\Tilde{\bfu}_N^n$.
When choosing $s < \infty$ in $\rL^s(\Gamma)$ sufficiently large for the above handling of the Jacobian $\cJ_\Gamma^{\omega_N^n}$, we may consider $\rL^{\frac{5}{2}}(\Gamma)$.
For $\eps > 0$ sufficiently small, it then follows that~$\rW^{\frac{4}{9}-2\eps,\frac{9}{5}}(\Gamma) \hookrightarrow \rL^{\frac{5}{2}}(\Gamma)$.
Additionally making use of classical properties of the trace and the embedding $\rH^{1-\eps,\frac{9}{5}}(\Omf)$, complex interpolation, the uniform boundedness of $\bfu_N^n$ in $\rL^2(\Omf)$ as revealed in \autoref{lem:unif bddness of approx sols}, and \cite[Lemma~2.6]{LR:14} in order to control the transformation back to the moving domain, we get
\begin{equation*}
    \begin{aligned}
        \| \bfu_N^{n+1} \|_{\rL^{\frac{5}{2}}(\Gamma)} 
        &\le C \cdot \| \bfu_N^{n+1} \|_{\rW^{\frac{4}{9}-2\eps,\frac{9}{5}}(\Gamma)} \le C \cdot \| \bfu_N^{n+1} \|_{\rW^{1-2\eps,\frac{9}{5}}(\Omf)} \le C \cdot \| \bfu_N^{n+1} \|_{\rH^{1-\eps,\frac{9}{5}}(\Omf)}\\
        &\le C \cdot \| \bfu_N^{n+1} \|_{\rL^{\frac{9}{5}}(\Omf)}^\eps \cdot \| \bfu_N^{n+1} \|_{\rW^{1,\frac{9}{5}}(\Omf)}^{1-\eps} \le C \cdot \| \bfu_N^{n+1} \|_{\rH^1(\Omega_{\rf,N}^n)}^{1-\eps} \le C \cdot \| \bfu_N^{n+1} \|_{V_N^{n+1}}^{1-\eps}.
    \end{aligned}
\end{equation*}
Upon noting that $\Tilde{\bfu}_N^n$ and $\bfu_N^n$ have the same trace along the interface $\Gamma$, we argue that the term involving~$\Tilde{\bfu}_N^n$ can be handled in a similar way.
Concatenating the previous estimates, and exploiting the elementary estimate $1 + (a b)^\delta  \le (1 + a + b)^{2 \delta}, \tfor a,b \ge 0 \tand \delta \in (\nicefrac{1}{2},1)$, we conclude that
\begin{equation*}
    \begin{aligned}
        \left|\int_{\Gamma_N^n} (\bfu_N^{n+1} - \dbfeta_N^{n+1}) \cdot \bfn (\Tilde{\bfu}_N^n \cdot \bfv) \srd S\right|
        &= \left|\int_{\Gamma} \cJ_\Gamma^{\omega_N^n} (\bfu_N^{n+1} - \dbfeta_N^{n+1}) \cdot \bfn (\bfu_N^n \cdot \bfv) \srd S\right|\\
        &\le C \cdot \bigl(\| \bfu_N^{n+1} \|_{\rL^{\frac{5}{2}}(\Gamma)} + \| \dbfeta_N^{n+1} \|_{\rL^2(\Gamma)}\bigr) \cdot \| \bfu_N^n \|_{\rL^{\frac{5}{2}}(\Gamma)} \cdot \| \bfv \|_{\rL^\infty(\Gamma)}\\
        &\le C \bigl(1 + \| \bfu_N^{n+1} \|_{V_N^{n+1}}^{1-\eps}\bigr) \cdot \| \bfu_N^n \|_{V_N^n}^{1-\eps}\\
        &\le C \cdot \bigl(1+ \| \bfu_N^{n+1} \|_{V_N^{n+1}} + \| \bfu_N^n \|_{V_N^n}\bigr)^{2(1-\eps)}.
    \end{aligned}
\end{equation*}
We stress that the (generic) constant $C > 0$ above is independent of $N$ and $n$, and $2(1-\eps) < 2$ is valid.
Similarly, we find that 
\begin{equation*}
    \begin{aligned}
        \left|\int_{\Gamma_N^n} \Bigl(\frac{1}{2}\bfu_N^{n+1} \cdot \Tilde{\bfu}_N^n - p_N^{n+1}\Bigr) (\bfv \cdot \bfn) \srd S\right|
        &\le C \cdot \bigl(1 + \| p_N^{n+1} \|_{\rH^1(\Omb)} + \| \bfu_N^n \|_{V_N^n} + \| \bfu_N^{n+1} \|_{V_N^{n+1}}\bigr)^{2(1-\eps)}, \tand\\
        \beta \sum_{i=1}^2 \left|\int_{\Gamma_N^n} (\dbfeta_N^{n+1} - \bfu_N^{n+1}) \cdot \bftau_i (\bfv \cdot \bftau_i) \srd S\right|
        &\le C \cdot \bigl(1 + \| \bfu_N^{n+1} \|_{V_N^{n+1}}\bigr),
    \end{aligned}
\end{equation*}
where $C > 0$ is again uniform in $N$ and $n$.
Combining these estimates, we find that
\begin{equation*}
    \sup_{\| \bfv \|_{Q_N^n} \le 1} \left|\int_{\Omega_{\rf,N}^n} \frac{\bfu_N^{n+1}-\Tilde{\bfu}_N^n}{\Delta t} \cdot \bfv \srd \bfx\right| \le C \cdot \bigl(a_N^n + \| \bfu_N^n \|_{V_N^n} + \| \bfu_N^{n+1} \|_{V_N^{n+1}}\bigr)^{2(1-\eps)},
\end{equation*}
where $C > 0$ is uniform in $N$ and $n$, and $a_N^n \coloneqq 1 + \| p_N^{n+1} \|_{\rH^1(\Omb)}$ is such that
\begin{equation*}
    (\Delta t) \sum_{n=0}^{N-1} |a_N^n|^2 \le 2 \bigl((\Delta t) N + \| p_N \|_{\rL^2(0,T;\rH^1(\Omb))}^2\bigr) \le C.
\end{equation*}
The boundedness of the pore pressure follows from \autoref{lem:unif bddness of approx sols}.
By \eqref{eq:cons of semidiscr form moving dom fluid}, the above estimates lead to 
\begin{equation}\label{eq:est first part prop B}
    \left|\int_{\Omega_{\rf,N}^n} \frac{\bfu_N^{n+1}-\Tilde{\bfu}_N^n}{\Delta t} \cdot \bfv \srd \bfx\right| \le C \cdot \bigl(a_N^n + \| \bfu_N^n \|_{V_N^n} + \| \bfu_N^{n+1} \|_{V_N^{n+1}}\bigr)^{2(1-\eps)}
\end{equation}
for $C > 0$ uniform in $N$ and $n$ and $\eps > 0$ small so that $2(1-\eps) \in (1,2)$.

To finish proving that Property(B) from Theorem 3.1 in \cite{MC:19} holds, it remains to estimate the term $\bigl|\int_{\Omega_{\rf,N}^n} \frac{\Tilde{\bfu}_N^n - \bfu_N^n}{\Delta t} \cdot \bfv \srd \bfx\bigr|$.
Indeed, following the approach in \cite[Theorem~4.2]{MC:19}, see also \cite[Theorem~10.6]{CGM:21} for the 3D case, we get the existence of a constant $C > 0$ that is uniform in $N$ and $n$ such that
\begin{equation}\label{eq:est second part prop B}
    \left|\int_{\Omega_{\rf,N}^n} \frac{\Tilde{\bfu}_N^n - \bfu_N^n}{\Delta t} \cdot \bfv \srd \bfx\right| \le C \cdot \| \bfu_N^n \|_{V_N^n}.
\end{equation}

Combining \eqref{eq:est first part prop B} and \eqref{eq:est second part prop B}, we find that there exists a constant $C > 0$ that is independent of $N$ and~$n$, $a_N^n \coloneqq 1 + \| p_N^{n+1} \|_{\rH^1(\Omb)}$ with $(\Delta t) \sum_{n=0}^{N-1} |a_N^n|^2 \le C$ and $p \in [1,2)$ such that
\begin{equation*}
    \left\| P_N^n \frac{\bfu_N^{n+1}-\bfu_N^n}{\Delta t} \right\|_{(Q_N^n)'} \le C\bigl(a_N^n + \| \bfu_N^n \|_{V_N^n} + \| \bfu_N^{n+1} \|_{V_N^{n+1}}\bigr)^p
\end{equation*}
for all $n = 0,1,\dots,N-1$.
This completes the verification of Property(B). 

\medskip
\noindent
\textbf{Verification of Property(C):} This property is crucial for compactness on moving domains as it specifies the conditions under which one can take time derivatives of functions defined on different domains. 
Conditions~(C1) and (C2) introduce ``common'' test and solutions spaces that depend smoothly on the time shifts, locally in time. 
Condition (C3) is a ``global'' estimate in which one has to show that Conditions~(C1) and (C2) hold uniformly for all $n$, $N$, and time-shifts by $l\Delta t$. 

The proof of {\bf{Property(C1)}} can be obtained analogously to the respective proof in \cite[Theorem~4.2]{MC:19} upon defining the common test space $Q_N^{n,l}$ similarly as $Q_N^n$ in \eqref{SpacesCompactVel} and estimating the test function $\bfv$ carefully to guarantee uniformity of the constants in $N$, $l$ and $n$.

{\bf{Property(C2)}} follows from a {\emph{squeezing procedure}} in the wake of \cite[Definition~4.2]{MC:19}.
Concerning the last estimate required for (C2), we get
\begin{equation*}
    \| I_{N,l,n}^i\bfv - \bfv \|_{\rL^2\bigl(\Omf^{M_N^{n,l}}\bigr)} \le C \bigl((l \Delta t)^{\frac{1}{4}} + (l \Delta t)^{\frac{1}{8}}\bigr) \cdot \| \bfv \|_{V_N^{n+i}},
\end{equation*}
completing the proof for the function $g(h) \coloneqq C \bigl(h^{\frac{1}{4}} + h^{\frac{1}{8}}\bigr)$.

The proof of the uniform Ehrling property, \textbf{Property(C3)}, is similar to the one in \cite[Section~4.2]{MC:19}, so we omit the details here.

Now that all the assumptions of Theorem 3.1 in \cite{MC:19} as formulated in \autoref{lem:gen Aubin--Lions compactness crit} have been verified, the compactness results for fluid velocity stated in \autoref{prop:rel comp fluid vel} follows by applying this theorem.
\end{proof}

\section{Passing to the limit in the regularized weak formulation}\label{sec:limit passage}

In this section, we pass to the limit in the regularized weak formulation in order to show existence of a finite-energy weak solution as stated in \autoref{thm:ex of a weak sol}.
For this purpose, we summarize the strong convergences that we have established so far.
\\
{\bf{A summary of strong convergence results:}}
\begin{enumerate}[(i)]
    \item $\overline{\bfeta}_N \to \bfeta$  in~$\rC(0,T;\rL^2(\Omb)^3)$, see \autoref{lem:strong conv Biot displacement}.
    \item $\overline{\omega}_N \to \omega$ in~$\rC(0,T;\rH^s(\Gamma))$ and $\omega_N \to \omega$ in~$\rL^\infty(0,T;\rH^s(\Gamma))$ for $s \in (0,2)$, see \autoref{lem:comp for plate displacement}.
    \item $\bfxi_N \to \bfxi$ in $\rL^2(0,T;\rH^{s}(\Omb)^3)$ and $\zeta_N \to \zeta$  in $\rL^2(0,T;\rH^{s}(\Gamma))$ for $s \in (-\frac{1}{2},0)$, see  \autoref{prop:comp of Biot & plate vel}.
    \item $p_N \to p$ in $\rL^2(0,T;\rL^2(\Omb))$, see \autoref{prop:comp for Biot pore pressure}.
    \item  $\bfu_N \to \bfu$ in $\rL^2(0,T;\rL^2(\Omf^M)^3)$, see \autoref{prop:rel comp fluid vel}.
    \item $\zeta_N^* \to \zeta$ in $\rL^2(0,T;\rH^{s}(\Omb)^3)$ for $s \in (-\frac{1}{2},0)$.
\end{enumerate}
Concerning the last statement, it might not be clear that $(\zeta_N)$ and $(\zeta_N^*)$ converge to the same limit in~$\rL^2(0,T;\rH^{s}(\Gamma))$,
however, \eqref{eq:num dissip ests} implies
\begin{equation*}
    \sum_{n=0}^{N-1} \| \zeta_N^{n+\frac{1}{2}} - \zeta_N^n \|_{\rL^2(\Gamma)}^2 \le C, \tso \| \zeta_N - \zeta_N^* \|_{\rL^2(0,T;\rL^2(\Gamma))} \to 0.
\end{equation*}

We will use these convergence results to pass to the limit in the semidiscrete coupled formulation~\eqref{eq:weak form coupled semidiscr probl}.
Before passing to the limit, we need some further preparation.
First, we identify the weak limit of the family of the gradients of the fluid velocity.
Second, we establish the strong convergence for the traces of the fluid velocity on the boundary of the fluid domain.
Third, we show a convergence result for the test functions defined on approximate domains.
These steps will be carried out in the following subsections.

\subsection{Convergence of the gradient of the fluid velocity}\label{ssec:conv of grad of fluid vel}
\ 

From \autoref{cor:weak & weak* conv}, we recall that for all $p \in (1,2)$, the sequence $(\nabla_\rf^{\tau_{\Delta t} \omega_N} \bfu_N)$ converges weakly in~$\rL^2(0,T;\rL^p(\Omf)^3)$ to some $\bfG$.
Here, we identify this weak limit as the gradient of the limit of the sequence of fluid velocities $\bfu_N$.
More precisely, we have the following.

\begin{lem}\label{lem:lim of the gradient of the pressure}
Let $\bfu$, $\bfG$ and $\omega$ be the weak* limit of the sequence $\bfu_N$ in $\rL^\infty(0,T;\rL^2(\Omf)^3)$, the weak limit of the sequence $(\nabla_\rf^{\tau_{\Delta t}\omega_N} \bfu_N)$ in $\rL^2(0,T;\rL^p(\Omf)^3)$ and the weak* limit of the sequence $\omega_N$ in $\rL^\infty(0,T;\rH_0^2(\Gamma))$, respectively.
Then it holds that $\bfG = \nabla_\rf^\omega \bfu$.
\end{lem}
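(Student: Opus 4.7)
The plan is to identify $\bfG = \nabla_\rf^\omega \bfu$ by pairing the weak limit against smooth compactly supported test functions, shifting derivatives off $\bfu_N$ via integration by parts, and then exploiting the strong compactness of $\omega_N$ from \autoref{lem:comp for plate displacement} to pass to the limit. Writing the transformed fluid gradient in matrix form,
\begin{equation*}
    \nabla_\rf^{\omega_N} \bfu_N = \bfA(\omega_N, \nabla \omega_N, \hz)\,\nabla \bfu_N,
\end{equation*}
with $\bfA$ read off from \eqref{eq:transformed fluid grad} --- a matrix that is smooth in its arguments as long as $R + \omega > 0$, which is guaranteed by Assumption~1.B --- reduces the question to strong convergence of the adjoint operator $(\nabla_\rf^{\tau_{\Delta t}\omega_N})^*$ applied to smooth test functions, paired against the weak$^*$ limit of $\bfu_N$.

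First I would upgrade the strong convergence of $\omega_N$ to its time shift. Combining \autoref{lem:comp for plate displacement} with the H\"older-in-time estimate \eqref{eq:Holder est omega_N}, namely $\| \tau_{\Delta t}\omega_N - \omega_N \|_{\rL^\infty(0,T;\rH^s(\hGam))} \le C(\Delta t)^{1-s/2}$, yields $\tau_{\Delta t}\omega_N \to \omega$ strongly in $\rL^\infty(0,T;\rH^s(\hGam))$ for every $s \in (0,2)$. Since $\hGam \subset \bR^2$, Sobolev embedding with $s$ close to $2$ gives strong convergence of both $\tau_{\Delta t}\omega_N$ and $\nabla \tau_{\Delta t}\omega_N$ in $\rL^\infty(0,T;\rL^q(\hGam))$ for every $q < \infty$. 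Together with the uniform lower bound $R + \omega_N \ge R - R_{\max} > 0$, this transfers to strong convergence of every coefficient of $\nabla_\rf^{\tau_{\Delta t}\omega_N}$ --- which involves only $\omega_N$, $\nabla \omega_N$ and the elementary factor $R + \hz$ --- in $\rL^\infty(0,T;\rL^q(\hOmf))$.

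Next, I would integrate by parts. For arbitrary $\bfPhi \in \rC_\rc^\infty((0,T) \times \hOmf)^{3 \times 3}$, the compact support inside $\hOmf$ kills all boundary contributions, giving
\begin{equation*}
    \int_0^T \int_{\hOmf} \bigl(\nabla_\rf^{\tau_{\Delta t}\omega_N} \bfu_N\bigr) : \bfPhi \srd \hbfx \srd t = -\int_0^T \int_{\hOmf} \bfu_N \cdot \bigl[(\nabla_\rf^{\tau_{\Delta t}\omega_N})^* \bfPhi \bigr] \srd \hbfx \srd t.
\end{equation*}
Crucially, the only coefficient derivatives that appear in $(\nabla_\rf^\omega)^* \bfPhi$ are $\hz$-derivatives of the off-diagonal entries of $\bfA$; since those entries depend affinely on $\hz$, differentiation in $\hz$ produces only $\omega$ and $\nabla \omega$ (never second-order spatial derivatives of $\omega$). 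Combined with the smoothness of $\bfPhi$, this yields strong convergence of $(\nabla_\rf^{\tau_{\Delta t}\omega_N})^* \bfPhi$ in $\rL^1(0,T;\rL^2(\hOmf)^3)$ to $(\nabla_\rf^\omega)^* \bfPhi$. Pairing against the weak$^*$ convergence $\bfu_N \rightharpoonup \bfu$ in $\rL^\infty(0,T;\rL^2(\hOmf)^3)$ and undoing the integration by parts on the limit side produces
\begin{equation*}
    \int_0^T \int_{\hOmf} \bfG : \bfPhi \srd \hbfx \srd t = \int_0^T \int_{\hOmf} \bigl(\nabla_\rf^\omega \bfu\bigr) : \bfPhi \srd \hbfx \srd t,
\end{equation*}
and since $\bfPhi$ is arbitrary, $\bfG = \nabla_\rf^\omega \bfu$ follows.

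The main delicate point is the structural observation that the adjoint $(\nabla_\rf^\omega)^*$ never sees second-order spatial derivatives of $\omega$. If it did, only weak$^*$ convergence of $\nabla^2 \omega_N$ in $\rL^\infty(0,T;\rL^2(\hGam))$ would be available, and pairing it against the weak$^*$-convergent $\bfu_N$ would stall the limit passage. The fact that differentiation in $\hz$ of a coefficient of the form $\frac{R+\hz}{R+\omega}\del_j \omega$ produces only $\frac{\del_j \omega}{R + \omega}$, depending on $(\omega,\nabla \omega)$ alone, is what ultimately enables the argument to close in three dimensions, where the plate displacement in the finite-energy class enjoys strictly less spatial regularity than in the 2D setting of \cite{KCM:24}.
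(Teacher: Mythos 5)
Your argument is correct, and it is the standard identification route that the paper points to by citing Proposition~7.6 of \cite{MC:14}: test against compactly supported smooth $\bfPhi$, move the derivatives onto $\bfPhi$, and exploit strong convergence of the geometric coefficients together with weak$^*$ convergence of $\bfu_N$. The structural observation you isolate is indeed the load-bearing point in 3D: writing the coefficients in the form $\tfrac{R+\hz}{R+\omega}\,\del_j\omega$ (equivalent to the expression in \eqref{eq:transformed fluid grad} since $R+z = \tfrac{(R+\hz)(R+\omega)}{R}$), the only coefficient derivative generated by the integration by parts is the $\hz$-derivative of the $(1,3)$ and $(2,3)$ entries of $\bfA$, which yields $\tfrac{\del_j\omega}{R+\omega}$ and never $\nabla^2\omega$, so the strong convergence $\nabla\omega_N\to\nabla\omega$ in $\rL^\infty(0,T;\rL^q(\hGam))$ from \autoref{lem:comp for plate displacement} (and its transfer to $\tau_{\Delta t}\omega_N$ via \eqref{eq:Holder est omega_N}) suffices. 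One small clarification: to legitimately ``undo'' the integration by parts on the limit side, you should note that $\bfu\in\rL^2(0,T;\rW^{1,p}(\Omf)^3)$ for $p<2$, which follows from the uniform bound of $\bfu_N$ in that space in \autoref{lem:unif bddness of approx sols} and uniqueness of weak limits; otherwise the identity you derive only determines $\nabla_\rf^\omega\bfu$ as a distribution (which, since $\bfG\in\rL^2(0,T;\rL^p)$, still implies the claimed pointwise a.e.\ equality, so the conclusion is unaffected either way).
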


The proof of this result is conceptually similar to the proof of Proposition~7.6 in \cite{MC:14}, so we omit the details here for brevity.

\subsection{Strong convergence of the fluid velocity traces at the interface $\Gamma$}\label{ssec:strong conv of fluid vel traces}
\ 

Recall the definition of the time shift $\tau_{\Delta t}$  introduced in \autoref{cor:weak & weak* conv}. The main result in this subsection is the following.

\begin{prop}\label{prop:conv of fluid vel traces}
Let $\hbfu_N = \bfu_N \circ \bfPhi_\rf^{\tau_{\Delta t} \omega_N}$ and $\hbfu = \bfu \circ \bfPhi_\rf^\omega$.
Then the traces $\left.\hbfu_N\right|_{\Gamma}$ of the approximate fluid velocities on $\Gamma$ converge to the trace of the limiting fluid velocity on $\Gamma$ as $N \to \infty$, i.e., for $s \in (\frac{1}{2},1)$, we have $\left.\hbfu_N\right|_{\Gamma} \to \left.\hbfu\right|_{\Gamma}$ in $\rL^2(0,T;\rH^{s-\frac{1}{2}}(\Gamma)^3)$.
\end{prop}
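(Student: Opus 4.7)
The plan is to upgrade the already-available strong $\rL^2$-convergence of $\bfu_N$ on the moving maximal domain to strong convergence of the pulled-back velocities $\hbfu_N$ in $\rL^2(0,T;\rH^s(\hOmf)^3)$ for some $s\in(\tfrac{1}{2},1)$ on the \emph{fixed} reference domain, and then to invoke the classical trace theorem $\rH^s(\hOmf)\to\rH^{s-\frac{1}{2}}(\hGam)$ for $s>\tfrac{1}{2}$, which is available because $\hOmf$ is a smooth Lipschitz fixed domain. Throughout, $\bfu_N$ and $\bfu$ are understood in the sense of their extensions by zero to $\Omf^M$ whenever required.

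\textbf{Step 1: from moving to fixed domain.} To transfer the strong convergence $\bfu_N\to\bfu$ in $\rL^2(0,T;\rL^2(\Omf^M)^3)$ from \autoref{prop:rel comp fluid vel} to $\hbfu_N\to\hbfu$ in $\rL^2(0,T;\rL^2(\hOmf)^3)$, I decompose
\begin{equation*}
\hbfu_N - \hbfu = (\bfu_N - \bfu)\circ \bfPhi_\rf^{\tau_{\Delta t}\omega_N} + \bfu\circ\bfPhi_\rf^{\tau_{\Delta t}\omega_N} - \bfu\circ\bfPhi_\rf^\omega.
\end{equation*}
A change of variables, with Jacobian $\hcJ_\rf^{\tau_{\Delta t}\omega_N}=1+\tfrac{\tau_{\Delta t}\omega_N}{R}$ uniformly bounded above and away from zero by \autoref{lem:verification of assumptions}, controls the first summand by $C\|\bfu_N - \bfu\|_{\rL^2(\Omf^M)}$. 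For the second, \autoref{lem:comp for plate displacement} implies $\omega_N\to\omega$ in $\rC(0,T;\rC^{0,\alpha}(\hGam))$ for every $\alpha<1$, so $\bfPhi_\rf^{\tau_{\Delta t}\omega_N}\to\bfPhi_\rf^\omega$ uniformly on $[0,T]\times\overline{\hOmf}$; $\rL^2$-continuity of composition, via density of $\rC_\rc^\infty$ in $\rL^2(\Omf^M)$ and uniform equicontinuity of smooth approximations, drives this term to zero.

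\textbf{Steps 2--4: uniform Sobolev bound, interpolation, trace.} Fix $p\in(\tfrac{3}{2},2)$. By \autoref{lem:unif bddness of approx sols}, $\hbfu_N$ is uniformly bounded in $\rL^2(0,T;\rW^{1,p}(\hOmf)^3)$, and the Sobolev embedding $\rW^{1,p}(\hOmf)\hookrightarrow \rH^{s'}(\hOmf)$ valid for any $s'<\tfrac{5}{2}-\tfrac{3}{p}$ yields a uniform bound in $\rL^2(0,T;\rH^{s'}(\hOmf)^3)$ with some $s'\in(\tfrac{1}{2},1)$ that can be pushed arbitrarily close to $1$ by choosing $p$ close to $2$. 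A Gagliardo--Nirenberg estimate together with H\"older in time gives, for $\theta=s/s'\in(0,1)$,
\begin{equation*}
\|\hbfu_N - \hbfu\|_{\rL^2(0,T;\rH^s(\hOmf))} \le C\,\|\hbfu_N - \hbfu\|_{\rL^2(0,T;\rL^2(\hOmf))}^{1-\theta}\,\|\hbfu_N - \hbfu\|_{\rL^2(0,T;\rH^{s'}(\hOmf))}^{\theta}.
\end{equation*}
The first factor tends to zero by Step 1 and the second is uniformly bounded, so $\hbfu_N\to\hbfu$ strongly in $\rL^2(0,T;\rH^s(\hOmf)^3)$ for every $s\in(\tfrac{1}{2},s')$. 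Continuity of the trace $\rH^s(\hOmf)\to\rH^{s-\frac{1}{2}}(\hGam)$ on the Lipschitz fixed domain then concludes the proof for the stated range of $s$.

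\textbf{Main obstacle.} The genuine difficulty, specific to the 3D setting, is that the approximate fluid velocities are only uniformly bounded in $\rL^2(\rW^{1,p})$ with $p<2$ (rather than in $\rL^2(\rH^1)$ as in the 2D case of \cite{KCM:24}). Consequently the intermediate Sobolev exponent $s'$ in the interpolation step cannot reach $1$, and a careful balancing of $p$, $s'$ and $s$ is needed to land inside $(\tfrac{1}{2},1)$. This in turn forces the target trace space to be $\rH^{s-\frac{1}{2}}$ with $s-\tfrac{1}{2}<\tfrac{1}{2}$, which is precisely the regularity compatible with the Lagrangian-trace framework of \cite{Muh:14} invoked elsewhere in the paper. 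A secondary technical delicacy appears in Step 1, where the triangle-inequality argument must handle two \emph{non-Lipschitz} ALE maps and crucially relies on the uniform convergence of $\omega_N$ delivered by \autoref{lem:comp for plate displacement}.
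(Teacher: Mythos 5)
Your overall strategy coincides with the paper's: pull the strong $\rL^2$-convergence of $\bfu_N$ on the maximal moving domain back to the fixed reference domain, interpolate against the uniform $\rL^2(0,T;\rW^{1,p}(\hOmf))$ bound (the best available in 3D, since the Korn inequality of \cite{Len:14} only yields $p<2$), and then apply the trace $\rH^s(\hOmf)\to\rH^{s-\frac{1}{2}}(\hGam)$ on the fixed Lipschitz domain. Steps 2--4 of your proposal are, up to a rephrasing of the interpolation route (you interpolate directly between $\rL^2$ and $\rH^{s'}$ rather than using $\rH^{s',p}=[\rL^p,\rW^{1,p}]_{s'}$ and a Sobolev embedding), precisely the content of \autoref{lem:conv in L2Hs}, which the paper proves separately and then invokes. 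One point you should make explicit, as the paper does, is that $\hbfu$ also lies in $\rL^2(0,T;\rW^{1,p}(\hOmf)^3)$: this follows from weak lower semicontinuity along a subsequence once the strong $\rL^2$-limit is identified.

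The genuinely different ingredient is your Step 1 for the cross term $\bfu\circ\bfPhi_\rf^{\tau_{\Delta t}\omega_N}-\bfu\circ\bfPhi_\rf^\omega$. The paper decomposes this ($I_2=I_{2,1}+I_{2,2}$ with the auxiliary height $z^*$), applies the fundamental theorem of calculus in the vertical direction, and then exploits the uniform bound on $\nabla\bfu$ in $\rL^2(0,T;\rL^p)$ together with the uniform convergence of the plate displacements; the extra term $I_{2,2}$ is needed to account for the region where the shifted evaluation point escapes $\Omf^\omega(t)$ and the zero extension kicks in. You instead approximate the zero-extension of $\bfu$ by a function $\bfu_\eps\in\rC_\rc^\infty((0,T)\times\Omf^M)$ in $\rL^2$, use the uniformly bounded Jacobian $1+\tau_{\Delta t}\omega_N/R$ to move the $\eps$-error across the change of variables, and use the uniform convergence $\omega_N\to\omega$ in $\rC(0,T;\rC(\hGam))$ (from \autoref{lem:comp for plate displacement} and $\rH^s(\hGam)\hookrightarrow\rC(\hGam)$, $s>1$) to control the middle term by uniform continuity of $\bfu_\eps$. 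This is cleaner -- it needs only $\rL^2$-density, no control of $\nabla\bfu$ for this step, and no case distinction between $z^*\gtrless 0$ -- at the cost of being purely qualitative (it yields no rate). If you include this in the paper, you should spell out that the Jacobian is bounded above and below uniformly in $N,n,t$ (Assumption 1.B / \autoref{lem:verification of assumptions}), and that the ``equicontinuity'' invoked is simply the uniform continuity of each fixed $\bfu_\eps$ on $[0,T]\times\overline{\Omf^M}$. Both routes are correct; yours is somewhat simpler but the paper's estimate on $I_{2}$ via $\nabla\bfu$ is closer in spirit to the quantitative arguments used elsewhere in \autoref{lem:verification of assumptions} and \autoref{lem:bddness of plate displacements}.
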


The proof of \autoref{prop:conv of fluid vel traces} will be carried out in several steps.
One important ingredient is the lemma below on strong convergences.
Let us emphasize that even though the {\emph{assumptions are weaker}} than in the 2D case, see \cite[Lemma~9.1]{KCM:24}, we do get convergence in the same spaces as in 2D.

\begin{lem}\label{lem:conv in L2Hs}
Assume that $(\bff_n)$ and $\bff$ are uniformly bounded in $\rL^2(0,T;\rW^{1,p}(\Omf)^3)$ for all $p \in (1,2)$ as well as~$\bff_n \to \bff$ in $\rL^2(0,T;\rL^2(\Omf)^3)$.
Then for $s \in (0,1)$, it follows that $\bff_n \to \bff$ in $\rL^2(0,T;\rH^s(\Omf)^3)$.
Moreover, we have $\left. \bff_n \right|_\Gamma \to \left. \bff \right|_\Gamma$  in $\rL^2(0,T;\rH^{s-\frac{1}{2}}(\Gamma)^3)$ for $s \in (\frac{1}{2},1)$.
\end{lem}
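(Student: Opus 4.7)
The plan is to combine a Sobolev embedding that converts $\rW^{1,p}$-control (for $p<2$) into fractional $\rH^{s'}$-control with $s'<1$, followed by interpolation between $\rL^2$ and $\rH^{s'}$, and finally Hölder's inequality in time. The trace statement will then be immediate from the standard trace theorem on the (smooth) reference fluid domain.

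First, I would observe that since $\Omf \subset \bR^3$ is the fixed rectangular reference domain, the classical Sobolev embedding $\rW^{1,p}(\Omf) \hookrightarrow \rH^{s'}(\Omf)$ is valid whenever $1 - \tfrac{3}{p} \ge s' - \tfrac{3}{2}$, i.e., $s' \le \tfrac{5}{2} - \tfrac{3}{p}$. For any target $s \in (0,1)$, I would pick $s' \in (s,1)$ and then choose $p \in (1,2)$ sufficiently close to $2$ so that $s' < \tfrac{5}{2} - \tfrac{3}{p}$; this guarantees $\rW^{1,p}(\Omf) \hookrightarrow \rH^{s'}(\Omf)$ with a continuous embedding. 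Consequently, the hypothesis that $(\bff_n)$ and $\bff$ are uniformly bounded in $\rL^2(0,T;\rW^{1,p}(\Omf)^3)$ for every $p \in (1,2)$ upgrades automatically to a uniform bound in $\rL^2(0,T;\rH^{s'}(\Omf)^3)$.

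Next, I would invoke the standard Hilbert space interpolation identity $\rH^s(\Omf) = [\rL^2(\Omf),\rH^{s'}(\Omf)]_{s/s'}$, which yields the pointwise-in-time estimate
\begin{equation*}
    \|\bff_n(t) - \bff(t)\|_{\rH^s(\Omf)} \le C \|\bff_n(t) - \bff(t)\|_{\rL^2(\Omf)}^{1 - s/s'} \, \|\bff_n(t) - \bff(t)\|_{\rH^{s'}(\Omf)}^{s/s'}.
\end{equation*}
Squaring, integrating in time on $(0,T)$, and applying Hölder's inequality with the conjugate exponents $\tfrac{s'}{s'-s}$ and $\tfrac{s'}{s}$, I get
\begin{equation*}
    \int_0^T \|\bff_n - \bff\|_{\rH^s(\Omf)}^2 \srd t \le C \left(\int_0^T \|\bff_n - \bff\|_{\rL^2(\Omf)}^2 \srd t\right)^{1 - s/s'} \left(\int_0^T \|\bff_n - \bff\|_{\rH^{s'}(\Omf)}^2 \srd t\right)^{s/s'}.
\end{equation*}
The second factor is bounded uniformly in $n$ by the preceding step, while the first tends to zero by the hypothesis $\bff_n \to \bff$ in $\rL^2(0,T;\rL^2(\Omf)^3)$, establishing the first assertion.

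For the trace statement, I would simply apply the continuous trace operator $\rH^s(\Omf) \to \rH^{s-\frac{1}{2}}(\Gamma)$, valid for every $s \in (\tfrac{1}{2},1)$ since both $\Omf$ and the flat interface $\Gamma$ are smooth at the reference level. Composing with the bound from the previous step immediately gives $\left.\bff_n\right|_\Gamma \to \left.\bff\right|_\Gamma$ in $\rL^2(0,T;\rH^{s-\frac{1}{2}}(\Gamma)^3)$. The only subtle point in the argument is the choice of auxiliary parameters $(s',p)$ so that the Sobolev embedding is strict; this is routine, and no genuine analytic obstacle appears, which is why the conclusion matches the 2D case despite the weaker assumption (uniform bounds only in $\rL^2_t\rW^{1,p}_x$ for $p<2$ rather than in $\rL^2_t\rH^1_x$).
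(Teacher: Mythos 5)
Your proof is correct and follows essentially the same strategy as the paper: both combine a Sobolev embedding (to upgrade $\rW^{1,p}$-control with $p<2$ into fractional Sobolev control) with complex interpolation, Hölder's inequality in time, and the trace theorem. The only difference is the order of operations -- you first embed $\rW^{1,p}(\Omf)\hookrightarrow\rH^{s'}(\Omf)$ and then interpolate $\rH^s=[\rL^2,\rH^{s'}]_{s/s'}$, whereas the paper first interpolates $\rH^{s',p}=[\rL^p,\rW^{1,p}]_{s'}$ and then embeds $\rH^{s',p}(\Omf)\hookrightarrow\rH^s(\Omf)$ -- which is an inessential reordering.
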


\begin{proof}
Let $s \in (0,1)$.
For some $p \in (1,2)$, we find $s' \in (s,1)$ with $s' - \frac{3}{p} > s - \frac{3}{2}$, so $\rH^{s',p}(\Omf) \hookrightarrow \rH^s(\Omf)$ by Sobolev embeddings.
Furthermore, by complex interpolation, we have $\rH^{s',p}(\Omf) = [\rL^p(\Omf),\rW^{1,p}(\Omf)]_{s'}$.
Thus, additionally using that $\rL^2(\Omf) \hookrightarrow \rL^p(\Omf)$ for $p \in (1,2)$, we find that
\begin{equation*}
    \begin{aligned}
        \| \bff_n - \bff \|_{\rL^2(0,T;\rH^s(\Omf))}^2
        &= \int_0^T \| (\bff_n - \bff)(t) \|_{\rH^s(\Omf)}^2 \srd t
        \le C \cdot \int_0^T \| (\bff_n - \bff)(t) \|_{\rH^{s',p}(\Omf)}^2 \srd t\\
        &\le C \cdot \int_0^T \| (\bff_n - \bff)(t) \|_{\rL^p(\Omf)}^{2(1-s')} \cdot \| (\bff_n - \bff)(t) \|_{\rW^{1,p}(\Omf)}^{2 s'} \srd t\\
        &\le C \cdot \| \bff_n - \bff \|_{\rL^2(0,T;\rL^2(\Omf))}^{2(1-s')} \cdot \| \bff_n - \bff \|_{\rL^2(0,T;\rW^{1,p}(\Omf))}^{2s'}.
    \end{aligned}
\end{equation*}
Now, the factor $\| \bff_n - \bff \|_{\rL^2(0,T;\rW^{1,p}(\Omf))}$ is uniformly bounded by assumption, while for the first factor, we observe that it tends to zero as $N \to \infty$ thanks to the assumption on the strong convergence of $\bff_n \to \bff$ in $\rL^2(0,T;\rL^2(\Omf)^3)$.
On the other hand, by classical properties of the trace, for $s \in (\frac{1}{2},1)$, we get
\begin{equation*}
    \| \left.\bff_n\right|_\Gamma - \left.\bff\right|_\Gamma\|_{\rL^2(0,T;\rH^{s-\frac{1}{2}}(\Gamma))}^2 \le C \cdot \| \bff_n - \bff \|_{\rL^2(0,T;\rH^s(\Omf))}^2. \qedhere
\end{equation*}
\end{proof}

We are now in the position to prove \autoref{prop:conv of fluid vel traces}.

\begin{proof}[Proof of~\autoref{prop:conv of fluid vel traces}]
The main idea is to link the strong convergence $\bfu_N \to \bfu$ in~$\rL^2(0,T;\rL^2(\Omf^M)^3)$ from \autoref{prop:rel comp fluid vel} with the uniform boundedness of $\bfu_N$ and its limit $\bfu$ in~$\rL^2(0,T;\rW^{1,p}(\Omf)^3)$ as discussed in \autoref{lem:unif bddness of approx sols}, and to employ \autoref{lem:conv in L2Hs}.

In the first part of the proof, we shall verify that $\hbfu_N \to \hbfu$ in $\rL^2(0,T;\rL^p(\Omf)^3)$ for all $p \in (1,2)$.
To this end, we express the relevant difference $\| \hbfu_N - \hbfu \|_{\rL^2(0,T;\rL^p(\Omf))}^2$ in terms of $\bfu_N$ and $\bfu$ defined on the maximal fluid domain $\Omf^M$.
In fact, we get
\begin{equation*}
    \begin{aligned}
        &\quad\| \hbfu_N - \hbfu \|_{\rL^2(0,T;\rL^p(\Omf))}^2\\
        &= \int_0^T \int_{\Omf} \left|\bfu_N\Bigl(t,x,y,z+\Bigl(1+\frac{z}{R}\Bigr) \tau_{\Delta t} \omega_N\Bigr) - \bfu\Bigl(t,x,y,z+\Bigl(1+\frac{z}{R}\Bigr) \omega\Bigr)\right|^p \srd \bfx \srd t\\
        &\le \int_0^T \int_{\Omf} \left|\bfu_N\Bigl(t,x,y,z+\Bigl(1+\frac{z}{R}\Bigr) \tau_{\Delta t} \omega_N\Bigr) - \bfu\Bigl(t,x,y,z+\Bigl(1+\frac{z}{R}\Bigr) \tau_{\Delta t} \omega_N\Bigr)\right|^p \srd \bfx \srd t\\
        &\quad + \int_0^T \int_{\Omf} \left|\bfu\Bigl(t,x,y,z+\Bigl(1+\frac{z}{R}\Bigr) \tau_{\Delta t} \omega_N\Bigr) - \bfu\Bigl(t,x,y,z+\Bigl(1+\frac{z}{R}\Bigr) \omega\Bigr)\right|^p \srd \bfx \srd t
        \eqqcolon I_1 + I_2.
    \end{aligned}
\end{equation*}
In order to handle the integral $I_1$, we invoke the uniform boundedness of the term $1+\frac{\omega_N^n}{R}$ to find that
\begin{equation*}
    \begin{aligned}
        I_1 
        &= \sum_{n=0}^{N-1} \int_{n \Delta t}^{(n+1)\Delta t} \int_{\Omega_{\rf,N}^n} \Bigl(1+\frac{\omega_N^n}{R}\Bigr) |\bfu_N^{n+1} - \bfu|^p \srd \bfx\\
        &\le C \sum_{n=0}^{N-1} \int_{n \Delta t}^{(n+1) \Delta t} \int_{\Omega_{\rf,N}^n} |\bfu_N^{n+1} - \bfu|^p \srd \bfx
        \le C \cdot \| \bfu_N - \bfu \|_{\rL^2(0,T;\rL^2(\Omf^M))}^2.
    \end{aligned}
\end{equation*}
Thus, with regard to the strong convergence $\bfu_N \to \bfu$ in $\rL^2(0,T;\rL^2(\Omf^M)^3)$ by \autoref{prop:rel comp fluid vel}, the integral $I_1$ indeed tends to zero as $N \to \infty$.

Concerning the second integral, $I_2$, we introduce 
\begin{equation*}
    z^*(t,x,y) = \frac{R(\omega(t,x,y) - \tau_{\Delta t}\omega_N(t,x,y))}{R + \tau_{\Delta t}\omega_N(t,x,y)}.
\end{equation*}
Thus, for $z^*$, it holds that $z^* + (1+\frac{z^*}{R}) \tau_{\Delta t} \omega_N = \omega$.
With the above $z^*$, we further split $I_2$, namely, 
\begin{equation*}
    \begin{aligned}
        I_2 
        &= I_{2,1} + I_{2,2}, \twhere\\
        I_{2,1}
        &\coloneqq \int_0^T \int_\Gamma \int_{-R}^{\min(0,z^*(t,x,y))} \left|\bfu\Bigl(t,x,y,z+\Bigl(1+\frac{z}{R}\Bigr) \tau_{\Delta t} \omega_N\Bigr) - \bfu\Bigl(t,x,y,z+\Bigl(1+\frac{z}{R}\Bigr) \omega\Bigr)\right|^p \srd z \srd(x,y) \srd t,\\
        I_{2,2}
        &\coloneqq \int_0^T \int_\Gamma \int_{\min(0,z^*(t,x,y))}^{0} \left|\bfu\Bigl(t,x,y,z+\Bigl(1+\frac{z}{R}\Bigr) \tau_{\Delta t} \omega_N\Bigr)\right|^p \srd z \srd(x,y) \srd t,
    \end{aligned}
\end{equation*}
where for $I_{2,2}$, we used that $\bfu$ is extended by zero to the maximal fluid domain $\Omf^M$, so it vanishes for values of $z$ that exceed $\omega$.
For the estimate of $I_{2,1}$, the fundamental theorem of calculus implies that
\begin{equation*}
    \int_{z + (1+\frac{z}{R})\tau_{\Delta t}\omega_N}^{z+(1+\frac{z}{R})\omega} \del_{z'} \bfu(t,x,y,z') \srd z' = \bfu\Bigl(t,x,y,z+\Bigl(1+\frac{z}{R}\Bigr)\omega\Bigr) - \bfu\Bigl(t,x,y,z+\Bigl(1+\frac{z}{R}\Bigr)\tau_{\Delta t}\omega_N\Bigr).
\end{equation*}
Consequently, using H\"older's inequality, we obtain
\begin{equation*}
    \begin{aligned}
        I_{2,1}
        &\le \int_0^T \int_\Gamma \int_{-R}^{\min(0,z^*(t,x,y))} \left(\int_{z + (1+\frac{z}{R})\tau_{\Delta t}\omega_N(t,x,y)}^{z+(1+\frac{z}{R})\omega(t,x,y)} |\del_{z'} \bfu(t,x,y,z')| \srd z'\right)^p \srd z \srd(x,y) \srd t\\
        &\le \int_0^T \int_\Gamma \int_{-R}^{\min(0,z^*(t,x,y))} \left|\Bigl(1+\frac{z}{R}\Bigr) \cdot \bigl(\omega(t,x,y) - \tau_{\Delta t} \omega_N(t,x,y)\bigr)\right|^{\frac{p}{p'}}\\
        &\quad \cdot\int_{z + (1+\frac{z}{R})\tau_{\Delta t}\omega_N(t,x,y)}^{z+(1+\frac{z}{R})\omega(t,x,y)} |\del_{z'} \bfu(t,x,y,z')|^p \srd z' \srd z \srd(x,y) \srd t.
    \end{aligned}
\end{equation*}
In a similar way as in the proof of \autoref{lem:unif bddness of approx sols}, one can show that the term $\| \nabla \bfu \|_{\rL^2(0,T;\rL^p(\Omf^\omega(t)))}$ is uniformly bounded.
On the other hand, as shown in \autoref{lem:comp for plate displacement} and also recalled at the beginning of this section, for $s \in (0,2)$, we have $\overline{\omega}_N \to \omega$ strongly in $\rC(0,T;\rH^s(\Gamma))$.
By virtue of \eqref{eq:Holder est omega_N}, this yields the pointwise uniform convergence of $\tau_{\Delta t} \omega_N \to \omega$ in $[0,T] \times \Gamma$ as $N \to \infty$.
Putting together these observations, we conclude that $I_{2,1} \to 0$ as $N \to \infty$.

Next, we tackle $I_{2,2}$.
For this, we use the embedding $\rW^{1,p}(-R,\omega) \hookrightarrow \rL^\infty(-R,\omega)$ for all $p > 1$ to obtain
\begin{equation*}
    \begin{aligned}
        I_{2,2}
        &\le C \cdot \int_0^T \int_\Gamma |\min(0,z^*(t,x,y))| \cdot \max_{z \in [-R,\omega(t,x,y)]} |\bfu(t,x,y,z)|^p \srd(x,y) \srd z \srd t\\
        &\le C \cdot \int_0^T \int_\Gamma |\min(0,z^*(t,x,y))| \cdot \int_{-R}^{\omega(t,x,y)} |\dz \bfu(t,x,y,z)|^p + |\bfu(t,x,y,z)|^p \srd z \srd(x,y) \srd t.
    \end{aligned}
\end{equation*}
The convergence $I_{2,2} \to 0$ as $N \to \infty$ then follows from $|\min(0,z^*(t,x,y))| \to 0$ uniformly on $[0,T] \times \Gamma$, which is based on the aforementioned convergence of $\tau_{\Delta t} \omega_N$ to $\omega$, together with the uniform boundedness of $\bfu$ in $\rL^2(0,T;\rL^2(\Omf^\omega)^3)$ as well as $\nabla \bfu$ in $\rL^2(0,T;\rL^p(\Omf^\omega)^{3 \times 3})$ for all $p \in (1,2)$.
Therefore, we have shown that $\| \hbfu_N - \hbfu \|_{\rL^2(0,T;\rL^p(\Omf)^3)} \to 0$.

Next, we show that $\hbfu_N$ and $\hbfu$ are uniformly bounded in $\rL^2(0,T;\rW^{1,p}(\Omf)^3)$ so that we can use \autoref{lem:conv in L2Hs} to obtain the final convergence result.
To this end, we recall from \autoref{lem:unif bddness of approx sols} that $\hbfu_N$ is uniformly bounded in $\rL^2(0,T;\rW^{1,p}(\Omf)^3)$ for all $p \in (1,2)$.
At the same time, $\hbfu$ is the strong limit of $\hbfu_N$ in $\rL^2(0,T;\rL^2(\Omf)^3)$ and thus also in $\rL^2(0,T;\rL^p(\Omf)^3)$ for $p \in (1,2)$.
Therefore, as $\hbfu_N$ converges weakly in $\rL^2(0,T;\rW^{1,p}(\Omf)^3)$ along a subsequence to a weak limit that has to coincide with $\hbfu$, we argue that $\hbfu$ is also in $\rL^2(0,T;\rW^{1,p}(\Omf)^3)$.

We can now combine the previous steps and use \autoref{lem:conv in L2Hs} to obtain the strong convergence result of fluid velocity traces as stated in \autoref{prop:conv of fluid vel traces}.
\end{proof}

\subsection{Convergence of the test functions on approximate fluid domains}\label{ssec:conv of test fcts on approx fluid doms}
One obstacle in passing to the limit is that the test functions for the fluid velocity in $\cVtestom$, see \autoref{def:sol & test space}, are defined on the fixed reference domain $\Omf$ and satisfy $\nabla_\rf^\omega \cdot \bfv = 0$ on $\Omf$, while the test functions in the semidiscrete formulation lie in the test space $Q_N^n$ and satisfy $\nabla_\rf^{\omega_N^n} \cdot \bfv = 0$ in $\Omf$.
Thus, we need to find a way to compare the test functions in $Q_N^n$ to those in the test space $\cVtestom$.
The idea is to use test functions that are defined on the maximal domain $\Omf^M$, and which are constructed such that the restrictions of these test functions to the domains associated with $\omega$ and composed with the ALE mapping $\bfPhi_\rf^\omega$ lead to a space of test functions $\cX_\rf^\omega$ that is dense in the fluid velocity test space $\cV_\rf^\omega$.
We then denote the space of all such test functions defined on $\Omf^M$ by $\cX$.
More precisely, due to the reduced regularity of the ALE mapping, we need to define the test space in a slightly more careful way.
First, we introduce 
\begin{equation*}
    \prescript{}{0}{V_\rf^M} \coloneqq \bigl\{\bfv \in \rC^1(\overline{\Omf^M})^3: \nabla \cdot \bfv = 0 \tand \bfv = 0 \tfor x,y \in \{0,L\} \tand z = -R\bigr\}.
\end{equation*}
We then define $V_\rf^M \coloneqq \overline{\prescript{}{0}{V_\rf^M}}^{\rH^1(\Omf^M)^3}$.
Similarly as mentioned in the context of \autoref{def:sol & test space}, one can verify that $V_\rf^M$ admits the characterization
\begin{equation*}
    V_\rf^M = \bigl\{\bfv \in \rH^1(\Omf^M)^3 : \nabla \cdot \bfv = 0 \tand \bfv = 0 \tfor x,y \in \{0,L\} \tand z = -R\bigr\}.
\end{equation*}
The space $\cX$ consists of functions $\bfv \in \rC_\rc^1([0,T);\rH^1(\Omf^M)^3)$ such that for all $t \in [0,T)$, the following hold:
\begin{enumerate}[(a)]
    \item $\bfv(t)$ is a smooth vector-valued function in $\Omf^M$,
    \item $\nabla \cdot \bfv = 0$ in $\Omf^M$,
    \item $\bfv(t) = 0$ on $\del \Omf^M \setminus \Gamma_M$, where $\Gamma_M = \{(x,y,M(x,y)) : 0 \le x,y \le L\}$ is the top boundary of the maximal fluid domain $\Omf^M$.
\end{enumerate}
For $\bfv \in \cX$, we set
\begin{equation}\label{eq:mod test fcts}
    \tbfv \coloneqq \left. \bfv \right|_{\Omf^\omega} \circ \bfPhi_\rf^\omega \tand \tbfv_N \coloneqq \left. \bfv \right|_{\Omf^{\omega_N}} \circ \bfPhi_\rf^{\omega_N}.
\end{equation}

Let us observe that the test functions $\tbfv$ are dense in the fluid velocity test space $\cVfom$ related to the fixed domain formulation, while the test functions $\tbfv_N$ restricted to the time intervals $[n \Delta t,(n+1) \Delta t)$ are dense in $V_\rf^{\omega_N^n}$, with $V_\rf^{\omega_N^n}$ as made precise in \eqref{eq:spat fluid sol space}.
Thus, for every fixed $N$, we can take into account the semidiscrete formulation with the test function $\tbfv_N$.
Note that the $\tbfv_N$ are discontinuous in time due to the jumps in $\omega_N$ at each time step $n \Delta t$.

In the lemma below, we state the convergence result for the test functions $\tbfv_N$ and their gradients.
We emphasize that the lack of Lipschitz continuity implies that we cannot obtain uniform convergence of the gradients as in the 2D case. Instead, the gradients converge only in~$\rL^\infty(0,T;\rL^p(\Omf^M)^{3 \times 3}))$ for all $p < \infty$.

\begin{lem}\label{lem:conv of test fcts}
Let $\bfv \in \cX$ and $\tbfv$ as well as $\tbfv_N$ be as defined in \eqref{eq:mod test fcts}.
Then $\tbfv_N \to \tbfv$ pointwise uniformly in $[0,T] \times \Omf$ and $\nabla \tbfv_N \to \nabla \tbfv$ in $\rL^\infty(0,T;\rL^p(\Omf)^{3 \times 3}))$ for all $p < \infty$.
\end{lem}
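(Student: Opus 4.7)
The plan is to exploit the strong convergences of $\omega_N$ to $\omega$ established in \autoref{lem:comp for plate displacement} together with the smoothness of $\bfv \in \cX$. The argument relies on the chain rule applied to the ALE maps in \eqref{eq:trafos to phys doms}$_3$; the obstacle caused by the mere H\"older continuity of $\omega$ in the $3$D setting forces us to settle for $\rL^p$-convergence of the gradients rather than uniform convergence as in the 2D analogue.

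First I would establish pointwise uniform convergence. Writing
\begin{equation*}
    \tbfv_N(t,\hx,\hy,\hz) = \bfv\Bigl(t,\hx,\hy,\hz + \Bigl(1+\tfrac{\hz}{R}\Bigr)\omega_N(t,\hx,\hy)\Bigr),
\end{equation*}
and similarly for $\tbfv$, the difference is controlled by the modulus of continuity of $\bfv$ applied to the quantity $(1+\hz/R)(\omega_N-\omega)$. Since $\rH^s(\Gamma)\hookrightarrow\rC(\Gamma)$ for any $s\in(1,2)$ and since \autoref{lem:comp for plate displacement} yields $\omega_N\to\omega$ in $\rL^\infty(0,T;\rH^s(\Gamma))$, we deduce that $\omega_N\to\omega$ uniformly on $[0,T]\times\Gamma$. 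As $\bfv\in\rC_\rc^1([0,T);\rH^1(\Omf^M)^3)$ with $\bfv(t)$ smooth in space, Lipschitz estimates in the $z$-variable then give $\tbfv_N\to\tbfv$ pointwise uniformly on $[0,T]\times\Omf$.

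For the gradient convergence, the chain rule yields, using the notation $\Psi_N(t,\hbfx) \coloneqq (\hx,\hy,\hz+(1+\hz/R)\omega_N(t,\hx,\hy))$ and analogously $\Psi$,
\begin{equation*}
    \nabla\tbfv_N = (\nabla\bfv)\circ\Psi_N\,\nabla\bfPhi_\rf^{\omega_N}, \quad \nabla\tbfv = (\nabla\bfv)\circ\Psi\,\nabla\bfPhi_\rf^{\omega}.
\end{equation*}
I would split
\begin{equation*}
    \nabla\tbfv_N - \nabla\tbfv = \bigl[(\nabla\bfv)\circ\Psi_N - (\nabla\bfv)\circ\Psi\bigr]\nabla\bfPhi_\rf^{\omega_N} + (\nabla\bfv)\circ\Psi\,\bigl[\nabla\bfPhi_\rf^{\omega_N}-\nabla\bfPhi_\rf^{\omega}\bigr].
\end{equation*}
The Jacobian $\nabla\bfPhi_\rf^{\omega_N}$ involves only $\omega_N$ itself and its first tangential derivatives $\partial_{\hx}\omega_N$, $\partial_{\hy}\omega_N$. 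By Sobolev embedding on the $2$D set $\Gamma$, for every $p<\infty$ we may choose $s\in(1,2)$ sufficiently close to $2$ so that $\rH^s(\Gamma)\hookrightarrow\rW^{1,p}(\Gamma)$; combined with \autoref{lem:comp for plate displacement} this gives $\nabla\omega_N\to\nabla\omega$ in $\rL^\infty(0,T;\rL^p(\Gamma)^2)$, and correspondingly $\nabla\bfPhi_\rf^{\omega_N}\to\nabla\bfPhi_\rf^\omega$ in $\rL^\infty(0,T;\rL^p(\Omf)^{3\times 3})$, with uniform bounds in $N$.

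The first summand is treated using the uniform continuity of $\nabla\bfv$ (since $\bfv$ is smooth on $\overline{\Omf^M}$ and compactly supported in time) together with the pointwise uniform convergence $\Psi_N\to\Psi$ established above; this yields $(\nabla\bfv)\circ\Psi_N - (\nabla\bfv)\circ\Psi \to 0$ uniformly on $[0,T]\times\Omf$, and multiplying by the $\rL^\infty(0,T;\rL^p(\Omf)^{3\times 3})$-bounded factor $\nabla\bfPhi_\rf^{\omega_N}$ gives convergence to zero in $\rL^\infty(0,T;\rL^p(\Omf)^{3\times 3})$. The second summand factors as the bounded quantity $(\nabla\bfv)\circ\Psi$ times $\nabla\bfPhi_\rf^{\omega_N}-\nabla\bfPhi_\rf^{\omega}$, which tends to zero in $\rL^\infty(0,T;\rL^p(\Omf)^{3\times 3})$ by the observation above. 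This completes the proof; the technical heart is the $\rW^{1,p}(\Gamma)$-convergence of $\omega_N$, which compensates for the absence of Lipschitz regularity of the plate displacement in the $3$D setting.
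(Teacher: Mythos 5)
Your proof is correct and takes essentially the same approach as the paper: both rely on the uniform convergence $\omega_N\to\omega$ (via $\rH^s(\Gamma)\hookrightarrow\rC(\Gamma)$ for $s\in(1,2)$), the $\rL^p(\Gamma)$-convergence of $\nabla\omega_N\to\nabla\omega$ for all $p<\infty$ (via $\rH^s(\Gamma)\hookrightarrow\rW^{1,p}(\Gamma)$), and the smoothness of $\bfv\in\cX$ to control composition with the ALE maps. Your matrix-product decomposition of $\nabla\tbfv_N-\nabla\tbfv$ is a marginally more compact bookkeeping than the paper's componentwise chain-rule computation, but the underlying ingredients and the treatment of each summand are identical.
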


\begin{proof}
The proof is inspired by the one of \cite[Lemma~4]{MC:13a}.
In fact, let us first recall that
\begin{equation*}
    \tbfv(t,x,y,z) = \bfv\Bigl(t,x,y,z+\Bigl(1+\frac{z}{R}\Bigr)\omega(t,x,y)\Bigr) \tand \tbfv_N(t,x,y,z) = \bfv\Bigl(t,x,y,z+\Bigl(1+\frac{z}{R}\Bigr)\omega_N(t,x,y)\Bigr). 
\end{equation*}
Thus, making use of the mean value theorem, we find that 
\begin{equation*}
    \begin{aligned}
        |\tbfv_N(t,x,y,z) - \tbfv(t,x,y,z)| 
        &= \left|\bfv\Bigl(t,x,y,z+\Bigl(1+\frac{z}{R}\Bigr)\omega_N(t,x,y)\Bigr) - \bfv\Bigl(t,x,y,z+\Bigl(1+\frac{z}{R}\Bigr)\omega(t,x,y)\Bigr)\right|\\
        &= |\dz \bfv(t,x,y,\zeta)| \cdot \left|\Bigl(1 + \frac{z}{R}\Bigr)(\omega_N(t,x,y) - \omega(t,x,y))\right|.
    \end{aligned}
\end{equation*}
From here, as $(1 + \frac{z}{R})$ is clearly bounded, and invoking the boundedness of $|\dz \bfv(t,x,y,\zeta)|$ thanks to the smoothness assumption on $\bfv$ coming from the test space as well as the uniform convergence of $\omega_N$ to $\omega$ by \autoref{lem:comp for plate displacement} and the Sobolev embedding $\rH^s(\Gamma) \hookrightarrow \rL^\infty(\Gamma)$ for $s > 1$, we deduce the first convergence statement of the lemma.

For the second part, for $i \in \{x,y\}$, we compute
by the chain rule that
\begin{equation*}
    \begin{aligned}
        \del_i \tbfv_N(t,x,y,z) 
        &= \del_i \Bigl[\bfv\Bigl(t,x,y,z+\Bigl(1+\frac{z}{R}\Bigr) \omega_N(t,x,y)\Bigr)\Bigr]
        = \del_i \bfv\Bigl(t,x,y,z+\Bigl(1+\frac{z}{R}\Bigr)\omega_N(t,x,y)\Bigr)\\
        &\quad + \Bigl(1+\frac{z}{R}\Bigr) \del_i \omega_N(t,x,y) \cdot \del_z \bfv\Bigl(t,x,y,z+\Bigl(1+\frac{z}{R}\Bigr)\omega_N(t,x,y)\Bigr)\\
        \del_i \tbfv(t,x,y,z) 
        &= \del_i \bfv\Bigl(t,x,y,z+\Bigl(1+\frac{z}{R}\Bigr)\omega(t,x,y)\Bigr)\\
       &\quad+ \Bigl(1+\frac{z}{R}\Bigr) \del_i \omega(t,x,y) \cdot \del_z \bfv\Bigl(t,x,y,z+\Bigl(1+\frac{z}{R}\Bigr)\omega(t,x,y)\Bigr).
    \end{aligned}
\end{equation*}
The desired convergence of $\del_i \tbfv_N(t,x,y,z)$ to $\del_i \bfv$ then follows in a similar way as the uniform convergence in the first part of the proof upon additionally invoking the uniform boundedness of $\del_i \omega_N$ in $\rL^p$ for all~$p < \infty$ and the convergence of $\del_i \omega_N$ to $\del_i \omega$ in $\rL^p$ for all $p < \infty$ thanks to \autoref{lem:comp for plate displacement}.
The procedure for controlling the difference $\dz \tbfv_N - \dz \tbfv$ is similar, finishing the proof of this lemma.
\end{proof}

The construction of the above test space $\cX$ is considerably simpler than the corresponding construction in the situation of FSI problems with purely elastic structures, where the fluid and and structure test spaces are related via a kinematic coupling condition, included in the velocity test space. 
This is not the case for fluid-poroelastic structure interaction problems and the fluid test space in our case can therefore be decoupled from the rest.

\subsection{Passage to the limit in the regularized weak formulation}\label{ssec:limit passage}
Here, we pass to the limit in  the semidiscretized formulation of the regularized problem in order to prove the main result, \autoref{thm:ex of a weak sol}. 
To this end, we take a test function $(\tbfv_N,\varphi,\bfpsi,r)$ for given $\bfv \in \cX$,  where $\tbfv_N$ was introduced in \eqref{eq:mod test fcts}.
For every $n = 0,1,\dots,N-1$, we then test the semidiscrete formulation \eqref{eq:weak form coupled semidiscr probl} with $(\tbfv_N(t),\varphi(t),\bfpsi(t),r(t))$ for every $t \in [n \Delta t,(n+1) \Delta t)$ and integrate in time from $n \Delta t$ to $(n+1) \Delta t$ and then sum from $n=0$ to $N-1$ to obtain an integral over $[0,T]$.
By definition of the approximate solutions as made precise in \autoref{sec:approx sols on the complete time int}, for each $(\tbfv_N,\varphi,\bfpsi,r)$ in the test space with $\bfv \in \cX$, we get
\begin{equation*}
    \begin{aligned}
        &\quad \int_0^T\int_{\Omf} \Bigl(1 + \frac{\tau_{\Delta t}\omega_N}{R}\Bigr) \dt \bbfu_N \cdot \tbfv_N \srd \bfx \srd t +  2 \nu \int_0^T \int_{\Omf} \Bigl(1 + \frac{\tau_{\Delta t}\omega_N}{R}\Bigr)\bfD_\rf^{\tau_{\Delta t}\omega_N}(\bfu_N) : \bfD_\rf^{\tau_{\Delta t}\omega_N}(\tbfv_N) \srd \bfx \srd t\\
        &\quad +\int_0^T \int_{\Gamma} \Bigl(\frac{1}{2}\bfu_N \cdot \tau_{\Delta t}\bfu_N - p_N\Bigr)(\bfpsi - \tbfv_N) \cdot \bfn^{\tau_{\Delta t} \omega_N} \srd S \srd t\\
        &\quad + \frac{1}{2}\int_0^T \int_{\Omf} \Bigl(1 + \frac{\tau_{\Delta t}\omega_N}{R}\Bigr) \Bigl[\Bigl(\Bigl(\Bigl(\tau_{\Delta t}\bfu_N - \zeta_N\frac{R+z}{R}\bfe_3\Bigr) \cdot \nabla_\rf^{\tau_{\Delta t}\omega_N}\Bigr)\bfu_N\Bigr) \cdot \tbfv_N\\
        &\qquad -\Bigl(\Bigl(\Bigl(\tau_{\Delta t}\bfu_N - \zeta_N\frac{R+z}{R}\bfe_3\Bigr) \cdot \nabla_\rf^{\tau_{\Delta t}\omega_N}\Bigr)\tbfv_N\Bigr) \cdot \bfu_N\Bigr] \srd \bfx \srd t + \frac{1}{2R} \int_0^T \int_{\Omf} \zeta_N \bfu_N \cdot \tbfv_N \srd \bfx \srd t\\
        &\quad + \frac{1}{2}\int_0^T \int_{\Gamma} (\bfu_N - \zeta_N^* \bfe_3) \cdot \bfn^{\tau_{\Delta t}\omega_N} (\tau_{\Delta t}\bfu_N \cdot \tbfv_N) \srd S \srd t\\
        &\quad + \beta\sum_{i=1}^2 \int_0^T \int_{\Gamma} \cJ_\Gamma^{\tau_{\Delta t}\omega_N}(\zeta_N^* \bfe_3 - \bfu_N) \cdot \bftau_i^{\tau_{\Delta t}\omega_N}(\bfpsi - \tbfv_N) \cdot \bftau_i^{\tau_{\Delta t}\omega_N} \srd S \srd t\\
        &\quad + \rhob\int_0^T \int_{\Omb} \Bigl(\frac{\bfxi_N-\tau_{\Delta t}\bfxi_N}{\Delta t}\Bigr) \cdot \bfpsi \srd \bfx \srd t + \rhop \int_0^T \int_{\Gamma} \dt \overline{\zeta}_N \cdot \varphi \srd S \srd t + 2 \mue \int_0^T \int_{\Omb} \bfD(\bfeta_N) : \bfD(\bfpsi) \srd \bfx \srd t\\
        &\quad + \lambde \int_0^T \int_{\Omb} (\nabla \cdot \bfeta_N)(\nabla \cdot \bfpsi) \srd \bfx \srd t + 2 \muv \int_0^T \int_{\Omb} \bfD(\bfxi_N) : \bfD(\bfpsi) \srd \bfx \srd t\\
        &\quad + \lambdv \int_0^T \int_{\Omb} (\nabla \cdot \bfxi_N)(\nabla \cdot \bfpsi) \srd \bfx \srd t - \alpha \int_0^T \int_{\Omb} \cJ_\rb^{(\tau_{\Delta t}\bfeta_N)^\delta} p_N \nabla_\rb^{(\tau_{\Delta t}\bfeta_N)^\delta} \cdot \bfpsi \srd \bfx \srd t\\
        &\quad + c_0 \int_0^T \int_{\Omb} \del_t \overline{p}_N \cdot r \srd \bfx \srd t - \alpha \int_0^T \int_{\Omb} \cJ_\rb^{(\tau_{\Delta t}\bfeta_N)^\delta} \bfxi_N \cdot \nabla_\rb^{(\tau_{\Delta t}\bfeta_N)^\delta} r \srd \bfx \srd t\\
        &\quad - \alpha \int_0^T \int_{\Gamma} (\zeta_N^* \bfe_3 \cdot \bfn^{(\tau_{\Delta t}\bfeta_N)^\delta}) r \srd S \srd t + \kappa \int_0^T \int_{\Omb} \cJ_\rb^{(\tau_{\Delta t}\bfeta_N)^\delta} \nabla_\rb^{(\tau_{\Delta t}\bfeta_N)^\delta} p_N \cdot \nabla_\rb^{(\tau_{\Delta t}\bfeta_N)^\delta} r \srd \bfx \srd t\\
        &\quad - \int_0^T \int_{\Gamma} ((\bfu_N - \zeta_N^* \bfe_3) \cdot \bfn^{\tau_{\Delta t}\omega_N}) r \srd S \srd t + \int_0^T \int_\Gamma \Delta \omega_N \cdot \Delta \varphi \srd S \srd t = 0.
    \end{aligned}
\end{equation*}

Since many terms are analogous to those in classical fluid-structure interaction problems (see, e.g., \cite[Section 7.2]{MC:13a}), we will focus only on the terms that are specific to the poroelastic structure interaction problem or that involve the regularized displacement.

Let us start with a term involving the Jacobian of the regularized deformation gradient. Note that regularized terms will converge in arbitrary Sobolev spaces in the spatial variable, and thus are easy to handle. 
More precisely, since the regularization does not affect the time variable, using \autoref{lem:unif bddness of approx sols}, we conclude that $(\overline{\bfeta}_N)^\delta$ is uniformly bounded in~$\rW^{1,\infty}(0,T;\rL^2(\Omb)^3)$.
On the other hand, by definition of the regularization, for all $k \in \bN$, we have~$\| \nabla^k (\overline{\bfeta}_N)^\delta \|_{\rL^\infty(0,T;\rL^2(\Omb))} \le C$, for a constant $C = C(\delta)$ that is uniform in $N$.
Hence, in a similar way as in \autoref{lem:strong conv Biot displacement}, we deduce from the Aubin--Lions compactness lemma that there exists a strongly convergent subsequence, still denoted by $(\overline{\bfeta}_N)^\delta$, that converges to $\bfeta$ strongly in $\rC(0,T;\rC^k(\Omb)^3)$ for all $k \in \bN$.
Similarly to the plate displacement, by invoking the Lipschitz continuity in time and proceeding as in the proof of \autoref{lem:verification of assumptions}, we further argue that this convergence carries over to $(\tau_{\Delta t} \bfeta_N)^\delta$.
In particular, we conclude that
\begin{equation*}
    \cJ_\rb^{(\tau_{\Delta t}\bfeta_N)^\delta} = \det\bigl(\bfI + \nabla(\tau_{\Delta t} \bfeta_N)^\delta\bigr) \to \det\bigl(\bfI + \nabla \bfeta^\delta\bigr) = \cJ_\rb^{\bfeta^\delta}
\end{equation*}
strongly in $\rC(0,T;\rC(\Omb)^3)$.
The aforementioned convergence also implies that
\begin{equation*}
    \nabla_\rb^{(\tau_{\Delta t}\bfeta_N)^\delta} \cdot \bfpsi = \nabla \cdot \bfpsi \cdot \bigl(\bfI + \nabla(\tau_{\Delta t} \bfeta_N)^\delta\bigr)^{-1} \to \nabla \cdot \bfpsi \cdot \bigl(\bfI + \nabla \bfeta^\delta\bigr)^{-1} = \nabla_\rb^{\bfeta^\delta} \cdot \bfpsi
\end{equation*}
strongly in $\rC(0,T;\rC(\Omb)^3)$, and similarly for $\nabla_\rb^{(\tau_{\Delta t}\bfeta_N)^\delta} r \to \nabla_\rb^{\bfeta^\delta} r$.
Thus, from the strong convergence of $p_N$ in $\rL^2(0,T;\rL^2(\Omb))$ by \autoref{prop:comp for Biot pore pressure}, the weak* convergence of $\bfxi_N$ in $\rL^\infty(0,T;\rL^2(\Omb)^3)$, which follows from \autoref{lem:unif bddness of approx sols} and the relation between $\bfxi$ and $\bfeta$, as well as the weak convergence of $p_N$ in~$\rL^2(0,T;\rH^1(\Omb))$, we conclude that
\begin{equation*}
    \begin{aligned}
        - \alpha \int_0^T \int_{\Omb} \cJ_\rb^{(\tau_{\Delta t}\bfeta_N)^\delta} p_N \nabla_\rb^{(\tau_{\Delta t}\bfeta_N)^\delta} \cdot \bfpsi \srd \bfx \srd t
        &\longrightarrow - \alpha \int_0^T \int_{\Omb} \cJ_\rb^{\bfeta^\delta} p \nabla_\rb^{\bfeta^\delta} \cdot \bfpsi \srd \bfx \srd t.
    \end{aligned}
\end{equation*}
The desired convergence of the other three integrals involving the regularized Biot displacement follows likewise upon invoking the weak* convergence of $\bfxi_N$ in $\rL^\infty(0,T;\rL^2(\Omb)^3)$, which follows from \autoref{lem:unif bddness of approx sols} and the relation of $\bfxi$ and $\bfeta$, the weak convergence of $p_N$ in~$\rL^2(0,T;\rH^1(\Omb))$ as well as the weak* convergence of $\zeta_N^*$ in $\rL^\infty(0,T;\rL^2(\Gamma))$ following from \autoref{cor:weak & weak* conv}.

Next, we proceed with the term $\frac{1}{2R} \int_0^T \int_{\Omf} \zeta_N \bfu_N \cdot \tbfv_N \srd \bfx \srd t$.
To this end, let us note that
\begin{equation*}
    \begin{aligned}
        &\quad \left|\int_0^T \int_{\Omf} \zeta_N \bfu_N \cdot \tbfv_N \srd \bfx \srd t - \int_0^T \int_{\Omf} \dt \omega \bfu \cdot \tbfv \srd \bfx \srd t\right|
        \le \left|\int_0^T \int_{\Omf} (\zeta_N - \dt \omega) \bfu_N \cdot \tbfv_N \srd \bfx \srd t\right|\\
        &+ \left|\int_0^T \int_{\Omf} \dt \omega (\bfu_N - \bfu) \cdot \tbfv_N \srd \bfx \srd t\right|
       + \left|\int_0^T \int_{\Omf} \dt \omega \bfu \cdot (\tbfv_N - \tbfv) \srd \bfx \srd t\right|.
    \end{aligned}
\end{equation*}
For the first term, we recall from \eqref{eq:repr of bomega & bbfeta} and \autoref{cor:weak & weak* conv} that $\zeta_N \rightharpoonup^* \dt \omega$ in $\rL^\infty(0,T;\rL^2(\Gamma))$.
Moreover, we have $\bfu_N \cdot \tbfv_N \to \bfu \cdot \tbfv$ strongly in $\rL^2(0,T;\rL^2(\Omf)) \hookrightarrow \rL^1(0,T;\rL^2(\Omf))$.
Indeed, we recall $\bfu_N \to \bfu$ strongly in $\rL^2(0,T;\rL^2(\Omf)^3)$.
In \autoref{lem:conv of test fcts}, we have verified the uniform convergence~$\tbfv_N \to \tbfv$.
Concerning the second term, by virtue of the strong convergence of $\bfu_N \to \bfu$ in $\rL^2(0,T;\rL^2(\Omf)^3)$, it remains to argue that $\dt \omega \tbfv_N$ is uniformly bounded in $\rL^2(0,T;\rL^2(\Omf)^3)$.
In fact, from the above convergences, we infer that $\dt \omega \in \rL^\infty(0,T;\rL^2(\Omf))$, while $\tbfv_N$ is uniformly bounded in $\rL^\infty(0,T;\rL^\infty(\Omf)^3)$, yielding that $\dt \omega \tbfv_N \in \rL^\infty(0,T;\rL^2(\Omf)^3) \hookrightarrow \rL^2(0,T;\rL^2(\Omf)^3)$.
As a result the second term also tends to zero as $N \to \infty$.
For the last term, we invoke the uniform convergence $\tbfv_N \to \tbfv$ as well as the fact that $\dt \omega \in \rL^\infty(0,T;\rL^2(\Omf))$ and $\bfu \in \rL^2(0,T;\rL^2(\Omf)^3)$.
Finally, we conclude that
\begin{equation*}
    \frac{1}{2R} \int_0^T \int_{\Omf} \zeta_N \bfu_N \cdot \tbfv_N \srd \bfx \srd t \longrightarrow \frac{1}{2R} \int_0^T \int_{\Omf} \dt \omega \bfu \cdot \tbfv \srd \bfx \srd t.
\end{equation*}

Most remaining terms include the normal or tangential vectors $\bfn^{\tau_{\Delta t}\omega_N} = (-\del_x \tau_{\Delta t} \omega_N,-\del_y \tau_{\Delta t} \omega_N,1)$, $\bftau_1^{\tau_{\Delta t} \omega_N} = (1,0,\del_x \tau_{\Delta t} \omega_N)$ and $\bftau_2^{\tau_{\Delta t} \omega_N} = (0,1,\del_y \tau_{\Delta t} \omega_N)$, so we elaborate on their handling in the following.
In fact, the strong convergence of $\omega_N \to \omega$ in $\rL^\infty(0,T;\rH^s(\Gamma))$ for $s \in (0,2)$, which also carries over to~$\tau_{\Delta t} \omega_N$, together with the embedding $\rH^s(\Gamma) \hookrightarrow \rW^{1,p}(\Gamma)$ for all $p < \infty$ for $s \in (0,2)$ {sufficiently close to $2$} implies that $\bfn^{\tau_{\Delta t}\omega_N} \to \bfn^\omega$ and $\bftau_i^{\tau_{\Delta t}\omega_N} \to \bftau_i^\omega$ strongly in $\rL^\infty(0,T;\rL^p(\Gamma))$ for all~$p < \infty$.
From \autoref{lem:conv of test fcts} and the embedding $\rW^{1,p}(\Omf) \hookrightarrow \rC(\overline{\Omega}_\rf)$ for $p > 3$, we deduce that $\tbfv_N \to \tbfv$ uniformly on $(0,T) \times \overline{\Omega}_\rf$, so in particular, the uniform convergence holds on the interface $\Gamma$. To handle the quadratic term in the fluid velocity, we use \autoref{prop:conv of fluid vel traces} to conclude that $\left. \bfu_N \right|_{\Gamma} \to \left. \bfu \right|_{\Gamma}$ strongly in $\rL^2(0,T;\rH^s(\Gamma)^3)$ for~$s \in (0,\frac{1}{2})$, and therefore, by Sobolev embedding for $s$ sufficiently close to $\frac{1}{2}$, we have $\left. \bfu_N \right|_{\Gamma} \to \left. \bfu \right|_{\Gamma}$ strongly in~$\rL^3(0,T;\rL^3(\Gamma)^3)$. Moreover, the same argument applies to $\tau_{\Delta t} \bfu_N$. An analogous result holds for the fluid pore pressure $p_N$, and therefore we conclude:
\begin{equation*}
    \int_0^T \int_{\Gamma} \Bigl(\frac{1}{2}\bfu_N \cdot \tau_{\Delta t}\bfu_N - p_N\Bigr)(\bfpsi - \tbfv_N) \cdot \bfn^{\tau_{\Delta t} \omega_N} \srd S \srd t \longrightarrow \int_0^T \int_{\Gamma} \Bigl(\frac{1}{2}|\bfu|^2 - p\Bigr)(\bfpsi - \tbfv) \cdot \bfn^{ \omega} \srd S \srd t.
\end{equation*}
With regard to the term $\frac{1}{2}\int_0^T \int_{\Gamma} (\bfu_N - \zeta_N^* \bfe_3) \cdot \bfn^{\tau_{\Delta t}\omega_N} (\tau_{\Delta t}\bfu_N \cdot \tbfv_N) \srd S \srd t$, we can proceed similarly for the factors $\bfu_N$, $\bfn^{\tau_{\Delta t}\omega_N}$, $\tau_{\Delta t}\bfu_N$ and $\tbfv_N$, while we additionally make use of the weak* convergence of $\zeta_N^*$ to $\zeta$ in $\rL^\infty(0,T;\rL^2(\Gamma))$ to argue that
\begin{equation*}
    \frac{1}{2}\int_0^T \int_{\Gamma} (\bfu_N - \zeta_N^* \bfe_3) \cdot \bfn^{\tau_{\Delta t}\omega_N} (\tau_{\Delta t}\bfu_N \cdot \tbfv_N) \srd S \srd t \longrightarrow \frac{1}{2}\int_0^T \int_{\Gamma} (\bfu \cdot \bfn^{\omega} - \zeta \bfe_3 \cdot \bfn^{\omega}) \bfu \cdot \tbfv \srd S \srd t.
\end{equation*}
In the same way, we find that
\begin{equation*}
    - \int_0^T \int_{\Gamma} \bigl((\bfu_N - \zeta_N^* \bfe_3) \cdot \bfn^{\tau_{\Delta t}\omega_N}\bigr) r \srd S \srd t \longrightarrow - \int_0^T \int_{\Gamma} \bigl((\bfu - \zeta \bfe_3) \cdot \bfn^{\omega}\bigr) r \srd S \srd t.
\end{equation*}

The treatment of the remaining terms is more delicate.
Let us start with the term associated to the Beavers--Joseph--Saffman condition, namely $\sum_{i=1}^2 \int_0^T \int_{\Gamma} \cJ_\Gamma^{\tau_{\Delta t}\omega_N}(\zeta_N^* \bfe_3 - \bfu_N) \cdot \bftau_i^{\tau_{\Delta t}\omega_N}(\bfpsi - \tbfv_N) \cdot \bftau_i^{\tau_{\Delta t}\omega_N} \srd S \srd t$.
The difficulty in the limit passage for this term arises from $\cJ_\Gamma^{\tau_{\Delta t}\omega_N}$ and the factors in the tangential vectors.
For the first term involving $\bftau_1^{\tau_{\Delta t}\omega_N}$, we need to handle 
\begin{equation*}
    \frac{\sqrt{1 + |\del_x \omega_N|^2 + |\del_y \omega_N|^2}}{1 + |\del_x \omega_N|^2}.
\end{equation*}
As the consideration of the second term is analogous, we focus on the first one.
Therefore, we set $f_N \coloneqq \sqrt{1+|\del_x \omega_N|^2+|\del_y \omega_N|^2}$, $f \coloneqq \sqrt{1+|\del_x \omega|^2+|\del_y \omega|^2}$, $g_N \coloneqq 1 + |\del_x \omega_N|^2$ and $g \coloneqq 1 + |\del_x \omega|^2$.
As $g$ and $g_N$ are bounded from below by $1$, it follows that
\begin{equation}\label{eq:est of diff of fractions}
    \left|\frac{f_N}{g_N} - \frac{f}{g}\right| = \left|\frac{(f_N-f)g}{g_N g}\right| + |f| \cdot \left|\frac{g-g_N}{g_N g}\right| \le |f_N-f| + |f| \cdot |g-g_N|.
\end{equation}
Next, we employ the elementary identity $\sqrt{a} - \sqrt{b} = \frac{a-b}{\sqrt{a} + \sqrt{b}}$ for $a$, $b > 0$ to get
\begin{equation*}
    \begin{aligned}
        f_N - f
        &= \sqrt{1+|\del_x \omega_N|^2+|\del_y \omega_N|^2} - \sqrt{1+|\del_x \omega|^2+|\del_y \omega|^2}\\
        &= \frac{|\del_x \omega_N|^2 - |\del_x \omega|^2 + |\del_y \omega_N|^2 - |\del_y \omega|^2}{\sqrt{1+|\del_x \omega_N|^2+|\del_y \omega_N|^2} + \sqrt{1+|\del_x \omega|^2+|\del_y \omega|^2}}.
    \end{aligned}
\end{equation*}
Now, exploiting that the denominator of the preceding fraction is bounded below by two, we argue that
\begin{equation*}
    |f_N - f| \le \frac{1}{2} \bigl(|\del_x \omega_N|^2 - |\del_x \omega|^2 + |\del_y \omega_N|^2 - |\del_y \omega|^2\bigr).
\end{equation*}
The strong convergence of $\omega_N$ now implies that $f_N \to f$ in $\rL^\infty(0,T;\rL^p(\Gamma))$ for all $p < \infty$.
Concerning the second term in \eqref{eq:est of diff of fractions}, we proceed analogously as above, namely we expand it as
\begin{equation*}
    \begin{aligned}
        g - g_N
        &= \frac{|\del_x \omega|^2 - |\del_x \omega_N|^2 + |\del_y \omega|^2 - |\del_y \omega_N|^2 + |\del_x \omega|^2 \cdot |\del_y \omega|^2 - |\del_x \omega_N|^2 \cdot |\del_y \omega_N|^2}{\sqrt{1 + |\del_x \omega|^2 + |\del_y \omega|^2 + |\del_x \omega|^2 \cdot |\del_y \omega|^2} + \sqrt{1 + |\del_x \omega_N|^2 + |\del_y \omega_N|^2 + |\del_x \omega_N|^2 \cdot |\del_y \omega_N|^2}}.
    \end{aligned}
\end{equation*}
Again, we note that the denominator can be bounded from below by two, while for the numerator, we use that
\begin{equation*}
    |\del_x \omega|^2 \cdot |\del_y \omega|^2 - |\del_x \omega_N|^2 \cdot |\del_y \omega_N|^2 = \bigl(|\del_x \omega|^2 - |\del_x \omega_N|^2\bigr) \cdot |\del_y \omega|^2 + |\del_x \omega_N|^2 \cdot \bigl(|\del_y \omega|^2 - |\del_y \omega_N|^2\bigr),
\end{equation*}
and the terms $|\del_y \omega_N|^2$ and and the terms $|\del_y \omega_N|^2$ and $|\del_x \omega_N|^2$ can be controlled again in $\rL^r(\Gamma)$ for large~$r$.
Similarly, we find that $|f|$ can be bounded in $\rL^r(\Gamma)$ for large~$r$.
In total, with regard to \eqref{eq:est of diff of fractions}, we conclude that the difference under consideration converges strongly in~$\rL^\infty(0,T;\rL^r(\Gamma))$ for large $r \in (1,\infty)$.
Putting together these arguments, we derive the convergence
\begin{equation*}
    \begin{aligned}
        &\quad \beta\sum_{i=1}^2 \int_0^T \int_{\Gamma} cJ_\Gamma^{\tau_{\Delta t}\omega_N}(\zeta_N^* \bfe_3 - \bfu_N) \cdot \bftau_i^{\tau_{\Delta t}\omega_N}(\bfpsi - \tbfv_N) \cdot \bftau_i^{\tau_{\Delta t}\omega_N} \srd S \srd t\\
        &\longrightarrow \beta\sum_{i=1}^2 \int_0^T \int_{\Gamma} \cJ_\Gamma^{\omega}(\zeta \bfe_3 - \bfu) \cdot \bftau_i^{\omega}(\bfpsi - \tbfv) \cdot \bftau_i^{\omega} \srd S \srd t.
    \end{aligned}
\end{equation*}

Finally, we discuss the treatment of the convective terms, and we treat the four terms separately. 
Notice that the main difference from the classical FSI problems is that now the plate velocity strongly converges only in negative Sobolev spaces, since there is no direct dissipation acting on the plate.
First observe that $(1+\frac{\tau_{\Delta t}\omega_N}{R}) \to (1+\frac{\omega}{R})$ uniformly on $(0,T) \times \Omf$.
Let us start with $(\tau_{\Delta t} \bfu_N \cdot \nabla_\rf^{\tau_{\Delta t} \omega_N})\bfu_N \cdot \tbfv_N$.
From \autoref{lem:conv of test fcts}, we recall the uniform convergence of $\tbfv_N \to \tbfv$ in~$(0,T) \times \Omf$.
On the other hand, applying \autoref{lem:conv in L2Hs} to the sequence $\bfu_N$ as in the proof of \autoref{prop:conv of fluid vel traces}, we argue that $\bfu_N \to \bfu$ strongly in~$\rL^2(0,T;\rH^s(\Omf)^3)$ for $s \in (0,1)$.
Thus, by Sobolev embeddings, it follows in particular that~$\bfu_N \to \bfu$ strongly in $\rL^2(0,T;\rL^4(\Omf)^3)$.
Note that this convergence carries over to $\tau_{\Delta t} \bfu_N$.
Moreover, we invoke from \autoref{lem:lim of the gradient of the pressure} the weak convergence of $\nabla_\rf^{\tau_{\Delta t} \omega_N} \bfu_N$ to $\nabla_\rf^\omega \bfu$ in $\rL^2(0,T;\rL^p(\Omf)^3)$ for all $p \in (1,2)$.
A concatenation of these arguments leads to 
\begin{equation*}
    \begin{aligned}
        &\quad \int_0^T \int_{\Omf} \Bigl(1 + \frac{\tau_{\Delta t}\omega_N}{R}\Bigr) \Bigl(\Bigl(\tau_{\Delta t}\bfu_N \cdot \nabla_\rf^{\tau_{\Delta t}\omega_N}\Bigr)\bfu_N\Bigr) \cdot \tbfv_N \srd \bfx \srd t
        \longrightarrow \int_0^T \int_{\Omf} \Bigl(1 + \frac{\omega}{R}\Bigr) \bigl((\bfu \cdot \nabla_\rf^{\omega})\bfu\bigr) \cdot \tbfv \srd \bfx \srd t.
    \end{aligned}
\end{equation*}
In a similar way, this time using that $\nabla_\rf^{\tau_{\Delta t}\omega_N}\tbfv_N \to \nabla_\rf^\omega \tbfv$ strongly in $\rL^\infty(0,T;\rL^{p'}(\Omf)^{3 \times 3})$ for all $p' < \infty$ by \autoref{lem:conv of test fcts}, we obtain
\begin{equation*}
    \begin{aligned}
        \int_0^T \int_{\Omf} \Bigl(1 + \frac{\tau_{\Delta t}\omega_N}{R}\Bigr) \Bigl(\Bigl(\tau_{\Delta t}\bfu_N \cdot \nabla_\rf^{\tau_{\Delta t}\omega_N}\Bigr)\tbfv_N\Bigr) \cdot \bfu_N \srd \bfx \srd t
        \longrightarrow \int_0^T \int_{\Omf} \Bigl(1 + \frac{\omega}{R}\Bigr) \bigl((\bfu \cdot \nabla_\rf^{\omega})\tbfv\bigr) \cdot \bfu \srd \bfx \srd t.
    \end{aligned}
\end{equation*}
Next, we take into account the term $(\zeta_N\frac{R+z}{R}\bfe_3 \cdot \nabla_\rf^{\tau_{\Delta t}\omega_N}) \bfu_N \cdot \tbfv_N$.
Let us recall that $\nabla_\rf^{\tau_{\Delta t}\omega_N} \bfu_N \rightharpoonup \nabla_\rf^\omega \bfu$ weakly in $\rL^2(0,T;\rL^p(\Omf)^{3 \times 3})$ for all $p \in (1,2)$, while $\tbfv_N \to \tbfv$ strongly in $\rL^\infty(0,T;\rW^{1,{p'}}(\Omf)^3)$ for all~$p' < \infty$.
On the other hand, we recall from \autoref{prop:comp of Biot & plate vel} that $\zeta_N \to \zeta$ strongly in $\rL^2(0,T;\rH^{s}(\Gamma))$ for $s \in (-\frac{1}{2},0)$.
For $s' \in (-\frac{1}{2},0)$ and $q > 2$, we then derive the strong convergence of $\zeta_N \to \zeta$ in $\rL^2(0,T;\rW^{s',q}(\Gamma))$ for some $q > 2$.
Thus, recalling that $\bfw = \frac{R + z}{R} \dt \omega \bfe_3$ we deduce the convergence
\begin{equation*}
    \begin{aligned}
        \int_0^T \int_{\Omf} \Bigl(1 + \frac{\tau_{\Delta t}\omega_N}{R}\Bigr) \Bigl(\Bigl(\zeta_N\frac{R+z}{R}\bfe_3 \cdot \nabla_\rf^{\tau_{\Delta t}\omega_N}\Bigr)\bfu_N\Bigr) \cdot \tbfv_N \srd \bfx \srd t
        \longrightarrow \int_0^T \int_{\Omf} \Bigl(1 + \frac{\omega}{R}\Bigr) \bigl((\bfw \cdot \nabla_\rf^{\omega})\bfu\bigr) \cdot \tbfv \srd \bfx \srd t.
    \end{aligned}
\end{equation*}
The final term includes $(\zeta_N\frac{R+z}{R}\bfe_3 \cdot \nabla_\rf^{\tau_{\Delta t}\omega_N}) \tbfv_N \cdot \bfu_N$.
Here we note that $\nabla_\rf^{\tau_{\Delta t}\omega_N} \tbfv_N \to \nabla_\rf^\omega \tbfv$ strongly in~$\rL^\infty(0,T;\rL^p(\Omf)^{3 \times 3})$ for all $p < \infty$ by \autoref{lem:conv of test fcts}.
Moreover, we recall that $\zeta_N$ converges strongly in~$\rL^2(0,T;\rH^s(\Gamma))$ for $s \in (-\frac{1}{2},0)$, while as in the proof of \autoref{prop:conv of fluid vel traces}, we argue that $\bfu_N \to \bfu$ strongly in $\rL^2(0,T;\rH^{s'}(\Omf)^3)$ for all $s' < 1$.
From there, we deduce the existence of $s'' \in (0,s')$ and $q > 2$ such that $\bfu_N \to \bfu$ strongly in $\rL^2(0,T;\rW^{s'',q}(\Omf)^3)$.
In total, we conclude that
\begin{equation*}
    \begin{aligned}
        \int_0^T \int_{\Omf} \Bigl(1 + \frac{\tau_{\Delta t}\omega_N}{R}\Bigr) \Bigl(\Bigl(\zeta_N\frac{R+z}{R}\bfe_3 \cdot \nabla_\rf^{\tau_{\Delta t}\omega_N}\Bigr)\tbfv_N\Bigr) \cdot \bfu_N \srd \bfx \srd t
        \longrightarrow \int_0^T \int_{\Omf} \Bigl(1 + \frac{\omega}{R}\Bigr) \bigl((\bfw \cdot \nabla_\rf^{\omega})\tbfv\bigr) \cdot \bfu \srd \bfx \srd t.
    \end{aligned}
\end{equation*}
By combining the last four convergence results, we obtain
\begin{equation*}
    \begin{aligned}
        &\quad \frac{1}{2}\int_0^T \int_{\Omf} \Bigl(1 + \frac{\tau_{\Delta t}\omega_N}{R}\Bigr) \Bigl[\Bigl(\Bigl(\Bigl(\tau_{\Delta t}\bfu_N - \zeta_N\frac{R+z}{R}\bfe_3\Bigr) \cdot \nabla_\rf^{\tau_{\Delta t}\omega_N}\Bigr)\bfu_N\Bigr) \cdot \tbfv_N\\
        &\qquad -\Bigl(\Bigl(\Bigl(\tau_{\Delta t}\bfu_N - \zeta_N\frac{R+z}{R}\bfe_3\Bigr) \cdot \nabla_\rf^{\tau_{\Delta t}\omega_N}\Bigr)\tbfv_N\Bigr) \cdot \bfu_N\Bigr] \srd \bfx \srd t\\
        &\longrightarrow \frac{1}{2}\int_0^T \int_{\Omf} \Bigl(1 + \frac{\omega}{R}\Bigr) \Bigl[\bigl(\bigl((\bfu - \bfw) \cdot \nabla_\rf^{\omega}\bigr)\bfu\bigr) \cdot \tbfv -\bigl(\bigl((\bfu - \bfw) \cdot \nabla_\rf^{\omega}\bigr)\tbfv\bigr) \cdot \bfu\Bigr] \srd \bfx \srd t.
    \end{aligned}
\end{equation*}

This proves the first statement of \autoref{thm:ex of a weak sol}.

In the last part of the proof we derive an energy estimate for the weak solution $(\bfu,\bfeta,p,\omega)$ resulting from the limit passage in the splitting scheme as described above.
For this purpose, let us recall from~\eqref{eq:unif energy ineq} the estimate
\begin{equation*}
    E_N^{n+1} + \sum_{j=1}^n D_N^{j+1} \le E_0
\end{equation*}
for all $n=0,\dots,N-1$, where
\begin{equation*}
    \begin{aligned}
        E_N^{n+1}
        &\coloneqq \frac{1}{2} \int_{\Omf} \Bigl(1+\frac{\omega_N^n}{R}\Bigr) \bigl|\bfu_N^{n+1}\bigr|^2 \srd \bfx + \frac{\rhob}{2} \int_{\Omb} \bigl|\dbfeta_N^{n+1} \bigr|^2 \srd \bfx + \frac{\rhop}{2} \int_\Gamma \bigl|\zeta_N^{n+1}\bigr|^2 \srd S\\
        &\quad + \frac{c_0}{2} \int_{\Omb} \bigl|p_N^{n+1}\bigr|^2 \srd \bfx + \mue \int_{\Omb} |\bfD(\bfeta_N^{n+1})|^2 \srd \bfx + \frac{\lambde}{2} \int_{\Omb} |\nabla \cdot \bfeta_N^{n+1}|^2 \srd \bfx + \frac{1}{2}\int_\Gamma |\Delta \omega_N^{n+1}|^2 \srd S
    \end{aligned}
\end{equation*}
and 
\begin{equation*}
    \begin{aligned}
        D_N^{n+1}
        &\coloneqq 2 \nu (\Delta t)\int_{\Omf} \Bigl(1 + \frac{\omega_N^n}{R}\Bigr)\bigl|\bfD_\rf^{\omega_N^n}(\bfu_N^{n+1})\bigr|^2 \srd \bfx + 2\muv (\Delta t) \int_{\Omb} |\bfD(\dbfeta_N^{n+1})|^2 \srd \bfx\\
        &\quad + \lambdv (\Delta t) \int_{\Omb} |\nabla \cdot \dbfeta_N^{n+1}|^2 \srd \bfx + \kappa (\Delta t) \int_{\Omb} \cJ_\rb^{(\bfeta_N^n)^\delta} \bigl|\nabla_\rb^{(\bfeta_N^n)^\delta} p_N^{n+1}\bigr|^2 \srd \bfx\\
        &\quad + \beta(\Delta t) \sum_{i=1}^2 \int_{\Gamma} \cJ_\Gamma^{\omega_N^n} \bigl|(\dbfeta_N^{n+1} - \bfu_N^{n+1}) \cdot \bftau_i^{\omega_N^n}\bigr|^2 \srd S.
    \end{aligned}
\end{equation*}
Hence, employing the definition of the approximate solutions from \autoref{sec:approx sols on the complete time int}, invoking the relations of~$\dbfeta_N$ and $\bfxi_N$ as well as $\dbfeta_N$ and $\zeta_N^*$ on the interface $\Gamma$, and transforming to the respective moving domains, for almost all $t \in [0,T]$, we deduce the energy inequality
\begin{equation*}
    \begin{aligned}
        &\frac{1}{2} \int_{\Omega_{\rf,N}(t)} \bigl|\bfu_N\bigr|^2 \srd \bfx + \frac{\rhob}{2} \int_{\Omb} \bigl|\bfxi_N \bigr|^2 \srd \bfx + \frac{c_0}{2} \int_{\Omb} \bigl|p_N\bigr|^2 \srd \bfx + \mue \int_{\Omb} |\bfD(\bfeta_N)|^2 \srd \bfx + \frac{\lambde}{2} \int_{\Omb} |\nabla \cdot \bfeta_N|^2 \srd \bfx\\
        &+\frac{\rhop}{2} \int_\Gamma \bigl|\zeta_N\bigr|^2 \srd S + \frac{1}{2}\int_\Gamma |\Delta \omega_N|^2 \srd S + 2 \nu \int_0^t \int_{\Omega_{\rf,N}(s)} \bigl|\bfD(\bfu_N)\bigr|^2 \srd \bfx \srd s + 2\muv \int_0^t \int_{\Omb} |\bfD(\bfxi_N)|^2 \srd \bfx \srd s\\
        &+ \lambdv \int_0^t \int_{\Omb} |\nabla \cdot \bfxi_N|^2 \srd \bfx \srd s + \kappa \int_0^t \int_{\Omega_{\rb,N}^\delta} \bigl|\nabla p_N\bigr|^2 \srd \bfx \srd s + \beta \sum_{i=1}^2 \int_0^t \int_{\Gamma(s)} \bigl|(\zeta_N^* \bfe_3 - \bfu_N) \cdot \bftau_i\bigr|^2 \srd S \srd t \le E_0.
    \end{aligned}
\end{equation*}
Thus, exploiting the weak and weak* convergences collected in \autoref{cor:weak & weak* conv}, noting that the weak convergence of the symmetrized gradient $\bfD(\bfu_N)$ is obtained without loss of integrability on the moving domain due to the uniform energy estimates derived in \autoref{sec:approx sols on the complete time int}, and using the weak lower semicontinuity of the norms, we obtain the energy estimate stated in the second part of \autoref{thm:ex of a weak sol}. This completes the proof of the main result of this work, \autoref{thm:ex of a weak sol}.

\section{Conclusions}\label{sec:conclusions}

In this paper, we proved the existence of a finite-energy weak solution to a regularized 3D FSI problem between an incompressible, viscous fluid and a multilayered poro(visco)elastic structure. The structure is described by the Biot equations of poro(visco)elasticity, coupled with a thin reticular plate at the interface. The coupling between the fluid and the multilayered structure is nonlinear, which necessitates considering a moving interface. The resulting geometric nonlinearities led to considerable challenges in the existence proof. The proof is constructive, based on a splitting scheme that decouples the problem into a plate subproblem and a fluid-Biot subproblem. These subproblems are solved separately to construct approximate solutions. We then pass to the limit by establishing uniform bounds, deducing the corresponding weak and weak* convergences, and employing compactness arguments to obtain the strong convergence needed for the nonlinear terms. While the existence of a weak solution is shown for the poroelastic case, the poroviscoelastic case is also considered as it may be advantageous for future work on weak-classical consistency and for practical applications where dissipation is typically present in the poroelastic region.

The present work is the first that addresses a nonlinearly coupled FSI problem with a multilayered poro(visco)\-elastic structure in three dimensions. Compared to the study of the two-dimensional problem in \cite{KCM:24}, we faced several additional difficulties. First, as discussed in \autoref{ssec:maps from ref to phys dom}, the finite-energy space for the plate displacement only yields H\"older continuity in space, so the fluid domain could only be regarded as the subgraph of a H\"older-continuous function. Consequently, we had to invoke the Lagrangian trace, and the solution and test spaces needed to be defined carefully in \autoref{def:sol & test space}. The lack of Lipschitz regularity of the plate displacement required us to be cautious when switching between the fixed and moving domain configurations due to a potential loss of integrability or regularity. This also became apparent in the uniform boundedness of the approximate fluid velocities in \autoref{lem:unif bddness of approx sols}, which only covers $\rL^2(0,T;\rW^{1,p}(\Omf)^3)$ for $p \in (1,2)$ and excludes the case $p = 2$. This, in turn, had implications for the weak and weak* convergences in \autoref{cor:weak & weak* conv} and required us to identify the weak limit of the sequence of gradients of the fluid velocities in \autoref{ssec:conv of grad of fluid vel}. Another consequence of the lower regularity of the plate displacements is visible in the compactness proof for the family of fluid velocities, see \autoref{prop:rel comp fluid vel}. Here, an adjustment of the generalized Aubin--Lions compactness criterion, as initially presented in \cite{MC:19} and further extended in \cite{KCM:24}, was necessary to accommodate the loss of regularity. Finally, the limit passage in \autoref{ssec:limit passage} called for significantly more care with regard to the lack of Lipschitz continuity of the plate displacements.

In \cite{KCM:24}, a weak-classical consistency result was shown, i.e., it was proved in 2D that as the regularization parameter tends to zero, the weak solution to the regularized problem converges to a classical solution of the original problem, provided a classical solution exists. It is a natural question whether such a result carries over to the present 3D case. As already observed in \cite{KCM:24}, the current regularization procedure based on an odd extension does not seem appropriate due to the critical convergence rate of the convolution kernel in three dimensions. However, in a forthcoming work, we plan to show that a weak-classical consistency result can still be obtained by using a different extension of the structure displacement.

\medskip

{\bf Acknowledgments.}
{Felix Brandt would like to thank the German National Academy of Sciences Leopoldina for support through the Leopoldina Fellowship Program with grant number LPDS~2024-07. 
The work of Sun\v{c}ica \v{C}ani\'c has been supported in part by the US National Science Foundation under grants DMS-2408928, DMS-2247000, DMS-2011319.
Boris Muha was supported by the Croatian Science Foundation under project number IP-2022-10-2962 and the Croatia-USA bilateral grant
"The mathematical framework for the diffuse interface method applied to coupled problems in fluid dynamics".}

\appendix 

\section{The generalized Aubin--Lions compactness criterion}\label{sec:appendix}

In the following, we state the variant of the generalized Aubin--Lions compactness criterion from \cite{MC:19} tailored to our setting.
It is a consequence of \cite[Theorem~3.1]{MC:19} together with \cite[Remark~3.1(2)]{MC:19} and \cite[Theorem~8.3]{KCM:24}, see also the discussion in the proof of \cite[Corollary~4.4]{TW:20} for the adjustments of the second inequality of Property~(C1).
We will comment more on the assumptions below.

\begin{lem}\label{lem:gen Aubin--Lions compactness crit}
Consider Hilbert spaces $V$ and $H$ with $V \Subset H$, and let $V_N^n$ and $Q_N^n$ be (isomorphic to) Hilbert spaces with~$(V_N^n,Q_N^n) \hookrightarrow V \times V$, where the embeddings are uniformly continuous with respect to~$N$ and $n$, and where additionally
\begin{equation*}
    V_N^n \Subset \overline{Q_N^n}^H \hookrightarrow (Q_N^n)'.
\end{equation*}
Moreover, let $\bfu_N^n \in V_N^n$, $n=1,\dots,N$, and define $\bfu_N$ via $\bfu_N^n$ by piecewise constant approximation as in~\eqref{eq:pw const approx}.
Assume that $(\bfu_N) \subset \rL^2(0,T;H)$ such that the following properties (A)--(C) are fulfilled:
\begin{enumerate}
    \item[(A)] There is a universal constant $C > 0$ so that
    \begin{enumerate}
        \item[(A1)] $\| \bfu_N \|_{\rL^2(0,T;V)} \le C$ and
        \item[(A2)] $\| \bfu_N \|_{\rL^\infty(0,T;H)} \le C$.
    \end{enumerate}
    \item[(B)] There exists a constant $C > 0$ independent of $n$ and $N$, an exponent $p \in [1,2)$ and a sequence of nonnegative numbers $\{a_N^n\}_{n=0}^{N-1}$ for each $N$, satisfying $(\Delta t) \sum_{n=0}^{N-1} |a_N^n|^2 \le C$ uniformly in $N$ such that
    \begin{equation*}
        \left\| P_N^n \frac{\bfu_N^{n+1}-\bfu_N^n}{\Delta t} \right\|_{(Q_N^n)'} \le C\bigl(a_N^n + \| \bfu_N^n \|_{V_N^n} + \| \bfu_N^{n+1} \|_{V_N^{n+1}}\bigr)^p
    \end{equation*}
    for all $n = 0,1,\dots,N-1$.
    \item[(C)] The function spaces $Q_N^n$ and $V_N^n$ depend in a smooth way on time.
    More precisely, we have the following.
    \begin{enumerate}
        \item[(C1)] For every $N \in \bN$, or equivalently, for each $\Delta t > 0$, and for all $l \in \{1,\dots,N\}$ as well as $n \in \{1,\dots,N-1\}$, there exist a space $Q_N^{n,l} \subset V$ and operators $J_{N,l,n}^i \colon Q_N^{n,l} \to Q_N^{n+i}$, $i = 0,1,\dots,l$, so that $\| J_{N,l,n}^i \bfv \|_{Q_N^{n+i}} \le C \cdot \| \bfv \|_{Q_N^{n,l}}$ for all $\bfv \in Q_N^{n,l}$.
        Furthermore, for $C > 0$ independent of $N$, $n$ and $l$, it holds that
        \begin{equation*}
            \begin{aligned}
                \left(J_{N,l,n}^{j+1} \bfv - J_{N,l,n}^{j} \bfv,\bfu_N^{n+j+1} \right)_H
                &\le C (\Delta t) \cdot \| \bfv \|_{Q_N^{n,l}} \cdot \| \bfu_N^{n+j+1} \|_{V_N^{n+j+1}}, &&\tfor j \in \{0,\dots,l-1\},\\
                \| J_{N,l,n}^i \bfv - \bfv \|_{H}
                &\le C (l \Delta t)^{\frac{1}{4}} \cdot \| \bfv \|_{Q_N^{n,l}}, &&\tfor i \in \{0,\dots,l\}.
            \end{aligned}
        \end{equation*}
        \item[(C2)] Set $V_N^{n,l} = \overline{Q_N^{n,l}}^V$.
        There are functions $I_{N,l,n}^i \colon V_N^{n+i} \to V_N^{n,l}$, $i = 0,1,\dots,l$,  as well as a universal constant $C > 0$ with 
        \begin{equation*}
            \begin{aligned}
                \| I_{N,l,n}^i \bfv \|_{V_N^{n,l}} 
                &\le C \cdot \| \bfv \|_{V_N^{n+i}}, &&\tfor i \in \{0,\dots,l\},\\
                \| I_{N,l,n}^i \bfv - \bfv \|_{H}
                &\le g(l \Delta t) \| \bfv \|_{V_N^{n+i}}, &&\tfor i \in \{0,\dots,l\},
             \end{aligned}
        \end{equation*}
        where $g \colon \bR_+ \to \bR_+$ is a universal, monotonically increasing function with $g(h) \to 0$ as~$h \to 0$.
        \item[(C3)] The following Ehrling property is valid:
        For every $\delta > 0$, there is a constant $C(\delta) > 0$ independent of $n$, $l$ and $N$ with
        \begin{equation*}
            \| \bfv \|_H \le \delta \cdot \| \bfv \|_{V_N^{n,l}} + C(\delta) \cdot \| \bfv \|_{(Q_N^{n,l})'}, \tfor \bfv \in V_N^{n,l}.
        \end{equation*}
    \end{enumerate}
\end{enumerate}
Then the sequence $(\bfu_N)$ is relatively compact in $\rL^2(0,T;H)$.
\end{lem}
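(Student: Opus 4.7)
The approach is to verify the hypotheses of the Simon-type compactness criterion in $\rL^2(0,T;H)$: in view of $V \Subset H$ and the uniform bound (A1), it suffices to establish time equicontinuity of $(\bfu_N)$ in $H$, i.e.,
\begin{equation*}
    \lim_{h \to 0} \sup_N \| \bfu_N(\cdot + h) - \bfu_N(\cdot) \|_{\rL^2(0,T-h;H)} = 0,
\end{equation*}
where tightness in space follows directly from (A1) and the compact embedding $V \Subset H$. Thus the entire difficulty concentrates on controlling increments $\bfu_N^{n+l} - \bfu_N^n$ in the $H$-norm, and Property (C3) prescribes exactly the right Ehrling decomposition: for every $\delta > 0$, it splits the $H$-norm into a small $V_N^{n,l}$-piece, absorbable by (A1), and a $C(\delta)$-times the $(Q_N^{n,l})'$-norm, which must be shown to vanish as $l \Delta t \to 0$.

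First I would fix $l \in \{1,\dots,N-1\}$ and $n \in \{0,\dots,N-l-1\}$ and test the projected difference against an arbitrary $\bfv \in Q_N^{n,l}$ with $\| \bfv \|_{Q_N^{n,l}} \le 1$. The key identity is a telescoping-with-interpolation formula
\begin{equation*}
    (\bfu_N^{n+l} - \bfu_N^n, \bfv)_H = \sum_{j=0}^{l-1} (\bfu_N^{n+j+1} - \bfu_N^{n+j}, J_{N,l,n}^{j+1} \bfv)_H + \sum_{j=0}^{l-1} (\bfu_N^{n+j+1}, J_{N,l,n}^{j+1} \bfv - J_{N,l,n}^j \bfv)_H + \mathcal{R},
\end{equation*}
where the remainder $\mathcal{R}$ collects the mismatch between $\bfv$ and $J_{N,l,n}^0 \bfv$ (controlled by the second line of (C1)) together with the mismatch between $\bfu_N^{n+l} - \bfu_N^n$ and its image $I_{N,l,n}^l \bfu_N^{n+l} - I_{N,l,n}^0 \bfu_N^n$ (controlled by (C2)). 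The first sum is dominated using Property (B) applied with the admissible test function $J_{N,l,n}^{j+1} \bfv \in Q_N^{n+j+1}$ and the uniform bound $\| J_{N,l,n}^{j+1} \bfv \|_{Q_N^{n+j+1}} \le C \| \bfv \|_{Q_N^{n,l}}$. The second sum is estimated directly by the first inequality in (C1). Taking the supremum over test functions yields a bound of the form
\begin{equation*}
    \| \bfu_N^{n+l} - \bfu_N^n \|_{(Q_N^{n,l})'} \le C (l \Delta t) \cdot \Bigl( b_N^{n,l} + \max_{0 \le j \le l} \| \bfu_N^{n+j} \|_{V_N^{n+j}} \Bigr)^p + C g(l \Delta t) \cdot \bigl( \| \bfu_N^n \|_{V_N^n} + \| \bfu_N^{n+l} \|_{V_N^{n+l}} \bigr),
\end{equation*}
with $p \in [1,2)$ and $b_N^{n,l}$ inherited from the $\ell^2$-summable sequence in (B).

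Next I would apply (C3) with a small parameter $\delta$, square the resulting pointwise estimate, and sum over $n$ after multiplying by $\Delta t$. Using (A1) to handle the $\delta$-term and $\ell^2$-summability of $b_N^{n,l}$ together with the bound on $(\Delta t)\sum_n \| \bfu_N^n \|_{V_N^n}^2$ to handle the remainder, and exploiting that $p < 2$ together with H\"older's inequality to keep powers of the energy quantities summable, I expect to obtain
\begin{equation*}
    \| \tau_{l \Delta t} \bfu_N - \bfu_N \|_{\rL^2(0,T-l\Delta t;H)}^2 \le C \bigl( \delta + g(l \Delta t)^2 + C(\delta)^2 (l \Delta t)^{2} \bigr),
\end{equation*}
uniformly in $N$. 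Sending first $l \Delta t \to 0$ (using $g(h) \to 0$ and $C(\delta)$ fixed) and then $\delta \to 0$ gives the desired equicontinuity, and Simon's theorem then yields the relative compactness of $(\bfu_N)$ in $\rL^2(0,T;H)$.

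The main obstacle will be the second step: carrying out the telescoping-with-interpolation identity cleanly while tracking the commutator-type remainder $\mathcal{R}$ generated by all the $J_{N,l,n}^j$ and $I_{N,l,n}^i$ substitutions. In particular, the endpoint discrepancies at $j=0$ and $j=l-1$, together with the fact that neither operator is a genuine projection, require repeated use of both inequalities in (C1) and (C2) and careful management of the scaling in $l \Delta t$ so that each error term carries either a factor $(l\Delta t)^{1/4}$ or $g(l\Delta t)$; this bookkeeping is the only technically delicate piece, and once it is performed, the Ehrling step and the summation argument proceed routinely.
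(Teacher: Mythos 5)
The paper does not carry out a proof of this lemma at all: it is established by citation, combining \cite[Theorem~3.1]{MC:19}, \cite[Remark~3.1(2)]{MC:19}, \cite[Theorem~8.3]{KCM:24}, and the discussion surrounding \cite[Corollary~4.4]{TW:20}, with the necessary adjustments explained in \autoref{rem:assumptions of adjusted generalized Aubin--Lions}. Your proposal reconstructs that argument from scratch, and the high-level plan is the correct one and matches the one used in those references -- prove $\rL^2(0,T;H)$ time equicontinuity, split with the Ehrling property (C3), correct with the $I$-operators of (C2), and estimate the $(Q_N^{n,l})'$-norm of the increments by an Abel summation against the $J$-operators of (C1), closing with Property~(B) and H\"older's inequality on the exponent $p<2$.

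There is, however, a concrete flaw in the telescoping identity you wrote down, and a couple of further imprecisions in the subsequent bounds. The Abel summation compatible with Property~(B) and the first inequality of (C1) is
\begin{align*}
    (\bfu_N^{n+l} - \bfu_N^n, \bfv)_H
    &= (\bfu_N^{n+l}, \bfv - J_{N,l,n}^{l}\bfv)_H - (\bfu_N^{n}, \bfv - J_{N,l,n}^{0}\bfv)_H\\
    &\quad + \sum_{j=0}^{l-1}\Bigl[\bigl(\bfu_N^{n+j+1} - \bfu_N^{n+j},\, J_{N,l,n}^{j}\bfv\bigr)_H + \bigl(\bfu_N^{n+j+1},\, J_{N,l,n}^{j+1}\bfv - J_{N,l,n}^{j}\bfv\bigr)_H\Bigr],
\end{align*}
whereas your first sum carries $J_{N,l,n}^{j+1}\bfv$. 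This is not a cosmetic slip: Property~(B) pairs $\bfu_N^{n+j+1} - \bfu_N^{n+j}$ with a test function in $Q_N^{n+j}$ through $P_N^{n+j}$ and the norm $(Q_N^{n+j})'$, and it is $J_{N,l,n}^{j}\bfv$ that lands in $Q_N^{n+j}$, while $J_{N,l,n}^{j+1}\bfv$ lands in $Q_N^{n+j+1}$. Moreover, with $J^{j+1}$ in both sums the expression no longer telescopes to $(\bfu_N^{n+l},\bfv)_H - (\bfu_N^{n},\bfv)_H$ modulo boundary terms, so the remainder $\mathcal{R}$ cannot be the small quantity you describe (and the $I$-operators should not appear in $\mathcal{R}$ at all -- they belong to the Ehrling step, not the dual-norm identity). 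Separately, your intermediate bound involves $\max_{0\le j\le l}\| \bfu_N^{n+j} \|_{V_N^{n+j}}$, which is not summable over $n$ after squaring; the quantity that actually comes out of H\"older applied to (B), exploiting $p < 2$, is
\begin{equation*}
    (l\Delta t)^{1-\frac{p}{2}}\Bigl[\sum_{j=0}^{l-1}(\Delta t)\bigl(a_N^{n+j} + \| \bfu_N^{n+j} \|_{V_N^{n+j}} + \| \bfu_N^{n+j+1} \|_{V_N^{n+j+1}}\bigr)^2\Bigr]^{\frac{p}{2}},
\end{equation*}
whose square does sum over $n$ by (A1) and the $\ell^2$-bound on $(a_N^n)$; the boundary terms contribute a factor $(l\Delta t)^{\frac{1}{4}}$ rather than $(l\Delta t)$. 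All exponents of $l\Delta t$ remain strictly positive, so the conclusion is unaffected -- your plan is right, but the identity and the bookkeeping need these corrections before the argument actually closes.
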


A few remarks on the generalized Aubin--Lions compactness criterion in \autoref{lem:gen Aubin--Lions compactness crit} are in order now.

\begin{remark}\label{rem:assumptions of adjusted generalized Aubin--Lions}
\begin{enumerate}[(a)]
    \item The relaxation of $V_N^n$ and $Q_N^n$ merely being isomorphic to Hilbert spaces does not affect the arguments in \cite[Section~3]{MC:19}.
    \item In \cite[Theorem~3.1]{MC:19}, in~(A), for the aforementioned time shift operator $\tau_{\Delta t}$, it is additionally assumed that $\| \tau_{\Delta t} \bfu_N - \bfu_N \|_{\rL^2(\Delta t,T;H)}^2 \le C \Delta t$.
    However, as pointed out in \cite[Remark~3.1(2)]{MC:19}, this assumption is not necessary.
    \item The result in \cite[Theorem~3.1]{MC:19} is originally formulated for the following more specific property instead of~(B) in our case:
    There exists a universal constant $C > 0$ so that for the orthogonal projector $P_N^n$ onto $\overline{Q_N^n}^H$, it holds that
    \begin{equation*}
        \left\| P_N^n \frac{\bfu_N^{n+1}-\bfu_N^n}{\Delta t} \right\|_{(Q_N^n)'} \le C \bigl(\| \bfu_N^{n+1} \|_{V_N^{n+1}} + 1\bigr), \tfor n=0,\dots,N-1.
    \end{equation*}
    It is shown in \cite[Theorem~8.3]{KCM:24} that the above condition can be replaced by~(B) in \autoref{lem:gen Aubin--Lions compactness crit}.
    \item A close inspection of \cite[Section~3]{MC:19} shows that the adjustments of the statements in~(C1) still give rise to the desired estimates, see also the discussion in the proof of \cite[Corollary~4.4]{TW:20}.
\end{enumerate}
\end{remark}

\end{document}